\documentclass[11pt]{amsart}

\usepackage{amsthm, amsfonts, amssymb,graphicx, mathrsfs}
\usepackage[dvipsnames]{xcolor}
\usepackage{tikz-cd}
\usepackage{mathtools}
\usepackage{pinlabel}
\usepackage{bbm}
\usepackage[pdftex,%
  final,%
  colorlinks=true,%
  linkcolor=NavyBlue,%
  citecolor=NavyBlue,%
  filecolor=NavyBlue,%
  menucolor=NavyBlue,%
  urlcolor=NavyBlue,%
  bookmarks=true,%
  bookmarksdepth=3,%
  bookmarksnumbered=true,%
  bookmarksopen=true,%
  bookmarksopenlevel=2,%
]{hyperref}

\graphicspath{{Figures/}}

\newcommand{\dfn}[1]{{\textbf{#1}}}

\numberwithin{equation}{section}

\newcommand{\leg}{\ensuremath{\Lambda}}
\newcommand{\nleg}[1]{\ensuremath{\leg^{#1}}}
\newcommand{\stcleg}{\ensuremath{\widehat{\leg}}}
\newcommand{\tcleg}{\ensuremath{\overline{\leg}}}
\newcommand{\otcleg}{\ensuremath{\underline{\leg}}}
\newcommand{\df}{\ensuremath{\partial}}
\newcommand{\alg}{\ensuremath{\mathcal{A}}}

\newcommand{\aug}{\ensuremath{\varepsilon}}
\newcommand{\Aug}{\ensuremath{\mathcal{A}\mathsf{ug}}}

\newcommand{\obj}{\ensuremath{\mathrm{ob}}}
\newcommand{\base}{\ensuremath{\mathcal{T}}}
\newcommand{\reeb}{\ensuremath{\mathcal{R}}}
\newcommand{\gens}
{\ensuremath{\mathcal{S}}}
\DeclareMathOperator{\cone}{Cone}
\DeclareMathOperator{\cocone}{Cocone}
\DeclareMathOperator{\cyl}{Cyl}
\DeclareMathOperator{\cocyl}{Cocyl}
\DeclareMathOperator{\Hom}{Hom}


\newcommand{\rr}{\ensuremath{\mathbb{R}}}
\newcommand{\zz}{\ensuremath{\mathbb{Z}}}
\newcommand{\qq}{\ensuremath{\mathbb{Q}}}

\newcommand{\ff}{\ensuremath{\mathbb{F}}}

\newcommand{\bbid}{\ensuremath{\mathbbm{1}}}

\newcommand{\mca}{\ensuremath{\mathcal{A}}}
\newcommand{\mcb}{\ensuremath{\mathcal{B}}}
\newcommand{\mcc}{\ensuremath{\mathcal{C}}}
\newcommand{\mcd}{\ensuremath{\mathcal{D}}}
\newcommand{\mcm}{\ensuremath{\mathcal{M}}}
\newcommand{\mcn}{\ensuremath{\mathcal{N}}}
\newcommand{\mcq}{\ensuremath{\mathcal{Q}}}
\newcommand{\mcs}{\ensuremath{\mathcal{S}}}
\newcommand{\mcl}{\ensuremath{\mathcal{L}}}
\newcommand{\mcp}{\ensuremath{\mathcal{P}}}
\newcommand{\mct}{\ensuremath{\mathcal{T}}}

\newcommand{\mbw}{\ensuremath{\mathbf{w}}}
\newcommand{\mbx}{\ensuremath{\mathbf{x}}}
\newcommand{\mby}{\ensuremath{\mathbf{y}}}
\newcommand{\mbz}{\ensuremath{\mathbf{z}}}
\newcommand{\mba}{\ensuremath{\mathbf{a}}}
\newcommand{\mbb}{\ensuremath{\mathbf{b}}}

\newcommand{\ul}[1]{\underline{#1}}
\newcommand{\cev}[1]{\reflectbox{\ensuremath{\vec{\reflectbox{\ensuremath{#1}}}}}}

\theoremstyle{plain}
\newtheorem{thm}{Theorem}[section]

\newtheorem{lem}[thm]{Lemma}

\newtheorem{prop}[thm]{Proposition}

\newtheorem{claim}[thm]{Claim}

\theoremstyle{definition}
\newtheorem{defn}[thm]{Definition}

\theoremstyle{remark}
\newtheorem{rem}[thm]{Remark}
\newtheorem{ex}[thm]{Example}


\begin{document}

\title[Weak Relative CY Structures for LCH]{Weak Relative Calabi-Yau Structures for Legendrian Contact Homology}

\author[J. Ma]{Jiajie Ma} \address{Duke University,
Durham, NC 27708} \email{jason.ma@duke.edu}

\author[J. Sabloff]{Joshua M. Sabloff} \address{Haverford College,
Haverford, PA 19041} \email{jsabloff@haverford.edu} 

\keywords{Legendrian knot, Legendrian Contact Homology, Calabi-Yau structure}
\subjclass[2020]{53D42; 53D37, 57K10, 57K33}

\begin{abstract}
    Legendrian Contact Homology (LCH) and its augmentations are important invariants of Legendrian submanifolds, and for Legendrian knots in the standard contact 3-space in particular.  We increase understanding of the algebraic structure of LCH by generalizing the duality isomorphism and long exact sequence for linearized LCH for Legendrian knots to a weak relative Calabi-Yau structure for $A_\infty$ bimodules over the positive augmentation category.
\end{abstract}

\date{\today}

\maketitle

\setcounter{tocdepth}{1}
\tableofcontents

\section{Introduction}
\label{sec:intro}

\subsection{Context and Goals}
\label{ssec:intro-context}

Invariants built from $J$-holomorphic curves play a central role in both symplectic and smooth topology.  In the study of Legendrian knots, the fundamental $J$-holomorphic invariant is the Legendrian Contact Homology (LCH), which is derived from the Chekanov-Eliashberg differential graded algebra (DGA).  First defined almost thirty years ago \cite{chv, yasha:icm, egh}, LCH has yielded essential insights into the topology of the space of Legendrian submanifolds and Lagrangian cobordisms between them. The goal of this paper is to better understand the structure of LCH by extending a type of duality on linearized LCH \cite{high-d-duality, duality} to the non-linear setting of  augmentation categories of a Legendrian knot.

Structural results are particularly important for understanding the nature and applications of LCH.  Among the first of these  results was the existence of a Poincar\'e-Lefschetz-like duality and a fundamental class for the LCH, linearized with respect to an augmentation \cite{high-d-duality,duality}.  This duality eased computations (see, for example, \cite{casey-henry, melvin-shrestha}), connected the linearized LCH to the Poincar\'e-Lefschetz duality of Lagrangian fillings \cite{c-dr-g-g-cobordism, rizell:lifting, ekholm:rsft, ehk:leg-knot-lagr-cob}, provided a framework for capacities that measure the length \cite{josh-lisa:cob-length} and relative Gromov width \cite{josh-lisa:rel-gr-width} of Lagrangian cobordisms, and even inspired duality results for related invariants such as those based on generating families \cite{josh-lisa:obstr} and microlocal sheaves \cite{li:reeb-sheaf}. 

The linearized LCH of a Legendrian knot $\leg$ ignores a wealth of non-linear information contained in the differential of the full DGA.  In order to reintroduce that non-linear information, it has proven useful to use $A_\infty$ structures.  The starting point was to define  $A_\infty$ algebras for a fixed augmentation \cite{products}; this idea was then generalized to the $A_\infty$ categories $\Aug_-(\leg)$ \cite{bc:bilinear} and $\Aug_+(\leg)$ \cite{nrssz:aug-sheaf}, whose objects are augmentations and whose structure maps capture the (dual of) the LCH differential.  The augmentation categories are designed to connect the LCH framework to wrapped Fukaya categories and to sheaf invariants \cite{nrssz:aug-sheaf}. As frameworks for understanding LCH have grown more sophisticated, linearized versions of duality have followed.  In \cite{products}, the original linearized duality was interpreted in terms of cup and cap products. In \cite{bc:bilinear}, the duality long exact sequence of \cite{high-d-duality} was extended and reinterpreted in terms of ``bi-linearized'' LCH.  Finally, in \cite{nrssz:aug-sheaf}, the duality long exact sequence was recast in terms of a relationship between the (linear) morphism spaces of $\Aug_-(\leg)$ and of $\Aug_+(\leg)$.

The next step in this development, embodied in this paper, is to incorporate non-linear information into the duality structure, i.e.\ to upgrade the duality relationship between the morphism spaces of $\Aug_\pm(\leg)$ to a relationship on the full $A_\infty$ structure.  The main difficulty is in figuring out the proper algebraic setting for this upgrade.  We frame our results using Brav and Dyckerhoff's relative Calabi-Yau structures \cite{bd:rel-cy}.  Calabi-Yau (CY) structures on $A_\infty$ algebras first appeared in Kontsevich and Soibelman's work \cite{kontsevich-soibelman}. Since then, they have arisen naturally in the geometry of CY manifolds and in various formulations of mirror symmetry; see, for example, \cite{ganatra:thesis, ganatra:cyclic, sheridan:fano} and, in particular, Seidel's notion of a boundary $A_\infty$ algebra \cite{seidel:lefschetz1} that motivated earlier work on this paper.  See \cite{kl:sheaf-CY} for a more comprehensive survey of the recent wave of discoveries of CY and relative CY structures.

\subsection{Results}
\label{ssec:results}

The main theorem of this paper is a generalization of the linearized duality to a weak right relative CY structure. The precise definition of a relative Calabi-Yau structure appears in Section~\ref{sec:wrcy}.

To state the theorem, we need an additional $A_\infty$ category beyond the augmentation categories $\Aug_\pm(\leg)$.  The circle category $\mcc(\leg)$ is based on the Morse theory of the Legendrian $\leg$ and comes equipped with a projection functor $\pi_\mcc$  from $\Aug_+(\leg)$. From these categories, we will derive three $\Aug_+(\leg)$-bimodules:  $\mcm_+$ is the diagonal bimodule of $\Aug_+(\leg)$, $\mcm_-$ is a submodule of $\mcm_+$ related to the diagonal bimodule of $\Aug_-(\leg)$, and $\mcn$ is the pullback of the diagonal bimodule of $\mcc$ over the projection $\pi_\mcc$. Precise definitions of all of these categories and bimodules will be presented in Section~\ref{sec:aug-cat}. 

For simplicity, we work with coefficients in $\ff_2 = \zz/2\zz$, though there should be no mathematical obstruction to lifting the coefficients to $\qq$ or even to any commutative ring. We say that $\Aug_+(\leg)$ is \dfn{simply perturbed} if it is defined using a Morse function $f: \leg \to \rr$ having one maximum and one minimum that are adjacent in the Lagrangian diagram of $\leg$, with a unique basepoint just before the maximum with respect to the orientation of $\leg$.

\begin{thm}
\label{thm:main}
    Let $\leg$ be a Legendrian knot in the standard contact $\rr^3$. For a simply perturbed augmentation category $\Aug_+(\leg)$ over $\ff_2$, the projection functor $\pi_\mcc: \Aug_+(\leg) \to \mcc(\leg)$ admits a weak right relative Calabi-Yau structure of dimension $2$.
\end{thm}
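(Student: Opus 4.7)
The plan has three main stages: translate the abstract weak relative CY data into concrete bimodule statements, build the required Hochschild class from the Chekanov--Eliashberg DGA, and reduce non-degeneracy to the linearized duality already known for $\Aug_\pm(\leg)$.

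First I would unpack the definition of a weak right relative $2$-CY structure on $\pi_\mcc$ in terms of the bimodules $\mcm_\pm$ and $\mcn$. Abstractly, such a structure consists of a Hochschild class on the target together with a filling by a class on the source, whose cap products induce a cofiber sequence identifying the inverse dualizing bimodule of the source with its diagonal bimodule up to shift. In our setting this should translate, up to degree shifts, into a cofiber sequence of $\Aug_+(\leg)$-bimodules relating $\mcm_+^{!}$, $\mcm_-$, and $\mcn$---exactly the data needed to encode the duality between $\Aug_+(\leg)$ and $\Aug_-(\leg)$ with the circle contribution from $\mcc(\leg)$ sitting in the middle.

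Second, I would construct the Hochschild class geometrically. The simply perturbed hypothesis is the key input: the Morse function $f \colon \leg \to \rr$ has a unique maximum and minimum, with a single basepoint positioned just before the maximum. These choices give the Morse model for $\mcc(\leg) \simeq C^*(S^1)$ a canonical Hochschild $1$-cycle $\sigma$ coming from the unique minimum, and they single out a distinguished degree-$2$ generator of the DGA adjacent to the basepoint, out of which I would assemble a Hochschild $2$-chain $\eta$ on $\Aug_+(\leg)$ whose Hochschild boundary matches $\pi_\mcc^* \sigma$. Closure modulo the pullback should reduce to counting boundaries of moduli spaces of $J$-holomorphic disks with one positive puncture at the distinguished generator, a count controlled by the DGA differential and the augmentation axiom.

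Finally, non-degeneracy would be proved by reduction to the linearized case: after evaluation at any pair of augmentations $(\aug_1,\aug_2)$, the cap-product maps induced by $\eta$ specialize to the linearized duality maps between $\Hom_{\Aug_+}(\aug_1,\aug_2)$ and $\Hom_{\Aug_-}(\aug_2,\aug_1)^\vee[-2]$, and the proposed cofiber sequence recovers the duality long exact sequence of \cite{nrssz:aug-sheaf}. The main obstacle I anticipate is not this pointwise reduction---which is essentially already in place---but rather upgrading the cap products to genuine $A_\infty$-bimodule maps: one must verify that the chain-level duality candidates are compatible with all higher $A_\infty$ structure maps of $\Aug_+(\leg)$, and that the homotopies exhibiting the cofiber sequence respect the bimodule structure. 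The simply perturbed assumption appears to be precisely what keeps this combinatorics tractable by pinning down the contributions of disks near the basepoint, so that the higher compositions involving the unique degree-$2$ generator assemble cleanly rather than introducing spurious corrections.
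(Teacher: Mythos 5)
Your outline correctly identifies the general shape of the result---a duality relating $\mcm_+$, $\mcm_-$, and $\mcn$ whose non-degeneracy should ultimately reduce to linearized duality---but it leaves the actual content of the theorem unproved, and it starts from a conceptual slip. For a \emph{right} (proper) relative CY structure the non-degeneracy condition identifies shifted diagonal bimodules with their \emph{linear duals} (Serre bimodules), as in Definition~\ref{defn:weak-rel-cy}; the inverse dualizing bimodule you invoke belongs to the left/smooth notion, which is not what is being constructed. More importantly, your stage two is not a construction: there is no ``distinguished degree-$2$ generator of the DGA adjacent to the basepoint,'' and the duality class is not assembled from a single generator. In the paper the chain-level duality morphism $\eta\colon \mcm_-^\vee[-2]\to\mcm_+$ comes from the \emph{separated $2$-copy}: its Reeb chords split into $q$-chords (giving $\mcm_+$) and $p$-chords (giving $\mcm_-^\vee[-1]$), the length filtration exhibits $\mcm_+$ as a submodule of the separated bimodule $\widehat{\mcm}$, and Lemma~\ref{lem:submodule-to-cone} produces $\eta$ with $\widehat{\mcm}\simeq\cone(\eta)$. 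Non-degeneracy is then precisely the acyclicity of $\widehat{\mcm}$, obtained by isotoping the two copies apart (Proposition~\ref{prop:eta-quasi-iso}). This mechanism is what makes $\eta$ an honest $A_\infty$-bimodule morphism from the start---the ``upgrading of cap products to $A_\infty$-bimodule maps'' that you flag as the main obstacle is never carried out in your plan, only named.

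Even granted $\eta$, the theorem does not follow from pointwise/linearized specialization. Two further ingredients are essential and absent from your proposal. First, one must prove the homotopy commutativity $\theta\rho^\vee\sim\pi_\mcn\eta$, where $\theta\colon\mcn^\vee[-1]\to\mcn$ is the Poincar\'e-duality quasi-isomorphism of Proposition~\ref{prop:pd-bimodule} and $\rho$ is the connecting morphism with $\mcm_+\simeq\cone(\rho)$ from Proposition~\ref{prop:aug+cone}; this is the bulk of the paper's Section~\ref{sec:main-pf}, carried out by writing down an explicit homotopy $H$ and verifying it corner-type by corner-type using the disk identification lemmas and telescoping sums of augmented corners (this is also exactly where the simply perturbed hypothesis is used, via the enumeration of thin disks and of enrichments at the basepoint). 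Second, one must convert this bimodule-level ``very weak'' CY structure into a weak relative CY structure on the functor $\pi_\mcc$ itself: this is the algebraic Proposition~\ref{prop:alg-gadget} about conical pairs, which uses finite-dimensionality to invert $\eta$ and $\theta$ up to homotopy and assemble the data $(\kappa,\phi)$ of Definition~\ref{defn:weak-rel-cy}, together with Lemma~\ref{lem:very-weak-shortcut}. Without these two steps your text is a plausible program rather than a proof.
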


Again, the full definition of a weak right relative CY structure will appear below, but briefly, the theorem tells us that there is an isomorphism of exact triangles of $\Aug_+(\leg)$-bimodules of the following form in the derived category: 

\begin{equation} \label{eq:wrcy}
\begin{tikzcd}
	\cdots \arrow[r] & \mcm_+[1] \arrow[r,"\pi_{\mcn}"] \arrow[d] & \mcn[1] \arrow[r, "I_\mcn"] \arrow[d] & \mcm_-[2] \arrow[d] \arrow[r] & \cdots \\
	\cdots \arrow[r] &\mcm_-^\vee[-1] \arrow[r, "I^\vee_\mcn"] & \mcn^\vee \arrow[r,"\pi^\vee_{\mcn}"] & \mcm_+^\vee \arrow[r] & \cdots
\end{tikzcd}
\end{equation}

\begin{rem} \label{rem:reduce1}
By reducing Diagram \eqref{eq:wrcy} to the linear level (i.e.\ to the level of morphisms), using the first vertical quasi-isomorphism to replace $\Hom_+(\aug, \aug)$ with $\Hom_-(\aug,\aug)^\vee[-2]$, and translating between the language of \cite[\S5]{nrssz:aug-sheaf} and \cite{high-d-duality}, we recover the original duality long exact sequence in \cite{high-d-duality} from the top line of the diagram.  See Remark~\ref{rem:reduce2} for more details.
\end{rem}

We make several comments on the relationship between our main theorem and other past and recent work.  First and foremost, we note that Chen \cite{chen:lsft-dual} has proven a closely related result.  Chen constructs $\mcm_-$, proves that the top line in \eqref{eq:wrcy} is an exact sequence of $\Aug_+$-bimodules, and shows that the first and third vertical maps in \eqref{eq:wrcy} are $A_\infty$ quasi-isomorphisms (cf.\ Proposition~\ref{prop:eta-quasi-iso}, below). Chen's results hold over an arbitrary commutative ring.  Chen uses a different framework for the proof, namely Ng's Legendrian Symplectic Field Theory \cite{lenny:lsft} rather than the close analysis of the DGAs of $n$-copies of the Legendrian used in this paper.  Both groups of authors view their results as independent, and virtually simultaneous, discoveries.  

Other recent related work includes L\'egout's generalization of the linear duality map to a CY structure on the full Chekanov-Eliashberg DGA for horizontally displaceable Legendrian spheres \cite{legout:cy}.  Dimitrouglou Rizell and L\'egout have a further generalization of the duality long exact sequence in \cite{high-d-duality} to a weak relative CY structure for general dg-bimodules using LCH over chains in the based loop space of $\leg$; see \cite{drl:dg-duality} for a preliminary report. Relatedly, Gorsky and Haiden \cite{gh:counting-cy} constructed a weak relative CY structure on a dg-category quasi-equivalent to $\Aug_+(\leg)$ by gluing together ``2CY structures'' defined on simple Legendrian tangles.  Asplund \cite{asplund:singular-unknot} constructs a \emph{strong} relative smooth CY structure involving the full Chekanov-Eliashberg DGA for a class of higher-dimensional singular links of Legendrian unknots by using a relation to relative Ginzburg algebras. Finally, Kuo and Li construct a \emph{strong} relative CY structure  for microlocal sheaf invariants \cite{kl:sheaf-CY}; this structure was predicted in  \cite[Remark 1.10]{li:reeb-sheaf}. 

Next, the main theorem should generalize to higher-dimensional Legendrian submanifolds in the standard contact $\rr^{2n+1}$, or any ``horizontally displaceable'' Legendrian in a $1$-jet space; see \cite{liu:high-d-aug-cat} for the necessary machinery.  As noted in \cite[Remark 1.4]{nrssz:aug-sheaf}, however, in higher dimensions there are issues with ensuring that the augmentation category is well-defined with respect to choices of perturbation of the $n$-copy.  A generalization of \cite[Theorem 3.6]{high-d-duality} to multiple mixed punctures would also be necessary.  Alternatively, the aforementioned work of Dimitroglou Rizell and L\'egout \cite{drl:dg-duality} paves the way for an approach to duality in higher dimensions using DGAs with coefficients in chains on the based loop space of $\leg$ rather than using augmentation categories and $n$-copies.

Finally, there is evidence that the weak relative CY structure in the main theorem generalizes to a strong relative CY structure, which we plan to return to in future work.  The evidence comes from observations about symmetries of the inverse to the leftmost quasi-isomorphism in \eqref{eq:wrcy}, from Asplund's aforementioned work \cite{asplund:singular-unknot}, from the existence of such a strong structure in the closely related microlocal sheaf invariants \cite{kl:sheaf-CY}, and from computations on examples such as those in \cite{products}.

\subsection{Plan of the Paper}
\label{ssec:plan}

We review the construction of Legendrian Contact Homology in Section~\ref{sec:lch-intro}, with an eye towards reviewing older duality results and setting notation that will be used in future sections.  In Section~\ref{sec:a-infty}, we set the language of $A_\infty$ categories and bimodules.  We complete the necessary algebraic background in Section~\ref{sec:wrcy} by defining weak relative CY structures and proving a key algebraic lemma that lets us construct such a structure from even weaker information.

We begin to construct the geometric framework for the proof of Theorem~\ref{thm:main} with a careful discussion of pushoffs of Legendrian knots, especially the Reeb chords and immersed disks in those pushoffs, in Section~\ref{sec:copies}.  In Section~\ref{sec:aug-cat}, we use this geometric framework to review the definition of the augmentation categories $\Aug_\pm(\leg)$ from \cite{nrssz:aug-sheaf}, and then proceed to define the $\Aug_+(\leg)$-bimodules that appear in the statement of the main theorem.  Finally, we prove the main theorem in Proposition~\ref{prop:eta-quasi-iso}, where we prove that the first and last vertical maps in Diagram~\eqref{eq:wrcy} are quasi-isomorphisms, and in Section~\ref{sec:main-pf}, where we prove the homotopy commutativity of Diagram~\eqref{eq:wrcy}.

\subsection*{Acknowledgements}

We thank Zhenyi Chen for the open dialogue and sharing of ideas as both teams worked to complete their approaches to weak relative CY structures for augmentation categories.  We also thank Johan Asplund, Georgios Dimitroglou Rizell, Sheel Ganatra, Christopher Kuo, No\'emie Legout, Wenyuan Li, Aaron Lowe, Lenny Ng, Paul Seidel, and Yiyang Xu for stimulating conversations about the material in this paper.  We are also grateful to the participants of the Philadelphia Area Contact / Topology seminar for their early feedback on this work. JM was partially supported by NSF grant DMS-2003404 and JMS was partially supported by NSF grant DMS-1406093 during the research leading to this paper.  Part of the research was done while JMS was hosted by the Institute for Advanced Study; in particular, this paper is partly based on work supported by the Institute for Advanced Study.

\section{DGAs and Legendrian Contact Homology}
\label{sec:lch-intro}

The goal of this section is to briefly review the foundational ideas of Legendrian Contact Homology (LCH) for Legendrian knots in $\rr^3$: the Chekanov-Eliashberg algebra in Section~\ref{ssec:ce-dga}, the idea of an augmentation in Section~\ref{ssec:aug}, duality for linearized Legendrian Contact Homology in Section~\ref{ssec:old-duality}, and Mishachev's link grading in Section~\ref{ssec:dga-link}.  These sections serve mostly to set notation and perspective. For more depth for all of this material, see the recent survey \cite{en:lch-survey} or the original papers \cite{chv, yasha:icm, ens}; we follow the notation developed in \cite{nrssz:aug-sheaf} as much as reasonable.

\subsection{The Chekanov-Eliashberg Algebra}
\label{ssec:ce-dga}

Our first task is to sketch the definition of the Chekanov-Eliashberg differential graded algebra (DGA) of an oriented Legendrian link $\leg$. We emphasize that we only define the DGA over the field $\ff_2$; see the references above for more general coefficients. First, label the crossings of the Lagrangian projection $\pi_{xy}(\leg)$ --- that is, label the Reeb chords of $\leg$ --- with natural numbers from $1$ to $n$. Further,  choose base points $\circ_1, \ldots, \circ_M$ on $\leg$ so that the complement of the base points is contractible; in particular, there must be at least one base point on every component of $\leg$. Let $\reeb = \{a_1, \ldots, a_n\}$ be labels corresponding to the Reeb chords, let $\base = \{t_1, t_1^{-1}, \ldots, t_M, t_M^{-1}\}$ be labels corresponding to the base points with $\base_\pm = \{t_1^{\pm1}, \ldots, t_M^{\pm1}\}$, and let $\mcs = \reeb \cup \base$.  

Define the algebra $\alg$ to be the unital algebra freely generated by $\mcs$ over the field $\ff_2$, quotiented by the relations $t_it_i^{-1} = 1$ and $t_i^{-1}t_i = 1$; such an algebra is called \dfn{semi-free} on $\mcs$.  The algebra $\alg$ has a grading in $\zz/2r(\leg)\zz$ derived from the Conley-Zehnder index, where $r(\leg)$ is the greatest common divisor of the rotation numbers of the components of $\leg$; see \cite{chv,ens} for details when $\leg$ is a knot and \cite{lenny:computable} in the case of a link.  Denote the grading of a generator $a_i$ by $|a_i|$; we specify that the gradings of $t_i^{\pm1}$ are all zero.

The differential $\df$ on $\alg$ is defined by counting immersed disks in the Lagrangian diagram $\pi_{xy}(\leg)$, which  stand in for pseudo-holomorphic disks in the symplectization $\rr \times \rr^3$; see \cite{ees:high-d-analysis,ens} for more on the latter perspective.  More specifically, begin by decorating the quadrants of the Lagrangian diagram near each crossing with positive or negative \dfn{Reeb signs} as in Figure~\ref{fig:disk-defn}(a).  To find the differential of a generator $a \in \reeb$, let $\mathbf{b} = b_1 \cdots b_n$ be a word in $\alg$.  Define $\Delta(a,\mathbf{b})$ to be the set of immersions $u: D^2 \to \rr^2$ (up to smooth reparametrization) that extend continuously to the closed disk, whose boundaries lie in the Lagrangian projection $\pi_{xy}(\leg)$, and whose boundaries encounter the letters of $\mathbf{b}$ in counterclockwise order.  Additionally, each immersed disk must satisfy the following:

\begin{figure}
\labellist
\small\hair 2pt
 \pinlabel {(a)} [ ] at 30 0
 \pinlabel {(b)} [ ] at 166 0
 \pinlabel {$a$} [ ] at 119 54
 \pinlabel {$b_1$} [ ] at 180 17
 \pinlabel {$b_2$} [ ] at 180 89
 \pinlabel {$t$} [ ] at 145 79
\endlabellist
    \begin{center}
        \includegraphics{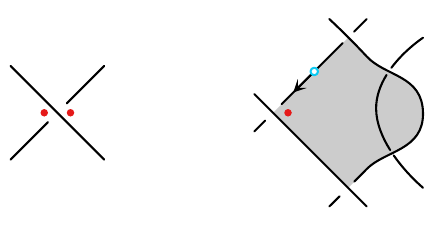}
    \end{center}
    \caption{(a) Reeb signs at a crossing of the Lagrangian diagram of a Legendrian knot $\leg$, with dots denoting positive corners and a lack of decoration denoting a negative corner. (b) A schematic picture of a disk in $\Delta_\leg (a,b_1 b_2 t)$.}
    \label{fig:disk-defn}
\end{figure}

\begin{enumerate}
\item The disk has only \emph{convex} corners at the crossings,
\item The disk must have a corner covering a positive Reeb sign at $a$, 
\item If $b_i \in \reeb$, the disk has a corner covering a negative Reeb sign at $b_i$, and
\item If $b_{i} \in \base_+$ (resp.\ $\base_-$), then the orientation of $\leg$ and the counterclockwise orientation of the boundary of $u$ agree (resp.\ disagree).
\end{enumerate}
See Figure~\ref{fig:disk-defn}(b). The differential is then defined on generators in $\reeb$ by
\begin{equation} \label{eq:df}
 \df a = \sum_{u \in \Delta(a,\mathbf{b})} \mathbf{b}.
 \end{equation}
Additionally, define the differential to vanish on the base point generators $t_i^{\pm1}$. Extend the differential over all of $\alg$ by linearity and the Leibniz rule.

\begin{figure} 
\labellist
\small\hair 2pt
 \pinlabel {$a_1$} [ ] at 65 89
 \pinlabel {$a_2$} [ ] at 135 89
 \pinlabel {$a_3$} [ ] at 96 25
 \pinlabel {$a_4$} [ ] at 146 44
 \pinlabel {$a_5$} [ ] at 188 41
 \pinlabel {$a_6$} [ ] at 202 87
 \pinlabel {$a_7$} [ ] at 202 18
 \pinlabel {$t$} [ ] at 231 105
\endlabellist
\centerline{\includegraphics{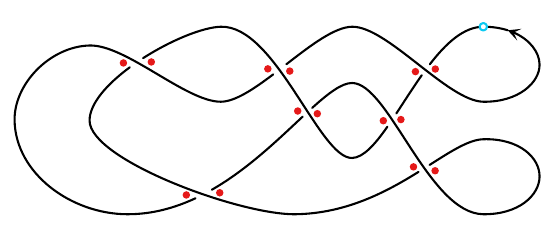}}
    \caption{The Lagrangian diagram of a Legendrian figure eight knot.}
    \label{fig:fig-eight}
\end{figure}

\begin{thm}[\cite{chv}; see also \cite{ens, lenny:computable}]
  The differential $\df$ is well-defined, has degree $-1$, and squares to zero.  The homology $H_*(\alg,\df)$ is invariant under Legendrian isotopy.
\end{thm}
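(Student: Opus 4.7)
The plan is to prove the four claims—well-definedness, degree $-1$, $\df^2=0$, and Legendrian invariance—in turn, following the combinatorial strategy initiated by Chekanov and refined by Etnyre–Ng–Sabloff.

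For well-definedness, I would first check that for each $a \in \reeb$ only finitely many words $\mathbf{b}$ satisfy $\Delta(a,\mathbf{b}) \neq \emptyset$, and that each such set is itself finite. Admissibility forces any disk $u \in \Delta(a,\mathbf{b})$ to have strictly positive symplectic area, equal to the $z$-height at $a$ minus the sum of the $z$-heights at the negative Reeb corners (base-point markings contribute zero). This action bound caps the word length, after which the finiteness of bounded complementary regions of $\pi_{xy}(\leg)$ bounds the number of immersions. For the degree, I would use the Conley–Zehnder formula for the expected dimension of the moduli space through a corner: for a rigid disk $u$, $0 = |a| - \sum |b_i| - 1$, so $|\df a| = |a|-1$.

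For $\df^2=0$, I would run the standard gluing/compactness argument. Fix a word $\mathbf{c}$ potentially occurring in $\df^2 a$, and form the $1$-dimensional moduli space $\mathcal{M}(a;\mathbf{c})$ of index-one immersed disks with positive corner $a$ and negative corners spelling $\mathbf{c}$. Its Gromov–SFT compactification is a compact $1$-manifold with boundary, and the boundary points correspond precisely to two-level broken configurations: pairs $(u,v) \in \Delta(a,\mathbf{b}) \times \Delta(b_i,\mathbf{d})$ whose concatenation along the Reeb chord $b_i$ yields $\mathbf{c}$. Counting boundary points of a compact $1$-manifold modulo $2$ gives zero, so the coefficient of $\mathbf{c}$ in $\df^2 a$ vanishes. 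The main obstacle here is ruling out other boundary degenerations: no sphere or disk bubbling, no escape to infinity in the symplectization, and no corner collapse except at Reeb chords. In the Lagrangian-projection setup this is a combinatorial convex-corner analysis; in the symplectization it uses the SFT compactness theorem applied to the cylindrical setting.

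For Legendrian invariance, I would decompose a generic Legendrian isotopy into finitely many elementary moves—Legendrian Reidemeister moves of types I, II, III together with base-point crossings and cusp slides—and exhibit, for each elementary move, a stable tame isomorphism between the DGAs before and after. Reidemeister III yields a tame isomorphism given by an explicit change-of-generator formula that tracks the new triangle disks appearing in the triple-point region. Reidemeister II introduces a cancelling pair of Reeb chords $(e,f)$ with $\df e = f + \cdots$, and I would construct a stabilization-plus-tame-isomorphism that identifies the new DGA with the old one after adjoining an acyclic summand. Base-point relocation is handled by a tame conjugation by $t_i^{\pm 1}$, which is well defined because of the relations $t_it_i^{-1}=1 = t_i^{-1}t_i$. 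Since stable tame isomorphisms induce isomorphisms on homology, we conclude that $H_*(\alg,\df)$ is a Legendrian isotopy invariant. The main subtlety in this step is verifying that the elementary-move reduction covers all generic isotopies and that the tame isomorphism for Reidemeister III genuinely matches every disk count on both sides, which is a careful but finite combinatorial check.
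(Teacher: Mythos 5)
This theorem is stated in the paper purely as background: it is quoted from \cite{chv} (with \cite{ens, lenny:computable} as further references) and no proof is given in the text, so there is no in-paper argument to compare against. Your sketch reproduces the standard proofs from those references: finiteness and well-definedness via the action/Stokes bound (the same estimate as Lemma~\ref{lem:stokes}), the degree count via the Conley--Zehnder index, $\df^2=0$ by identifying two-level broken configurations with the boundary of a one-dimensional family (Chekanov's original argument does this purely combinatorially by gluing/splitting immersed polygons, while the SFT-compactness version you describe is the analytic route of \cite{ens}; over $\ff_2$ either suffices), and invariance via stable tame isomorphisms associated to elementary moves. So the overall strategy is the right one.

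One point in your invariance step deserves a flag. The DGA here is defined from the \emph{Lagrangian} projection, where there are no cusps and no Legendrian Reidemeister I move; the available diagrammatic moves are the triple-point move and the birth/death of a pair of crossings, so your list mixing ``type I,'' ``cusp slides,'' and front-projection moves conflates the two pictures. More substantively, the genuinely delicate point that your sketch treats as routine is the reduction itself: not every isotopy or move sequence of Lagrangian diagrams is induced by a Legendrian isotopy (an area/admissibility constraint intervenes), and conversely a generic Legendrian isotopy must be decomposed with some care before the move-by-move stable tame isomorphisms can be applied. Chekanov handles this directly, and an alternative is to work in the front projection and transfer the DGA via Ng's resolution procedure \cite{lenny:computable}. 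With that step repaired (and the by-now-standard bookkeeping for the basepoint generators $t_i^{\pm1}$), your outline matches the cited proofs.
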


\begin{ex} \label{ex:running-dga}
  The knot in Figure \ref{fig:fig-eight} has seven crossings and a single base point. The generators $t,a_4,a_5$ have grading $0$; $a_1,a_6,a_7$ have grading $1$; and $a_2,a_3$ have grading $-1$. The differential is given by:
  \begin{align*}
      \df a_1 &= \df a_2 = \df a_3 = \df a_5 = \df t^{\pm 1} = 0 \\
      \df a_4 &= a_2 + a_3 + a_2a_1a_3 \\
      \df a_6 &= t + a_5 + a_1a_3a_5 \\
      \df a_7 &= 1 + a_5 + a_5a_2a_1.
  \end{align*}
\end{ex}

The Chekanov-Eliashberg DGA is filtered by the length of the Reeb chords.  This follows from an application of Stokes' Theorem.  Stated precisely, we have:  

\begin{lem}[{\cite[Lemma 6.1]{chv}}] \label{lem:stokes}
Let $\ell(x)$ represent the length of the Reeb chord $x$. If the set $\Delta(a, b_1 \cdots b_n)$ is nonempty, then 
\begin{equation} \label{eq:stokes}
    \ell(a) > \sum_i \ell(b_i).
\end{equation}
In particular, if $\mcb \subset \mca$ is a subalgebra freely generated by all chords below a given height, then it is a sub-DGA.
\end{lem}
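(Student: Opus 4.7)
The plan is to apply Stokes' theorem with the $1$-form $y\,dx$ on the Lagrangian projection plane, using that $d(y\,dx) = -\,dx \wedge dy$. For an orientation-preserving immersion $u: D^2 \to \rr^2$ (which every disk in $\Delta(a,\mathbf{b})$ is, once given its standard orientation), the pulled-back symplectic area $\int_{D^2} u^*(dx \wedge dy)$ is strictly positive, so Stokes yields
\begin{equation*}
    \int_{\partial D^2} u^*(y\,dx) \;=\; -\int_{D^2} u^*(dx \wedge dy) \;<\; 0.
\end{equation*}
The proof will reduce to showing that this boundary integral equals $\sum_i \ell(b_i) - \ell(a)$, where base-point factors are understood to contribute $0$; the desired inequality is then immediate.

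Next I would decompose $\partial D^2$ into its smooth arcs between consecutive corners, so that the boundary integral becomes a sum over arc integrals. Each arc maps into $\pi_{xy}(\leg)$ and lifts canonically to $\leg \subset \rr^3$ via the Legendrian condition $dz = y\,dx$, so the integral of $y\,dx$ along an arc equals the net $z$-displacement of its lift. Since $\partial D^2$ is a closed loop, the lifted arcs do not close up in $\rr^3$: at each corner, the lift must jump vertically to transition between the two strands meeting at the crossing, and the sum of these jumps must cancel the total arc $z$-displacement. Hence the boundary integral equals the negation of the total corner jump, while base-point corners (not at crossings) contribute no jump.

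The main care of the argument is identifying the sign of each jump. Using the Reeb sign convention in Figure~\ref{fig:disk-defn}(a), a case check of the four possible quadrant placements at each crossing shows that the unique positive corner at $a$ contributes a jump of $+\ell(a)$, while each negative corner at $b_i \in \reeb$ contributes a jump of $-\ell(b_i)$. Combining this with the Stokes identity gives $\mathrm{Area}(u) = \ell(a) - \sum_i \ell(b_i) > 0$, proving the first claim. The sub-DGA statement is then immediate from \eqref{eq:df}: if $a \in \reeb$ has $\ell(a) < H$, every monomial $\mathbf{b}$ contributing to $\df a$ has Reeb chord factors of length strictly less than $H$, so (provided $\mcb$ is taken to contain the base-point generators) the entire monomial lies in $\mcb$, showing $\df a \in \mcb$.
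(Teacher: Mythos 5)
Your argument is correct and is exactly the approach the paper (following Chekanov's Lemma 6.1) has in mind: apply Stokes' theorem to $y\,dx$, identify the boundary integral with $z$-displacements of Legendrian lifts, and read off that the positive area of the immersed disk equals $\ell(a) - \sum_i \ell(b_i)$, with basepoints contributing nothing. Your parenthetical that the subalgebra must also contain the invertible basepoint generators is a sensible precision, and the rest matches the standard proof.
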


\subsection{Augmentations}
\label{ssec:aug}

The Chekanov-Eliashberg DGA is difficult to use directly for practical computations, so we use Chekanov's idea of an \dfn{augmentation} to linearize the differential.  An augmentation is a DGA map $\aug: (\alg, \df) \to (\ff_2,0)$, where $(\ff_2,0)$ is the trivial DGA in grading $0$.  Note that $\aug(t_i^{\pm 1})$ must be $1$. Such augmentations do not always exist \cite{fuchs:augmentations,fuchs-ishk,rulings}, and further, ever more sophisticated ``counts'' of augmentations are interesting Legendrian invariants \cite{hr:ruling-poly, nrss:augm-cat, ns:augm-rulings}. 

An augmentation induces further structures on a semi-free DGA.  We define the $\ff_2$-algebra $\alg^\aug = \alg / (t_i = \aug(t_i))$; since $\df t_i^{\pm 1} = 0$, the differential descends to $\alg^\aug$.  Setting $A$ to be the graded vector space generated by $\reeb$, we write $\alg^\aug$ as a unital tensor algebra:
\[\alg^\aug = TA = \bigoplus_{k \geq 0} A^{\otimes k}.\]
This, in turn, leads us to define the truncated tensor algebra, or \dfn{bar algebra}
\[\bar{T}A = \bigoplus_{k \geq 1} A^{\otimes k}.\]
By changing coordinates using the automorphim $\Phi_\aug: \alg^\aug \to \alg^\aug$ defined by $\Phi_\aug(a_i) = a_i + \aug(a_i)$, we have a new \dfn{twisted differential}
\[ \df^\aug = \Phi_\aug \df \Phi_\aug^{-1}\]
on $\alg^\aug$.  The following proposition is well-known (see \cite{chv}, for example) and straightforward to check.

\begin{prop} \label{prop:augmented-df}
    The differential $\df^\aug$ descends to a differential on the bar algebra $\bar{T}A$, i.e.\ $(\bar{T}A, \df^\aug)$ is itself a DGA.
\end{prop}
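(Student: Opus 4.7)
The plan is to verify the two required properties of $\df^\aug$ on $\bar{T}A$: (i) it maps $\bar{T}A$ into itself (rather than producing constant terms), and (ii) it squares to zero and is a degree $-1$ derivation. Property (ii) will be essentially automatic from the conjugation definition, so the real content lies in showing that $\df^\aug$ has no constant term, and the key input for this will be the chain map property $\aug \circ \df = 0$.

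First I would set up the basics. Since $\aug$ has degree $0$ and vanishes on generators of nonzero degree, the assignment $\Phi_\aug(a_i) = a_i + \aug(a_i)$ extends uniquely to a degree-preserving unital algebra homomorphism $\alg^\aug \to \alg^\aug$. Working over $\ff_2$, the map $a_i \mapsto a_i + \aug(a_i)$ is its own inverse on generators, so $\Phi_\aug$ is an algebra automorphism with $\Phi_\aug^{-1} = \Phi_\aug$ (equivalently, $a_i - \aug(a_i) = a_i + \aug(a_i)$). Consequently, $\df^\aug = \Phi_\aug \df \Phi_\aug^{-1}$ is a degree $-1$ derivation of $\alg^\aug$ (conjugation of a derivation by an algebra automorphism is a derivation), and
\[
(\df^\aug)^2 = \Phi_\aug\, \df\, \Phi_\aug^{-1}\, \Phi_\aug\, \df\, \Phi_\aug^{-1} = \Phi_\aug\, \df^2\, \Phi_\aug^{-1} = 0.
\]

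Next I would verify that $\df^\aug$ preserves $\bar{T}A$. Because $\df^\aug$ is a derivation, it suffices to check that $\df^\aug(a_i) \in \bar{T}A$ for every generator $a_i \in \reeb$, i.e.\ that $\df^\aug(a_i)$ has no constant term in $A^{\otimes 0} = \ff_2$. Computing directly,
\[
\df^\aug(a_i) \;=\; \Phi_\aug\, \df\bigl(a_i + \aug(a_i)\bigr) \;=\; \Phi_\aug(\df a_i),
\]
since $\aug(a_i) \in \ff_2$ is a constant and $\df$ is a derivation annihilating constants. Now for any word $b_1 \cdots b_k$ in the generators of $\alg^\aug$,
\[
\Phi_\aug(b_1 \cdots b_k) \;=\; \prod_{j=1}^{k}\bigl(b_j + \aug(b_j)\bigr),
\]
and expanding this product, the only purely constant contribution is $\aug(b_1)\cdots \aug(b_k) = \aug(b_1 \cdots b_k)$. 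Therefore the constant term of $\Phi_\aug(\df a_i)$ equals $\aug(\df a_i)$, which vanishes because $\aug$ is a chain map to the trivial DGA $(\ff_2, 0)$.

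The main (and essentially only) obstacle is the bookkeeping that the constant term of $\Phi_\aug$ applied to a word is exactly the augmentation of that word; once this is observed, the chain map property $\aug \df = 0$ gives the result immediately. Combining the two steps, $\df^\aug$ is a degree $-1$ derivation of $\bar{T}A$ with $(\df^\aug)^2 = 0$, so $(\bar{T}A, \df^\aug)$ is a DGA as claimed.
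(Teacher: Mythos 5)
Your proof is correct, and since the paper itself omits the argument (deferring to \cite{chv} as ``straightforward to check''), what you wrote is exactly the intended standard verification: conjugation by the automorphism $\Phi_\aug$ automatically yields a degree $-1$ derivation squaring to zero, and the only real point is that the constant term of $\Phi_\aug(x)$ equals $\aug(x)$, so the chain-map property $\aug \circ \df = 0$ shows $\df^\aug$ has no constant term on generators and hence preserves $\bar{T}A$. No gaps; the reduction to generators via the Leibniz rule and the observation $\Phi_\aug^{-1} = \Phi_\aug$ over $\ff_2$ are both handled correctly.
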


The DGA $(\bar{T}A, \df^\aug)$ is the \dfn{augmented DGA} of $(\alg, \df)$ with respect to $\aug$. Observe that, by the Leibniz rule, the differential $\df^\aug$ on $\bar{T}A$ is determined by its components $\df^\aug_k: A \to A^{\otimes k}$.  We call the homology of the chain complex $(A, \df_1^\aug)$ the \dfn{linearized Legendrian contact homology}, which we also denote by $LCH_*(\aug)$.  The linear differential $\df^\aug_1$ comes from disks with one positive corner, one negative corner, and perhaps other ``augmented'' negative corners on which $\aug$ is nonzero; see Figure~\ref{fig:aug-diff}.  

\begin{ex} \label{ex:running-aug-dga}
    Referring back to Example~\ref{ex:running-dga}, it can easily be  checked that there are exactly two augmentations $\aug_1,\aug_2$ over $\ff_2$. Both send the degree $0$ generators $t$ and $a_5$ to $1$, but differ on $a_4$, with $\aug_1(a_4) = 0$ and $\aug_2(a_4) = 1$. Since $a_4$ does not contribute to terms in the differential, the resulting twisted differentials are the same:
    \begin{align*}
        \df^{\aug_i} a_1 &= \df^{\aug_i} a_2 = \df_1^{\aug_i} a_3 = \df^{\aug_i} a_5 = 0 \\
        \df^{\aug_i} a_4 &= a_2 + a_3 + a_2a_1a_3\\
        \df^{\aug_i} a_6 &= a_5 + a_1a_3 + a_1a_3a_5 \\
        \df^{\aug_i} a_7 &= a_5+a_2a_1+a_5a_2a_1,
    \end{align*}
    for $i = 1,2$. The only nontrivial linear terms are $\df^{\aug_i}_1 a_4 = a_2+a_3, \df^{\aug_i}_1 a_6 = a_5,$ and $\df^{\aug_i} a_7 = a_5$. It follows that the linearized Legendrian contact homology induced by either augmentation is generated by $[a_1],[a_2]$, and $[a_6+a_7]$. In particular, $LCH_1(\aug_i) \simeq \ff_2^2$, $LCH_{-1}(\aug_i) \simeq \ff_2$, and the homology in all other degrees is $0$.
\end{ex}

\begin{figure}
\labellist
\small\hair 2pt
 \pinlabel {$a$} [ ] at 8 53
 \pinlabel {$a_1$} [ ] at 54 6
 \pinlabel {$a_2$} [ ] at 90 6
 \pinlabel {$a_n$} [ ] at 135 6
 \pinlabel {$b$} [ ] at 179 54
 \pinlabel {$b_1$} [ ] at 135 102
 \pinlabel {$b_2$} [ ] at 99 102
 \pinlabel {$b_m$} [ ] at 54 102
\endlabellist
\centerline{\includegraphics{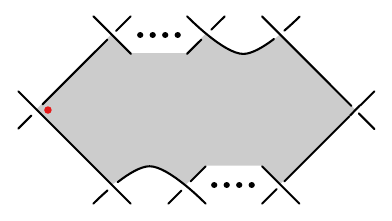}}
\caption{A term $b$ in the linearized differential $\df^\aug_1(a)$ arises from a disk in $\Delta_\leg(a, a_1, \ldots, a_n, b, b_1, \ldots b_m)$ with $\aug(a_i) = 1 = \aug(b_i)$.}
\label{fig:aug-diff}
\end{figure}

It will prove useful to consider the duals of augmented DGAs.  Denote by $A^*$ the dual vector space $\hom(A, \ff_2)$.  The dual space has a dual basis $\{a^*_i\}$ --- that is, $\langle a^*_i, a_j \rangle = \delta_{ij}$ --- whose gradings are defined by $|a^*_i| = |a_i|$.  Extend the pairing between the dual bases to a pairing between $\bar{T}A$ and $\bar{T}A^*$ so that
\[ \langle a^*_{i_k} \cdots a^*_{i_1}, a_{i_1} \cdots a_{i_k} \rangle = 1, \]
with all other pairings going to $0$.  As $\bar{T}A$ is a (graded) algebra, we see that $\bar{T}A^*$ is naturally a (graded) co-algebra called the \dfn{cobar coalgebra}.

To turn $\bar{T}A^*$ into a DG co-algebra (DGCA), we take the adjoint $\delta_\aug$ of $\df^\aug$ with respect to the pairing $\langle, \rangle$:
\[\langle \delta_\aug x^*, y \rangle = \langle x^*, \df^\aug y \rangle.\]
See \cite[\S1]{gj:a-infty} for a concise introduction to DG co-algebras (DGCAs). Using Proposition~\ref{prop:augmented-df}, we obtain the following.

\begin{prop} \label{prop:dual-dgc}
    The map $\delta_\aug: \bar{T}A^* \to \bar{T}A^*$ is a codifferential, i.e.\ it is a coderivation of degree $1$ with $\delta_\aug^2 = 0$, and hence $(\bar{T}A^*, \delta_\aug)$ is a DGCA.
\end{prop}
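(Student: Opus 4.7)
The statement is essentially a formal bar--cobar duality assertion: because the pairing $\langle\cdot,\cdot\rangle$ is nondegenerate and identifies $\bar{T}A^*$ with the graded dual of $\bar{T}A$, every structural statement about $\df^\aug$ as a derivation squaring to zero on the algebra $\bar{T}A$ dualizes to the analogous statement for $\delta_\aug$ as a coderivation squaring to zero on the coalgebra $\bar{T}A^*$. My plan is to first pin down the coalgebra structure on $\bar{T}A^*$ (this is not stated explicitly in the excerpt), and then unwind each of the three assertions ``degree $+1$,'' ``coderivation,'' and ``squares to zero'' via adjointness.

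First I would check the grading claim. Since $\df^\aug$ has degree $-1$, the adjoint relation $\langle \delta_\aug x^*, y\rangle = \langle x^*, \df^\aug y\rangle$ forces $\delta_\aug x^*$ to pair nontrivially only with $y$ satisfying $|y| = |x^*| + 1$, so $|\delta_\aug x^*| = |x^*| + 1$, as desired. I would also observe that because $\df^\aug$ preserves $\bar{T}A \subset \alg^\aug$ (Proposition~\ref{prop:augmented-df}), its adjoint does indeed land in $\bar{T}A^*$ rather than creating a counit-like piece.

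Next, for the coderivation property, I would define the comultiplication $\Delta$ on $\bar{T}A^*$ to be the adjoint of concatenation: it is the unique map satisfying $\langle \Delta \xi, y \otimes z\rangle = \langle \xi, yz\rangle$ for all $\xi \in \bar{T}A^*$ and $y,z \in \bar{T}A$, where the pairing is extended to tensors by $\langle \phi\otimes \psi, y\otimes z\rangle = \langle \phi, y\rangle \langle \psi, z\rangle$. In the basis $\{a^*_{i_k}\cdots a^*_{i_1}\}$, this is a version of deconcatenation dictated by the order-reversal in the pairing convention. Granting this, the coderivation identity $\Delta \delta_\aug = (\delta_\aug \otimes 1 + 1 \otimes \delta_\aug)\Delta$ becomes, after pairing with a test element $y \otimes z$, the assertion
\begin{equation*}
    \langle x^*, \df^\aug(yz)\rangle = \langle x^*, (\df^\aug y)z\rangle + \langle x^*, y(\df^\aug z)\rangle,
\end{equation*}
which is exactly the Leibniz rule for $\df^\aug$ (with no signs since we work over $\ff_2$). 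By nondegeneracy of the pairing, this suffices.

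Finally, for $\delta_\aug^2 = 0$, I would simply use $\langle \delta_\aug^2 x^*, y\rangle = \langle x^*, (\df^\aug)^2 y\rangle = 0$ for all $y$, again invoking Proposition~\ref{prop:augmented-df} and nondegeneracy. Honestly, there is no substantial obstacle here: the proposition is formal, and the only thing to be careful about is the bookkeeping induced by the order-reversal convention $\langle a^*_{i_k}\cdots a^*_{i_1}, a_{i_1}\cdots a_{i_k}\rangle = 1$ when specifying $\Delta$ on basis elements. If one wanted a fully coordinate-free presentation, one could cite the general fact that linear-dualizing a graded (non-co)unital algebra of finite type in each degree yields a graded coalgebra, with derivations corresponding to coderivations and differentials to codifferentials; everything here is just the finite-type, $\ff_2$-linear instance of that duality.
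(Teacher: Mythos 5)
Your proposal is correct and matches the paper's treatment: the paper offers no separate argument, asserting the proposition as the formal dualization of Proposition~\ref{prop:augmented-df} via the pairing (citing the cobar/DGCA literature), which is exactly the adjointness argument you spell out for the degree, coderivation (Leibniz), and $\delta_\aug^2=0$ claims. Your extra care about the deconcatenation coproduct, the order-reversal convention, and the finite-type hypothesis is sound and simply makes explicit what the paper leaves implicit.
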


As with the differential $\df^\aug$, the codifferential $\delta_\aug$ is determined by its components $\delta_\aug^k: (A^*)^{\otimes k} \to A^*$.  Further, observe that $(A^*, \delta_\aug^1)$ forms the cochain complex dual to $(A, \df^\aug_1)$, whose homology we term the \dfn{linearized contact cohomology} and denote by $LCH^*(\aug)$.

\begin{ex} \label{ex:running-dual-dga}
    For either augmentation $\aug_i$ in Example \ref{ex:running-aug-dga}, the codifferential of the induced DGCA is given by:
    \begin{align*}
        \delta^1_{\aug_i} (a_2^*) &= \delta_{\aug_i}^1 (a_3^*) = a_4^* & \delta^3_{\aug_i}(a_3^*, a_1^*, a_2^*) &= a_4^*\\
        \delta^1_{\aug_i} (a_5^*) &= a_6^* + a_7^* & \delta^3_{\aug_i}(a_5^*, a_3^*, a_1^*) &= a_6^* \\
        \delta^2_{\aug_i}(a_1^*,a_2^*) &= a_7^* & \delta^3_{\aug_i}(a_1^*, a_2^*, a_5^*) &= a_7^* \\
        \delta^2_{\aug_i}(a_3^*,a_1^*) &= a_6^* &&
    \end{align*}
    and vanishes otherwise. It follows that the linearized contact cohomology induced by $\aug_i$ is generated by $[a_1^*],[a_6^*],$ and $[a_2^*+a_3^*]$, with $LCH^1(\aug_i) \simeq \ff_2^2$, $LCH^{-1}(\aug_i) \simeq \ff_2$, and the cohomology vanishes in all other degrees.
\end{ex}

\subsection{Duality}
\label{ssec:old-duality}

The linearized Legendrian contact (co)homology groups obey a duality reminiscent of Poincar\'e-Lefschetz duality for manifolds with boundary.  We state the duality exact sequence for Legendrian knots in $\rr^3$, though the theorem works in more general settings.  

\begin{thm}[Duality \cite{high-d-duality, duality}] \label{thm:duality}
	Suppose $\aug$ is an augmentation for a Legendrian knot $\leg$.  There is a long exact sequence
\begin{equation*}
	\begin{tikzcd}[sep=scriptsize]
	\cdots \ar[r] & H^k(\leg) \ar[r,"\sigma"] & LCH^{k}(\aug) \ar[r,"\eta"] & LCH_{-k}(\aug) \ar[r,"\rho"] & H^{k+1}(\leg) \ar[r] & \cdots 
	\end{tikzcd}
\end{equation*}
	When $\leg$ is connected, the map $\sigma$ is injective in degree $1$ and zero in degree $0$.
\end{thm}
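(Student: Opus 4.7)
The plan is to establish the long exact sequence by analyzing the Chekanov-Eliashberg DGA of a Morse pushoff of $\leg$ linearized with respect to the bi-augmentation $(\aug,\aug)$. First I would take a Morse two-copy $\leg^2 = \leg^t \sqcup \leg^b$, where $\leg^t$ is obtained by pushing $\leg$ off in the Reeb direction using a Morse function $f:\leg\to\rr$ with one maximum $x$ and one minimum $y$. The Reeb chords of $\leg^2$ decompose into pure chords on each component (each set in bijection with $\reeb$) and two families of mixed chords: a set $\mcp$ going from one component to the other, and a set $\mcq$ going in the opposite direction. One of these mixed sets, say $\mcp$, contains two extra short chords $q_x,q_y$ arising from the Morse critical points of $f$, while the other generators of $\mcp$ and all generators of $\mcq$ come in bijection with $\reeb$.

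Next, $\aug$ pulls back to augmentations on both pure sub-DGAs of the two-copy, yielding a bi-augmentation whose linearization defines complexes $(C^+,\partial^+) = (\ff_2\langle\mcp\rangle,\partial^+)$ and $(C^-,\partial^-) = (\ff_2\langle\mcq\rangle,\partial^-)$. Here $\partial^\pm$ count immersed disks in $\pi_{xy}(\leg^2)$ with a single positive mixed corner, a single negative mixed corner, and arbitrary pure negative corners evaluated under $\aug$. The main structural input, essentially \cite[Thm.~3.6]{high-d-duality}, is a duality pairing $\langle\cdot,\cdot\rangle:C^+ \otimes C^- \to \ff_2$ (obtained from dual bases on mixed chords at a common crossing) with respect to which $\partial^+$ and $\partial^-$ are adjoint after a shift. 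This gives a quasi-isomorphism $C^+ \simeq (C^-)^\vee[-1]$.

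I would then identify the two short exact sequences whose intertwining yields the long exact sequence. On the one hand, the subspace spanned by $q_x,q_y$ is a subcomplex of $C^+$ whose cohomology is $H^*(\leg)$ (all disks between these two short chords cancel or vanish for a connected knot, so the differential there vanishes), and the quotient complex is isomorphic to the cobar differential complex $(A^*,\delta^1_\aug)$ computing $LCH^*(\aug)$ via the assignment of the mixed chord near $a\in\reeb$ to $a^*$. This produces a short exact sequence
\[
0 \longrightarrow H^*(\leg) \longrightarrow H^*(C^+) \longrightarrow LCH^*(\aug) \longrightarrow 0 \quad\text{(at cochain level)}.
\]
Dualizing via the pairing identifies $H^*(C^+)$ with $LCH_{-*}(\aug)$ up to a degree shift, and the associated long exact sequence of the pair is the sequence in the statement, with $\sigma$ induced by the inclusion of the Morse subcomplex, $\eta$ by the pairing, and $\rho$ as the connecting homomorphism.

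The main obstacle is the careful grading analysis and the disk-counting arguments that pin down each differential: one must verify (a) that no rigid disks contribute a nontrivial differential between $q_x$ and $q_y$, for which the key geometric fact is that a disk from $q_y$ to $q_x$ would project to a flow line of $f$ producing a nontrivial class in $H^0(\leg)$, whose vanishing in $LCH^0(\aug)$ yields precisely the statement that $\sigma$ is zero in degree $0$; (b) that disks with one positive mixed corner at a generator near $a \in \reeb$ and one negative mixed corner near $b \in \reeb$ correspond bijectively with disks on $\leg$ contributing to $\langle \delta^1_\aug b^*, a\rangle$; and (c) that the pairing exchanges the short exact sequences from $C^+$ and $C^-$, which amounts to a symmetry under reflection of disks that swaps top-to-bottom and bottom-to-top positive corners. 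Once these are in hand, the long exact sequence and the behavior of $\sigma$ in degrees $0$ and $1$ follow from standard homological algebra applied to the pair $(C^+,\text{Morse subcomplex})$.
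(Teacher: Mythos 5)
This theorem is quoted from \cite{high-d-duality, duality} rather than proved in the present paper, so the relevant comparison is with the argument those papers (and Section~\ref{ssec:sep-module} here, at the bimodule level) actually use: the linearized complex of mixed chords of the \emph{separated} $2$-copy is acyclic because the two copies can be displaced by a Legendrian isotopy, and that complex is the mapping cone of the duality map $\eta$; the long exact sequence then falls out of the cone/filtration structure. Your skeleton agrees with this in outline (Morse subcomplex computing $H^*(\leg)$, quotient computing $LCH^*(\aug)$, then substitution of a duality identification of $H^*(C^+)$ with $LCH_{-*}(\aug)$, which is also how Remark~\ref{rem:reduce2} reassembles the sequence), but the step carrying all of the content is not correct as you state it.

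The gap is your claim that on the ordinary (non-separated) Morse $2$-copy there is a tautological dual-basis pairing of mixed chords ``at a common crossing'' with respect to which $\partial^+$ and $\partial^-$ are adjoint, so that $C^+ \simeq (C^-)^\vee[-1]$ comes for free. In the ordinary $2$-copy a rigid disk whose positive mixed corner sits near $a$ and whose negative mixed corner sits near $b$ projects to a disk on $\leg$ with positive corner at $a$ and negative corner at $b$, \emph{regardless} of which of the two directions of mixed chords it uses; so, apart from the Morse generators and the thin disks, $\partial^+$ and $\partial^-$ are the \emph{same} cobar-type differential, not adjoint ones (your item (b) actually shows this). Adjointness appears only for the $p$-chords of the separated $2$-copy, where a positive corner at a $p$-chord projects to a corner of $\leg$ with the opposite Reeb sign --- this is exactly what identifies the $p$-chord complex with the dual of the linearized complex (cf.\ $\mcp \simeq \mcm_-^\vee[-1]$ in Proposition~\ref{prop:separated-id}). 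Even granting that, the quasi-isomorphism $C^+ \simeq (C^-)^\vee[-1]$ is not a dual-basis identification: it is the $q$-to-$p$ component of the separated-$2$-copy differential, which counts disks on $\leg$ with \emph{two} positive punctures (this is where \cite[Theorem~3.6]{high-d-duality} enters), and it is a quasi-isomorphism only because the full mixed-chord complex is acyclic, proved by displacing the two copies. Your sketch contains neither the two-positive-puncture analysis nor the displacement/acyclicity argument, so the ``dualizing via the pairing'' step --- the existence of $\eta$ and the fact that it induces isomorphisms --- is unsupported; and the ``reflection symmetry'' invoked in your item (c), which would swap top-to-bottom and bottom-to-top positive corners, does not exist: it is precisely the nontrivial duality being asserted.
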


The exact sequence in the theorem above parallels the long exact sequence of a pair $(M,\partial M)$ together with Poincar\'e-Lefschetz duality, with $LCH_*(\aug)$ playing the role of the cohomology of the manifold $M$ and $H^*(\leg)$ playing the role of the cohomology of the boundary. Indeed, the Seidel isomorphism \cite{rizell:lifting, ekholm:lagr-cob} makes this parallel precise.

The central aim of this paper is to generalize the (linear) duality in Theorem~\ref{thm:duality} to the (non-linear) $A_\infty$ category setting.

\subsection{Link Gradings}
\label{ssec:dga-link}

Before formulating the $A_\infty$ category framework used to generalize duality, we need to introduce one more technical tool.  When $\leg$ is a link (or, more generally, a decomposition of $\leg$ into sets of connected components), Mishachev \cite{kirill} defined an additional structure on the Chekanov-Eliashberg DGA; see also \cite{lenny:computable}.  The algebraic underpinning of this structure is a pair of maps called a link grading.

\begin{defn}
\label{defn:link-grading}
    Let $(\alg,\df)$ be a semi-free DGA generated by $\mcs = \reeb \sqcup \mct$.  An \dfn{$m$-component link grading} on $(\alg, \df)$ is a pair of maps \[ o,\upsilon: \mcs \to \{1, \ldots, m\}.\]  A word $a_1 \cdots a_k$ in $\alg$ is \dfn{composable} if $o(a_{i+1}) = \upsilon(a_{i})$ for $i = 1, \ldots, k-1$. The link grading maps must satisfy:
    \begin{enumerate}
        \item For any $a \in \reeb$ with $o(a) \neq \upsilon(a)$, each term $a_1 \cdots a_k$ in $\df a$ is composable with $o(a_1) = o(a)$  and $\upsilon(a_k) = \upsilon(a)$.
        \item For any $a \in \reeb$ with $o(a) = \upsilon(a)$, each term $a_1 \cdots a_k$ in $\df a$ is composable with $o(a_1) = o(a)$  and $\upsilon(a_k) = \upsilon(a)$ or is a constant.
        \item For any $j$, $o(t_j) = \upsilon(t_j) = o(t_j^{-1}) = \upsilon(t_j^{-1})$.
    \end{enumerate}
\end{defn}

We write $\mcs^{ij} = (o \times \upsilon)^{-1}(i,j)$; the sets $\reeb^{ij}$ is defined similarly.  A generator in $\mcs^{ii}$ is \dfn{diagonal} and a generator in $\mcs^{ij}$ with $i \neq j$ is \dfn{off-diagonal}.  All elements of $\mct$ are diagonal by definition, and we write $\base^i = (o \times \upsilon)^{-1}(i,i)$.  An augmentation \dfn{respects the link grading} if it vanishes on off-diagonal generators.

The geometry underlying this algebraic definition is that given a Legendrian link $\leg$ whose components have been decomposed into $\leg = \leg_1 \sqcup \cdots \sqcup \leg_m$, the link grading function $o$ (resp.\  $\upsilon$) sends a crossing to the index of the component of its overcrossing (resp. its undercrossing); both functions send a base point to its component.  Thus, $\reeb^{ij}$ can be thought of as the set of Reeb chords that start on $\leg_j$ and end on $\leg_i$.

\section{$A_\infty$ Background}
\label{sec:a-infty}

In order to bring non-linear information from the Chekanov-Eliashberg DGA back to the linearized Legendrian contact cohomology, we use $A_\infty$ structures. In this section, we briefly overview of the $A_\infty$ structures that we will need to generalize the duality expressed in Theorem~\ref{thm:duality}: $A_\infty$ algebras, $A_\infty$ categories, and $A_\infty$ bimodules, as well as mapping cones and mapping cylinders.  

Our notation for $A_\infty$ algebras and categories is designed to be consistent, for the most part, with that of \cite{nrssz:aug-sheaf} which, in turn, follows \cite{keller}; when we turn to $A_\infty$ bimodules, we follow \cite{ganatra:thesis, seidel:fukaya}. For simplicity, we work over $\ff_2$ and assume, when necessary, that the morphism spaces are finite dimensional; see \cite{ganatra:thesis, keller, nrssz:aug-sheaf} for more general constructions.  

\subsection{$A_\infty$ Algebras}
\label{ssec:a-infty-alg}

One may think of an $A_\infty$ algebra as a DGA that is associative up to homotopy, with those homotopies encoded in higher product structures. Later on, we will use $A_\infty$ algebras to encode information dual to that of augmented DGAs.  

\begin{defn}
\label{defn:a-infty-alg}
An \dfn{$A_\infty$ algebra} $A$ is a graded $\ff_2$ vector space $A$ together with a sequence of maps
\begin{equation*}
    m^k:A^{\otimes k} \to A
\end{equation*}
for $k \geq 1$ of degree $2-k$ such that the following quadratic relation holds for each $k$:
\begin{equation} \label{eq:a-infty-rel}
    \sum_{\substack{0 < r \leq k \\ 0 \leq s \leq k-r}} m^{k-s+1}(\bbid^{\otimes k-r} \otimes m^s \otimes \bbid^{\otimes r}) = 0.
\end{equation}
\end{defn}

The first of the $A_\infty$ relations implies that $m^1$ is a codifferential, while the second shows that $m^2$ descends to a product on cohomology.  The third relation is
\begin{align*}   m^2(m^2(\cdot,\cdot),\cdot)+m^2(\cdot,m^2(\cdot,\cdot)) &= m^1(m^3(\cdot,\cdot,\cdot)) + m^3(m^1(\cdot),\cdot,\cdot) \\
    & \quad + m^3(\cdot,m^1(\cdot),\cdot) + m^3(\cdot,\cdot,m^1(\cdot)),
\end{align*}
which shows that the product is associative on cohomology.

To connect with the Chekanov-Eliashberg DGA, we re-characterize $A_\infty$ algebras using the cobar construction as in \cite{gj:a-infty, kadeishvili, stasheff:h-space}. Given an augmented DGA $(\bar{T}A, \df^\aug)$, construct its dual DGCA $(\bar{T}A^*, \delta_\aug)$ as in  Section~\ref{ssec:aug}. Recall that  $\delta_\aug$ is determined by a sequence of maps $\delta_\aug^k: (A^*)^{\otimes k} \to A^*$. Let $s:A \to A[1]$ be the canonical degree $-1$ shift map, and denote $A^*[-1]$ by $A^\vee$; further, if $a$ is a generator of $A$, then we denote its dual in $A^\vee$ by $a^\vee$. We define a sequence of maps $m^k_\aug: (A^\vee)^{\otimes k} \to A^\vee$ by 
\[s \circ m^k_\aug = \delta^k_\aug \circ s^{\otimes k}.\]
This construction has a geometric characterization in the context of the Chekanov-Eliashberg DGA. A disk contributes $a^\vee$ to $m^k_\aug(a_{i_k}^\vee, \ldots, a_{i_1}^\vee)$ if it has a positive corner at $a$, negative corners at $a_{i_1}, \ldots, a_{i_k}$ (in counter-clockwise order), and possibly other negative corners that the augmentation $\aug$ maps to $1$; see Figure~\ref{fig:a-infty-disk}.

\begin{figure}
\labellist
\small\hair 2pt
 \pinlabel {$a$} [ ] at 12 89
 \pinlabel {$a_1$} [ ] at 39 8
 \pinlabel {$a_2$} [ ] at 131 8
 \pinlabel {$a_3$} [ ] at 159 89
 \pinlabel {$a_4$} [ ] at 84 138
\endlabellist
\centering
\includegraphics{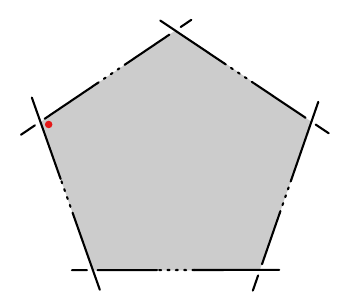}

    \caption{When all of the corners in the dotted parts of the boundary are mapped to $1$ by $\aug$, this disk contributes $a^\vee$ to $m^k_\aug(a_4^\vee, \ldots, a_1^\vee)$.}
    \label{fig:a-infty-disk}
\end{figure}

It is straightforward to check that the defining equation $(\df^\aug)^2=0$ implies the following:

\begin{prop} \label{prop:step1-dga}
    A semi-free DGA $(\alg, \df)$ with augmentation $\aug$ determines an $A_\infty$ algebra $(A^\vee, \{m^k_\aug\})$.
\end{prop}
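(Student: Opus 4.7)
The plan is to derive the $A_\infty$ structure on $A^\vee$ directly from Proposition~\ref{prop:dual-dgc}, which guarantees that $\delta_\aug$ is a codifferential on the cofree tensor coalgebra $\bar{T}A^*$. Two things then require checking: that each $m^k_\aug$ has degree $2-k$, and that the family $\{m^k_\aug\}$ satisfies the quadratic relations \eqref{eq:a-infty-rel}.

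For the degree check, note that from the adjoint relation $\langle \delta_\aug x^*, y\rangle = \langle x^*, \df^\aug y\rangle$ together with the fact that $|\df^\aug|=-1$, the codifferential $\delta_\aug$ has degree $+1$, and hence so does each component $\delta^k_\aug: (A^*)^{\otimes k} \to A^*$. Since the shift map $s$ has degree $-1$, the defining relation $s \circ m^k_\aug = \delta^k_\aug \circ s^{\otimes k}$ immediately forces $|m^k_\aug|=2-k$.

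For the quadratic relations, I use the standard fact that a coderivation of the cofree tensor coalgebra $\bar{T}A^*$ is uniquely determined by its corestriction $\pi_1 \circ \delta_\aug$ onto the cogenerator $A^*$, i.e.\ by the family $\{\delta^k_\aug\}$. Expanding $\delta_\aug \circ \delta_\aug$ along the coalgebra structure and projecting onto $A^*$ yields the cobar identity
\begin{equation*}
    \sum_{\substack{r+s+t=k \\ s \geq 1}} \delta^{r+1+t}_\aug \circ \bigl(\bbid^{\otimes r} \otimes \delta^s_\aug \otimes \bbid^{\otimes t}\bigr) = 0
\end{equation*}
for every $k \geq 1$. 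Conjugating this identity by appropriate powers of $s$ and using the defining formula for $m^k_\aug$ transports it verbatim to \eqref{eq:a-infty-rel}. Since we work over $\ff_2$ no Koszul signs appear, so the transport is a direct substitution; over a more general ring one would need to track signs arising from commuting shifts past tensor factors.

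The essential input is that $\delta_\aug$ genuinely squares to zero as a coderivation of $\bar{T}A^*$, which is precisely Proposition~\ref{prop:dual-dgc}; after that, the identification of $(A^\vee, \{m^k_\aug\})$ as an $A_\infty$ algebra is a formal piece of (co)algebra bookkeeping, and no real obstacle arises.
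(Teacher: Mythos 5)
Your proposal is correct and follows essentially the same route as the paper, which also obtains $(A^\vee,\{m^k_\aug\})$ from the dual codifferential of Proposition~\ref{prop:dual-dgc} via the cobar construction and the shift relation $s\circ m^k_\aug=\delta^k_\aug\circ s^{\otimes k}$; you have simply written out the degree count and the projection of $\delta_\aug^2=0$ onto $A^*$ that the paper leaves as "straightforward to check."
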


This construction first appeared in the context of the Chekanov-Eliashberg DGA in \cite{products}, though without the correct gradings.

\begin{ex}\label{ex:running-a-infty-alg}
    The $A_\infty$ algebra induced by either augmentation $\aug_i$ of Example \ref{ex:running-aug-dga} is generated by the duals of the generators of the Chekanov-Eliashberg DGA with grading $|a_2^\vee| = |a_3^\vee| = 0, |a_4^\vee| = |a_5^\vee| = 1$, and $|a_1^\vee| = |a_6^\vee| = |a_7^\vee| = 2$ (note the grading shift compared to Example \ref{ex:running-dual-dga}). The $A_\infty$ operations match those in Example ~\ref{ex:running-dual-dga} with suitable changes in notation.  We see that $H^0(A,m_1) \simeq \ff_2$ and $H^{2}(A,m_1) \simeq \ff_2^2$.
\end{ex}

In the presence of an $m$-component link grading $o \times \upsilon: \mcs \to \{1, \ldots, m\}^2$, we may refine the associated $A_\infty$ algebra structure $(A^\vee, \{m_\aug^k\})$ further.  Following \cite[\S3.2]{nrssz:aug-sheaf}, we write $A^{ij}$ for the subspace of $A$ generated by $\reeb^{ij}$, splitting $A$ as $\bigoplus_{i,j} A^{ij}$ and $A^\vee$  as $\bigoplus_{i,j} A^\vee_{ij}$.  If the augmentation $\aug$ respects the link grading, then the products in the $A_\infty$ algebra $(A^\vee, \{m^k_\aug\})$ split, with the only non-vanishing products being the following:
\begin{equation} \label{eq:a-infty-link}
m^k_\aug: A^\vee_{i_k i_{k+1}} \otimes \cdots \otimes A^\vee_{i_1 i_2} \to A^\vee_{i_1 i_{k+1}}.
\end{equation}

\subsection{$A_\infty$ Categories}
\label{ssec:a-infty-cat}

The next step in our generalization is to move from an $A_\infty$ algebra up to an $A_\infty$ category.  In Section~\ref{sec:aug-cat}, we shall define several $A_\infty$ categories for a Legendrian $\leg$, whose objects will be augmentations and whose morphisms will be generalizations of the linearized Legendrian contact homology complex.  In this section, we set down basic notions and notations for $A_\infty$ categories, following \cite{ganatra:thesis, seidel:fukaya}.

\subsubsection{Definition of an $A_\infty$ Category}
\label{sssec:a-infty-cat}

Just as an algebra may be understood as a category with a single object, an $A_\infty$ algebra may be thought of as an $A_\infty$ category with a single object. 

\begin{defn}
\label{defn:a-infty-cat}
An \dfn{$A_\infty$ category} $\mcc$ (over $\ff_2$) consists of the following data:
\begin{itemize}
    \item a collection of objects $\obj\ \mcc$,
    \item for every pair $X,X'$ of objects, a graded vector space $\Hom_\mcc(X,X')$ over $\ff_2$, and
    \item for any set of $k+1$ objects $X_1,\ldots,X_{k+1}$, structure maps
    \begin{equation*}
        m^k_\mcc:\Hom_\mcc(X_k,X_{k+1}) \otimes \ldots \otimes \Hom_\mcc(X_1,X_2) \to \Hom_\mcc(X_1,X_{k+1})
    \end{equation*}
    of degree $2-k$ that satisfy the relation \eqref{eq:a-infty-rel}, with appropriate changes of notation.
\end{itemize}
\end{defn}

For convenience, we shall henceforth denote by $\Hom_\mcc(X_{k+1}, \ldots, X_1)$ the tensor product $\Hom_\mcc(X_{k},X_{k+1}) \otimes \ldots \otimes \Hom_\mcc(X_1,X_2)$.

As for $A_\infty$ algebra, the $k = 1$ instance of the relations for an $A_\infty$ category $\mcc$ says that $m^1_\mcc$ is a differential on each graded vector space $\Hom_\mcc(X,X')$. Thus, we may define the \dfn{cohomology category} $H(\mcc)$ of $\mcc$ whose objects are the same as $\mcc$ and whose morphisms are given by the cohomology $H^*(\Hom_\mcc(X,X'))$. That the cohomology category is actually a category stems from the $k = 2$ and $k = 3$ instances of the $A_\infty$ relations for $\mcc$, which yield composition of morphisms and associativity for that composition, respectively.

Given an $A_\infty$ category $\mcc$, a natural construction is a \dfn{wide subcategory} $\mcd$ of $\mcc$, which is an $A_\infty$ category that has the same objects at $\mcc$, whose morphisms are subspaces $\Hom_{\mcd}(X,X') \subset \Hom_{\mcc}(X,X')$, and whose structure maps are restrictions of those on $\mcc$, assuming the latter have image in the appropriate $\Hom_{\mcd}$ spaces. A wide subcategory $\mcd$ of $\mcc$ induces a \dfn{quotient category} $\mcc/\mcd$ that has the same objects as $\mcc$,  whose morphisms are defined by $\Hom_{\mcc/\mcd}(X,X') = \Hom_{\mcc}(X,X')/\Hom_{\mcd}(X,X')$, and whose structure maps are defined by $m_\mcc^k$ modulo elements in $\Hom_{\mcd}(X_i,X_{i+1})$.

The theory of $A_\infty$ categories requires a notion of unitality for $A_\infty$ equivalence to make sense. The most relevant notion of unitality for this paper is the following.

\begin{defn}
\label{defn:a-infty-cat-unital}
An $A_\infty$ category $\mcc$ is \dfn{strictly unital} if for any object $X$, there is an element $e_X \in \Hom_\mcc(X,X)$ of grading $0$ such that
\begin{itemize}
    \item $m^1_\mcc(e_X) = 0$,
    \item for any objects $X,X'$ and $a \in \Hom_\mcc(X,X')$,
    \begin{equation*}
        m^2_\mcc(e_{X'},a) = m^2_\mcc(a,e_X) = a,
    \end{equation*}
    \item $m^k_\mcc(\ldots,e_X,\ldots) = 0$ for all $k \ge 3$.
\end{itemize}
\end{defn}

\subsubsection{Definition of an $A_\infty$ Functor}
\label{sssec:a-infty-func}

The definition of an $A_\infty$ functor between $A_\infty$ categories generalizes the standard definition of a functor as follows:

\begin{defn}
\label{defn:a-infty-functor}
An \dfn{$A_\infty$ functor} $F:\mcc \to \mcd$ of degree $d$ consists of the following data:
\begin{itemize}
    \item A map $F: \obj\ \mcc \to \obj\ \mcd$;
    \item For any $k+1$ objects $X_1,\ldots,X_{k+1} \in \obj\ \mcc$, a linear map
    \begin{equation*}
        F^k: \Hom_\mcc(X_{k+1},\ldots,X_1) \to \Hom_{\mcd}(F(X_1),F(X_{k+1})),
    \end{equation*}
    of degree $1 - k + d$ that satisfies the relations
    \begin{multline*} \label{eq:a-infty-morphism}
    \sum_{j,i_1+\cdots+i_j = k} m_\mcd^j(F^{i_j} \otimes \cdots \otimes F^{i_1}) = 
    \sum_{\substack{0 < s \leq k \\ 0 \leq r \leq k-s}} F^{k-s+1}(\bbid^{\otimes k-r-s} \otimes m_\mcc^s \otimes \bbid^{\otimes r}).
 \end{multline*}
\end{itemize}
\end{defn}

The first two relations show that an $A_\infty$ functor $F:\mcc \to \mcd$ induces an ordinary functor $[F]$ between the corresponding cohomology categories. 

Note that if $\mcd$ is a subcategory of $\mcc$, there is a canonical inclusion functor $i: \mcd \to \mcc$ where $i^1$ is the inclusion of linear subspace and $i^k = 0$ for $k > 1$. Similarly, there is a canonical projection functor $\pi:\mcc \to \mcc/\mcd$ where $\pi^1$ is the projection of vector spaces and $\pi^k = 0$ for $k > 1$.  Functors such as these, whose higher order terms vanish, are termed \dfn{na\"ive functors}.

Strict unitality for functors follows a similar pattern as for strictly unital categories. 

\begin{defn}
    An $A_\infty$ functor $F: \mcc \to \mcd$ between strictly unital categories is  \dfn{strictly unital} if for all objects $X \in \obj\ \mcc$, $F^1(e_X) = e_{F(X)}$ and $F^k(\ldots, e_{X_i}, \ldots) = 0$ for any $1 \leq i \leq k$ and $k \geq 2$.
\end{defn}

Again, it is straightforward to check that a strictly unital $A_\infty$ functor induces a unital functor on the cohomology categories. An $A_\infty$ functor between two strictly unital $A_\infty$ categories is an \textbf{$A_\infty$ equivalence} between if the induced functor on cohomology categories is an equivalence of categories in the usual sense, in particular preserving units. $A_\infty$ equivalence is indeed an equivalence relation; see \cite[Theorem 2.9]{seidel:fukaya}

\subsection{$A_\infty$ Bimodules}
\label{ssec:a-infty-bimod}

Theorem~\ref{thm:main} sets up a duality relationship between $A_\infty$ bimodules over augmentation categories.  In this section, we lay out the basic definitions and constructions of $A_\infty$ bimodules that we will need to describe and prove that relationship.  We will not develop the most general theory in this section, as we work in a setting where all of the underlying vector spaces are finite dimensional.

\subsubsection{The Definition of an $A_\infty$ Bimodule}
\label{sssec:defn-bimod-cat}

We begin with the definition of an $A_\infty$ bimodule over two $A_\infty$ categories.

\begin{defn} \label{defn:a-infty-bimod-cat}
Given $A_\infty$ categories $\mcc$ and $\mcd$ over $\ff_2$, an \dfn{$\mcc$-$\mcd$ bimodule} $\mcm$ consists of the following data:
\begin{itemize}
    \item for $X \in \obj\ \mcc$ and $Y \in \obj\ \mcd$, a graded vector space $\mcm(X,Y)$;
    \item for $r,s \ge 0$, objects $X_1,\ldots,X_{r+1} \in \obj\ \mcc$, and $Y_1,\ldots,Y_{s+1} \in \obj\ \mcd$, bimodule structure maps
    \begin{gather*}
        n_\mcm^{r|s}: \Hom_\mcc(X_{r+1},\ldots,X_1) \otimes \mcm(X_1,Y_1) \otimes \Hom_\mcd(Y_1,\ldots,Y_{s+1}) \\ \to \mcm(X_{r+1},Y_{s+1}),
    \end{gather*}
    of degree $1-r-s$, satisfying the relations
    \begin{equation} \label{eq:a-infty-bimod}
\begin{split}
    0&=\sum n_\mcm^{r-i|s-j}(\bbid_\mcc^{\otimes r-i} \otimes n_\mcm^{i|j} \otimes \bbid_\mcd^{\otimes s-j}) \\ 
    &\quad +  \sum n_\mcm^{r-i+1|s}(\bbid_\mcc^{\otimes r-i-j} \otimes m_\mcc^i \otimes \bbid_\mcc^{\otimes j} \otimes \bbid_\mcm \otimes \bbid_\mcd^{\otimes s}) \\
    &\quad + \sum n_\mcm^{r|s-j+1}(\bbid_\mcc^{\otimes r} \otimes \bbid_\mcm \otimes \bbid^{\otimes i} \otimes m_\mcd^j \otimes \bbid^{\otimes s-i-j})
\end{split}
\end{equation} 
\end{itemize}
\end{defn}

In the case when the base categories $\mcc = \mcd$ are the same, we simply refer such a bimodule as a $\mcc$-bimodule. Further, an $A_\infty$ bimodule may come with a \dfn{grading shift}, denoted by $\mcm[n]$, meaning that its graded vector spaces are the shifted spaces $\mcm[n](X,Y) = \mcm(X,Y)[n]$, where $[n]$ means shifting the grading of a vector space down by $n \in \zz$. Since we work over $\ff_2$, the structure maps are unchanged.  

If $\mcc$ and $\mcd$ are strictly unital, then, following \cite[\S3.7]{fooo1} or \cite{seidel:a-infty-subalgebra}, we say that an $\mcc$-$\mcd$ bimodule $\mcm$ is \dfn{strictly unital} if for any objects $X \in \obj\ \mcc$ and $Y \in \obj\ \mcd$, the units $e_X \in \Hom_\mcc(X,X)$ and $e_Y \in \Hom_\mcd(Y,Y)$ satisfy the following relations for any $\mbz \in \mcm(X,Y)$:
\begin{itemize}
    \item $n_\mcm^{1|0}(e_X, \ul{\mbz}) = \mbz = n_\mcm^{0|1}(\ul{\mbz},e_Y)$, and
    \item $n_\mcm^{r|s}(\ldots,e_X,\ldots,\ul{\mbz},\ldots) = 0 =  n_\mcm^{r|s}(\ldots,\ul{\mbz},\ldots,e_Y,\ldots)$ for $r+s \ge 2$.
\end{itemize}

\subsubsection{Fundamental Constructions of $A_\infty$ Bimodules}
\label{sssec:a-infty-bimod-construct}

We review some general constructions of $A_\infty$ bimodules, including submodules, linear duals, pullbacks, diagonal bimodules, and Serre bimodules, which combine to give the language necessary for the relative Calabi-Yau structure promised in Theorem~\ref{thm:main}.

A \dfn{submodule} $\mcn$ of an $A_\infty$ $\mcc$-$\mcd$ bimodule $\mcm$ is simply an assignment, for each $X \in \obj\ \mcc$ and $Y \in \obj\ \mcd$, of a subspace $\mcn(X,Y) \subset \mcm(X,Y)$ that is preserved under the bimodule action.  

The \dfn{linear dual} $\mcm^\vee$ of an $A_\infty$ $\mcc$-$\mcd$ bimodule $\mcm$  is a $\mcd$-$\mcc$ bimodule whose underlying vector spaces are defined by 
\[\mcm^\vee(X,Y) = \hom(\mcm(Y,X),\ff_2).\]
Note the reversal of the order of the actions of $\mcc$ and $\mcd$.  Further, the grading of $\mcm^\vee(X,Y)$ should also be carefully noted:   the base field is assumed to lie in grading zero and the linear functionals in $\hom(\mcm(Y,X),\ff_2)$ are meant to have degree $0$.  Thus, if $\{b_i\}$ is a basis for $\mcm(X,Y)$ with dual basis $\{b_i^\vee\}$, then $|b_i^\vee| = -|b_i|$.  The $A_\infty$ operations on $\mcm^\vee$ are defined by
\begin{equation} \label{eq:linear-dual}
	n_{\mcm^\vee}^{s|r}(b_s, \ldots, b_1,\ul{\boldsymbol{\beta}},a_1, \ldots, a_r)(\mbz) =  \boldsymbol{\beta}(n_{\mcm}^{r|s}(a_1, \ldots, a_r,\ul{\mbz},b_s, \ldots, b_1)).
\end{equation}

We will also need to change the base categories for an $A_\infty$ bimodule by pulling back along a pair of functors.  Let $F:\mcc \to \mcc'$ and $G:\mcd \to \mcd'$ be two functors of $A_\infty$ categories of degree $0$. For a $\mcc'$-$\mcd'$ bimodule $\mcm$, the \dfn{pullback} $(F\otimes G)^*\mcm$ of $\mcm$ along $F$ and $G$ is an $\mcc$-$\mcd$ bimodule with graded vector spaces $(F \otimes G)^*\mcm(X,Y) = \mcm(F(X),G(Y))$
and structure maps
\begin{multline}\label{eq:pullback}
    n_{(F\otimes G)^*\mcm}^{r|s} = \sum_{\substack{i,p_1+\cdots+p_i \leq r \\j, q_1+\cdots+q_j \leq s}} n_\mcm^{i|j}(F^{p_i} \otimes \cdots \otimes F^{p_1} \otimes \bbid_\mcm \otimes G^{q_1} \otimes \cdots \otimes G^{q_j}).
\end{multline}
If $F = G$, we use the shorthand $F^*\mcm = (F\otimes F)^*\mcm$. It follows directly from the definition that the pullback construction commutes with the linear dual.

Finally, every $A_\infty$ category $\mcc$ induces a  $\mcc$-bimodule $\mcc_\Delta$ --- called the \dfn{diagonal bimodule} of $\mcc$ --- with underlying vector spaces $\mcc_\Delta(X,Y) = \Hom_\mcc(X,Y)$
and structure maps given by $n^{r|s}_{\mcc_\Delta} = m^{r+s+1}_\mcc$. The linear dual of a diagonal bimodule of $\mcc_\Delta$ is also a $\mcc$-$\mcc$ bimodule, called the \dfn{Serre bimodule} of $\mcc$ and denoted by $\mcc^\vee$.

\subsubsection{Morphisms of $A_\infty$ Bimodules}
\label{sssec:a-infty-bimod-cat-morphism}

We finish our recollection of $A_\infty$ bimodule concepts by considering morphisms between $A_\infty$ bimodules. A key step in generalizing duality in \cite{high-d-duality, duality} will be promoting the duality map $\eta$ in Theorem~\ref{thm:duality} to an $A_\infty$ bimodule quasi-isomorphism. To get there, we need to first define a weaker notion.

\begin{defn}
\label{defn:pre-morphism} A \dfn{pre-morphism} of $\mcc$-$\mcd$ bimodules $f:\mcm \to \mcn$ of degree $k$ consists of a collection of linear maps
\begin{gather*}
    f^{r|s}: \Hom_\mcc(X_{r+1},\ldots,X_1)
    \otimes \mcm(X_1,Y_1) \otimes \Hom_\mcd(Y_1,\ldots,Y_{s+1}) \\ \to \mcn(X_{r+1},Y_{s+1}),
\end{gather*}
of degree $k-r-s$ for all $r,s \ge 0$, and objects $X_1,\ldots,X_{r+1} \in \obj\ \mcc$ and $Y_1,\ldots,Y_{s+1} \in \obj\ \mcd$.
\end{defn}

For two pre-morphisms $f:\mcm \to \mcn$ and $g:\mcn \to \mcp$ of degrees $k$ and $l$, their \dfn{composition} $g f:\mcm_0 \to \mcm_2$ is a pre-morphism of degree $k+l$ consisting of maps
\begin{equation}\label{eqn:composition}
    (gf)^{r|s} := \sum_{i,j} g^{r-i|s-j}(\bbid_\mcc^{\otimes r-i} \otimes f^{i|j} \otimes \bbid_\mcd^{\otimes s-j}).
\end{equation}

We will use three special types of pre-morphisms. First, a bimodule structure map is a degree $1$ pre-morphism. 

To define the second type, let the \dfn{differential} $\delta(f)$ of a pre-morphism $f$ be the pre-morphism of degree $k+1$ consisting of maps given by
\begin{align*}
    (\delta(f))^{r|s} &= \sum_{i,j} n_\mcm^{r-i|s-j} (\bbid_\mcc^{\otimes r-i} \otimes f^{i|j} \otimes \bbid_\mcd^{\otimes s-j}) \\
    &\quad + \sum_{i,j} f^{r-i|s-j}(\bbid_\mcc^{\otimes r-i} \otimes n_\mcm^{i|j} \otimes \bbid_\mcd^{\otimes s-j})\\
    & \quad + \sum_{i,j} f^{r-i+1|s}(\bbid_\mcc^{\otimes j} \otimes m^i_\mcc \otimes \bbid_\mcc^{\otimes r-i-j} \otimes \bbid_\mcm \otimes \bbid_\mcd^{\otimes s}) \\
    & \quad + \sum_{i,j} f^{r|s-j+1}(\bbid_\mcc^{\otimes r} \otimes \bbid_\mcm \otimes \bbid_\mcd^{\otimes j} \otimes m^i_\mcd \otimes \bbid_\mcd^{\otimes s-i-j}),
\end{align*}
for all $r,s \ge 0$ and objects $X_1,\ldots,X_{r+1} \in \obj\ \mcm$, $Y_1,\ldots,Y_{s+1} \in \obj\ \mcn$.

Note that if $\delta(f) = 0$, then $f^{0|0}$ is a chain map, which leads us to the following definition:

\begin{defn}
\label{defn:bimod-morphism}
A pre-morphism of $\mcc$-$\mcd$ bimodules $f:\mcm \to \mcn$ is a \dfn{bimodule morphism} if $\delta(f) = 0$.
\end{defn}

A simple, but useful, example of a bimodule morphism arises from a family of chain maps $f_{X,Y}: \mcm(X,Y) \to \mcn(X,Y)$. In this case, we may define a bimodule morphism $f: \mcm \to \mcn$ by setting $f^{0|0}$ to be $f_{X,Y}$ on $\mcm(X,Y)$ and all higher maps vanish; this is called a \dfn{na\"ive morphism}. In particular, for a $\mcc$-$\mcd$ bimodule $\mcm$, the \dfn{identity morphism} is a na\"ive morphism obtained by extending the identity maps on $\mcm(X,Y)$.

Other useful examples of bimodule morphisms arise from the constructions in Section~\ref{sssec:a-infty-bimod-construct}.  First, for an $A_\infty$ functor $F:\mcc \to \mcd$ of degree $0$, the \dfn{induced bimodule morphism} $f:\mcc_\Delta \to F^*\mcd_\Delta$ of $F$ is a $\mcc$-bimodule morphism of degree $0$ defined by $f^{r|s} = F^{r+s+1}$. In general, if $f: \mcm \to \mcn$ is a $\mcd$-bimodule morphism, its \dfn{pullback morphism} $F^*f:F^*\mcm \to F^*\mcn$ is a $\mcc$-bimodule morphism defined using a formula analogous to Equation~\eqref{eq:pullback}. Further, we may form the \dfn{adjoint} $f^\vee: \mcn^\vee \to \mcm^\vee$ using a formula analogous to Equation~\eqref{eq:linear-dual}.

The final type of pre-morphism we will use generalizes the notion of a chain homotopy.  

\begin{defn}
\label{defn:a-infty-homotopy}
Let $f:\mcm \to \mcn$ and $g:\mcm \to \mcn$ be two $\mcc$-$\mcd$ bimodule morphisms of degree $k$. An \dfn{$A_\infty$ homotopy} between $f$ and $g$ is a pre-morphism $H:\mcm \to \mcn$ of degree $k-1$ such that
\begin{equation}
    f+g = \delta(H).
\end{equation}
In this case, $f$ is homotopic to $g$ and we write $f \sim g$.
\end{defn}

The following lemma, which will be useful in proving Proposition~\ref{prop:alg-gadget}, is straightforward to check.

\begin{lem} 
\label{lem:bimod-homotopy-prop}
    The $A_\infty$ homotopy relation is an equivalence relation.  Further, if $f,f': \mcm \to \mcn$ are homotopic and if $j: \mcn \to \mcp$ and $g: \mcl \to \mcm$ are $A_\infty$ morphisms, then $jf$ and $jf'$ are homotopic, as are $fg$ and $f'g$.
\end{lem}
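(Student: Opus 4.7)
The plan is to verify both assertions as formal consequences of the structure of $\delta$ on the space of pre-morphisms between $\mcc$-$\mcd$ bimodules. The foundational fact I want to extract first is that $\delta$ is a differential (i.e.\ $\delta^2 = 0$) that satisfies a Leibniz rule with respect to the composition defined in Equation~\eqref{eqn:composition}: for pre-morphisms $f,g$ of arbitrary degrees, one has
\begin{equation*}
    \delta(gf) = \delta(g)\,f + g\,\delta(f),
\end{equation*}
with no signs since we work over $\ff_2$. This is a direct unwinding of the definitions: every summand on the left either differentiates within $f$, within $g$, or inserts an $m_\mcc$ or $m_\mcd$ into the surrounding strands, and these three types of terms regroup precisely into $\delta(g)\,f$ and $g\,\delta(f)$. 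I will verify this once and then use it to knock out everything else.

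For the equivalence relation, reflexivity holds by taking $H = 0$, since $\delta(0) = 0 = f + f$ over $\ff_2$. Symmetry is immediate because the defining equation $f + g = \delta(H)$ is symmetric in $f$ and $g$. For transitivity, if $f \sim g$ via $H_1$ and $g \sim h$ via $H_2$, then $H_1 + H_2$ is a homotopy of the appropriate degree, and
\begin{equation*}
\delta(H_1 + H_2) = \delta(H_1) + \delta(H_2) = (f+g) + (g+h) = f+h.
\end{equation*}

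For the compatibility with composition, suppose $f \sim f'$ via a homotopy $H$ of degree $k-1$, so that $\delta(H) = f + f'$. Since $j$ and $g$ are bimodule morphisms, $\delta(j) = 0$ and $\delta(g) = 0$. Applying the Leibniz rule,
\begin{equation*}
\delta(jH) = \delta(j)\,H + j\,\delta(H) = j(f+f') = jf + jf',
\end{equation*}
so $jH$ is a homotopy from $jf$ to $jf'$. Similarly,
\begin{equation*}
\delta(Hg) = \delta(H)\,g + H\,\delta(g) = (f+f')g = fg + f'g,
\end{equation*}
so $Hg$ witnesses $fg \sim f'g$.

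The main obstacle is the Leibniz rule itself, which is the only nontrivial computation. This is a bookkeeping exercise with the four families of terms appearing in $\delta$ (one inner bimodule action, one outer bimodule action, one $\mcc$-structure insertion, and one $\mcd$-structure insertion), together with the composition formula \eqref{eqn:composition}. Each term in $\delta(gf)$ is classified by where the extra $n_\mcm$, $n_\mcp$, $m_\mcc$, or $m_\mcd$ lands relative to the cut dividing the inputs between $f$ and $g$; recollecting by that cut produces exactly $\delta(g)f + g\delta(f)$. I would perform this check once, then the rest of the lemma follows as above.
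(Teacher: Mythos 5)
Your argument is correct and is essentially the route the paper has in mind: the paper leaves this lemma as ``straightforward to check,'' but it elsewhere (in the proof of Lemma~\ref{lem:very-weak-shortcut}) invokes exactly the fact you isolate, namely that $\delta$ is a derivation for the composition \eqref{eqn:composition} on pre-morphisms, and your equivalence-relation and composition arguments are the standard formal consequences over $\ff_2$. One small caution on the Leibniz verification: the terms of $\delta(gf)$ do not regroup \emph{bijectively} onto those of $\delta(g)f + g\,\delta(f)$ --- the right-hand side also contains the insertions of the intermediate bimodule structure maps $n_\mcn$ (once from $\delta(g)$ composed with $f$, once from $g$ composed with $\delta(f)$), which have no counterpart in $\delta(gf)$ and must be observed to cancel in pairs mod $2$; with that noted, the bookkeeping closes as you describe.
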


A key ingredient in the generalization of duality is the invertibility of morphisms. First, a bimodule morphism $f$ is a \dfn{quasi-isomorphism} if $f^{0|0}$ is a chain quasi-isomorphism (i.e.\ $[f^{0|0}]$ is an isomorphism on homology). A bimodule morphism $f:\mcm \to \mcn$ is an \dfn{$A_\infty$ homotopy equivalence} if there is an \dfn{$A_\infty$ homotopy inverse} $g:\mcn \to \mcm$ such that
\begin{equation*}
    fg \sim \bbid_\mcn\ \ \text{and}\ \ gf \sim \bbid_\mcm.
\end{equation*}
By noting that $A_\infty$ homotopy equivalence implies ordinary homotopy equivalence at the linear level, we see that an $A_\infty$ homotopy equivalence is automatically a quasi-isomorphism. Conversely, if $f:\mcm \to \mcn$ is a quasi-isomorphism and all $\mcm(X,Y)$ and $\mcn(X,Y)$ are finite-dimensional as graded vector spaces, then we can explicitly construct a quasi-inverse to $f^{0|0}$ and bootstrap up to the following proposition.  The bootstrapping proof appears in a variety of sources, but the source closest to the our setting of modules over $A_\infty$ categories is \cite[Lemma 8.1]{gps:sectorial-descent}.

\begin{prop}
\label{prop:homotopy-equiv}
If $f:\mcm \to \mcn$ is a quasi-isomorphism of $A_\infty$ bimodules with all $\mcm(X,Y)$ and $\mcn(X,Y)$ finite-dimensional as graded vector spaces, then $f$ is an $A_\infty$ homotopy equivalence.
\end{prop}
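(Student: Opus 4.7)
The plan is to follow the standard bootstrap template for $A_\infty$ homological algebra, proceeding in three stages: build a linear chain-homotopy inverse to $f^{0|0}$, extend it to an $A_\infty$ bimodule morphism order by order in the arity $r+s$, and finally construct the homotopies witnessing the two composite relations by the same inductive pattern.

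First, I would handle the linear base case. For each pair of objects $(X,Y)$, the map $f^{0|0}_{X,Y}: \mcm(X,Y) \to \mcn(X,Y)$ is a quasi-isomorphism of finite-dimensional chain complexes over the field $\ff_2$, and so (by elementary linear algebra in each degree) it admits a chain-homotopy inverse: linear maps $g_{X,Y}: \mcn(X,Y) \to \mcm(X,Y)$ together with degree $-1$ maps $H^\mcm_{X,Y}$ and $H^\mcn_{X,Y}$ satisfying $g_{X,Y} f^{0|0}_{X,Y} + \bbid_\mcm = \delta(H^\mcm_{X,Y})$ and $f^{0|0}_{X,Y} g_{X,Y} + \bbid_\mcn = \delta(H^\mcn_{X,Y})$. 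I would set $g^{0|0}_{X,Y} := g_{X,Y}$; the hypothesis of finite dimension enters the proof only here.

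Second, I would construct the higher components $g^{r|s}$ inductively in $r+s$ so that $g$ is a bimodule morphism in the sense of Definition~\ref{defn:bimod-morphism}. At each stage, the piece of $\delta(g) = 0$ in arity $(r,s)$ can be reorganized into an equation of the form
\begin{equation*}
n_\mcm^{0|0} \circ g^{r|s} + g^{r|s} \circ n_\mcn^{0|0} \; = \; \Phi^{r|s},
\end{equation*}
where $\Phi^{r|s}$ is an explicit expression involving the structure maps of $\mcc$, $\mcd$, $\mcm$, $\mcn$, the components of $f$, and the already-defined lower-order $g^{r'|s'}$. Unwinding the $A_\infty$ bimodule relations \eqref{eq:a-infty-bimod} for $\mcm$ and $\mcn$ together with the inductive hypothesis shows that $\Phi^{r|s}$ is a cycle with respect to the linear differential induced by $n^{0|0}_\mcm$; a solution $g^{r|s}$ is then obtained by post-composing with $f^{0|0}$, applying $g^{0|0}$, and correcting using the homotopy $H^\mcm$. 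A parallel induction then produces the homotopies $T:\mcm \to \mcm$ and $S:\mcn \to \mcn$ of Definition~\ref{defn:a-infty-homotopy} witnessing $gf \sim \bbid_\mcm$ and $fg \sim \bbid_\mcn$, starting from $T^{0|0}_{X,Y} := H^\mcm_{X,Y}$ and $S^{0|0}_{X,Y} := H^\mcn_{X,Y}$ and extending by the analogous order-by-order argument.

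The main obstacle is the combinatorial verification that at each inductive step the obstruction $\Phi^{r|s}$ is indeed an $n^{0|0}_\mcm$-cycle, which is the one place the full $A_\infty$ bimodule relations are crucially used; everything else is formal. This bookkeeping is carried out in essentially the form we need in \cite[Lemma 8.1]{gps:sectorial-descent} for left modules over an $A_\infty$ category, and the bimodule variant is a direct adaptation that treats the $\Hom_\mcc$ and $\Hom_\mcd$ inputs on the same footing. I would therefore write the proof by reducing to the base case via the above bootstrap scheme and referring to \emph{loc.\ cit.}\ for the detailed cycle computation.
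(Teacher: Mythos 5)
Your proposal is correct and follows essentially the same route as the paper: construct a chain-homotopy inverse to $f^{0|0}$ using finite-dimensionality over $\ff_2$, then bootstrap the higher components and homotopies order by order, deferring the detailed obstruction/cycle bookkeeping to \cite[Lemma 8.1]{gps:sectorial-descent}, which is exactly the reference the paper invokes for this step.
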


This proposition is a key feature of the $A_\infty$ bimodules, and we will use it repeatedly in our work to come.

\subsection{Mapping Cones and Cylinders}
\label{ssec:mapping-cone-defn}

The proof of Theorem~\ref{thm:duality} in \cite{duality} identifies the linearized LCH of a ``separated $2$-copy'' as the mapping cone of the duality map $\eta$  to deduce the duality exact sequence. The proof of the main theorem in this paper will generalize this structure to an $A_\infty$ mapping cone of an $A_\infty$ bimodule morphism.  Following \cite[\S3s]{seidel:fukaya}, we recall the definition of these objects here, beginning with the mapping cone. 

\begin{defn}
\label{defn:a-infty-mapping-cone}
Given a degree $0$ $\mcc$-$\mcd$ bimodule morphism $f:\mcm \to \mcn$, the \dfn{$A_\infty$ mapping cone} $\cone(f)$ of $f$ is the $\mcc$-$\mcd$ bimodule consisting of the following data:
\begin{itemize}
    \item for $X \in \obj\ \mcc$ and $Y \in \obj\ \mcd$, a graded vector space
    \begin{equation*}
        \cone(f)(X,Y) = \mcm(X,Y)[1] \oplus \mcn(X,Y).
    \end{equation*}
    \item For $r,s \geq 0$, 
    the structure map of degree $1-r-s$ is defined by
    \begin{align*}
    n^{r|s}_{\cone(f)}(\ldots,\ul{(\mbx,\mby)},\ldots) =  \bigl(& n_\mcm^{r|s}(\ldots,\ul{\mbx},\ldots), \\ & n_\mcn^{r|s}(\ldots,\ul{\mby},\ldots) + f^{r|s}(\ldots,\ul{\mbx},\ldots) \bigr).
    \end{align*}
\end{itemize}
\end{defn}

Note that $\mcn$ is a submodule of $\cone(f)$. Further, $\cone(f)$ comes with two degree $0$ bimodule morphisms, the inclusion $i_\mcn:\mcn \to \cone(f)$ and the projection $\pi_\mcm:\cone(f) \to \mcm[1]$, both defined as the na\"ive morphisms extending the corresponding linear map.

The \dfn{mapping cocone} of $f$ is given by $\cocone(f) = \cone(f^\vee)[-1]$. The cocone is also equipped with two degree $0$ bimodule morphisms $i_\mcn^\vee:\cocone(f) \to \mcn^\vee$ and $\pi^\vee_{\mcm}:\mcm^\vee[-1] \to \cocone(f)$ corresponding to the duals of $i_\mcn$ and $\pi_\mcm$ respectively. 

As stated in \cite[\S2]{seidel:a-infty-subalgebra}, a mapping cone structure naturally arises when a bimodule contains a submodule.  In particular, suppose $\mcq$ is a submodule of an $\mcc$-bimodule $\mcm$.  Let $\mcp$ be the quotient $\mcm/\mcq$.  We get a na\"ive short exact sequence of $\mcc$-bimodules
\[
\begin{tikzcd}
    0 \ar[r] & \mcq \ar[r,"i"] & \mcm \ar[r,"\pi"] & \mcp \ar[r] &0
\end{tikzcd}
\]
From this data, we may define a morphism $\varphi: \mcp[-1] \to \mcq$.  For any choice of a na\"ive pre-morphism $\sigma: \mcp \to \mcm$ whose underlying vector space maps split $\pi$,  define $\varphi$ by $(id-\sigma \pi)\circ n_\mcm \circ \sigma$.  

\begin{lem}\label{lem:submodule-to-cone}
    In the setting above, $\varphi:\mcp[-1] \to \mcq$ is a bimodule morphism and $\mcm$ is quasi-isomorphic to $\cone(\varphi)$.
\end{lem}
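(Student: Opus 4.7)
The plan is to identify $\mcm$ with $\cone(\varphi)$ as na\"ive $A_\infty$-bimodules, which will simultaneously yield both conclusions. The linear splitting $\sigma^{0|0}$ produces a graded vector space isomorphism
\[ F^{0|0}(m) = (\pi(m), (\mathrm{id} - \sigma\pi)(m)) \]
from $\mcm(X,Y)$ to $\mcp(X,Y) \oplus \mcq(X,Y) = \cone(\varphi)(X,Y)$, with inverse $(p,q) \mapsto \sigma(p) + q$, where we regard $\mcq$ as a subspace of $\mcm$ via the given inclusion. Extending $F^{0|0}$ by zero in all higher orders produces a na\"ive pre-morphism $F: \mcm \to \cone(\varphi)$, and I aim to verify simultaneously that $F$ is a bimodule morphism and that $\varphi$ is a bimodule morphism through a single computation.

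The core computation is to evaluate $n_\mcm^{r|s}(\ldots, \ul{\sigma(p) + q}, \ldots)$ for $p \in \mcp$ and $q \in \mcq$, and then check that $F^{0|0}$ sends the output to $n_{\cone(\varphi)}^{r|s}(\ldots, \ul{(p,q)}, \ldots)$. Since $\mcq$ is a submodule, the $q$ contribution is simply $n_\mcq^{r|s}(\ldots, \ul{q}, \ldots) \in \mcq$. For the $\sigma(p)$ contribution, I apply the projector decomposition $\mathrm{id}_\mcm = \sigma\pi + (\mathrm{id} - \sigma\pi)$ to the output of $n_\mcm^{r|s}$: the $\sigma\pi$-piece yields $\sigma\, n_\mcp^{r|s}(\ldots, \ul{p}, \ldots)$, using that $\pi$ is a na\"ive bimodule morphism with $\pi\sigma = \mathrm{id}_\mcp$, while the $(\mathrm{id} - \sigma\pi)$-piece is exactly $\varphi^{r|s}(\ldots, \ul{p}, \ldots) \in \mcq$ by the definition of $\varphi$. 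Transporting the resulting sum through $F^{0|0}$ reproduces the cone structure formula of Definition~\ref{defn:a-infty-mapping-cone}.

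With this intertwining in hand, the $A_\infty$-bimodule relations~\eqref{eq:a-infty-bimod} for $\mcm$ transfer to the formula defining $n_{\cone(\varphi)}$ on $\mcp \oplus \mcq$. Projecting those relations onto the $\mcp$ and $\mcq$ summands recovers the bimodule relations for $\mcp$ and $\mcq$, which already hold, and leaves a single cross-term on the $\mcq$ summand that equals $\delta(\varphi)^{r|s}$. Hence $\delta(\varphi) = 0$, so $\varphi$ is a bimodule morphism, $\cone(\varphi)$ is a genuine bimodule, and $F$ is automatically a na\"ive bimodule morphism which, being a linear isomorphism at every $(X,Y)$, is a quasi-isomorphism. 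I expect the main obstacle to be the careful bookkeeping in the projector decomposition step; working over $\ff_2$ eliminates all sign issues and no higher-order component of $F$ is ever needed.
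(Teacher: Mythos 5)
Your argument is correct: the transport of $n_\mcm$ through the degree-zero linear isomorphism $m \mapsto (\pi(m),(\mathrm{id}-\sigma\pi)(m))$ does produce exactly the cone-type formulas, and reading off the $\mcq$-component of the transported $A_\infty$ relations indeed forces $\delta(\varphi)=0$, after which $F$ is a na\"ive bimodule isomorphism and in particular a quasi-isomorphism. The paper itself offers no proof of this lemma (it is attributed to Seidel's observation on submodules and cones), and your splitting argument is precisely the standard one that reference has in mind, so you have simply supplied the omitted details.
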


Finally, we recall the relevant concepts of the mapping cylinder in the $A_\infty$ bimodule setting.

\begin{defn}
\label{defn:a-infty-mapping-cyl}
The \dfn{$A_\infty$ mapping cylinder} $\cyl(f)$ of a $\mcc$-$\mcd$ bimodule morphism $f:\mcm \to \mcn$ is the shifted cone $\cone(\pi_\mcm)[-1]$ of the canonical projection $\pi_\mcm:\cone(\pi) \to \mcm[1]$. The \dfn{$A_\infty$ mapping cocylinder} $\cocyl(f)$ of a $\mcc$-$\mcd$ bimodule morphism $f:\mcm \to \mcn$ is the shifted cocone $\cocone(\pi_M)[1]$.
\end{defn}

A useful fact about these constructions is encapsulated in the following lemma, whose proof follows from the fact that the na\"ive inclusion $I_\mcn: \mcn \to \cyl(f)$ and na\"ive projection $\Pi_\mcn: \cyl(f) \to \mcn$ are homotopy inverses.

\begin{lem} 
\label{lem:mapping-cyl-homotopy}
The mapping cylinder $\cyl(f)$ of a $\mcc$-$\mcd$ bimodule morphism $f:\mcm \to \mcn$ is homotopy equivalent to $\mcn$ and the mapping cocylinder $\cocyl(f)$ is homotopy equivalent to $\mcn^\vee$. 
\end{lem}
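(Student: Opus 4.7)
The plan is to construct explicit homotopy inverses between $\mcn$ and $\cyl(f)$ and verify that they realize the required homotopy equivalence, following the hint in the statement. The cocylinder case will then follow by dualizing.

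First, I would unpack the iterated cone structure $\cyl(f) = \cone(\pi_\mcm)[-1]$. Applying Definition~\ref{defn:a-infty-mapping-cone} twice to $\pi_\mcm:\cone(f) \to \mcm[1]$, the underlying graded $\mcc$-$\mcd$ bimodule decomposes as $\mcm[1] \oplus \mcn \oplus \mcm$. The structure map $n_{\cyl(f)}^{r|s}$ sends $(\mbx_1,\mby,\mbx_2)$ to
\[(n_\mcm^{r|s}(\ldots,\ul{\mbx_1},\ldots),\ n_\mcn^{r|s}(\ldots,\ul{\mby},\ldots) + f^{r|s}(\ldots,\ul{\mbx_1},\ldots),\ n_\mcm^{r|s}(\ldots,\ul{\mbx_2},\ldots))\]
when $r+s > 0$, with an additional $\mbx_1$ contribution to the third coordinate coming from $\pi_\mcm^{0|0}$ when $r = s = 0$.

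Next, I would define the inclusion $I_\mcn: \mcn \to \cyl(f)$ as the na\"ive morphism $\mby \mapsto (0,\mby,0)$; the explicit structure shows $\delta(I_\mcn) = 0$ immediately. I would define the projection $\Pi_\mcn: \cyl(f) \to \mcn$ by $\Pi_\mcn^{0|0}(\mbx_1,\mby,\mbx_2) = \mby + f^{0|0}(\mbx_2)$ and, for $r+s > 0$, $\Pi_\mcn^{r|s}(\ldots,\ul{(\mbx_1,\mby,\mbx_2)},\ldots) = f^{r|s}(\ldots,\ul{\mbx_2},\ldots)$. The verification $\delta(\Pi_\mcn) = 0$ organizes itself as follows: the $\mby$-dependent terms cancel in pairs between the two $n_\mcn$-contributions, the $\mbx_1$-dependent terms cancel in pairs between the two $f^{r|s}$-contributions, and the $\mbx_2$-dependent terms reassemble into $\delta(f)$ evaluated on $\mbx_2$, which vanishes because $f$ is a bimodule morphism.

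A direct application of the composition formula~\eqref{eqn:composition} then gives $\Pi_\mcn \circ I_\mcn = \bbid_\mcn$ on the nose. For the opposite composition, I would introduce the pre-morphism $H: \cyl(f) \to \cyl(f)$ with $H^{0|0}(\mbx_1,\mby,\mbx_2) = (\mbx_2,0,0)$ and vanishing higher components. A term-by-term expansion of $\delta(H)$ shows that the contributions from applying $n_{\cyl(f)}$ before and after $H$ combine, through the explicit formula for the structure map, to yield $\bbid_{\cyl(f)} + I_\mcn \circ \Pi_\mcn$ in characteristic 2: the middle coordinate picks up precisely the $f^{r|s}(\ldots,\mbx_2,\ldots)$ contributions, and the $r=s=0$ piece reproduces the diagonal $(\mbx_1, f^{0|0}(\mbx_2), \mbx_2)$ that distinguishes the identity from $I_\mcn \circ \Pi_\mcn$.

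The cocylinder case follows by dualizing. Since $\cocyl(f) = \cone(\pi_\mcm^\vee)$ and the linear-dual construction sends $A_\infty$ homotopy equivalences of finite-dimensional bimodules to $A_\infty$ homotopy equivalences (by dualizing all pre-morphisms and homotopies), applying $(-)^\vee$ to $I_\mcn$, $\Pi_\mcn$, and $H$ produces the required homotopy inverse pair between $\mcn^\vee$ and $\cocyl(f)$, after identifying $\cyl(f)^\vee$ with $\cocyl(f)$ as graded bimodules via the natural reordering of summands. I do not anticipate any serious obstacle; the main task is the bookkeeping for $\delta(H) = \bbid_{\cyl(f)} + I_\mcn \circ \Pi_\mcn$, which is essentially the observation that the two extra copies of $\mcm$ inside $\cyl(f)$ are matched against each other by the identity portion of the cone differential.
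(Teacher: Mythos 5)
Your proof is correct and follows essentially the same route as the paper, which simply asserts that the na\"ive inclusion and projection are homotopy inverses; you supply the explicit formulas, the homotopy $H$, and the verifications the paper omits. The one point worth noting is that your projection $\Pi_\mcn$ (with the $f^{r|s}(\ldots,\ul{\mbx_2},\ldots)$ components) is exactly what is needed, since the strictly na\"ive projection onto the $\mcn$ summand is not itself a bimodule morphism, so your corrected version is the right formulation of the paper's sketch.
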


\section{Weak Relative Calabi-Yau Structures}
\label{sec:wrcy}

The goal of this section is to introduce the algebraic structures used to generalize duality. As above, we will not develop the most general theory, but rather work over $\ff_2$ and under the assumption that the morphism spaces of the $A_\infty$ categories and bimodules are finite-dimensional. We first define the notions of weak and very weak relative Calabi-Yau (CY) structures, basing the definitions on work of Brav and Dyckerhoff \cite{bd:rel-cy} and Chen \cite{chen:lsft-dual}. We then show that, under mild assumptions, a very weak relative CY structure gives rise to a weak one; this transition from very weak to weak structures will be the algebraic key to the proof of the main theorem.

The following definition of a weak relative CY structure is adapted from Chen's transcription of Brav and Dyckerhoff's original definition in terms of $A_\infty$ bimodules, which aligns more closely with our setup.

\begin{defn}[\cite{bd:rel-cy},\cite{chen:lsft-dual}]
\label{defn:weak-rel-cy}
Let $\mca$ and $\mcc$ be strictly unital $A_\infty$ categories and let $F: \mca \to \mcc$ be an $A_\infty$ functor.  A \dfn{weak right relative Calabi-Yau structure of dimension $n$} on $F$ consists of
\begin{itemize}
\item An $A_\infty$ morphism $\kappa: \mcc_\Delta[n-1] \to \mcc^\vee$ and
\item An $A_\infty$ pre-morphism $\phi: \mca_\Delta[n] \to \mca^\vee$ 
\end{itemize}
so that
\begin{enumerate}
    \item $\phi$ is a null-homotopy for the composition
    \[ \begin{tikzcd}
	& \mca_\Delta[n-1] \arrow[r,"f"]  & F^*\mcc_\Delta[n-1] \arrow[r,"F^*\kappa"]  & F^*\mcc^\vee \arrow[r,"f^\vee"] & \mca^\vee,
\end{tikzcd}
\]
where $f: \mca_\Delta \to F^*\mcc_\Delta$ is the bimodule morphism induced by $F$;

    \item Let the morphisms $\zeta$ and $\zeta'$ be induced by $\phi$ via the universal properties of $\cone(f)$ and $\cocone(f)$ that make the following diagram coherent in the derived category $D(\mca$-$\mathrm{bimod})$:
    \begin{equation}
    \label{eq:weak-right-cy}
 \begin{tikzcd}[column sep=small]
	\cdots \arrow[r] & \mca_\Delta[n-1] \arrow[r,"f"] \arrow[d,"\zeta", dashed] & F^*\mcc_\Delta[n-1] \arrow[r,"i"] \arrow[d,"F^*\kappa"] & \cone(f)[n-1] \arrow[d,"\zeta'",dashed] \arrow[r] & \cdots \\	
	\cdots \arrow[r] &\cocone(f) \arrow[r, "i^\vee"] & F^*\mcc^\vee \arrow[r,"f^\vee"] & \mca^\vee \arrow[r] & \cdots
\end{tikzcd}
\end{equation}
The morphisms $\kappa$, $\zeta$, and $\zeta'$ are all quasi-isomorphisms.
\end{enumerate}
\end{defn}

Several comments are in order to clarify the definition above and to connect it to other notions in the literature. First, the diagram in the definition above mirrors the Poincaré-Lefschetz Duality for a manifold with boundary $(M,\partial M)$, where $\cone(f)$ plays the role of $H^*(M)$, $F^*\mcc_\Delta$ plays the role of $H^*(\partial M)$, and $\mca_\Delta$ plays the role of $H^*(M,\partial M)$.

Second, weak relative right CY structures of dimension $n$ were originally defined in \cite{bd:rel-cy} to be morphisms from relative Hochschild homology $HH_{*+n}(\mcc, \mca)$ of the functor $F$ to the base field satisfying a non-degeneracy condition.  Definition~\ref{defn:weak-rel-cy} is a translation of the Hochschild homology definition to $A_\infty$ language. The idea behind the translation builds on the  correspondence between non-degenerate morphisms in  $\hom(HH_*(\mca),\ff_2)$ and bimodule quasi-isomorphisms $\mca_\Delta[n] \to \mca^\vee$, which both define weak right (absolute) CY structures of dimension $n$ on a proper category $\mca$; see \cite[Appendix A]{sheridan:fano} or \cite[Definition 6.3]{gps:mirror}.  In the relative case,  morphisms in $\hom(HH_{*+n}(\mcc, \mca), \ff_2)$ correspond (cohomologically) to the pair $(\kappa,\phi)$ in Definition \ref{defn:weak-rel-cy}. The non-degeneracy condition can be translated to condition (2) of Definition \ref{defn:weak-rel-cy}.

Next, a weak CY structure is a weaker notion than a strong CY structure, which puts extra conditions on the functor $F$. While a weak CY structure is a condition on the relative Hochschild homology, a strong structure is a condition on the relative cyclic homology.  As noted in Cho \cite{cho:homotopy}, the weak CY condition on a bimodule requires many fewer relations to hold than a strong CY condition on an $A_\infty$ category. 

Finally, a ``right'' CY structure is also called a ``proper'' CY structure in the literature; see \cite{ganatra:cyclic, gps:mirror, sheridan:fano}. Our terminology agrees with \cite{bd:rel-cy}. There is a parallel notion of a left or smooth CY structure defined using the inverse dualizing bimodule, but we will not discuss these structures in this paper.

We will not extract a weak relative CY structure directly from the geometry under our consideration.  Rather, we will use an even weaker structure as an intermediate step.

\begin{defn}
\label{defn:v-weak-rel-cy}
Let $\mca$ be a (strictly) unital $A_\infty$ category, let $\mcm, \mcn$ be $\mca$-bimodules, and let $f: \mcn \to \mcm$ be an $A_\infty$ morphism. A \dfn{very weak relative Calabi-Yau structure of dimension $n$} on $f$ consists of $A_\infty$ quasi-isomorphisms $\eta:\mcm^\vee[n] \to \cone(f)$, $\theta:\mcn^\vee[n] \to \mcn[1]$, and $\eta': \cocone(f)[n+1] \to \mcm[1]$ so that the diagram
\begin{equation} 
\label{eq:very-weak-CY}
 \begin{tikzcd}[column sep=scriptsize]
	\cdots \arrow[r] & \mcm^\vee[n] \arrow[r,"f^\vee"] \arrow[d,"\eta"] & \mcn^\vee[n] \arrow[r, "\pi^\vee_\mcn"] \arrow[d,"\theta"] & \cocone(f)[n+1] \arrow[d,"\eta'"] \arrow[r] & \cdots \\	
	\cdots \arrow[r] &\cone(f) \arrow[r, "\pi_\mcn"] & \mcn[1] \arrow[r,"f"] & \mcm[1] \arrow[r] & \cdots
\end{tikzcd}
\end{equation}
commutes up to homotopy.
\end{defn}

It turns out that the existence of the leftmost coherent square in Diagram~\eqref{eq:very-weak-CY} is sufficient for constructing a very weak relative CY structure.

\begin{lem}
    \label{lem:very-weak-shortcut}
    Given a morphism $f:\mcn \to \mcm$ and quasi-isomorphisms $\eta:\mcm^\vee[n] \to \cone(f)$ and $\theta:\mcn^\vee[n] \to \mcn[1]$ such that $\pi_\mcn \eta \sim \theta f^\vee$, there exists a quasi-isomorphism $\eta':\cocone(f)[n+1] \to \mcm[1]$ such that $(\eta,\theta,\eta')$ constitutes a very weak relative Calabi-Yau structure on $f$.
\end{lem}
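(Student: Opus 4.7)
The strategy is to construct $\eta'$ directly from the given homotopy and then verify the remaining commutativity and quasi-isomorphism properties. Recall that as a graded bimodule,
\[
\cocone(f)[n+1](X,Y) = \mcm^\vee(X,Y)[n+1] \oplus \mcn^\vee(X,Y)[n],
\]
so specifying $\eta'$ reduces to choosing pre-morphisms out of each summand, subject to a compatibility that accounts for the $f^\vee$-twist in the cocone differential.

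To set up the construction, first fix a pre-morphism $H \colon \mcm^\vee[n] \to \mcn[1]$ realizing the given homotopy, so that $\delta(H) = \pi_\mcn \eta + \theta f^\vee$. Decompose $\eta = (\pi_\mcn \eta,\, \pi_\mcm \eta)$ according to the direct sum $\cone(f) = \mcn[1] \oplus \mcm$. The bimodule morphism condition $\delta(\eta) = 0$, together with the fact that the structure maps of $\cone(f)$ mix the summands via $f$, yields the pre-morphism identity $\delta(\pi_\mcm \eta) = f \circ \pi_\mcn \eta$. Now define $\eta' \colon \cocone(f)[n+1] \to \mcm[1]$ by setting its restriction to $\mcm^\vee[n+1]$ to be $\phi_A := \pi_\mcm \eta + f H$ and its restriction to $\mcn^\vee[n]$ to be $\phi_B := f\theta$; the higher $(r|s)$-components of $\eta'$ are assembled from the corresponding higher components of these constituents.

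A direct computation then shows that $\eta'$ is a bimodule morphism. The $f^\vee$-twist in the cocone structure map contributes $f\theta f^\vee$ to the $\mcm^\vee[n+1]$-component of $\delta(\eta')$, while
\[
\delta(\pi_\mcm \eta + fH) = f\pi_\mcn \eta + f\,\delta(H) = f\pi_\mcn \eta + f\pi_\mcn \eta + f\theta f^\vee = f\theta f^\vee
\]
over $\ff_2$; the two contributions cancel, yielding $\delta(\eta') = 0$. The right square of Diagram~\eqref{eq:very-weak-CY} then commutes on the nose: $\pi^\vee_\mcn$ is the na\"ive inclusion onto the $\mcn^\vee[n]$ summand, so $\eta' \circ \pi^\vee_\mcn = \phi_B = f\theta$ at every order. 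Finally, to conclude that $\eta'$ is a quasi-isomorphism, I would apply the Five Lemma to the morphism between the long exact sequences in cohomology induced by the two exact triangles in Diagram~\eqref{eq:very-weak-CY}: since $\eta$ and $\theta$ are quasi-isomorphisms by hypothesis, so is $\eta'$.

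The main technical obstacle I anticipate lies in the bookkeeping needed to promote the formulas for $\phi_A$ and $\phi_B$ to honest pre-morphism identities involving all $(r|s)$-components of $\eta$, $\theta$, $f$, and $H$, and in verifying $\delta(\eta')=0$ componentwise. The key pre-morphism identity $\delta(\pi_\mcm \eta) = f\pi_\mcn \eta$ is obtained by expanding the $A_\infty$-bimodule relation $\delta(\eta) = 0$ and collecting the $\mcm$-valued terms produced by the cone structure map of $\cone(f)$; the cocone cross-term cancellation is similarly governed by the explicit definition of $\cocone(f) = \cone(f^\vee)[-1]$. Neither step is deep, but the indexing must be handled systematically to ensure the cancellations hold at every order.
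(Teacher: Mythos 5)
Your construction of $\eta'$ is exactly the paper's: writing $\cocone(f)[n+1] = \mcm^\vee[n+1]\oplus\mcn^\vee[n]$, the paper sets $\eta' = (\pi_\mcm\eta + fH)i_\mcm^\vee + f\theta\, i_\mcn^\vee$, which is your $\phi_A$ on the first summand and $\phi_B$ on the second, and your verification that $\delta(\eta')=0$ (via $\delta(\pi_\mcm\eta)=f\pi_\mcn\eta$ and the $f^\vee$-cross-term of the cocone producing the cancelling $f\theta f^\vee$) and that the middle square commutes strictly are the same computations. However, there is a genuine gap at the end. Both the definition of a very weak relative Calabi--Yau structure (Definition~\ref{defn:v-weak-rel-cy} asks that the \emph{whole} periodic ladder in Diagram~\eqref{eq:very-weak-CY} commute up to homotopy) and your Five Lemma step require the third, ``unseen'' square of the ladder: the one with horizontal maps $i_\mcm^\vee\colon \cocone(f)[n+1]\to\mcm^\vee[n+1]$ and $i_\mcm\colon \mcm[1]\to\cone(f)[1]$ and vertical maps $\eta'$ and $\eta[1]$. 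You never address it, yet your choice of $\eta'$ is a specific fill-in and its compatibility with the connecting maps is not automatic at the chain (or even cohomology) level; and without commutativity of that square on cohomology you do not have a morphism of long exact sequences, so the Five Lemma cannot be invoked to conclude that $\eta'$ is a quasi-isomorphism (you need all four squares surrounding $H(\eta')$, and square~3 is one of them).

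The paper closes exactly this gap by producing an explicit homotopy: with the same $H$, it defines the pre-morphism $K = \bigl(\eta',\,(\theta f^\vee + H)\,i_\mcm^\vee + \theta\, i_\mcn^\vee\bigr)\colon \cocone(f)[n+1]\to\cone(f)[1]$ and checks componentwise that $\delta(K) = \eta\, i_\mcm^\vee + i_\mcm\,\eta'$, using that $\eta'$ is a morphism (first component) and that $H$ is a homotopy between $\pi_\mcn\eta$ and $\theta f^\vee$ (second component). Adding this verification (or some equivalent argument that the connecting square commutes up to homotopy) is the substantive missing step; once it is in place, your Five Lemma conclusion and the claim that $(\eta,\theta,\eta')$ is a very weak relative CY structure both go through as you intend.
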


\begin{proof}
    Let  $H:\mcm^\vee[n] \to \mcn[1]$ be the homotopy between $\pi_\mcn \eta$ and $\theta f^\vee$.  Define $\eta' = (\pi_{\mcm}\eta + f H)i_{\mcm}^\vee + f \theta i_{\mcn}^\vee$. That $\eta'$ is a bimodule morphism follows from $\delta$ being a derivation on the category of pre-morphisms together with the facts that $\eta$, $i_{\mcm}^\vee$, $f$, and $\theta$ are morphisms, that $\delta(\pi_{\mcm}) = f \pi_\mcn$, and that $\delta(i_{\mcn}^\vee) = f^\vee i_{\mcm}^\vee$.

    With $\eta'$ defined above, it is immediate that the second square in Diagram~\eqref{eq:very-weak-CY} commutes strictly. That the next (unseen) square commutes up to homotopy follows a similar idea as \cite[Lemma 4.9]{yeung}. Define a pre-morphism $K:\cocone(f)[n+1] \to \cone[1]$ by
    \begin{equation*}
        K = (\eta',(\theta f^\vee+H) i_\mcm^\vee + \theta i_\mcn^\vee).
    \end{equation*}
    Since 
    \begin{equation*}
        \eta i_\mcm^\vee + i_\mcm \eta' = (\pi_\mcm\eta i_\mcm^\vee + \eta', \pi_\mcn \eta i_\mcm^\vee),
    \end{equation*}
    the equation $\eta i_\mcm^\vee + i_\mcm \eta' = \delta(K)$ holds in the first component due to the definition of $\eta'$ and the fact that $\eta'$ is a bimodule morphism. It holds in the second component since $\theta, \theta f^\vee$ are bimodule morphisms and $H$ is a homotopy between $\pi_\mcn \eta$ and $\theta f^\vee$. Thus, $K$ is a homotopy between $\eta i_\mcm^\vee$ and $i_\mcm \eta'$ as desired.

    Finally, the Five Lemma guarantees that $\eta'$ is a quasi-isomorphism.
\end{proof}

We end this section by introducing an algebraic tool to produce a weak right relative CY structure from a very weak relative CY structure.  In Section \ref{sec:main-pf}, we will use geometric considerations to construct a very weak relative CY structure on $A_\infty$ bimodules associated to a Legendrian link. The algebraic tool described in this section will then exploit the extra structure in this relationship to derive the main theorem. We emphasize that the form and proof of the algebraic tool rely on the assumption that the underlying vector spaces of all $A_\infty$ structures involved are finite-dimensional, though a more general statement may hold.

In order to state the algebraic tool, we first specify the extra information needed to pass from very weak to weak relative CY structures.  Begin with strictly unital $A_\infty$ categories $\mca$ and $\mcc$, an $\mca$-bimodule $\mcm$, a functor $\pi: \mca \to \mcc$, and a morphism $\rho: \pi^*\mcc_\Delta[-1] \to \mcm$.  For convenience, we set $\mcn = \pi^*\mcc_\Delta$.  We say that the data $(\alg \stackrel{\pi}{\to} \mcc, \mcn[-1] \stackrel{\rho}{\to} \mcm)$ is a \dfn{conical pair} if the diagonal bimodule $\mca_\Delta$ is $\cone(\rho)$ and if the functor $\pi$ induces the canonical projection $\pi_\mcn: \mca_\Delta \to \mcn$ of $\cone(\rho)$.

\begin{prop}
\label{prop:alg-gadget}
    Let $(\alg \stackrel{\pi}{\to} \mcc, \mcn[-1] \stackrel{\rho}{\to} \mcm)$ be a conical pair. If there is a very weak relative Calabi-Yau structure of dimension $-n$ on $\rho$ then there exists a weak right relative Calabi-Yau structure of dimension $n$ on $\pi$.
\end{prop}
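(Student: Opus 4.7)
The plan is to promote the very weak CY structure on $\rho$ to a weak right relative CY structure on $\pi$ by exploiting the cone decomposition $\mca_\Delta = \cone(\rho)$ supplied by the conical pair hypothesis. I proceed in three stages.

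First, I use the conical pair structure to give concrete models of the cone and cocone of $f = \pi_\mcn$. Since $f: \mca_\Delta = \cone(\rho) \to \mcn$ is the canonical projection out of a mapping cone, the cofiber is $\mcm[1]$; together with its linear dual, this furnishes canonical quasi-isomorphisms $\cone(f) \simeq \mcm[1]$ and $\cocone(f) \simeq \mcm^\vee[-1]$ arising from the distinguished triangle $\mcm \to \mca_\Delta \to \mcn \to \mcm[1]$ and its dualization $\mcn^\vee \to \mca^\vee \to \mcm^\vee \to \mcn^\vee[1]$. Under these identifications, the weak CY diagram \eqref{eq:weak-right-cy} matches the $[n-1]$-shift of Diagram~\eqref{eq:very-weak-CY} with the top and bottom triangles exchanged and the vertical arrows reversed, so the sought morphisms $\zeta$, $\pi^*\kappa$, and $\zeta'$ correspond to quasi-inverses of $\eta$, $\theta$, and $\eta'$ respectively.

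Second, I descend $\theta$ to produce $\kappa: \mcc_\Delta[n-1] \to \mcc^\vee$. The conical pair hypothesis forces $\pi$ to be na\"ive, with $\pi^k = 0$ for $k > 1$, so $\pi^*$ acts on morphisms between pulled-back bimodules by applying $\pi^1$ argument-wise; the underlying decomposition $\mca(X,Y) = \mcm(X,Y) \oplus \mcn(X,Y)$ realizes $\pi^1$ as a projection with kernel $\mcm$. My plan is to show that, up to $A_\infty$ homotopy, the quasi-inverse of $\theta$ can be replaced by an $\mca$-bimodule morphism whose components vanish whenever any $\mca$-argument lies in the $\mcm$-summand, and which therefore factors through $\pi^*$ to yield a $\mcc$-bimodule quasi-isomorphism whose shift is $\kappa$. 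Establishing this descent---modifying $\theta^{-1}$ to kill $\mcm$-arguments without spoiling the $A_\infty$ relations---is the main technical obstacle of the proof, and will rely on the finite-dimensionality hypothesis together with an inductive homotopy argument on the bidegree $(r,s)$ of the components.

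Third, I construct $\phi$ and conclude. Since $\pi^*\kappa \sim \theta^{-1}$, commutativity of the leftmost square of \eqref{eq:very-weak-CY} implies that $f^\vee \circ (\pi^*\kappa) \circ f$ vanishes in the derived category---essentially because the composition of consecutive maps in the distinguished triangle $\mca_\Delta \to \mcn \to \mcm[1]$ is zero. A chain-level null-homotopy $\phi$ can thus be constructed explicitly from the homotopies witnessing coherence of \eqref{eq:very-weak-CY}. The universal properties of $\cone(f)$ and $\cocone(f)$, applied as in the proof of Lemma~\ref{lem:very-weak-shortcut}, then produce $\zeta$ and $\zeta'$ assembling with $\pi^*\kappa$ into a coherent map of triangles \eqref{eq:weak-right-cy}. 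Finally, the Five Lemma---using that $\pi^*\kappa$, $\cone(f) \simeq \mcm[1]$, and $\cocone(f) \simeq \mcm^\vee[-1]$ are quasi-isomorphisms---shows that $\zeta$ and $\zeta'$ are quasi-isomorphisms as well, completing the construction of the weak right relative CY structure on $\pi$.
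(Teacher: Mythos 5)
Your overall skeleton matches the paper's argument: use finite-dimensionality (Proposition~\ref{prop:homotopy-equiv}) to invert $\eta$ and $\theta$, use coherence of the left square of the very weak CY diagram to produce the null-homotopy $\phi$, identify $\cone(\pi_\mcn)$ and $\cocone(\pi_\mcn)$ with $\mcm[1]$ and $\mcm^\vee[-1]$ via the cylinder/cocylinder (Lemma~\ref{lem:mapping-cyl-homotopy}), and finish with the Five Lemma. The problem is your second stage. You correctly observe that Definition~\ref{defn:weak-rel-cy} literally asks for a $\mcc$-bimodule morphism $\kappa:\mcc_\Delta[n-1]\to\mcc^\vee$, and you make the descent of a quasi-inverse of $\theta$ along $\pi^*$ the ``main technical obstacle'' of the proof --- but you never carry it out, only promising an inductive homotopy argument on the bidegree $(r,s)$. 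That is a genuine gap as written, and it is not a routine one: a homotopy inverse of $\theta$ is merely an $\mca$-bimodule morphism between the pulled-back bimodules $\mcn[n]=\pi^*\mcc_\Delta[n]$ and $\mcn^\vee[1]$, and $\pi^*$ is in general not full (even up to homotopy) on bimodule morphisms, so the claim that $\theta^{-1}$ can be homotoped to kill all $\mcm$-summand arguments and hence factor through $\pi^1$ needs an actual argument or extra hypotheses; nothing in the conical-pair definition or finite-dimensionality obviously supplies it.

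The paper sidesteps this entirely: it takes $\kappa$ to be the $A_\infty$ homotopy inverse of $\theta$ at the $\mca$-bimodule level, uses Lemma~\ref{lem:bimod-homotopy-prop} and the left square to get $\kappa\pi_\mcn\sim\kappa\pi_\mcn\eta\xi\sim\kappa\theta\rho^\vee\xi\sim\rho^\vee\xi$ with explicit homotopy $H$, sets $\phi=(\xi,H)$, and then verifies condition (2) of Definition~\ref{defn:weak-rel-cy} directly through the explicit chain model $\zeta=(\kappa\pi_\mcn,\xi,H):\mca_\Delta[n-1]\to\cocyl(\rho)[-1]$, checking by hand that $\zeta$ is a morphism and that $\zeta\sim I_{\mcm^\vee}\xi$ is a quasi-isomorphism with homotopy inverse $\eta\Pi_{\mcm^\vee}$, before invoking Lemma~\ref{lem:very-weak-shortcut}. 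So the descent you flag is never performed in the paper's route, and the verification of non-degeneracy is done with the pulled-back data in Diagram~\eqref{eq:weak-CY-pf-2}. To repair your proposal you should either (i) drop the descent and, like the paper, work with $\kappa$ as the homotopy inverse of $\theta$ together with the explicit $\phi=(\xi,H)$ and $\zeta$, or (ii) actually prove the factorization of $\theta^{-1}$ through $\pi^*$, which would require a substantive new argument beyond what you have sketched. A second, smaller soft spot: in stage three you assert $\phi$ ``can be constructed explicitly from the homotopies witnessing coherence''; the paper makes this concrete as $\phi=(\xi,H)$ and then checks the morphism equation componentwise --- you should do the same rather than appeal to vanishing in the derived category, since the definition demands a chain-level null-homotopy compatible with the chosen $\zeta$ and $\zeta'$.
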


\begin{proof} 
To set notation, suppose that the given very weak CY structure on $\rho$ is captured by the following homotopy commutative diagram:
\begin{equation} \label{eq:weak-CY-pf}
 \begin{tikzcd}
	\cdots \arrow[r] & \mcm^\vee \arrow[r,"\rho^\vee"] \arrow[d,"\eta"] & \mcn^\vee[1] \arrow[r, "\pi^\vee_\mcn"] \arrow[d,"\theta"] & \mca_\Delta^\vee[1] \arrow[d,"\eta'"] \arrow[r] & \cdots \\	
	\cdots \arrow[r] &\mca_\Delta[n] \arrow[r, "\pi_\mcn"] & \mcn[n] \arrow[r,"\rho"] & \mcm[n+1]\arrow[r] & \cdots
\end{tikzcd}
\end{equation}
Compare with Diagram~\ref{eq:very-weak-CY} and note that the degree shifts result from the the assumptions of dimension $-n$ and that $\mca_\Delta \simeq \cone(\rho)$ for $\rho:\mcn[-1] \to \mcm$.

Our goal is to construct $\kappa$ and $\phi$ that satisfy the requirements of a weak right relative CY structure.  As a first step, the fact that $\eta$ and $\theta$ are quasi-isomorphisms means that Proposition~\ref{prop:homotopy-equiv} yields $A_\infty$ homotopy inverses $\xi:\mca_\Delta[n] \to \mcm^\vee$ and $\kappa:\mcn[n] \to \mcn^\vee[1]$, respectively.  Further, we use coherence of the left square of Diagram~\eqref{eq:weak-CY-pf} and Lemma~\ref{lem:bimod-homotopy-prop} to see that 
\begin{equation*} \label{eq:weak-CY-homotopy}
    \kappa \pi_\mcn \sim \kappa \pi_\mcn \eta \xi \sim \kappa \theta \rho^\vee \xi \sim \rho^\vee \xi.
\end{equation*}
In particular, we obtain a homotopy $H: \mca_\Delta[n] \to \mcn^\vee$ between $\rho^\vee \xi$ and $\kappa \pi_\mcn$. Define a pre-morphism of $\mca$-bimodules $\phi:\mca_\Delta[n] \to \mca_\Delta^\vee =  \mcm^\vee \oplus \mcn^\vee$ by
\begin{equation*}
    \phi = (\xi, H).
\end{equation*}

We claim that the bimodule morphism $\kappa$ and the pre-morphism $\phi$ induce a weak right relative Calabi-Yau structure of dimension $n$ on the functor $\pi:\mca \to \mcc$.  By Lemma \ref{lem:very-weak-shortcut}, it suffices to show that the choice of explicit chain model $\zeta = (\kappa \pi_\mcn,\phi)$ is a bimodule quasi-isomorphism in the following commutative diagram:
\begin{equation} \label{eq:weak-CY-pf-2} \begin{tikzcd}
	\cdots \arrow[r] & \mca_\Delta[n-1] \arrow[r,"\pi_\mcn"] \arrow[d,"\zeta"] & \mcn[n-1] \arrow[r, "I_\mcn"] \arrow[d,"\kappa"] & \cyl(\rho)[n] \arrow[d,"\zeta'"] \arrow[r] & \cdots \\
	\cdots \arrow[r] &\cocyl(\rho)[-1] \arrow[r, "I_\mcn^\vee"] & \mcn^\vee \arrow[r,"\pi^\vee_\mcn"] & \mca_\Delta^\vee \arrow[r] & \cdots
\end{tikzcd}
\end{equation}
Recall from Section~\ref{ssec:mapping-cone-defn} that we have $\cyl(\rho) = \cone(\pi_\mcn)[-1]$, $\cocyl(\rho) = \cocone(\pi_\mcn)[1]$, and $\mcn = \pi^*\mcc_\Delta$.

We first verify that $\zeta$ is, indeed, a bimodule morphism. Expanding the definition of the cocylinder and writing out $\zeta:\mca_\Delta[n-1] \to \mcn^\vee \oplus \mcm^\vee[-1] \oplus \mcn^\vee[-1]$ componentwise yields
\[
    \zeta = (\kappa\pi_\mcn, \xi, H)
\]
Checking that $\zeta$ is an $A_\infty$ bimodule morphism comes down to the fact that $\kappa \pi_\mcn$ is a bimodule morphism in the first component, that $\xi$ is a bimodule morphism in the second component, and that $H$ is a homotopy between $\rho^\vee \xi$ and $\kappa \pi_\mcn$ in the last component.

To finish the proof, we need to show that $\zeta$ is a quasi-isomorphism. We already know that $\kappa$ is a quasi-isomorphism, as it is the homotopy inverse of $\theta$. For $\zeta:\mca_\Delta[n-1] \to \cocyl(\rho)$, we write $I_{\mcm^\vee}\xi = I_{\mcm^\vee}\Pi_{\mcm^\vee}\zeta $, where $I_{\mcm^\vee}$ and $\Pi_{\mcm^\vee}$ are the naïve inclusion and projection from Lemma \ref{lem:mapping-cyl-homotopy}. Since $I_{\mcm^\vee}\Pi_{\mcm^\vee}$ is homotopic to the identity $\bbid_{\cocyl(\rho)}$, it follows that $\zeta$ is homotopic to $I_{\mcm^\vee}\xi$. This further implies that $\eta \Pi_{\mcm^\vee}$ is a homotopy inverse for $\zeta$. Hence, $\zeta$ is a quasi-isomorphism. 
\end{proof}

\section{Pushoffs, Reeb Chords, and Disks}
\label{sec:copies}

In this section, we examine the geometric models that underlie the main algebraic objects in the paper.  The $n$-copy of a Legendrian link, described in Section~\ref{ssec:n-copy}, will lead to the definition of the augmentation categories $\Aug_\pm(\leg)$ and $\mcc$ in Section~\ref{ssec:aug-cat}.  The $(s,r)$-copy of the $2$-copy of a Legendrian knot, described in Sections~\ref{ssec:sr-copy} and \ref{ssec:2-copy}, will lead to the definition of the augmentation bimodules $\mcm_\pm$ and $\mcn$ in Section~\ref{ssec:aug-bimod}.  Finally, the $(s,r)$-copy of the separated $2$-copy of a Legendrian knot, described in Section~\ref{ssec:sep-2-copy}, will yield the structures that play a key role in the proof of the main theorem.

For each of these geometric models, we begin by examining the Reeb chords and immersed disks.  We then discuss how to identify disks in the copied links with enrichments of disks in the original Lagrangian diagram.   These ``disk identification lemmas'' will be important tools in the proof of the main theorem.

\subsection{The n-Copy of a Link}
\label{ssec:n-copy}

We begin by setting notation for the Reeb chords of the $n$-copy of a Legendrian link, and analyze the behavior of immersed disks in the $n$-copy.  These constructions will lead to the definition of the augmentation categories in Section~\ref{sec:aug-cat}.

\subsubsection{The n-Copy Construction}
\label{sssec:n-copy-geometry}

Given a Legendrian link $\leg$, we form its \dfn{$n$-copy $\nleg{n}$} by taking $n$ copies of $\leg$, pushing each successive copy in the negative Reeb ($z$) direction by a small amount, and then perturbing the results by the $1$-jet of a Morse function $f: \leg \to \rr$ in a small Weinstein neighborhood, with an additional perturbation near the critical points of $f$; see Figure~\ref{fig:n-copy}. The $n$-copy depends on the choice of $f$, though we do not explicitly include $f$ in the notation for readability.  We assume that $f$ has a unique maximum and a unique minimum on each component, though the theory in this section may be developed for more general perturbations.

\begin{figure}
\labellist
\small\hair 2pt
 \pinlabel {$1$} [ ] at 40 132
 \pinlabel {$2$} [ ] at 23 114
 \pinlabel {$3$} [ ] at 3 96
 \pinlabel {$1$} [ ] at 92 132
 \pinlabel {$2$} [ ] at 109 114
 \pinlabel {$3$} [ ] at 128 96
 \pinlabel {$1$} [r] at 170 105
 \pinlabel {$2$} [r] at 170 87
 \pinlabel {$3$} [r] at 170 69 
 \pinlabel {$a^{11}$} [b] at 67 111
 \pinlabel {$a^{21}$} [b] at 49 93
 \pinlabel {$a^{31}$} [b] at 31 74
 \pinlabel {$a^{12}$} [b] at 84 93
 \pinlabel {$a^{22}$} [b] at 67 74
 \pinlabel {$a^{32}$} [b] at 49 57
 \pinlabel {$a^{33}$} [b] at 67 38
 \pinlabel {$a^{23}$} [b] at 84 57
 \pinlabel {$a^{13}$} [b] at 103 74
 \pinlabel {$x^{23}$} [t] at 183 65
 \pinlabel {$x^{12}$} [t] at 200 48
 \pinlabel {$x^{13}$} [t] at 218 82
 \pinlabel {$y^{13}$} [t] at 237 82
 \pinlabel {$y^{12}$} [t] at 254 48
 \pinlabel {$y^{23}$} [t] at 272 65
 \pinlabel {Max} [t] at 208 196
 \pinlabel {Min} [t] at 245 196
\endlabellist

\centerline{\includegraphics{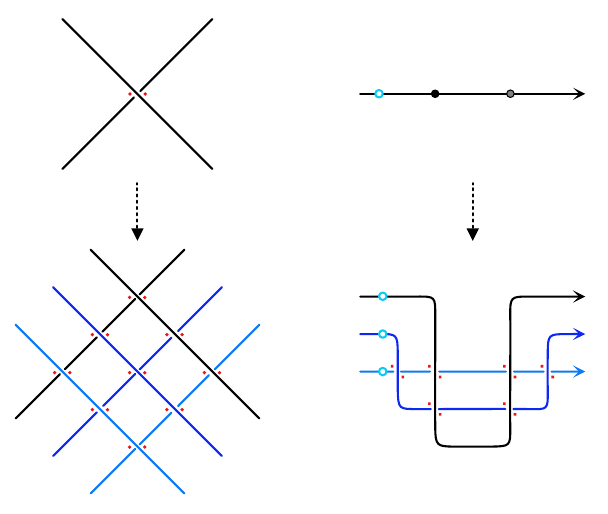}}
\caption{(a) For every crossing of $\leg$, there are $n^2$ crossings of $\nleg{n}$.  (b) For every critical point of the perturbing function $f$, there are $\frac{1}{2}n(n-1)$ crossings of $\nleg{n}$.}
\label{fig:n-copy}
\end{figure}

To set notation for the Reeb chords of $\nleg{n}$, we label the copies of $\nleg{n}$ from top to bottom by $\nleg{n}_1, \ldots, \nleg{n}_n$. There is a natural link grading on $\nleg{n}$ in which the set of Reeb chords splits into the union of sets $\reeb^{ij}$, each of which consists of Reeb chords that begin on the (lower) component $\nleg{n}_j$ and end on the (upper) component $\nleg{n}_i$, as in Section~\ref{ssec:dga-link}.

As shown in Figure~\ref{fig:n-copy}(a), for every Reeb chord $a_k$ of $\leg$, there are $n^2$ Reeb chords of $\nleg{n}$, one in each of the sets $\reeb^{ij}$.  We label these chords by $a^{ij}_k \in \reeb^{ij}$. For every critical point of the perturbing function $f$, there are $\frac{1}{2}n(n-1)$ Reeb chords of $\nleg{n}$, one in each set $\reeb^{ij}$ for $i>j$; see Figure~\ref{fig:n-copy}(b).  We denote by $x_l^{ij}$ the chord in $\reeb^{ij}$ at a maximum of $f$, and by $y_l^{ij}$ the chord in $\reeb^{ij}$ at a minimum. 

Since the $n$ copies of $\nleg{n}$ are separated by a small distance in the $z$ direction, the lengths of the Reeb chords $x_l^{ij}$ and $y_l^{ij}$ are much smaller than the lengths of the Reeb chords $a^{ij}_k$. Thus, we call the former chords \dfn{short chords}, collected in the set $\reeb_f$, and the latter chords \dfn{long chords}, which comprise the set $\reeb_\leg$.  We further divide the short chords into $\reeb_x$, which correspond to the maxima of $f$, and $\reeb_y$, which correspond to the minima of $f$.

We next place a base point on the original link $\leg$ at each maximum of $f$. We then place base points on every copy $\nleg{n}_i$ of $\nleg{n}$ preceding (with respect to the orientation of $\leg$) the corresponding half twist. Recall from Section~\ref{ssec:dga-link} that these base points correspond to invertible symbols  $(t_l^i)^{\pm 1}$, with $\base^{i}$ denoting the collection of symbols $(t_l^i)^{\pm 1}$ associated with base points on $\nleg{n}_i$.   As in Section~\ref{ssec:dga-link}, let $\gens^{ij} = \reeb^{ij} \cup \base^{ij}$, where $\base^{ij} = \emptyset$ if $i \neq j$ and $\base^i$ otherwise.

The sets of Reeb chords and base points are all \emph{graded} sets, with the gradings coming from \cite{nrssz:aug-sheaf}:
\begin{itemize}
    \item invertible generators $(t^i_k)^{\pm 1}$ have grading $|t^i_k| = |(t^i_k)^{-1}| = 0$;
    \item long chords $a^{ij}_k$ have grading $|a_k^{ij}| = |a_k|$;
    \item maxima $x_l^{ij}$ have grading $|x_l^{ij}| = 0$; and
    \item minima $y_l^{ij}$ have grading $|y_l^{ij}| = -1$.
\end{itemize} 

As in Section~\ref{ssec:ce-dga}, the Chekanov-Eliashberg DGA $\alg^n$ of the $n$-copy is generated by $\bigcup_{i,j} \gens^{ij}$.  

\subsubsection{Disks in the $n$-Copy}
\label{sssec:n-copy-disks}

The disks that define the differential of the Chekanov-Eliashberg DGA of an $n$-copy $\nleg{n}$ may be identified with ``thin'' disks that collapse into $\pi_{xy}(\leg)$ and ``thick'' disks that correspond to disks that define the Chekanov-Eliashberg differential of $\leg$.  This idea goes back to \cite{s1bundles} and the original proof of duality for linearized LCH \cite{duality}, with further developments most relevant to this paper in \cite{nrssz:aug-sheaf}.  The notation and work needed for this identification serves as a model for later, more subtle, disk identification lemmas that will be essential for the proof of the main theorem in Section~\ref{sec:main-pf}.

To enable translation between chords in the $n$-copy and chords in the original Legendrian, we introduce the \dfn{stick-together map} $s: \rr^2 \to \rr^2$, which retracts the Lagrangian diagram of $\nleg{n}$ onto the diagram of $\leg$.  The stick-together map induces the \dfn{algebraic projection} map  $\check{\alpha}: \alg^n \to \alg$, which is defined on the generators as follows:
\begin{itemize}
    \item A Reeb chord in $\reeb_{\leg}$ is mapped to the associated Reeb chord in $\reeb$.
    \item A Reeb chord in $\reeb_{f}$ is mapped to $1$.
    \item A base point in $\base^{i}$ is mapped to the associated base point in $\base$.
\end{itemize}
See Figure~\ref{fig:n-copy} once again. For compactness of notation, we will, at times, denote the image of $a \in \alg^n$ under $\check{\alpha}$ by $\check{a}$.  In particular, we have $\check{a}^{ij}_k = a_k$ and $\check{x}_l^{ij} = 1 = \check{y}_l^{ij}$.

The stick-together map divides the disks that define the differential in the $n$-copy into two types. A disk $u \in \Delta_{\nleg{n}}(a, \mba)$ is \dfn{thick} if $s \circ u$ is still immersed. In contrast, a disk $u \in \Delta_{\leg^n}(a, \mba)$ is \dfn{thin} if the image of $s \circ u$ lies entirely inside $\pi_{xy}(\leg)$.  A geometric argument using the convexity of the corners of disks in $\Delta_{\nleg{n}}$ and Figure~\ref{fig:n-copy}, or an examination of the formulae of \cite[Proposition 4.14]{nrssz:aug-sheaf} shows that thick and thin disks are the only two possibilities. 

Before analyzing thick and thin disks more precisely, we need to introduce some notation.  Given a choice of $n-1$ indices $1 \leq i_1<  \cdots < i_{n-1} \leq N$, we write the word $\mba = a_1 \cdots a_N$ as
\begin{equation} \label{eq:copy-form}
\mba = \mba_1 a_{i_1} \mba_2 a_{i_2} \mba_3 \cdots \mba_{n-1} a_{i_{n-1}} \mba_n.   
\end{equation}
This word could come from the generators for a single Legendrian, or could come from the generators for an $n$-copy.  When working with words in $\alg^n$, we say that a pair $(a,\mba)$ is in \dfn{positive $n$-copy form} with respect to the indices $\{i_1, \ldots, i_{n-1}\}$ if 
\begin{itemize}
    \item $a \in \reeb^{1,n}$,  
    \item $a_{i_k} \in \reeb^{k,k+1}$, and 
    \item $\mba_k$ a word in the alphabet $\gens^{kk}$.
\end{itemize}
Similarly, the pair $(a,\mba)$ is in \dfn{negative $n$-copy form} if $a \in \reeb^{n,1}$, $a_{i_k} \in \reeb^{n-k+1, n-k}$, and $\mba_k$ a word in the alphabet $\gens^{n-k+1, n-k+1}$. 

We turn to describing the thin disks. Propositions 4.14 and 4.26 of \cite{nrssz:aug-sheaf} or an examination of Figure~\ref{fig:n-copy-thin-disks} yield the following characterization of thin disks.  Note that the formula is simplified --- as in Proposition 4.26 of \cite{nrssz:aug-sheaf} --- by the assumption that $\nleg{n}$ is \dfn{simply perturbed}. Namely, $\leg^n$ is perturbed by a Morse function $f:\leg \to \rr$ with one maximum and one minimum on each component that are adjacent in the Lagrangian diagram of $\leg$, preceded (with respect to the orientation of $\leg$) by the unique base point on $\leg$.

\begin{lem}[$n$-Copy Thin Disk Identification] \label{lem:thin-disk}
    Suppose $\nleg{n}$ is simply perturbed and that $(a,\mba)$ is in positive $n$-copy form. The set $\Delta_{\leg^{n}}(a,\mba)$ consists of thin disks precisely when $(a,\mba)$ is in one of the following forms; see Figure~\ref{fig:n-copy-thin-disks}.  We suppress the indices of the $x$, $y$, and $t$ generators for readability.
    \begin{description}
    \item[Flowlines] 
    \[\begin{array}{ll}
        (x^{12}, y^{12}) & (x^{12}, t^{-1} y^{12} t)  \\
        (x^{13}, x^{12} y^{23}) & (x^{13}, t^{-1} y^{12} t x^{23}) 
    \end{array}\]
    \item[Partial Flowlines]
    \[\begin{array}{ll}
         (a^{12}, y^{12} a^{22})  &  (a^{13}, y^{12} a^{23})  \\
         (a^{12}, a^{11} y^{12})
         & (a^{13}, a^{12} y^{23}) 
    \end{array}\]
    \item[Constant Flowlines] $(y^{13}, y^{12} y^{23})$
    \end{description}
    In particular, a disk in $\Delta_{\leg^{n}}(a,\mba)$ is thin if and only if $\mba$ contains a $y$ generator. 
    
    If $(a,\mba)$ is in negative $n$-copy form, then no disk in $\Delta_{\leg^{n}}(a,\mba)$ is thin.
\end{lem}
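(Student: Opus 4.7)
The plan is to reduce the question to a local analysis: since $s \circ u$ is not immersed, a thin disk $u$ must lie in a $C^0$-small neighborhood of $\pi_{xy}(\leg)$ in which adjacent strands of $\nleg{n}$ run parallel, with non-parallel behavior only near the crossings of $\leg$ (producing long-chord corners $a^{ij}_k$) and near the critical points of the Morse perturbation $f$ (producing short-chord corners $x^{ij}$ and $y^{ij}$). The simply perturbed hypothesis makes each critical point of $f$ unique on its component and places the basepoint immediately before the maximum, so the local models at each feature are essentially rigid and enumeration reduces to a finite case-check. This case-check either can be done by hand using Figure~\ref{fig:n-copy} or can be extracted from the general formulas in \cite[Propositions 4.14 and 4.26]{nrssz:aug-sheaf} by specializing to the simply perturbed configuration.

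For the positive $n$-copy form, I would organize the enumeration by the location of the positive corner. If the positive corner is a max $x^{12}$ or $x^{13}$, the disk traces a Morse flowline (or a pair of parallel flowlines) from the max down to a min, with optional basepoint decorations arising because the boundary may wind past the basepoint $t$ placed just before the max; this yields the four flowline cases. If the positive corner is a long chord $a^{1n}$ at a crossing $a$ of $\leg$, the disk sweeps across the crossing and closes up by running along one strand to a min $y^{ij}$; the simply perturbed assumption restricts the closing min to the one adjacent to $a$ along the boundary of $\leg$, either just before or just after $a$, producing the four partial flowline cases. If the positive corner is a min $y^{13}$, the only option is the tiny triangular constant flowline $(y^{13}, y^{12} y^{23})$ supported near the minimum. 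The ``$u$ is thin iff $\mba$ contains a $y$ generator'' characterization then follows by inspection: every thin disk in the list terminates at a min, and conversely the presence of any $y$-corner forces the disk to be locally squeezed between parallel strands.

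For the negative $n$-copy form, the claim is that no thin disks exist, which should follow from the same local analysis together with the observation that the pushoff used to form $\nleg{n}$ orients the short chords in a fixed direction: every $x$ and $y$ chord lies in some $\reeb^{ij}$ with $i<j$. A thin disk with positive corner $a \in \reeb^{n,1}$ would need to close up via short chords in slots $\reeb^{k+1,k}$, but no such chords exist in the simply perturbed configuration, so the enumeration is empty. The main obstacle in carrying this out cleanly is the non-existence argument: it requires tracking the orientation of the pushoff against the Reeb-sign convention of Figure~\ref{fig:disk-defn}(a), which is why it is easiest to invoke the explicit classification in \cite[Proposition 4.26]{nrssz:aug-sheaf} rather than redo the local model by hand.
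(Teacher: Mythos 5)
Your overall route is the same as the paper's: the paper gives no written proof of this lemma beyond citing \cite[Propositions 4.14 and 4.26]{nrssz:aug-sheaf} and a direct combinatorial check against Figures~\ref{fig:n-copy} and \ref{fig:n-copy-thin-disks}, and your plan (local models at crossings of $\leg$ and at the critical points of $f$, case-check organized by the location of the positive corner, with NRSSZ as a fallback) is that same argument spelled out a bit further. Your negative-form argument is also in the right spirit: in the convention the lemma itself uses, every short chord is mixed and sits in a slot $\reeb^{ij}$ with $i<j$, so a negative-form word contains no $x$ or $y$ generator at all, and since every thin disk must carry a $y$ corner the enumeration is empty.

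One step of your sketch is inaccurate, although it does not change the list of forms. In the simply perturbed setting there is a \emph{single} minimum on the knot, and the strip giving a partial flowline from a crossing $a$ is not required to reach it without passing other crossings of $\leg$: only the \emph{boundary} of the disk is constrained to lie on the diagram, so the interior of the thin strip may cross the transverse copies at intervening crossings. Consequently partial flowlines of the listed forms occur for \emph{every} crossing $a$, not only for crossings adjacent to the minimum; this is exactly what is used later, e.g.\ in the term $m_+^1(y^+) = \left(\aug_1(a)+\aug_2(a)\right)a^+$ of Lemma~\ref{lem:aug+disks} and in Example~\ref{ex:running-aug-cat}. What the simply perturbed hypothesis actually buys is that the single adjacent max/min pair with the basepoint just before the maximum pins down the $t^{\pm1}$ decorations (so no extra $x$, $y$, or $t$ letters appear in the listed words), while the reason a thin strip can only close off at the minimum is the Reeb-sign pattern: both quadrants between the two copies at an $x$ crossing carry positive signs and both such quadrants at a $y$ crossing carry negative signs, so a negative short-chord termination is only possible at the minimum. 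With that correction, your enumeration and the ``thin if and only if $\mba$ contains a $y$ generator'' characterization go through as in the paper.
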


\begin{figure}
\labellist
\small\hair 2pt
 \pinlabel {$1$} [r] at 8 284
 \pinlabel {$2$} [r] at 8 275
 \pinlabel {$3$} [r] at 8 266
 \pinlabel {$1$} [b] at 81 301
 \pinlabel {$2$} [b] at 90 301
 \pinlabel {$3$} [b] at 99 301
 \pinlabel {$x$} [t] at 28 253
 \pinlabel {$y$} [t] at 61 253
 \pinlabel {$a$} [ ] at 107 290
 \pinlabel {Flowlines} [t] at 123 169
 \pinlabel {Partial Flowlines} [t] at 123 12
 \pinlabel {Constant Flowlines} [t] at 307 110
\endlabellist
    \centering
    \includegraphics[width=4.9in]{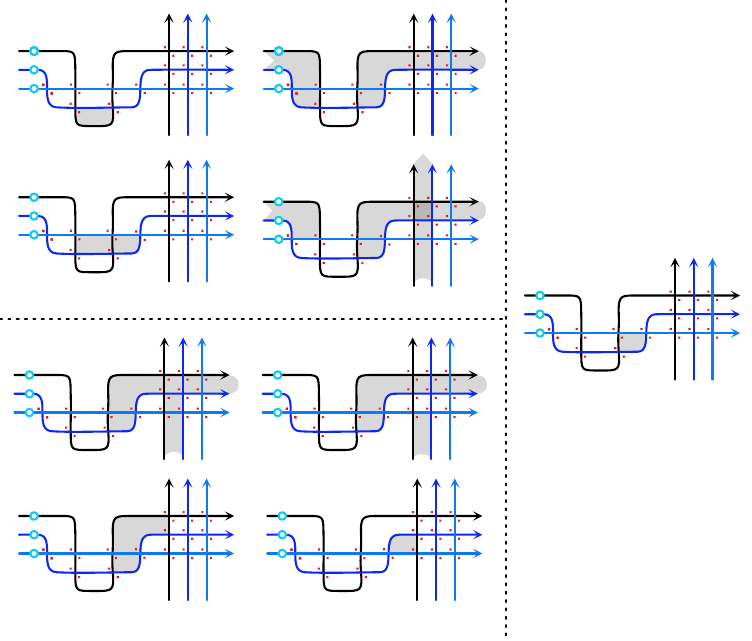}
    \caption{The thin disks in the $n$-copy of a Legendrian link take one of these forms, as listed in Lemma~\ref{lem:thin-disk}. We will use the red dots to indicate crossing information in this and subsequent figures that display disks in Lagrangian diagrams.}
    \label{fig:n-copy-thin-disks}
\end{figure}

We next describe thick disks, building on the definition of $\Delta_{\leg}(a, \mba)$.

\begin{defn} \label{defn:enriched-disk}
    An \dfn{$m$-enriched disk} is a pair $(u,\mathbf{w})$ consisting of a disk $u \in \Delta_\leg(a, \mba = a_1 \cdots a_N)$ and a multiset of indices $\mbw$ of total multiplicity $m$ whose elements are from $\{1, \ldots, N\}$, where an element $i$ may have multiplicity greater than $1$ only if $a_i \in \base_-$. The set of $m$-enriched disks with corners $(a,\mba)$, up to smooth reparametrization, is denoted $\Delta^m_\leg(a, \mba)$.
\end{defn}

\begin{figure}
\labellist
\small\hair 2pt
 \pinlabel {$a$} [ ] at 17 80
 \pinlabel {$a_1$} [ ] at 122 80
 \pinlabel {\textcolor{purple}{$2$}} [ ] at 68 100
 \pinlabel {$a$} [ ] at 169 80
 \pinlabel {$a_1$} [ ] at 329 80
 \pinlabel {$x$} [t] at 222 70
 \pinlabel {$y$} [t] at 271 70
 \pinlabel {$1$} [ ] at 176 138
 \pinlabel {$2$} [ ] at 185 138
 \pinlabel {$3$} [ ] at 194 138
 \pinlabel {$4$} [ ] at 203 138
 \pinlabel {$1$} [ ] at 342 115
 \pinlabel {$2$} [ ] at 342 106
 \pinlabel {$3$} [ ] at 342 97
 \pinlabel {$4$} [ ] at 342 88
\endlabellist
    \centerline{\includegraphics{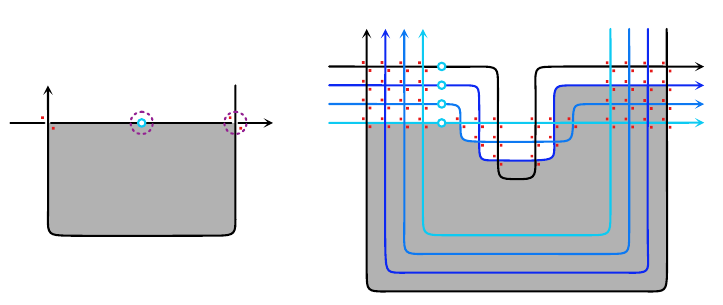}}
    \caption{The disk at left is an enriched disk in $\Delta_\leg^3(a, a_1 t^{-1})$ with $\mathbf{w}=\{1, 2,2\}$. The disk at right is its lift to a disk in $\Delta_{\nleg{4}}(a^{14}, a_1^{12} x^{23}x^{34} (t^4)^{-1})$, following  Lemma~\ref{lem:n-copy-dil}.}
    \label{fig:n-copy-dil}
\end{figure} 

Enriched disks allow us to identify thick disks in the diagram of the $n$-copy with objects in the diagram of original knot, as shown in Figure~\ref{fig:n-copy-dil}.  One direction of such an identification involves the stick-together map.  Given $u \in \Delta_{\nleg{n}}(a, \mba)$, with $(a,\mba)$ in positive $n$-copy form as in Equation \eqref{eq:copy-form}, let $s \circ u$ be the stick-together disk in the Lagrangian diagram of $\leg$.  We describe how $s \circ u$ induces an enriched disk $(\check{u}, \mbw(u))$ in $\Delta^{n-1}_{\leg}(\check{a}, \check{\mba})$, with enrichment $\mbw(u)$ to be determined, called the \dfn{projection} of $u$. 

If a convex corner of $u$ is labeled with $a \in \reeb_\leg$, then an examination of the configuration of crossings in Figure~\ref{fig:n-copy} shows that $s \circ u$ has a convex corner labeled by $\check{a} \in \reeb$ decorated with the same sign as $a \in \reeb_\leg$; we enrich $\check{u}$ by adding the position of $\check{a}$ in $\check{\mba}$ to $\mbw(u)$ if $a$ is one of the $a_{i_k}$, but not if $a$ is in one of the $\mathbf{a}_i$. Similarly, if the boundary of $u$ passes through a basepoint $t^{\pm 1} \in \base^{i}$, then $s \circ u$ passes through the corresponding basepoint $\check{t}^{\pm 1}$; we do not yet enrich these marked points.

We are left to understand the relationship between $u$ and $s \circ u$ near negative corners of $u$ at maxima $x \in \reeb^{i,i+1}_x$.  Since $\check{x} = 1$, we record the corner of $u$ at $x$ in $\check{u}$ by enriching the position in $\check{a}$ of the associated basepoint (which necessarily appears in $\mba$).  As can be seen in Figure~\ref{fig:n-copy}, if the orientations of $\nleg{n}$ and the boundary of $u$ agree at $x$, then at most one $x$ can serve as a negative corner; this corresponds to the associated marked point being enriched once.  On the other hand, if the orientations disagree, then multiple generators $x$ can serve as negative corners, and the position of the marked point may be enriched multiple times. See Figure~\ref{fig:n-copy-dil} for an example. This completes the description of the enriched disk $\check{u}$.

Conversely, we may also describe a \dfn{(geometric) lift} of an enriched disk $(v, \{m_1 i_1, \ldots, m_l i_l\}) \in \Delta^{n-1}_\leg(a, \mba)$ to a thick disk in the $n$-copy.  It suffices to specify the corners of the lift $\hat{v}$ of $v$ in $\nleg{n}$, as this will also specify the lift of the boundary.  We start by lifting $a$ to $a^{1n}$.  For each $k \in \{1, \ldots, l\}$, let $r_k = \sum_{j=1}^k m_j$.  We lift an enriched Reeb chord $a_{i_k}$ to $a_{i_k}^{r_k,r_k+1}$.  Similarly, we lift an enriched marked point $t_{i_k}^{\pm 1}$ with multiplicity $m_k$ to $m_k$ corners between strands $r_{k-1}+1$ and $r_{k-1}+2$ up to corners between strands $r_k$ and $r_k+1$ at the nearby maxima, preceded by the marked point $t^{r_{k-1}+1}_{i_k}$ or succeeded by $\left(t_{i_k}^{r_k+1}\right)^{-1}$. Note that these additional corners at the maxima add $m_k$ to the strand number, and hence the instructions for non-enriched generators and subsequent enriched corners or marked points are consistent.  Further, we add corners for all non-enriched generators in $\mba_i$ to the corresponding self-crossings (or marked points) in $\gens^{r_{i-1} r_{i-1}}$. An examination of the configuration of crossings in Figure~\ref{fig:n-copy} shows that the lifted corners yield an immersed disk in $\pi_{xy}(\nleg{n+1})$ with one positive corner at $a^{1n}$. Again, see Figure~\ref{fig:n-copy-dil} for an example of this process.

It is straightforward to check that the lifting process $v \mapsto \hat{v}$ inverts the projection process $u \mapsto \check{u}$ and vice versa. A similar projection / lifting framework works for disks in negative $n$-copy form, though this time, there are no corners at maxima in the $n$-copy, and no enrichments at the marked points for the enriched disks. We summarize these findings in the following lemma.

\begin{lem}[$n$-Copy Thick Disk Identification] 
\label{lem:n-copy-dil}
    Suppose $(a,\mba)$ is in positive $n$-copy form with no generators at the minima of $f$. There is a bijective correspondence between thick disks $u \in \Delta_{\nleg{n}}(a, \mba)$ and $(n-1)$-enriched disks $(s\circ u, \mbw(u))$ in $\Delta^{n-1}_{\leg}(\check{a}, \check{\mba})$.  An identical statement holds when $(a,\mba)$ is in negative $n$-copy form. 
\end{lem}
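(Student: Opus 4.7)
The plan is to make rigorous the projection and lifting constructions sketched in the paragraphs just before the lemma statement, and then verify that they are mutually inverse.

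First, I would show that the projection $u \mapsto (\check{u}, \mbw(u))$ produces a well-defined $(n-1)$-enriched disk. That $s \circ u$ is an immersion into $\pi_{xy}(\leg)$ is exactly the definition of $u$ being thick. That each corner of $\check{u}$ is convex with the same Reeb sign as the corresponding corner of $u$ is a local check at the crossing configuration in Figure~\ref{fig:n-copy}(a); marked points along the boundary descend to marked points whose orientation agreement with $\leg$ matches that of the lifted boundary with $\nleg{n}$. The key numerical check is that the total enrichment multiplicity is exactly $n-1$: since $(a,\mba)$ is in positive $n$-copy form, $a \in \reeb^{1,n}$, so the boundary of $u$ must start on strand $1$ and return to strand $n$. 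Each long-chord negative corner of the form $a_{i_k} \in \reeb^{k,k+1}$ and each negative corner at a short chord $x \in \reeb_x$ bridges adjacent strands, while corners in the $\mba_i$ blocks and boundary arcs in the $\nleg{n}_i$ strands keep the strand index constant. A strand-counting argument then shows that the total number of bridging corners equals $n-1$, which matches the sum of the multiplicities recorded in $\mbw(u)$.

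Second, I would verify that the lifting map $(v, \mbw) \mapsto \hat{v}$ produces a thick disk in $\pi_{xy}(\nleg{n})$ with the prescribed corners. Starting from the positive corner at $a^{1n}$, I would trace the prescribed boundary lift and check, by the same local analysis of Figure~\ref{fig:n-copy}, that each lifted corner remains convex and carries the correct Reeb sign. The partial sums $r_k = \sum_{j \leq k} m_j$ guarantee that after the $k$th enrichment position is processed, the boundary lies on strand $r_k + 1$ (measuring from the top), so the non-enriched generators in the interleaved block $\mba_k$ correctly lift to $\gens^{r_{k-1}, r_{k-1}}$, and the final strand index after processing all $n-1$ enrichments is $n$, matching the negative end of $a^{1n}$. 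The constraint that $m_k > 1$ only when the enriched marked point lies in $\base_-$ is precisely what the Reeb sign configuration at a maximum permits: when the boundary orientation disagrees with $\leg$, several short $x$-chords at the same maximum may be stacked as negative corners, whereas in the opposite case at most one such corner may appear.

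Third, I would show the two constructions are mutually inverse by a local verification at each corner and marked point. Applying $s$ to $\hat{v}$ collapses the strand separation and recovers $v$, and the positions recorded in $\mbw(v)$ can be read off from the inserted $x$-corners and the strand-bridging long-chord corners in $\hat{v}$; this inverts the lift. In the opposite direction, an arbitrary thick disk $u$ on $\nleg{n}$ has its strand transitions prescribed exactly by the data $(\check{u}, \mbw(u))$, so the lifting recipe applied to $(\check{u}, \mbw(u))$ reproduces $u$. For the negative $n$-copy case, the same scheme works but is simpler: thin disks are ruled out by the last sentence of Lemma~\ref{lem:thin-disk}, and by Figure~\ref{fig:n-copy}(b) no negative short-chord corners occur in $\reeb^{i+1,i}$, so no marked-point enrichments appear and every enrichment records a long-chord transition.

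The main obstacle is the bookkeeping at negative corners at maxima, where several short chords $x^{r,r+1}$ may be stacked at a single projected marked point. Handling this requires carefully matching the ordered sequence of $x$-corners and interspersed basepoint lifts $t^{r+1}_{i_k}$ or $(t^{r+1}_{i_k})^{-1}$ produced by the lift with the multiplicity $m_k$ recorded in $\mbw$, and checking that the resulting boundary word agrees as a word in $\alg^n$ (not just as a multiset of corners). The hypotheses that $\nleg{n}$ is simply perturbed and that $\mba$ contains no generators at minima eliminate competing thin-disk contributions and ensure that the basepoint placement described in Section~\ref{sssec:n-copy-geometry} lines up with this bookkeeping.
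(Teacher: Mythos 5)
Your proposal is correct and follows essentially the same route as the paper: the paper's argument for this lemma is exactly the projection $u \mapsto (\check u, \mbw(u))$ and lifting $(v,\mbw) \mapsto \hat v$ constructions described immediately before the statement, together with the (there unspelled) check that they are mutually inverse, which you flesh out via the strand-counting and maxima bookkeeping. The extra details you supply (total enrichment multiplicity $n-1$ from strand transitions, the $\base_-$ multiplicity condition matching the orientation dichotomy at maxima, and the simpler negative-form case) are consistent with the paper's treatment.
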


\begin{rem}
The strands of the $n$-copy traversed by the boundary of the disk need not be consecutive.  That is, the lemma holds for indices $1 \leq j_1 < \cdots < j_{k+1} \leq n$, with $a \in \reeb_\leg^{j_1,j_{k+1}}$ and $\mba$
a word in the alphabet $\gens^n$ with $a_{i_k} \in \reeb^{j_k,j_{k+1}}$ and $\mba_k$ a word in the alphabet $\gens^{j_k,j_k}$ with no generators at the minima of $f$.  The proof is the same as the one above, with somewhat more work when taking care of indices. Such a characterization is useful for directly checking $A_\infty$ relations, but will not be discussed further in this paper.
\end{rem}

\subsection{The (s,r)-Copy of a Link}
\label{ssec:sr-copy}

Our next step in setting the geometric underpinnings of the augmentation categories and their associated bimodules is to understand the $(s,r)$-copy of an ordered two-component Legendrian link.  This construction  involves taking the $s$-copy of one component and the $r$-copy of the other. Of particular interest are the $(s,r)$-copies of the $2$-copy of a Legendrian knot and of the separated $2$-copy of a Legendrian knot, both of which will be examined in greater detail in subsequent subsections. 

\subsubsection{The (s,r)-Copy Construction}
\label{sssec:sr-copy}

We begin by refining the link grading notation of Section~\ref{ssec:dga-link} in the presence of a pair of link gradings.  Given an ordered $2$-component Legendrian link $\leg = \leg_1 \sqcup \leg_2$ with the canonical $2$-component link grading $O \times \Upsilon$, we denote the \dfn{$(s,r)$-copy} link formed by the $s$-copy of the first component (with perturbation function $f_1$) and the $r$-copy of the second component (with perturbation function $f_2$) by $\nleg{s,r}$. Note that the ordering of the components of $\leg$ matters. 

Label the components of the $s$-copy from bottom to top and those of the $r$-copy from top to bottom.  Finally, place base points on $\nleg{s,r}$ just before the maxima of the functions $f_i$, as in the case of an $n$-copy. Note that there could be crossings between copies of $\leg_1$ and $\leg_2$ between the maximum and minimum of the perturbation functions.  See Figure~\ref{fig:s-r-copy}.

There is a $\max(s,r)$-component link grading $o \times \upsilon$ on $\nleg{s,r}$ so that, along with $O \times \Upsilon$, the Reeb chords of $\nleg{s,r}$ split into the union of the sets $\reeb^{ij|kl}$ which consists of Reeb chords that start on the $l^{th}$ component of the $r$-copy of the $j^{th}$ component of $\leg$ and end on the $k^{th}$ component of the $s$-copy of the $i^{th}$ component of $\leg$. We denote generators in $\reeb^{ij|kl}$ by $a^{ij|kl}$ when convenient. Similarly, the link gradings $O \times \Upsilon$ and $o \times \upsilon$ also split the marked points of $\nleg{s,r}$ into sets $\base^{i|k}$, and we denote a generator therein by $t^{i|k}$. As usual, we also define $\gens^{ij|kl} = \reeb^{ij|kl} \cup \base^{ij|kl}$, with $\base^{ij|kl} = \emptyset$ if $i \neq j$ or $k \neq l$. Call the elements of $\reeb^{ij|kl}$ with $i \neq j$ \dfn{mixed}, and those with $i=j$ \dfn{pure}.  

We extend the algebraic projection map $\check{\alpha}$ to the $(s,r)$-copy setting as a map the Chekanov-Eliashberg DGA of $\leg^{s,r}$ to that of $\leg$ induced from the stick-together map.  On the generators, we define 
\begin{align*}
\check{\alpha}(a^{ij|kl}) &= \check{a}^{ij|kl} = a^{ij} \\  \check{\alpha}(t^{i|k}) &= \check{t}^{i|k} = t^i
\end{align*}
Thick and thin disks in $\leg^{s,r}$ are defined  as in Section \ref{ssec:n-copy}. 

\begin{figure}
\labellist
\small\hair 2pt
 \pinlabel {$\leg_1$} [ ] at 14 13
 \pinlabel {$\leg_2$} [ ] at 347 13
 \pinlabel {$x^{11|\cdot \cdot}$} [t] at 140 110
 \pinlabel {$y^{11|\cdot \cdot}$} [l] at 195 68
 \pinlabel {$a^{11|\cdot \cdot}$} [r] at 85 62
 \pinlabel {$b_1^{21|\cdot \cdot}$} [b] at 172 117
 \pinlabel {$b_2^{12|\cdot \cdot}$} [t] at 172 10
 \pinlabel {$a^{22|\cdot \cdot}$} [l] at 257 65
 \pinlabel {$x^{22|\cdot \cdot}$} [l] at 352 50
 \pinlabel {$y^{22|\cdot \cdot}$} [l] at 352 78
 \pinlabel {$2$} [ ] at 96 100
 \pinlabel {$1$} [ ] at 114 85
 \pinlabel {$3$} [ ] at 336 105
 \pinlabel {$2$} [ ] at 322 95
 \pinlabel {$1$} [ ] at 308 85
\endlabellist
\centerline{\includegraphics{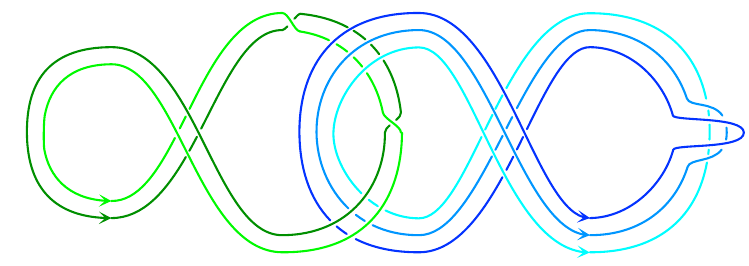}}
\caption{A Lagrangian diagram of the $(2,3)$-copy $\leg^{2,3}$ of a $2$-component link $\leg$. }
\label{fig:s-r-copy}
\end{figure}

\subsubsection{Disks in the $(s,r)$-Copy}
\label{sssec:sr-copy-disks}

The analysis of thin and thick disks for the $(s,r)$-copy of a Legendrian link is similar to that for the $n$-copy, though the notation is more involved.  Given a choice of $s-1$ indices $1 \leq i_1 < \cdots < i_{s-1} \leq M$ and $r-1$ indices $1 \leq j_1 < \cdots < j_{r-1} \leq N$, we write the word $\mba \ul{w} \mbb = a_M \cdots a_1 \ul{w} b_1 \cdots b_N$ as
\begin{equation} \label{eq:s-r-word}
    \mba \ul{w} \mbb = \mba_{s}a_{i_{s-1}} \mba_{s-1} \cdots \mba_2 a_{i_1}\mba_{1} \ul{w} \mbb_{1}b_{j_1} \mbb_2 \cdots \mbb_{r-1}b_{j_{r-1}}\mbb_{r}.
\end{equation}
We say that the position index of $\ul{w}$ is $0$. As before, the word could come from the generators for a single Legendrian link, or from the generators for the $(s,r)$-copy. When working with the $(s,r)$-copy, we say a pair $(\ul{z},\mba \ul{w} \mbb)$ is in \dfn{(s,r)-copy form} if \begin{itemize}
    \item $z \in \reeb^{12|sr}$,
    \item $w \in \reeb^{12|11}$,
    \item $a_{i_k} \in \reeb^{11|k+1,k}$ and $\mba_k$ are words in the alphabet $\gens^{11|kk}$, and
    \item $b_{j_k} \in \reeb^{22|k,k+1}$ and $\mbb_k$ are words in the alphabet $\gens^{22|kk}$. 
\end{itemize}
We underline $z$ and $w$ in the pair to emphasize the location of the mixed chords, i.e.\ chords in $\gens^{ij|kl}$ for $i \neq j$. Note that all generators in $\mba$ come from chords or basepoints of $\leg_1$, while all generators in $\mbb$ come from $\leg_2$.  As can be seen in Figure~\ref{fig:s-r-disks}, disks in $\Delta_{\nleg{s,r}}(\ul{z},\mba \ul{w} \mbb)$ have a positive corner at a mixed chord $z$, a negative corner at a mixed chord $w$ sandwiched by a collection of negative corners lying in $\nleg{s}_1$, $s-1$ of which are mixed relative to $\nleg{s}_1$, and a similar collection for $\nleg{r}_2$. 

\begin{figure}
\labellist
\small\hair 2pt
 \pinlabel {$\leg_1$} [ ] at 22 5
 \pinlabel {$\leg_2$} [ ] at 150 125
 \pinlabel {$2$} [b] at 27 155
 \pinlabel {$1$} [b] at 37 155
 \pinlabel {$1$} [r] at 5 116
 \pinlabel {$2$} [r] at 5 126
 \pinlabel {$3$} [r] at 5 136
 \pinlabel {$1$} [b] at 99 155
 \pinlabel {$2$} [b] at 109 155
 \pinlabel {$3$} [b] at 119 155
 \pinlabel {$z^{12}$} [l] at 40 143
 \pinlabel {$w^{12}$} [l] at 40 18
 \pinlabel {$x^{11}$} [r] at 24 70
 \pinlabel {$y^{11}$} [r] at 24 90
 \pinlabel {$a^{22}$} [l] at 122 108
 \pinlabel {$x^{22}$} [r] at 110 10
 \pinlabel {$y^{22}$} [l] at 130 43
\endlabellist
\centerline{\includegraphics{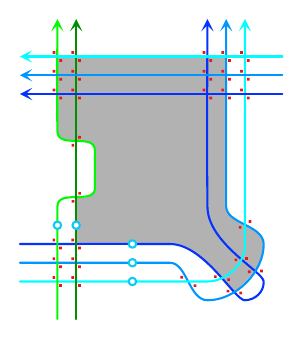}}
\caption{A thick disk in $(s,r)$-copy form in the set $\Delta_\leg(\ul{z}^{12|23}, x^{11|21} (t^{1|1})^{-1} \ul{w}^{12|11} t^{2|1} x^{22|12} a^{22|23})$.}
\label{fig:s-r-disks}
\end{figure}

Rather than identifying thin and thick disks at this stage using flowlines and enriched disks, we shall describe the process in detail only when the link in question is the $2$-copy and separated $2$-copy of a Legendrian knot.  That said, there is no obstruction to working with enriched disks for Legendrian links.

\subsection{The 2-Copy}
\label{ssec:2-copy}

Two special cases of the $(s,r)$-copy construction  will be important for the definition and analysis of the ``augmentation bimodules'' in Section~\ref{sec:aug-cat}. The first arises when the underlying link is the $2$-copy of a Legendrian knot $\leg$, ordered either from top to bottom (we call this the \dfn{$2$-copy} $\tcleg$) or bottom to top (the \dfn{opposite $2$-copy} $\otcleg$). The second special case, which we will encounter in the next subsection, involves separating the two copies by a large distance in the $z$ direction. In both cases, there is an additional ``stick-together'' map from the two-component link to a Legendrian knot, allowing us to identify disks in the $(s,r)$-copy with disks in the \emph{knot}, not just the $2$-copy \emph{link}. 

\subsubsection{The 2-Copy Construction}
\label{sssec:2-copy}

For each crossing $a$ of $\leg$, there are four types of crossings in the $(s,r)$-copy of the $2$-copy $\tcleg =  \tcleg_1 \sqcup \tcleg_2$:  those of the form $a^{12|kl}$ (which, again, we call mixed chords), those of the form $a^{21|kl}$ (which we will ignore), those of the form $a^{11|kl}$, and those of the form $a^{22|kl}$ (both of which we refer to as pure chords). Similarly, there are two kinds of crossings induced by critical points: those produced by perturbing the original $2$-copy of $\leg$ and those produced by perturbing the $(s,r)$-copy of $\tcleg$. The former type yields mixed Reeb chords in $\overline{\reeb}^{12|kl}$ (for any $i,j$) and the latter type yields pure Reeb chords in $\overline{\reeb}^{11|kl}$ (for $l<k$) and $\overline{\reeb}^{22|kl}$ (for $k<l$).  We denote both types by $x$ and $y$. 

From this point on, we assume that $\tcleg$ is simply perturbed and its $(s,r)$-copy is perturbed by a pair of Morse functions with the maxima and minima placed as shown in Figure~\ref{fig:2-copy-crit}. We call this a \dfn{simply perturbed $(s,r)$-copy of $\tcleg$}.  In this configuration, the combinatorics of the Lagrangian diagram of $\tcleg^{s,r}$ matches that of the diagram of a simply perturbed $\nleg{s+r}$, up to relabeling.

\begin{figure}
\labellist
\small\hair 2pt
 \pinlabel {$\leg$} [t] at 45 10
 \pinlabel {$\tcleg$} [t] at 170 10
 \pinlabel {$\tcleg^{s,r}$} [t] at 300 10
 \pinlabel {Max} [r] at 47 52
 \pinlabel {Min} [l] at 60 52
 \pinlabel {Maxima} [r] at 147 49
 \pinlabel {Minima} [l] at 185 49
 \pinlabel {$1$} [r] at 240 100
 \pinlabel {$2$} [r] at 240 110
 \pinlabel {$1$} [r] at 240 82
 \pinlabel {$2$} [r] at 240 72
 \pinlabel {$3$} [r] at 240 62
\endlabellist
    \centering
    \includegraphics[width=4.9in]{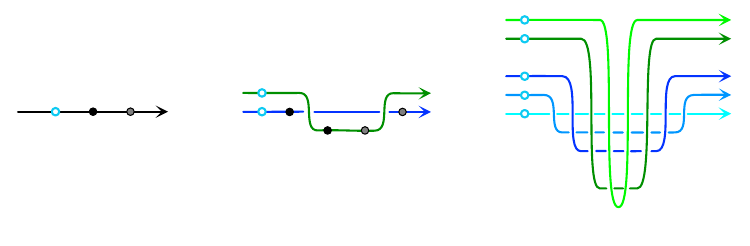}
    \caption{The construction of a simply perturbed $(s,r)$-copy of the $2$-copy near the critical points.}
    \label{fig:2-copy-crit}
\end{figure}

The description of the Reeb chords for the opposite $2$-copy $\otcleg$ is similar, though the critical points of the perturbing function induce generators in $\ul{\reeb}^{21|kl}$ rather than $\ul{\reeb}^{12|kl}$.  This is because in the opposite $2$-copy, the bottom copy is labeled $1$, and there are no Reeb chords that go from the top to the bottom copy at a critical point.

\subsubsection{Disks in the $2$-Copy}
\label{sssec:2-copy-disks}

Since either $2$-copy can be further retracted onto the original knot, disks in $\tcleg^{s,r}$ and $\otcleg^{s,r}$ can be classified thin or thick with respect to the original Legendrian $\leg$. As before, we can classify thin and thick disks in the $(s,r)$-copy of the two $2$-copies.
 
\begin{lem}[$2$-copy Thin Disk Identification] 
\label{lem:s-r-2-copy-thin-disk}
    Suppose $(\ul{z}, \mba \ul{w} \mbb)$ is in $(s,r)$-copy form for $\tcleg$.   The set of disks $\Delta_{\tcleg^{s,r}}(\ul{z}, \mba \ul{w}\mbb)$ consists of thin disks precisely when $(\ul{z}, \mba \ul{w} \mbb)$ is in of one of the following forms:
    \begin{description}
        \item[Flowlines] 
        \begin{subequations}
        \begin{align}
         &(\ul{x}^{12|11}, t^{-1} \ul{y}^{12|11} t) && (\ul{x}^{12|11}, \ul{y}^{12|11}) \\
        &(\ul{x}^{12|21}, t^{-1}y^{11|21} t \ul{x}^{12|11}) && (\ul{x}^{12|12}, \ul{x}^{12|11} y^{22|12}) \\
        &(\ul{x}^{12|12}, t^{-1} \ul{y}^{12|11} x^{12|11}) &&(\ul{x}^{12|21}, x^{11|21} \ul{y}^{12|11}).
        \end{align}
        \end{subequations}
        
        \item[Partial Flowlines] 
        \begin{subequations}
        \begin{align}
        &(\ul{a}^{12|12}, \ul{a}^{12|11} y^{22|12}) && (\ul{a}^{12|21}, y^{11|21} \ul{a}^{12|11}) \\
        &(\ul{a}^{12|11}, a^{11|11} \ul{y}^{12|11}) && (\ul{a}^{12|11}, \ul{y}^{12|11} a^{22|11}) \label{eq:2-thin-partial-2}\\
         &(\ul{a}^{12|21}, a^{11|21} \ul{y}^{12|11}) && (\ul{a}^{12|12}, \ul{y}^{12|11} a^{22|12}). \label{eq:2-thin-partial-3}
           \end{align}
        \end{subequations}
        \item[Constant Flowlines] $(\ul{y}^{12|21}, y^{11|21} \ul{y}^{12|11})$ or $(\ul{y}^{12|12}, \ul{y}^{12|11} y^{22|12}$).
    \end{description}
    In particular, a disk in $\Delta_{\tcleg^{s,r}}(\ul{z},\mba \ul{w} \mbb)$ is thin with respect to $\leg$ if and only if $\mba$, $w$, or $\mbb$ contains a $y$ generator.

    Similarly, the moduli spaces consist of thin disks in one of the following forms, both of which are partial flowlines:
\[(\ul{a}^{12|12},\ul{a}^{12|11}y^{22|12}),\quad (\ul{a}^{12|21},y^{11|21} \ul{a}^{12|11}).\]
    In particular, 
    if $(\ul{z}, \mba \ul{w} \mbb)$ is in $(s,r)$-copy form for $\otcleg$, a disk in $\Delta_{\otcleg^{s,r}}(\ul{z},\mba \ul{w} \mbb)$ is thin with respect to $\leg$ if and only if  $\mba$ or $\mbb$ contains a $y$ generator. 
\end{lem}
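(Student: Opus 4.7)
My plan is to reduce the lemma to the $n$-copy thin disk identification (Lemma~\ref{lem:thin-disk}) by exploiting the combinatorial identification, observed just before the lemma, between the Lagrangian diagram of the simply perturbed $\tcleg^{s,r}$ and that of the simply perturbed $\nleg{s+r}$. I would match $\nleg{s+r}$-strand $i$ with the $(s-i+1)$-st copy of $\leg_1$ in $\tcleg^{s,r}$ for $i = 1, \dots, s$ and with the $(i-s)$-th copy of $\leg_2$ for $i = s+1, \dots, s+r$. This identification respects the stick-together maps to $\leg$, so thin disks correspond to thin disks. Tracking the indices then shows that a disk in $(s,r)$-copy form for $\tcleg$ translates to one in positive $n$-copy form for $n = s+r$: the mixed positive corner $\ul{z} \in \reeb^{12|sr}$ becomes $\reeb^{1, s+r}$, the mixed negative corner $\ul{w} \in \reeb^{12|11}$ becomes $\reeb^{s, s+1}$, and the stepping corners $a_{i_k}, b_{j_k}$ fill in the remaining consecutive-index chords $\reeb^{1,2}, \dots, \reeb^{s+r-1, s+r}$.

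With this correspondence in hand, I would apply Lemma~\ref{lem:thin-disk} to obtain the complete list of thin disks in the $\nleg{s+r}$ picture, then translate each case back to yield the flowlines, partial flowlines, and constant flowlines stated here. The ``in particular'' claim follows directly from its counterpart in Lemma~\ref{lem:thin-disk}, since the $y$ generators (minima of the perturbing Morse functions) are preserved by the identification.

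For $\otcleg^{s,r}$ the same identification is available, but $\leg_1$ now sits at the bottom, so the mixed short chords arising from $\leg$'s perturbation lie in $\reeb^{21|\cdot\cdot}$ rather than $\reeb^{12|\cdot\cdot}$. Consequently, in any thin disk whose corners fit the $(s,r)$-copy form $\ul{z}, \ul{w} \in \reeb^{12|\cdot\cdot}$, both mixed corners must be lifts of regular $\leg$-crossings, never short perturbation chords. This forbids all flowlines and constant flowlines (both of which demand short mixed corners) and, among the partial flowlines in the $\tcleg$ list, preserves only those with both mixed corners at long chords---namely $(\ul{a}^{12|12}, \ul{a}^{12|11} y^{22|12})$ and $(\ul{a}^{12|21}, y^{11|21} \ul{a}^{12|11})$. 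The others carry a short mixed $\ul{y}^{12|11}$ and so do not survive.

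The main obstacle I anticipate is purely bookkeeping: carefully tracking strand-, chord-, and basepoint-labels across the identification---in particular, the bottom-to-top vs.\ top-to-bottom orientation conventions for the two families of pushoffs---so that the translation between the specific disks listed in Lemma~\ref{lem:thin-disk} and those listed here, including the positions and signs of the invertible basepoints $t^{\pm 1}$ in the twisted flowlines, comes out correctly. No new geometric input is required beyond the combinatorial matching already noted in the paper.
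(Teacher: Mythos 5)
Your proposal is correct, but it proves the lemma by a different route than the paper does. The paper's proof is a direct combinatorial enumeration of the local configurations near the critical points (half of it displayed in Figure~\ref{fig:2-copy-thin}); it does not formally invoke Lemma~\ref{lem:thin-disk}. You instead transport Lemma~\ref{lem:thin-disk} across the identification, asserted just before the lemma, of the diagram of the simply perturbed $\tcleg^{s,r}$ with that of a simply perturbed $\nleg{s+r}$. Your strand matching is the right one, and your observation that $(s,r)$-copy form for $\tcleg$ translates exactly into positive $(s+r)$-copy form is correct; the translation also explains structurally why each three-strand thin disk of the $n$-copy appears twice in the present list (once for $(s,r)=(2,1)$, once for $(1,2)$), which is precisely the content of the remark following the lemma. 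What your route buys is economy and a conceptual account of that doubling; what it costs is that everything rests on the ``up to relabeling'' identification being exact, including where the basepoints sit relative to the three clusters of maxima and minima in Figure~\ref{fig:2-copy-crit} --- that is where all of your bookkeeping lives, since the $t^{\pm1}$ content of the long flowlines is read off from it. Your treatment of $\otcleg^{s,r}$ (the short mixed chords lie in $\reeb^{21|\cdot\cdot}$, so both the positive corner and the single mixed negative corner must be long chords, leaving only the two partial flowlines) matches the paper's conclusion.

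One concrete point you will have to reconcile when carrying out the bookkeeping: translating the long flowline $(x^{13}, t^{-1} y^{12} t\, x^{23})$ in the $(s,r)=(1,2)$ case produces a word ending in the pure chord $x^{22|12}$, whereas the printed entry is $(\ul{x}^{12|12}, t^{-1} \ul{y}^{12|11} x^{12|11})$; as printed that tuple is not even in $(s,r)$-copy form, since the non-underlined $x^{12|11}$ is a mixed chord and so cannot belong to $\mbb$. So either that entry carries a typo or the basepoint/critical-point arrangement deviates from the naive identification at that one spot; your approach forces this to be settled explicitly, whereas the paper's direct enumeration simply records what its figure shows.
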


\begin{figure}
\labellist
\small\hair 2pt
 \pinlabel {$\leg_1$} [r] at 7 286
 \pinlabel {$\leg_2$} [r] at 7 258
 \pinlabel {$2$} [r] at 17 291
 \pinlabel {$1$} [r] at 17 281
 \pinlabel {$1$} [r] at 17 263
 \pinlabel {$2$} [r] at 17 253
 \pinlabel {$\leg_1$} [b] at 193 311
 \pinlabel {$\leg_2$} [b] at 220 311
 \pinlabel {$2$} [b] at 189 300
 \pinlabel {$1$} [b] at 199 300
 \pinlabel {$1$} [b] at 215 300
 \pinlabel {$2$} [b] at 225 300
 \pinlabel {Flowlines} [t] at 49 7
 \pinlabel {Partial Flowlines} [t] at 177 7
 \pinlabel {Constant Flowlines} [t] at 293 7
\endlabellist
    \centering
    \includegraphics{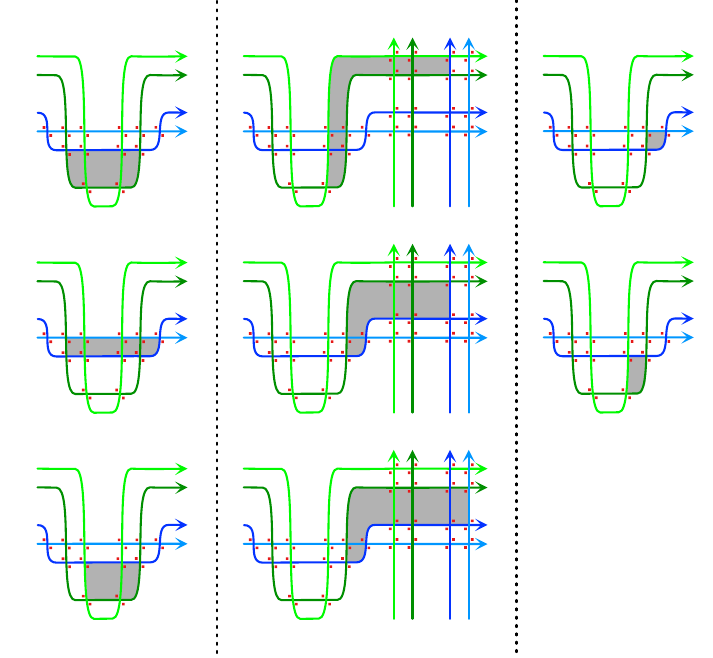}
    \caption{The thin disks in the $2$-copy listed in the second column of Lemma~\ref{lem:s-r-2-copy-thin-disk}. The disks listed in the first column are similar, but pass around larger portions of the knot as in Figure~\ref{fig:n-copy-thin-disks}.  }
    \label{fig:2-copy-thin}
\end{figure}

The proof of the lemma comes down to direct combinatorial enumeration, half of which is carried out in Figure~\ref{fig:2-copy-thin}.

\begin{rem}
    Notice that there are more thin disks for the $(s,r)$-copy of $\tcleg$ than for the $n$-copy.  A thin disk in the $n$-copy with more than one negative corner corresponds to two thin disks in the $(s,r)$-copy of $\tcleg$, depending on which of the two corners are pure and which are mixed.  These will contribute to the $A_\infty$ bimodule maps in different ways in Section~\ref{sec:aug-cat}.
\end{rem}

Next, we use enriched disks to identify the thick disks in an $(s,r)$-copy of the $2$-copy with disks in the original Legendrian.  First, we need to slightly generalize the notion of an enriched disk to incorporate enrichments on each component.  Recall the notation in Equation~\eqref{eq:s-r-word} for $\mba \ul{w} \mbb$.

\begin{defn} \label{defn:doubly-enriched}
    An \dfn{$(m,n)$-doubly enriched disk} is a triple $(u, \mbw_1, \mbw_2)$ consisting of a disk $u \in \Delta_\leg(\ul{z}, \mba \ul{w} \mbb)$ with $w \in \reeb \cup \base$ and two multisets of indices $\mbw_1$ and $\mbw_2$, termed the \dfn{pure enrichments}, so that
    \begin{itemize}
        \item $\mbw_1$ has total multiplicity $m$ and draws elements from $\{0, \ldots, M\}$, while
        \item $\mbw_2$ has total multiplicity $n$ and draws elements from $\{0, \ldots, N\}$.
    \end{itemize}
    The index $0$ of $\ul{w}$ carries an additional enrichment with multiplicity $1$; we term this the \dfn{mixed enrichment}. An index $i$ may have multiplicity greater than $1$ in $\mbw_j$  only if the corresponding generator is $t^{-1}$.
    
    The set of $(m,n)$-enriched disks with corners $(\ul{z},\mba \ul{w} \mbb)$ is denoted by $\Delta^{m,n}_\leg(\ul{z},\mba \ul{w} \mbb$).
\end{defn}

Before describing the projection and lifting operations, we modify how the algebraic projection map on a word in $(s,r)$-copy form for the $2$-copy interacts with the splitting of the word into $\mba$, $\ul{w}$, and $\mbb$. Examining the word $\check{\mba} \check{\ul{w}} \check{\mbb}$, we see that if $w$ arises from a Reeb chord of $\leg$, then the stuck-together word has the same tripartite structure as the original word.  On the other hand, if $w = x^{12|11}$, then $\check{\mba} \check{\ul{w}} \check{\mbb}$ no longer appears to have an underlined generator. In this case, a word arising from an immersed disk in $(s,r)$-copy form must either have a $t$ at the end of $\check{\mba}$ or $t^{-1}$ at the beginning of $\check{\mbb}$.  We simply remove that $t^{\pm 1}$ from one of the side words and underline it.

In parallel to Section~\ref{sssec:n-copy-disks}, we describe how to project a thick disk $u$ in $(s,r)$-copy form for $\tcleg$ to a doubly enriched disk for $\leg$.  We first apply the stick-together map twice, once to pass from $\tcleg^{s,r}$ to $\tcleg$, and again to reach $\leg$.  Composition with this double stick-together map yields an immersed disk with convex corners in $\leg$.  As before, we need only understand the enrichment process.  The pure enrichments $\mbw_1(u)$ and $\mbw_2(u)$ come from the positions of the $a_{i_k}$ in $\mba$ and $b_{j_l}$ in $\mbb$, while the mixed enrichment is at the central position $0$; see Figure~\ref{fig:2-copy-thick-1}. One further adjustment may be necessary: if the underlined generator of $\check{\alpha}(\mba \ul{w} \mbb)$ is $t^{-1}$, as in Figure~\ref{fig:2-copy-thick-2}, then we also enrich the position $0$ in $\mbw_1$ with the number of $x^{11|\cdot \cdot}$ generators that precede $\ul{w}$ in $\mba \ul{w} \mbb$ and enrich the position $0$ in $\mbw_2$ with the number of $x^{22|\cdot \cdot}$ generators that succeed $\ul{w}$.

Lifting a doubly enriched disk $(u,\mbw_1, \mbw_2)$ to the $(s,r)$-copy of $\tcleg$ is a straightforward generalization of lifting an enriched disk to the $n$-copy.  As before, positions that are purely enriched in $\mbw_i$ lift to Reeb chords of the $s$- or $r$-copy of $\tcleg_i$.  The mixed enriched position lifts to $w^{12|11}$ if $w \in \reeb$, and to $x^{12|11}$ if $w = t^{\pm 1}$. Note that if $0$ is enriched in either $\mbw_1$ or $\mbw_2$, then the lifting process will yield $x$ chords on either side of $x^{12|11}$.

As before, it is straightforward to check that the projection and enrichment processes are inverses of each other, and we obtain the following lemma.

\begin{lem}[$2$-copy Thick Disk Identification] 
\label{lem:s-r-2-copy-dil}
    Suppose $(\ul{z},\mba \ul{w} \mbb)$ is in $(s,r)$-copy form for either $\tcleg$ or $\otcleg$ without any generator at a minimum of a perturbing function.  There is a bijective correspondence between thick disks $u \in \Delta_{\tcleg^{s,r}}(\ul{z},\mba \ul{w} \mbb)$ and doubly enriched disks $(s\circ u, \mbw_1(u),\mbw_2(u)) \in \Delta^{s,r}_\leg(\ul{\check{z}}, \check{\mba} \ul{\check{w}} \check{\mbb})$, where $0$ has pure enrichments only if the mixed enrichment is at $t^{-1}$. The same holds true for the opposite $2$-copy, though without any complications coming from $w = x^{12|11}$.
\end{lem}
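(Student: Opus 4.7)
\medskip

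\noindent\textbf{Proof proposal.} The plan is to reduce both halves of the lemma to the already-proved $n$-copy identification (Lemma~\ref{lem:n-copy-dil}) by exploiting a direct combinatorial equivalence between the $(s,r)$-copy of the $2$-copy and the $(s+r)$-copy of the underlying knot.

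First, I would record the combinatorial identification. Because $\tcleg^{s,r}$ is simply perturbed, the Lagrangian diagram of $\tcleg^{s,r}$ is, up to relabeling of strands, the Lagrangian diagram of a simply perturbed $(s+r)$-copy of $\leg$. Reading strands of $\tcleg^{s,r}$ from top to bottom, we get $\leg_1$'s copy strands $s, s-1, \ldots, 1$ followed by $\leg_2$'s copy strands $1, 2, \ldots, r$, which correspond bijectively to strands $1, 2, \ldots, s+r$ of the $(s+r)$-copy. Under this identification, a pair $(\ul{z}, \mba \ul{w} \mbb)$ in $(s,r)$-copy form for $\tcleg$ becomes precisely a pair in positive $(s+r)$-copy form: $z \in \reeb^{12|sr}$ sits in $\reeb^{1,s+r}$, the chord $a_{i_k} \in \reeb^{11|k+1,k}$ becomes a chord between strands $s-k$ and $s-k+1$, the underlined $w \in \reeb^{12|11}$ becomes a chord between strands $s$ and $s+1$, and $b_{j_k} \in \reeb^{22|k,k+1}$ becomes a chord between strands $s+k$ and $s+k+1$, while the $\mba_k$ and $\mbb_k$ supply the pure-strand contributions.

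Second, I would apply Lemma~\ref{lem:n-copy-dil} to obtain a bijection between thick disks $u \in \Delta_{\tcleg^{s,r}}(\ul{z}, \mba \ul{w} \mbb)$ and $(s+r-1)$-enriched disks in $\Delta_\leg(\check{z}, \check{\mba}\,\check{w}\,\check{\mbb})$. I would then redistribute the enrichment multiset according to position: indices appearing in $\mba$ are placed in $\mbw_1$, indices in $\mbb$ are placed in $\mbw_2$, and the enrichment at the position of $\ul{w}$ (index $0$) becomes the mixed enrichment. This directly produces the triple $(s \circ u, \mbw_1(u), \mbw_2(u))$ of Definition~\ref{defn:doubly-enriched}. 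That the resulting assignment is a two-sided inverse of the lift recipe from Section~\ref{ssec:n-copy} then follows by unwinding the correspondence strand by strand.

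The main obstacle is the case $w = x^{12|11}$, where the mixed corner is at a maximum of a perturbing function and $\check{w}$ is not a Reeb chord of $\leg$. In this case the splitting of $\check{\alpha}(\mba \ul{w} \mbb)$ must be adjusted by underlining a nearby $t^{\pm 1}$, per the convention introduced just before the lemma; the pure $x^{11|\cdot\cdot}$ (resp.\ $x^{22|\cdot\cdot}$) chords flanking $x^{12|11}$ in $\mba \ul{w} \mbb$ then contribute the pure enrichments at index $0$ in $\mbw_1$ (resp.\ $\mbw_2$). This is exactly the content of the clause ``where $0$ has pure enrichments only if the mixed enrichment is at $t^{-1}$''. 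I would handle the bookkeeping by appealing to the $n$-copy lift rule for an enriched $t^{-1}$ with multiplicity $m_k$, which produces exactly $m_k$ corners at nearby maxima between consecutive strands; combined with the strand relabeling above, these corners partition cleanly into the $\leg_1$-adjacent and $\leg_2$-adjacent maxima. The hypothesis that no generator lies at a minimum eliminates the parallel $y$-chord configurations, which would otherwise fall in the thin regime of Lemma~\ref{lem:s-r-2-copy-thin-disk} rather than the thick regime under consideration.

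For the opposite $2$-copy $\otcleg^{s,r}$, the same strategy applies verbatim but with pairs in negative $(s+r)$-copy form. Here critical-point chords lie in $\reeb^{21|kl}$ rather than $\reeb^{12|kl}$, so the problematic case $w = x^{12|11}$ simply does not arise and the bijection is immediate from Lemma~\ref{lem:n-copy-dil} together with the strand relabeling.
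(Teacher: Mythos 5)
Your treatment of the standard $2$-copy $\tcleg$ is fine and is essentially the paper's own argument in different packaging: the paper likewise observes that a simply perturbed $\tcleg^{s,r}$ has the combinatorics of a simply perturbed $\nleg{s+r}$ and runs the projection/lift analysis ``in parallel'' to the $n$-copy case, and your redistribution of the enrichment at the basepoint position (splitting the total multiplicity into the mixed enrichment plus index-$0$ pure enrichments in $\mbw_1$, $\mbw_2$ according to whether the flanking $x$-corners precede or succeed $x^{12|11}$) is a legitimate way to organize that bookkeeping, including the $t$ versus $t^{-1}$ clause.

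The genuine gap is your last paragraph. A pair in $(s,r)$-copy form for $\otcleg$ does \emph{not} relabel to negative $(s+r)$-copy form. Since $\otcleg_1$ is the bottom copy and $\otcleg_2$ the top copy, under the top-to-bottom strand relabeling the mixed chords $z \in \reeb^{12|sr}$ and $w \in \reeb^{12|11}$ become \emph{downward} chords (between the two middle strands, respectively between the outermost strands), while the designated block chords $a_{i_k} \in \reeb^{11|k+1,k}$ and $b_{j_k} \in \reeb^{22|k,k+1}$ remain \emph{upward} chords between adjacent strands; these are $\Hom_+$-type inputs and in particular may be $x$-chords of the $(s,r)$-copy perturbation. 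Negative $n$-copy form requires \emph{all} designated chords to be of the downward type, and the negative half of Lemma~\ref{lem:n-copy-dil} accordingly has no corners at maxima and no enrichments at marked points. Applying it ``verbatim'' to $\otcleg^{s,r}$ would therefore wrongly exclude thick disks with $x^{11|\cdot\cdot}$ or $x^{22|\cdot\cdot}$ corners inside the pure blocks --- exactly the disks responsible for $x^+$ inputs to $n_{\mcm_-}$ (compare Lemma~\ref{lem:mcm-disks}, where pure enrichments at $t^{\pm1}$ are permitted and only the \emph{mixed} enrichment is forced to sit at a Reeb chord). So the opposite-$2$-copy configuration is of mixed type, covered by neither half of Lemma~\ref{lem:n-copy-dil} nor by the non-consecutive-strands remark after it; you would need either a mixed-form extension of the $n$-copy identification or the direct projection/lift argument the paper uses. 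The correct content of the lemma's final clause is only that the mixed corner cannot be an $x$-chord, not that the whole word becomes negative $(s+r)$-copy form.
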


We emphasize that for the $2$-copy, the Reeb chords $z$ and $w$ start at the bottom copy and end at the top copy, while the opposite holds for the opposite $2$-copy, even though the notation for the chords is forced to be identical.  

\begin{figure}
\labellist
\small\hair 2pt
 \pinlabel {$z$} [br] at 20 115
 \pinlabel {$a$} [tr] at 20 48
 \pinlabel {$w$} [bl] at 122 115
 \pinlabel {$z^{12}$} [tr] at 168 104
 \pinlabel {$z^{11}$} [br] at 168 148
 \pinlabel {$z^{21}$} [bl] at 212 148
 \pinlabel {$z^{22}$} [tl] at 212 104
 \pinlabel {$a^{11}$} [tr] at 168 15
 \pinlabel {$a^{12}$} [br] at 168 60
 \pinlabel {$a^{22}$} [bl] at 212 60
 \pinlabel {$a^{21}$} [tl] at 212 15
 \pinlabel {$w^{21}$} [br] at 294 148
 \pinlabel {$w^{22}$} [tr] at 294 104
 \pinlabel {$w^{12}$} [tl] at 337 104
 \pinlabel {$w^{11}$} [bl] at 337 148
\endlabellist
    \centering
    \includegraphics{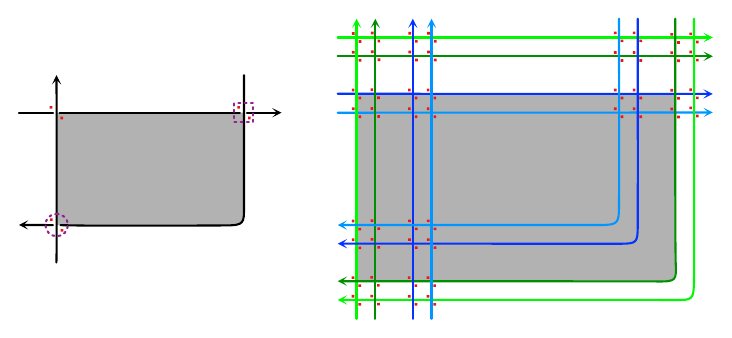}
    \caption{The disk at left is an enriched disk in $\Delta_\leg^2(\ul{z}, a\ul{w})$ with pure enrichments $\mbw_1 = \{1\}$ and $\mbw_2=\emptyset$.  The disk at right is its lift to a disk in $\Delta_{\tcleg^{2,2}}(\ul{z}^{12|21}, a^{11|21}\ul{w}^{12|11})$, following Lemma~\ref{lem:s-r-2-copy-dil}.}
    \label{fig:2-copy-thick-1}
\end{figure}

\begin{figure}
\labellist
\small\hair 2pt
 \pinlabel {$z$} [br] at 21 105
 \pinlabel {$a$} [bl] at 123 106
 \pinlabel {$z^{11}$} [br] at 170 142
 \pinlabel {$z^{12}$} [tr] at 170 94
 \pinlabel {$z^{22}$} [tl] at 209 94
 \pinlabel {$z^{21}$} [bl] at 209 142
 \pinlabel {$a^{21}$} [br] at 295 142
 \pinlabel {$a^{22}$} [tr] at 295 94
 \pinlabel {$a^{12}$} [tl] at 337 94
 \pinlabel {$a^{11}$} [bl] at 337 142
 \pinlabel {$x^{12}$} [tr] at 238 77
 \pinlabel {$y^{12}$} [tl] at 266 77
\endlabellist
    \centering
    \includegraphics{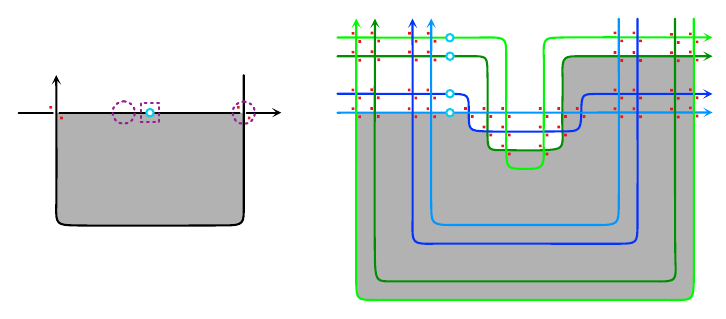}
    \caption{The disk at left is an enriched disk in $\Delta_\leg^2(\ul{z}, a \ul{t}^{-1})$ with pure enrichments $\mbw_1 = \{1\}$ and $\mbw_2 = \{0\}$.  The disk at right is its lift to a disk in $\Delta_{\tcleg^{2,2}}(\ul{z}^{12|22}, a^{11|21}\ul{x}^{12|11} x^{22|12} (t^{2|2})^{-1})$.}
    \label{fig:2-copy-thick-2}
\end{figure}

\subsection{The Separated 2-Copy}
\label{ssec:sep-2-copy}

The second important special case of the $(s,r)$-copy construction occurs when the underlying link is the the \emph{separated} $2$-copy of a Legendrian knot $\leg$. 

\subsubsection{The Separated $2$-Copy Construction}
\label{sssec:sep-2-copy}

Given a Legendrian link $\leg$, we form its \dfn{separated $2$-copy} $\stcleg$ by taking the 2-copy $\tcleg$ and pushing the bottom copy in the negative Reeb ($z$) direction by a distance $Z$ larger than the maximum length of the Reeb chords of $\leg$.  See Figure~\ref{fig:sep-2-copy}, which should be compared and contrasted with the standard $2$-copy in Figure~\ref{fig:n-copy}. We distinguish the Chekanov-Eliashberg DGA of $\stcleg$ with that of the standard $2$-copy $\tcleg$ by decorating related notation with a hat.

\begin{figure}[htbp]
   \labellist
\small\hair 2pt
 \pinlabel {$1$} [br] at 20 105
 \pinlabel {$2$} [tl] at 39 86
 \pinlabel {$a^{22}$} [r] at 102 64
 \pinlabel {$a^{11}$} [l] at 141 64
 \pinlabel {$q^{12}$} [b] at 122 94
 \pinlabel {$p^{12}$} [t] at 122 34
 \pinlabel {$x^{12}$} [tl] at 245 57
 \pinlabel {$y^{12}$} [tl] at 243 84
\endlabellist \centerline{\includegraphics{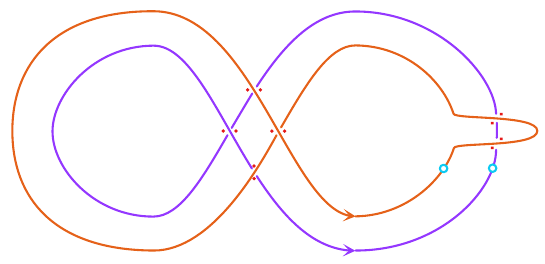}}
    \caption{The separated $2$-copy of a Legendrian knot. }
    \label{fig:sep-2-copy}
\end{figure}

For $i = 1,2$, pure chords and base points in $\widehat{\gens}^{ii}$ are in bijection with Reeb chords and base points of the original Legendrian $\leg$, and thus in bijection with pure chords and base points $\overline{\gens}^{ii}$ of $\tcleg$. We label such chords by $a^{ii|kl}$ or $t^{i|k}$. All mixed chords in the separated $2$-copy begin on the component $\stcleg_2$ and end on $\stcleg_1$. That is, all mixed chords lie in $\widehat{\reeb}^{12}$, and $\widehat{\reeb}^{21} = \emptyset$. Following \cite{duality}, we split chords of the separated $2$-copy into two classes based on a correspondence with chords of $\tcleg$; our formulation matches that in \cite[Proposition 5.4]{nrssz:aug-sheaf}.

\begin{prop}\label{prop:p-q-chords}
    There is a bijective correspondence between Reeb chords of $\widehat{\reeb}^{12}$ (resp. $\widehat{\gens}^{ii}$, for $i = 1,2$) and those of $\overline{\reeb}^{12} \cup \overline{\reeb}^{21}$  (resp. $\overline{\gens}^{ii}$). 
\end{prop}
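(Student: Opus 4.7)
The plan is to exhibit the bijections directly by examining what Reeb chords arise in each setting at each type of double point in the common Lagrangian projection. A key initial observation is that $\pi_{xy}(\tcleg) = \pi_{xy}(\stcleg)$, since the two constructions differ only in the $z$-separation of the copies---a modification that affects the existence and length of individual Reeb chords, but not the locations of the double points in the diagram. Moreover, within a single copy, $\stcleg_i$ and $\tcleg_i$ each project to the same curve as $\leg$ with the same internal $z$-profile, since vertical separation between distinct copies does not perturb a single copy.

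The bijection $\widehat{\gens}^{ii} \leftrightarrow \overline{\gens}^{ii}$ then follows immediately: pure Reeb chords in either $\widehat{\reeb}^{ii}$ or $\overline{\reeb}^{ii}$ are self-crossings within the single component indexed by $i$, and basepoints in $\widehat{\base}^{ii}$ and $\overline{\base}^{ii}$ are placed at identical locations relative to the maximum of $f$; the canonical identification $\stcleg_i \cong \tcleg_i$ matches these data directly.

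For the mixed chords, I would split the analysis by the origin of the double point in the shared diagram. At each crossing of $\pi_{xy}(\leg)$, the $2$-copy perturbation produces two mixed double points: an ``up crossing'' where the first copy lies on the overstrand, and a ``down crossing'' where the second copy lies on the overstrand. In $\tcleg$, the small separation $\epsilon < \min_a \ell(a)$ ensures the Reeb chord at the up crossing goes from $\tcleg_2$ to $\tcleg_1$ with length $\ell(a)+\epsilon$, producing $a^{12} \in \overline{\reeb}^{12}$, while the Reeb chord at the down crossing reverses and goes from $\tcleg_1$ to $\tcleg_2$ with length $\ell(a)-\epsilon>0$, producing $a^{21} \in \overline{\reeb}^{21}$. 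In $\stcleg$, the separation $Z > \max_a \ell(a)$ makes $\stcleg_1$ uniformly higher in $z$ than $\stcleg_2$, so both mixed chords begin on $\stcleg_2$ and end on $\stcleg_1$: these are $q^{12}$ (length $Z+\ell(a)$) and $p^{12}$ (length $Z-\ell(a)>0$), both in $\widehat{\reeb}^{12}$. This sets up $a^{12} \leftrightarrow q^{12}$ and $a^{21} \leftrightarrow p^{12}$. A parallel analysis at each critical point of $f$ matches the short mixed chord of $\tcleg$ (lying in $\overline{\reeb}^{12}$ or $\overline{\reeb}^{21}$ depending on the indexing convention) with the unique mixed chord of $\stcleg$ at the same self-tangency, now of length approximately $Z$. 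Assembling these across all double points produces the bijection $\widehat{\reeb}^{12} \leftrightarrow \overline{\reeb}^{12} \cup \overline{\reeb}^{21}$. The only non-trivial verification is the reversal of the $z$-ordering at the down crossing when passing from $\tcleg$ to $\stcleg$, which reduces to a direct height comparison using $Z > \max_a \ell(a)$.
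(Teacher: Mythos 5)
Your argument is correct and is essentially the justification the paper itself relies on: the paper states Proposition~\ref{prop:p-q-chords} without proof, deferring to \cite{duality} and \cite[Proposition 5.4]{nrssz:aug-sheaf}, where the correspondence is obtained exactly as you describe --- the Lagrangian projections of $\tcleg$ and $\stcleg$ coincide, pure chords and base points match copy by copy, and at each crossing of $\leg$ the choice $Z > \max_a \ell(a)$ reverses the $z$-ordering at the crossing carrying $a^{21}$, producing the $p$/$q$ chords of lengths $Z \mp \ell(a)$ in agreement with the length dichotomy stated after the proposition. The only cosmetic slip is calling the critical-point double points ``self-tangencies''; they are transverse crossings created by the perturbation, but this does not affect the argument.
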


Under this correspondence, Reeb chords in $\overline{\reeb}^{21}$ are mapped to those in $\widehat{\reeb}^{12}$ with length smaller than $Z$ and chords in $\overline{\reeb}^{12}$ are mapped to those in $\widehat{\reeb}^{12}$ with length larger than $Z$. Following \cite{duality}, we call the former type of chords \dfn{$p$ chords} and the latter type \dfn{$q$ chords}, denoted by $\widehat{\reeb}_p$ and $\widehat{\reeb}_q$, respectively. Note that the chords arising from the critical points of the perturbation are $q$ chords. We also use these letters to distinguish these two kinds of chords when necessary, though we continue to label the $q$ chords at critical points by $x$ and $y$.  The gradings of elements in $\widehat{\gens}^{ij}$ align with those in $\overline{\gens}^{ij}$ of the standard $2$-copy except that $|p_k| = -|a_k^{21}|-1$; again, see \cite{duality}.

\subsubsection{Disks in the Separated $2$-Copy}
\label{sssec:sep-2-copy-disks}

When analyzing immersed disks in $\stcleg$, we extend concepts such as the stick-together map, thin and thick disks, and enriched disks from the standard $n$-copy to the separated $2$-copy.  The set of immersed disks in the separated $2$-copy is noticeably more complicated than in the settings above. Again, we shall always assume that the separated $2$-copy and its $(s,r)$-copy are simply perturbed in the same sense as before. We begin by identifying thin disks of three different types. 

\begin{lem}[Separated $2$-Copy Thin Disk Identification I] 
\label{lem:s-r-sep-thin-disk-1} 
    Suppose that $(\ul{z},\mba \ul{w} \mbb)$ is in $(s,r)$-copy form for $\stcleg$. A disk in the set $\Delta_{\stcleg^{s,r}}(\ul{z},\mba \ul{w} \mbb)$ with both $z$ and $w$ in $\widehat{\reeb}_q$ is thin with respect to $\leg$ if and only if $(\ul{z},\mba \ul{w} \mbb)$ is one of the following types: 
    \begin{description}
        \item[Flowlines] 
        \[\begin{array}{ll} (\ul{x}^{12|11}, t^{-1} \ul{y}^{12|11} t), & (\ul{x}^{12|11}, \ul{y}^{12|11}), \\
        (\ul{x}^{12|21}, t^{-1}y^{11|21} t \ul{x}^{12|11}), & (\ul{x}^{12|12}, \ul{x}^{12|11} y^{22|12}), \\
        (\ul{x}^{12|12}, t^{-1} \ul{y}^{12|11} x^{12|11}), & (\ul{x}^{12|21}, x^{11|21} \ul{y}^{12|11}),
        \end{array}\]
        
        \item[Partial Flowlines] 
         \[\begin{array}{ll} (\ul{q}^{12|12}, \ul{q}^{12|11} y^{22|12}), & (\ul{q}^{12|21}, y^{11|21} \ul{q}^{12|11}), \\
        (\ul{q}^{12|11}, a^{11|11} \ul{y}^{12|11}), & (\ul{q}^{12|11}, \ul{y}^{12|11} a^{22|11}), \\
         (\ul{q}^{12|21}, a^{11|21} \ul{y}^{12|11}), & (\ul{q}^{12|12}, \ul{y}^{12|11} a^{22|12}),
         \end{array}\]
      
        \item[Constant Flowlines] $(\ul{y}^{12|21}, y^{11|21} \ul{y}^{12|11})$ or $(\ul{y}^{12|12}, \ul{y}^{12|11} y^{22|12}$).
    \end{description}
\end{lem}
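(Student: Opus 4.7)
The plan is to reduce the statement directly to Lemma~\ref{lem:s-r-2-copy-thin-disk} by exploiting the combinatorial identification between the separated $2$-copy $\stcleg$ restricted to $q$-chords and the standard $2$-copy $\tcleg$. The core point is that the Lagrangian projection $\pi_{xy}$ forgets the $z$-coordinate, so the diagrams of $\stcleg^{s,r}$ and $\tcleg^{s,r}$ are identical up to relabeling, with the extra separation $Z$ only affecting the Reeb lengths (and hence the $p$-vs-$q$ split of Proposition~\ref{prop:p-q-chords}). The thin-disk condition concerns only the image under $\pi_{xy}$ after applying the stick-together map, so it is invariant under this vertical reparametrization.

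First I would set up the identification of corners. By Proposition~\ref{prop:p-q-chords} each $q$-chord of $\widehat{\reeb}^{12}$ corresponds canonically to a mixed chord of $\overline{\reeb}^{12}$ in $\tcleg$, while every pure chord or basepoint of $\stcleg^{s,r}$ corresponds canonically to one of $\tcleg^{s,r}$.  The short chords $x^{12|\cdot\cdot}$ and $y^{12|\cdot\cdot}$ coming from the critical points of the perturbing function are always $q$-chords, so under the assumption that $z,w\in\widehat{\reeb}_q$ all labels appearing in the hypothesized list are identified with the corresponding labels for $\tcleg$.

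Next I would observe that given a disk $u\in\Delta_{\stcleg^{s,r}}(\ul{z},\mba\ul{w}\mbb)$ with both $z,w$ being $q$-chords, the boundary immersion on the Lagrangian diagram reads exactly as an element of $\Delta_{\tcleg^{s,r}}(\ul{z}',\mba\ul{w}'\mbb)$, where $z',w'$ are the corresponding mixed chords in $\tcleg$.  Conversely, any immersed disk in $\Delta_{\tcleg^{s,r}}$ whose mixed corners are the image of $z,w$ lifts back to an immersed disk in $\Delta_{\stcleg^{s,r}}$ because convex corners and boundary orientations are determined by the projection.  The thin-ness condition —\! that $s\circ u$ lies in $\pi_{xy}(\leg)$ —\! is a condition purely on this projected boundary, so it is preserved by the identification.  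Applying the thin-disk enumeration of Lemma~\ref{lem:s-r-2-copy-thin-disk} to the $\tcleg$ side therefore produces precisely the three families (flowlines, partial flowlines, constant flowlines) listed in the statement, with the only change being that the letter $\overline{a}^{12|\cdot\cdot}$ becomes $\widehat{q}^{12|\cdot\cdot}$.

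The main potential obstacle is to rule out thin disks whose \emph{other} negative corners are at $p$-chords rather than pure chords —\! in principle $p$-chords also project to mixed crossings and might appear in a thin disk.  This is handled by inspecting the list in Lemma~\ref{lem:s-r-2-copy-thin-disk}: every thin disk there has only two mixed corners (the positive $z$ and the underlined $w$), with every remaining negative corner pure.  Under the $\stcleg\leftrightarrow\tcleg$ identification the same statement holds for $\stcleg^{s,r}$, so under the hypothesis $z,w\in\widehat{\reeb}_q$ no $p$-chord can enter as a corner of a thin disk.  This verification, together with the combinatorial identification above, gives the claimed list.
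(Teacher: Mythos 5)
Your reduction to Lemma~\ref{lem:s-r-2-copy-thin-disk} is exactly the paper's strategy, and the conclusion is correct: the paper likewise notes that the crossing-sign configurations at the $x$, $y$, $q$, and pure crossings of $\stcleg^{s,r}$ match those of the ordinary $2$-copy (Figure~\ref{fig:sep-2-copy-qq}) and then imports the combinatorial count from the proof of Lemma~\ref{lem:s-r-2-copy-thin-disk} with the single relabeling $\overline{a}^{12|\cdot\cdot} \mapsto q^{12|\cdot\cdot}$. One caveat about your justification: it is not true that the decorated diagrams of $\stcleg^{s,r}$ and $\tcleg^{s,r}$ are ``identical up to relabeling'' with the separation affecting only Reeb lengths. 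Pushing the second copy down by $Z$ flips the over/under data at the crossings carrying the $p$ chords, so the Reeb-sign quadrants there are opposite to those at the corresponding $\overline{\reeb}^{21}$ crossings of $\tcleg^{s,r}$ (this is precisely why $|p_k| = -|a_k^{21}|-1$); in particular, positivity and negativity of corners is \emph{not} determined by the projection alone. Your argument survives because, under the $(s,r)$-copy-form hypothesis with $z,w \in \widehat{\reeb}_q$, every corner of the disks in question sits at a $q$-type or pure crossing (or a basepoint), where the decorations do agree with those of $\tcleg^{s,r}$ --- which is the restricted comparison the paper actually makes. Note also that your ``main obstacle'' of $p$-chord corners is already excluded by the $(s,r)$-copy-form hypothesis itself, since all negative corners other than $\ul{w}$ lie in the pure alphabets $\gens^{11|\cdot\cdot}$ and $\gens^{22|\cdot\cdot}$, so no appeal to the list in Lemma~\ref{lem:s-r-2-copy-thin-disk} is needed for that point.
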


\begin{figure}
\labellist
\small\hair 2pt
 \pinlabel {$\stcleg_1$} [r] at 3 85
 \pinlabel {$\stcleg_2$} [r] at 3 60
 \pinlabel {$2$} [r] at 16 90
 \pinlabel {$1$} [r] at 16 80
 \pinlabel {$1$} [r] at 16 65
 \pinlabel {$2$} [r] at 16 55
 \pinlabel {$x$} [ ] at 23 34
 \pinlabel {$y$} [ ] at 75 34
 \pinlabel {$\stcleg_1$} [t] at 114 7
 \pinlabel {$\stcleg_2$} [t] at 141 7
 \pinlabel {$2$} [t] at 109 16
 \pinlabel {$1$} [t] at 118 16
 \pinlabel {$1$} [t] at 136 16
 \pinlabel {$2$} [t] at 145 16
 \pinlabel {$p^{12}$} [tr] at 106 51
 \pinlabel {$q^{12}$} [bl] at 148 95
 \pinlabel {$a^{11}$} [br] at 106 95
 \pinlabel {$a^{22}$} [tl] at 148 51
\endlabellist
    \centering
    \includegraphics{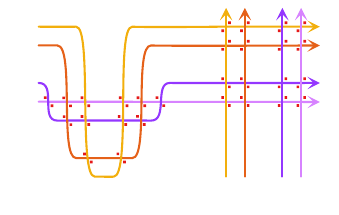}
    \caption{In Lemma~\ref{lem:s-r-sep-thin-disk-1}, the configuration of the crossing signs for the $x$, $y$, and $q$ generators is the same as those for the $x$, $y$, and $a^{ij}$ ($i \geq j$) generators in Figure~\ref{fig:2-copy-thin}.}
    \label{fig:sep-2-copy-qq}
\end{figure}

\begin{proof}
    The thin disks for the separated $2$-copy with both $z$ and $w$ in $\widehat{\reeb}_q$ are combinatorially identical to those in the ordinary $2$-copy.  To see this, first note that the configuration of the crossing signs for the flowlines and constant flowlines in Figure~\ref{fig:2-copy-crit} is identical for the separated $2$-copy; see Figure~\ref{fig:sep-2-copy-qq}.  Next, the partial flowlines in Figure~\ref{fig:2-copy-crit} only have corners at the $a^{12}$ crossings, which are configured identically to the $q^{12}$ crossings in the separated $2$-copy. Thus, the combintorial count in the proof of Lemma~\ref{lem:s-r-2-copy-thin-disk} applies here with appropriate changes in notation.
\end{proof}

\begin{lem}[Separated $2$-Copy Thin Disk Identification II] \label{lem:s-r-sep-thin-disk-2}
    Suppose that $(\ul{z},\mba \ul{w} \mbb)$ is in $(s,r)$-copy form for $\stcleg$. A disk in the set $\Delta_{\stcleg^{s,r}}(\ul{z},\mba \ul{w} \mbb)$ with both $z$ and $w$ in $\widehat{\reeb}_p$ is thin with respect to $\leg$ if and only if $(\ul{z},\mba \ul{w} \mbb)$ is one of the following types:
    \[(\ul{p}^{12|21}, y^{11|21} \ul{p}^{12|11}) \text{ or } (\ul{p}^{12|12},  \ul{p}^{12|11} y^{22|12}).\]
\end{lem}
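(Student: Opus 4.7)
The proof proceeds by direct combinatorial enumeration, closely paralleling the approach of Lemma~\ref{lem:s-r-sep-thin-disk-1}. The key observation is that the Lagrangian projections of $\stcleg^{s,r}$ and $\tcleg^{s,r}$ are identical, since the two diagrams differ only by a vertical translation of the bottom component in the Reeb direction. Consequently, the set of thin immersed disks is the same up to labeling, and the distinction between $p$- and $q$-chords at a given lifted crossing is determined purely by whether the Reeb chord length is less than or greater than $Z$.

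The first step is to observe that the mixed short chords $x^{12|\cdot\cdot}$ and $y^{12|\cdot\cdot}$ at critical points of the original perturbation defining $\stcleg$ all have lengths close to $Z$, so they all lie in $\widehat{\reeb}_q$. Hence the hypothesis $z,w \in \widehat{\reeb}_p$ excludes every pattern from Lemma~\ref{lem:s-r-sep-thin-disk-1} in which $z$ or $w$ equals one of these short chords. This rules out all six flowlines and both constant flowlines from that list (where $z$ is $x^{12|\cdot\cdot}$ or $y^{12|\cdot\cdot}$), as well as the four partial flowlines whose middle generator $w$ equals $y^{12|11}$.

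The only remaining candidates are the two partial flowline patterns $(\ul{q}^{12|12},\ul{q}^{12|11}y^{22|12})$ and $(\ul{q}^{12|21},y^{11|21}\ul{q}^{12|11})$, in which both $z$ and $w$ are mixed chords lifted from crossings of $\leg$. Since each lifted crossing of $\leg$ in the Lagrangian projection of $\stcleg^{s,r}$ contributes both a $p$-position and a $q$-position with analogous local tangent data and compatible Reeb sign configurations, the same partial flowline shape realizes an immersed thin disk with the $q$-chords replaced by $p$-chords. Relabeling yields precisely the two families
\[
(\ul{p}^{12|21},y^{11|21}\ul{p}^{12|11}) \quad \text{and} \quad (\ul{p}^{12|12},\ul{p}^{12|11}y^{22|12})
\]
in the statement. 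The main technical point to verify is that these $p$-chord partial flowlines are indeed immersed convex disks, but this is a local check near the critical points of the perturbing function of the $(s,r)$-copy, entirely parallel to the check already carried out in the proof of Lemma~\ref{lem:s-r-sep-thin-disk-1}, and no further thin disk configurations arise since the enumeration in that lemma is exhaustive.
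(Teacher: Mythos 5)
Your final list is correct, but the justification has a genuine gap at the exhaustiveness step. Lemma~\ref{lem:s-r-sep-thin-disk-1} enumerates thin disks whose mixed corners $z$ and $w$ both lie at $q$ chords; it says nothing about disks whose mixed corners lie at $p$ chords, so you cannot obtain the $p$--$p$ classification by deleting entries from that list and relabeling $q$ as $p$. The premise you invoke for the transfer --- that the $p$- and $q$-positions of a doubled crossing carry ``analogous local tangent data and compatible Reeb sign configurations'' --- is exactly what fails: pushing the bottom copy down by $Z$ switches the over/under strands at every former $a^{21}$ crossing (this is why $|p_k|=-|a_k^{21}|-1$ in Proposition~\ref{prop:p-q-chords}, and why, in Section~\ref{sssec:sep-2-copy-disks}, the stick-together map reverses the sign of a corner at a $p$ chord). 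So the quadrant decorations at a $p$ crossing differ from those at a $q$ crossing, and the thin disks with corners at $p$ chords must be enumerated afresh; in particular your argument never rules out $p$--$p$ thin disks with additional pure corners at $a^{11}$, $a^{22}$, or at the $(s,r)$-copy maxima, nor small configurations contained in a single doubled crossing. That such extra configurations are a real danger is illustrated by the mixed case $z\in\widehat{\reeb}_q$, $w\in\widehat{\reeb}_p$, where thin disks appear that have no analogue in the $q$--$q$ list (see the discussion preceding Remark~\ref{rem:qp-thick-disks}).

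The paper's (one-line) proof makes a different comparison: as shown in Figure~\ref{fig:sep-2-copy-qq}, the crossing-sign configuration at the $p$ crossings matches that of the mixed crossings of the \emph{opposite} $2$-copy $\otcleg$, so the $p$--$p$ thin disks are combinatorially identical to the thin disks enumerated in the $\otcleg$ part of Lemma~\ref{lem:s-r-2-copy-thin-disk}, which consists of exactly the two partial-flowline patterns in the statement. To repair your argument, replace the appeal to the exhaustiveness of the $q$--$q$ list by this comparison with $\otcleg$ (or redo the local convex-corner enumeration at a $p$ crossing directly, using the reversed sign decoration); the existence half of your argument, the local check that the two patterns are realized, is fine once the correct signs are used.
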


\begin{proof}
    As shown in Figure~\ref{fig:sep-2-copy-qq}, the thin disks for the separated $2$-copy with both $z$ and $w$ in $\widehat{\reeb}_p$ are combinatorially identical to those in the $(s,r)$-copy of an opposite $2$-copy in Lemma~\ref{lem:s-r-2-copy-thin-disk}.
\end{proof}

The final set of thin disks for the separated $2$-copy that we will need are those that have a positive corner at an $x$ generator and a negative corner at a $p$ generator.  

\begin{lem}[Separated $2$-Copy Thin Disk Identification III] \label{lem:s-r-sep-thin-disk-3}
    Suppose that $(\ul{z},\mba \ul{w} \mbb)$ is in $(s,r)$-copy form for $\stcleg$. All disks in the set $\Delta_{\stcleg^{s,r}}(\ul{z},\mba \ul{w} \mbb)$ with $z \in \widehat{\reeb}_x$ and $w$ in $\widehat{\reeb}_p$ are thin with respect to $\leg$. In particular, $(\ul{z},\mba \ul{w} \mbb)$ is one of the following half-flowlines.  

    \begin{subequations}
    \begin{align}
        &(\ul{x}^{12|11}, t^{-1} a^{11|11} \ul{p}^{12|11} t) && (\ul{x}^{12|11}, t^{-1} \ul{p}^{12|11} a^{22|11}t) \label{eq:sep-thin-1} \\
        &(\ul{x}^{12|21}, t^{-1} a^{11|21} \ul{p}^{12|11} t) && (\ul{x}^{12|12}, t^{-1} \ul{p}^{12|11} a^{22|12} t) \label{eq:sep-thin-2}\\
        &(\ul{x}^{12|12}, t^{-1}a^{11|11} \ul{p}^{12|11} t x^{22|12}) && (\ul{x}^{12|12}, t^{-1}\ul{p}^{12|11} a^{22|11}t x^{22|12}) \label{eq:sep-thin-3}\\
        &(\ul{x}^{12|22}, t^{-1} a^{11|21} \ul{p}^{12|11} t x^{22|12}) && (\ul{x}^{12|13}, t^{-1} \ul{p}^{12|11} a^{22|12} t x^{22|23}). \label{eq:sep-thin-4}
    \end{align}
     \end{subequations}
\end{lem}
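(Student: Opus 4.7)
The plan is to first establish thinness via a length argument and then enumerate the thin disks by combinatorial inspection of the Lagrangian diagram.

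For the first step, recall from the construction of the separated $2$-copy that an $x$-chord $z \in \widehat{\reeb}_x$ has length $\ell(z) = Z + \delta_x$, with $\delta_x$ bounded by the perturbation scale, while any $p$-chord $w$ has length $\ell(w) = Z - \delta_w$ for some small $\delta_w > 0$ also bounded by the perturbation scale. Applying Lemma~\ref{lem:stokes} to any $u \in \Delta_{\stcleg^{s,r}}(\ul{z}, \mba \ul{w} \mbb)$ yields
\[
\sum_{c \in \mba \mbb} \ell(c) < \ell(z) - \ell(w) = \delta_x + \delta_w,
\]
which can be made smaller than the minimum length of any Reeb chord of $\leg$ by shrinking the perturbation scale. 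This simultaneously rules out every long pure chord corner in $\mba \mbb$ (whose length is bounded below by $\min_a \ell(a)$) and every mixed $p$- or $q$-chord corner (whose length is of order $Z$). Hence each corner of $u$ in $\mba \mbb$ is either a base point or a short pure $x$- or $y$-chord coming from the $(s,r)$-copy perturbation. Consequently the image of $u$ under the double stick-together map $s_2: \pi_{xy}(\stcleg^{s,r}) \to \pi_{xy}(\leg)$ collapses entirely into $\pi_{xy}(\leg)$, so $u$ is thin with respect to $\leg$ by definition.

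For the second step, the thin disk $u$ is a small neighborhood of a curve on $\leg$ joining the shadow of $z$ (a maximum of $f$) to the shadow of $w$ (the crossing $a$ of $\leg$ that $w$ sits above). Following this curve around the overstrand side of $a$ or the understrand side of $a$ produces the two basic configurations, containing an $a^{11|\cdot\cdot}$ corner or an $a^{22|\cdot\cdot}$ corner adjacent to $\ul{p}^{12|11}$, respectively. Since $z \in \reeb^{12|sr}$ while $w \in \reeb^{12|11}$, the boundary of $u$ must additionally bridge the outermost and innermost $(s,r)$-copy indices: this is accomplished by the basepoint decoration $t^{-1}\cdots t$ surrounding $\ul{p}^{12|11}$ (which accounts for passing through the original basepoint adjacent to the maximum) and, when a relevant copy index exceeds $1$, by an extra $x^{11|21}$ or $x^{22|12}$ corner from a maximum of the $(s,r)$-copy perturbation. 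A direct case check in a neighborhood of the maximum for $z$ and the crossing for $w$, using the convexity and Reeb-sign conventions of Figure~\ref{fig:disk-defn} together with the diagram in Figure~\ref{fig:2-copy-crit}, shows that these combinatorial choices yield exactly the eight configurations listed in \eqref{eq:sep-thin-1}--\eqref{eq:sep-thin-4}.

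The main obstacle is the enumeration in the second step, which requires careful attention to orientation and convexity around the critical point for $z$ and the crossing for $w$, both to confirm that each of the eight listed pairs is realized by a valid immersed thin disk and to rule out any further admissible configurations. The simple perturbation hypothesis is what makes this rigid: with a single maximum, a single minimum, and a unique adjacent base point on each component, the local combinatorics near the maximum of $f$ are tight enough to enforce precisely the eight cases, with the four rows distinguishing the combinations of whether extra $(s,r)$-copy $x$-corners appear on each side of $\ul{p}^{12|11}$.
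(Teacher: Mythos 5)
There is a genuine gap in your first step, and it propagates through the whole argument. You assert that a $p$-chord has length $Z-\delta_w$ with $\delta_w$ bounded by the perturbation scale. This is false: by Proposition~\ref{prop:p-q-chords} and the discussion following it, the $p$-chords are the images of the chords $a^{21}$ of the ordinary $2$-copy, so the $p$-chord sitting over a crossing $a_k$ of $\leg$ has length approximately $Z-\ell(a_k)$, where $\ell(a_k)$ is the length of a genuine Reeb chord of $\leg$; the chords created by the perturbation are $q$-chords ($x$ and $y$), never $p$-chords. Consequently Lemma~\ref{lem:stokes} only yields $\sum_c \ell(c) < \ell(z)-\ell(w) \approx \ell(a_k)+O(\epsilon)$, which does not exclude long pure corners. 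Your conclusion that every corner in $\mba\mbb$ is a base point or a short $x$- or $y$-chord contradicts the very statement being proved---each configuration in \eqref{eq:sep-thin-1}--\eqref{eq:sep-thin-4} contains a long pure corner $a^{11|\cdot\cdot}$ or $a^{22|\cdot\cdot}$---and it also contradicts your own second step, where exactly such corners reappear. Moreover, even with the lengths corrected, an action estimate alone does not give thinness: thinness is the assertion that the stuck-together image lies in $\pi_{xy}(\leg)$, and ruling out thick disks with positive corner at an $x$-chord and negative corner at a $p$-chord requires the local convexity and Reeb-sign analysis at the $x$-crossings and at the crossing carrying $p$, not a length bound.

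That local analysis is precisely the paper's (brief) proof: a direct combinatorial enumeration of the possible boundary behavior using the crossing signs recorded in Figure~\ref{fig:sep-2-copy-xp}, which simultaneously shows that no thick disk can occur and that the only possibilities are the eight listed half-flowlines. Your second step gestures at this enumeration but leaves it as an assertion, while resting the thinness claim on the invalid length argument. To repair the proof, discard step one (or use the action estimate only to bound the number of mixed $p$- and $q$-corners) and actually carry out the case check: starting from the positive corner at $x^{12|kl}$, the quadrant structure forces the two boundary arcs to travel along parallel copies of $\leg$, so the disk is a thin strip that can only close up at a crossing where it has its negative corner at $\ul{p}^{12|11}$ together with an adjacent pure corner, with the passage of the base point and possible extra $x^{11|21}$ or $x^{22|12}$ corners accounting for the four rows of the list.
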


\begin{figure}
\labellist
\small\hair 2pt
 \pinlabel {$\stcleg_1$} [r] at 1 384
 \pinlabel {$\stcleg_2$} [r] at 1 356
 \pinlabel {$2$} [r] at 9 389
 \pinlabel {$1$} [r] at 9 379
 \pinlabel {$1$} [r] at 9 361
 \pinlabel {$2$} [r] at 9 351
 \pinlabel {$2$} [b] at 17 397
 \pinlabel {$1$} [b] at 27 397
 \pinlabel {$1$} [b] at 44 397
 \pinlabel {$2$} [b] at 54 397
 \pinlabel {$a^{11}$} [br] at 150 391
 \pinlabel {$p^{12}$} [tr] at 150 347
 \pinlabel {$q^{12}$} [bl] at 193 391
 \pinlabel {$a^{22}$} [tl] at 193 347
\endlabellist
    \centering
 \includegraphics{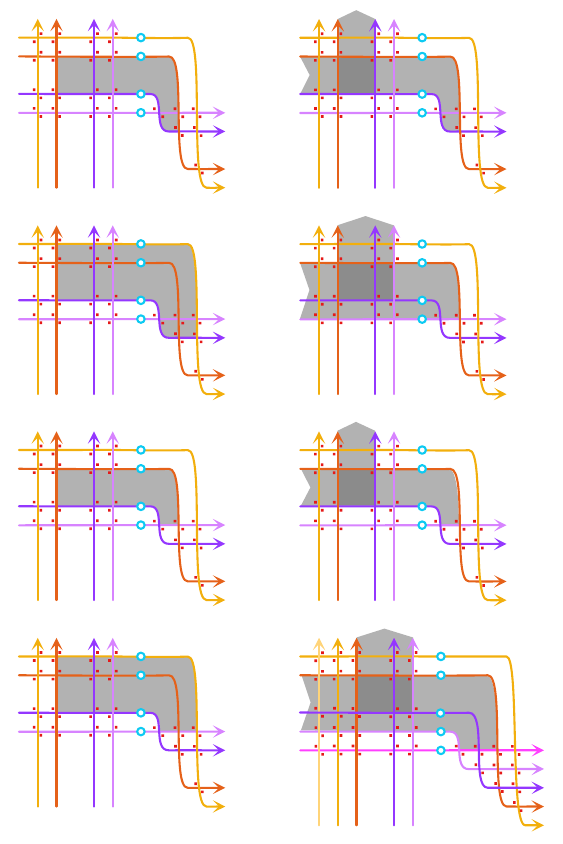}
    \caption{The thin disks in the $2$-copy listed in Lemma~\ref{lem:s-r-sep-thin-disk-3}. }
    \label{fig:sep-2-copy-xp}
\end{figure}

The proof is, once again, a straightforward combinatorial enumeration, which is carried out in Figure~\ref{fig:sep-2-copy-xp}.

There are many more thin disks with $z \in \widehat{\reeb}_q$ and $w \in \widehat{\reeb}_p$, including new configurations that lie between two crossings, or even lie inside a single crossing, but these will not need to be explicitly enumerated for the main proofs in Section~\ref{sec:main-pf}.

We are now ready to describe the thick disks.  The precise details of the projection and lifting operations depend on the nature of the $z$ and $w$ generators in $(\ul{z},\mba \ul{w} \mbb)$, divided into the following three cases.

\begin{figure}
\labellist
\small\hair 2pt
 \pinlabel {$1$} [ ] at 18 206
 \pinlabel {$2$} [ ] at 18 130
 \pinlabel {$3$} [ ] at 126 206
 \pinlabel {$q_1^{12}$} [br] at 171 290
 \pinlabel {$a_2^{11}$} [br] at 171 200
 \pinlabel {$q_3^{12}$} [br] at 325 290
 \pinlabel {$p_3^{12}$} [tr] at 296 134
 \pinlabel {$p_1^{12}$} [tr] at 197 134
 \pinlabel {$a_2^{22}$} [br] at 197 56
\endlabellist
    \centering
    \includegraphics{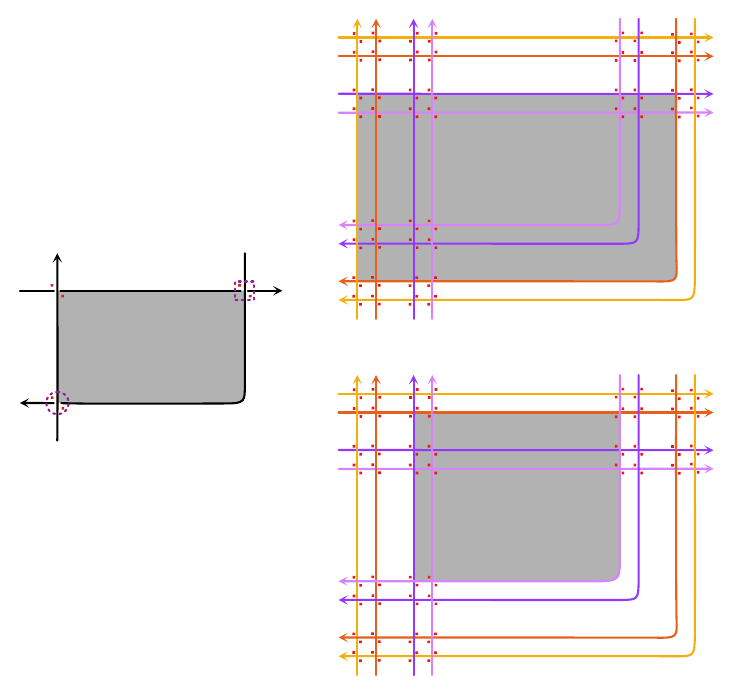}
    \caption{The thick disks captured by Lemma~\ref{lem:separated+dil}(1) on the top and (2) on the bottom come from the same underlying enriched disk in $\Delta_\leg^{1,0}(\ul{a}_1, a_2 \ul{a}_3)$ with pure enrichment $\mbw_1=\{1\}$ and $\mbw_2= \emptyset$.  At top, the enriched disk corresponds to a disk in $\Delta_{\stcleg^{2,2}}(\ul{q}_1^{12|21}, a_2^{12|21} \ul{q}_3^{12|11})$.  At bottom, the enriched disk corresponds to a disk  in $\Delta_{\stcleg^{2,2}}(\ul{p}_3^{12|12}, \ul{p}_1^{12|11} a_2^{22|12})$.}
    \label{fig:sep-copy-thick-1}
\end{figure}

\begin{description}
    \item[$z,w \in \widehat{\reeb}_q$] The projection and lifting operations are identical to those for the ordinary $2$-copy. See Figure~\ref{fig:sep-copy-thick-1}.   
    
    \item[$z \in \widehat{\reeb}_p$, $w \in \widehat{\reeb}_p$] The stick-together map sends a corner of a disk in the diagram of $\stcleg^{s,r}$ at a $p$ chord to a corner of a disk in the diagram of $\leg$ with the opposite sign. Thus, a disk in $\Delta_{\leg^{s,r}}(\ul{z}, \mba \ul{w} \mbb)$, which has a positive $p$-corner at $\check{z}$ and a negative $p$-corner at $\check{w}$, projects to a doubly enriched disk in $\Delta^{r,s}_\leg(\check{\ul{w}}, \check{\mbb} \check{\ul{z}} \check{\mba})$.  The pure enrichments follow the cyclic movement of $\mba$ and $\mbb$, and the mixed enrichment lies at $\ul{z}$. See Figure~\ref{fig:sep-copy-thick-1} once again.

    Once we specify that the target lifts of $z$ and $w$ lie in $\widehat{\reeb}_p$, the lifting procedure follows the same recipe as before and provides an inverse operation to the projection.
    
    \item[$z \in \widehat{\reeb}_y$, $w \in \widehat{\reeb}_p$]  While thick disks with a positive corner at a minimum did not exist for the ordinary $2$-copy, as dictated by Lemma~\ref{lem:s-r-2-copy-dil}, they are possible when the negative corner lies at a $p$ chord.  Similarly to the previous case, a disk in this case projects to a disk with a positive corner at $\check{w}$ and a marked point in the position occupied by $z$. That is, the projection lies in $\Delta^{r,s}(\check{\ul{w}}, \check{\mbb} \ul{t}^{\pm 1} \check{\mba})$. Care must be taken with the enrichment.  If the mixed enrichment is at $t$, then $0$ is enriched at most once in $\mbw_1$ and not enriched in $\mbw_2$.  If the mixed enrichment is at $t^{-1}$, then $0$ is not enriched in $\mbw_1$ but may be enriched multiple times in $\mbw_2$. See Figure~\ref{fig:sep-copy-thick-2}.

    Again, once we specify that the lifts of $z$ and $w$ lie in $\widehat{\reeb}_y$ and $\widehat{\reeb}_p$ respectively, the lifting procedure follows the same recipe as before and provides an inverse operation to the projection.
\end{description}

\begin{figure}
\labellist
\small\hair 2pt
 \pinlabel {$1$} [br] at 22 106
 \pinlabel {$2$} [bl] at 120 106
 \pinlabel {$y^{12}$} [tl] at 275 88
 \pinlabel {$p_1^{12}$} [tr] at 197 124
 \pinlabel {$a_2^{22}$} [tl] at 308 96
 \pinlabel {$x^{11}$} [tr] at 240 77
\endlabellist
    \centering
    \includegraphics{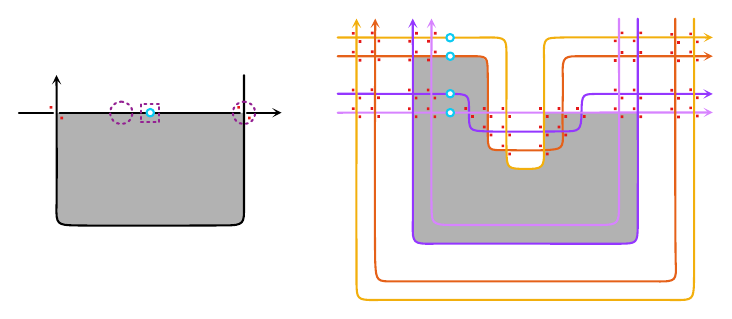}
    \caption{The thick disks captured by Lemma~\ref{lem:separated+dil}(3). The enriched disk at left lies in $\Delta_{\leg}^{1,0}(a_1, a_2\ul{t}^{-1})$ with pure enrichments $\mbw_1=\{1\}$ and $\mbw_2 = \{0\}$ corresponds to the disk at right in $\Delta_{\stcleg^{2,2}}(\ul{y}^{12|22}, x^{11|21} (t^{1|1})^{-1} \ul{p}_1^{12|11} a_2^{22|12})$.}
    \label{fig:sep-copy-thick-2}
\end{figure}

\begin{rem}\label{rem:qp-thick-disks}
    We need not formally identify the remaining thick disks with $z\in \widehat{\reeb}_q^{12|s,r}$ and $w \in \widehat{\reeb}_p^{12|s,r}$, though one can observe that they project to disks with precisely \emph{two} positive corners in $\leg$.  By Lemma~\ref{lem:s-r-sep-thin-disk-3}, there are no thick disks with $z \in \widehat{\reeb}_x$ and $w \in \widehat{\reeb}_p$. Further, action considerations imply that there are no disks in $\stcleg^{s,r}$ with $z\in \widehat{\reeb}_p^{12|s,r}$ and $w \in \widehat{\reeb}_q^{12|s,r}$.
\end{rem}

The descriptions of the projection and lifting operations above yield the following identification.

\begin{lem}[Separated $2$-Copy Thick Disk Identification] \label{lem:separated+dil}
    Suppose the pair $(\ul{z},\mba \ul{w} \mbb)$ is in $(s,r)$-copy form for $\stcleg$ without any generator in $\mba\ul{w}\mbb$ arising from a minimum of a perturbing function.

    \begin{enumerate}
        
    \item If $z\in \widehat{\reeb}_q^{12|s,r}$ and $w \in \widehat{\reeb}_q^{12|11}$, there is a bijective correspondence between the disks in $\Delta_{\stcleg^{s,r}}(\ul{z},\mba \ul{w} \mbb)$ and the doubly enriched disks in $\Delta^{s-1,r-1}_\leg(\ul{\check{z}}, \check{\mba} \ul{\check{w}} \check{\mbb})$, where $0$ has pure enrichments only if the mixed enrichment is at $t^{-1}$.

    \item If $z\in \widehat{\reeb}_p^{12|s,r}$ and $w \in \widehat{\reeb}_p^{12|11}$, there is a bijective correspondence between the disks in $\Delta_{\leg^{s,r}}(\ul{z},\mba \ul{w} \mbb)$ and the doubly enriched disks in $\Delta^{r-1,s-1}_\leg(\ul{\check{w}}, \check{\mbb} \ul{\check{z}} \check{\mba})$.

    \item If $y \in \widehat{\reeb}_q^{12|s,r}$ arises from a minimum and $w \in \widehat{\reeb}_p^{12|11}$, there is a bijective correspondence between the disks in $\Delta_{\leg^{s,r}}(\ul{y},\mba \ul{w} \mbb)$ and the doubly enriched disks in $\Delta^{r-1,s-1}_\leg(\ul{\check{w}}, \check{\mbb}\ul{t}^{\pm 1}  \check{\mba})$, where $0$ is enriched at most once in $\mbw_1$ and not enriched in $\mbw_2$ if the mixed enrichment is at $t$, and $0$ is not enriched in $\mbw_1$ if the mixed enrichment is at $t^{-1}$.
    \end{enumerate}
\end{lem}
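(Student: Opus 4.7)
The plan is to prove the three cases of Lemma~\ref{lem:separated+dil} by following the same template as Lemmas~\ref{lem:n-copy-dil} and \ref{lem:s-r-2-copy-dil}: construct explicit projection and lifting operations, then verify they are mutually inverse. The recipes for these operations have already been laid out in the three bullet points preceding the lemma, so the proof reduces to (i) checking that the projection $u \mapsto (s \circ u, \mbw_1(u), \mbw_2(u))$ is a well-defined doubly enriched disk satisfying the stated enrichment constraints at position $0$, (ii) checking that the lifting of a doubly enriched disk produces a genuine immersed disk in $\pi_{xy}(\stcleg^{s,r})$ with convex corners and one positive corner, and (iii) verifying that projection and lifting are mutually inverse. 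These verifications are local at each corner and reduce to combinatorial bookkeeping in the model pictures.

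For Case (1), since the $q$ chords of $\widehat{\reeb}^{12}$ correspond via Proposition~\ref{prop:p-q-chords} to the mixed chords $\overline{\reeb}^{12}$ of the ordinary $2$-copy and carry the same local Reeb-sign configuration near their crossings, the proof of Lemma~\ref{lem:s-r-2-copy-dil} carries over essentially verbatim; the provision that $0$ can be enriched in $\mbw_1$ or $\mbw_2$ only when the mixed enrichment is at $t^{-1}$ is already present in that earlier lemma. Case (2) is similar, but now requires the observation, quoted in the text just before the lemma, that the stick-together map reverses the Reeb sign at a $p$-corner because $p$ chords correspond to $\overline{\reeb}^{21}$ of the ordinary $2$-copy. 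Consequently a disk with a positive $p$-corner at $\ul{z}$ and a negative $p$-corner at $\ul{w}$ projects to a disk on $\leg$ with positive corner at $\check{w}$ and negative mixed corner at $\check{z}$, and the side words reorder cyclically from $\mba\ul{w}\mbb$ to $\mbb\ul{z}\mba$. Once we insist that the positive corner of the lift is a $p$ chord rather than a $q$ chord, the lift is uniquely determined, and inverse-checking is routine.

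The main obstacle is Case (3), which has no analog in the $n$-copy or ordinary $2$-copy settings: such disks exist precisely because the large height separation $Z$ in $\stcleg$ leaves room for a disk born at a minimum $y$-chord to terminate at a short $p$ chord. Under the double stick-together map the positive $y$-corner of the disk collapses onto a basepoint of $\leg$, producing a $\ul{t}^{\pm 1}$ in the projected word, with the sign dictated by whether the disk's boundary orientation agrees or disagrees with that of $\leg$ at the minimum. The asymmetric enrichment constraints at position $0$ follow from the local model at the minima of $f_1$ and $f_2$ (cf.\ Figure~\ref{fig:sep-copy-thick-2}): when the mixed enrichment is at $t$, the oriented boundary can stack at most one $x$-corner on the $\stcleg_1$-side and none on the $\stcleg_2$-side, with the reverse asymmetry when the mixed enrichment is at $t^{-1}$. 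These same local pictures determine the lift of any doubly enriched disk with the correct corner data, and convexity of the corners of the lift follows immediately from the crossing configuration of Figure~\ref{fig:sep-2-copy}, yielding bijectivity.
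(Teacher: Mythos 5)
Your proposal mirrors the paper's own argument: the proof there consists precisely of the three case-by-case descriptions of the projection and lifting operations (Case (1) reducing to the ordinary $2$-copy identification of Lemma~\ref{lem:s-r-2-copy-dil}, Case (2) using the sign reversal at $p$-corners and the resulting cyclic reordering of the word, Case (3) handling the new $y$-to-$p$ disks via a basepoint marking $\ul{t}^{\pm 1}$ with the asymmetric constraints at position $0$), followed by the observation that projection and lifting are mutually inverse. One small imprecision to note: in Case (3) the constraint at $t^{-1}$ is not the strict mirror of the constraint at $t$ --- there, position $0$ may be enriched \emph{multiple} times in $\mbw_2$ (consistent with the multiplicity rule of Definition~\ref{defn:doubly-enriched}), not at most once --- but this does not affect the structure of your argument.
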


\section{Augmentation Categories, Bimodules, and Isomorphisms}
\label{sec:aug-cat}

In this section, we define the central algebraic objects in the paper using the geometric models established in the previous section.  We begin by reviewing the construction from \cite{nrssz:aug-sheaf} of the augmentation $A_\infty$ categories $\Aug_\pm(\leg)$ in Section~\ref{ssec:aug-cat}; these constructions rely on the $n$-copy. Next, we define a variety of bimodules over $\Aug_+(\leg)$ in Sections~\ref{ssec:aug-bimod} and \ref{ssec:aug-bimod-defn}; these bimodules are constructed using the $(s,r)$-copy.  Finally, we elucidate relationships between the augmentation bimodules  using bimodules associated to the separated $2$-copy in Section~\ref{ssec:sep-module}.

\subsection{Augmentation Categories}
\label{ssec:aug-cat}

We begin with a summary of the definition of the augmentation categories $\Aug_\pm(\leg)$ associated to a Legendrian $\leg$, along with a third ``circle category'' $\mcc$. The first appearances of the categories $\Aug_-(\leg)$ and $\Aug_+(\leg)$ were in \cite{bc:bilinear} and \cite{nrssz:aug-sheaf} respectively; see also the earlier paper \cite{products}.  All three categories are built on the geometric model of the $n$-copy $\nleg{n}$.

\subsubsection{Positive and Negative Augmentation Categories}
\label{sssec:pm-aug-cat}

The objects of the $A_\infty$ categories $\Aug_{\pm}(\leg)$ are simply the augmentations of the Chekanov-Eliashberg DGA of $\leg$.  We emphasize once again that we restrict ourselves to augmentations over $\ff_2$.

To define the morphisms, differentials, and higher products, we generalize the algebraic structures presented in Section~\ref{ssec:aug}. Given an $n$-tuple of augmentations $(\aug_1, \ldots, \aug_n)$, define the \dfn{diagonal augmentation} $\hat{\aug}$ on the $n$-copy $\nleg{n}$ to equal $\aug_i$ on generators in $\gens^{ii}$ and to vanish on $\gens^{ij}$ for $i \neq j$. This augmentation respects the link grading on the $n$-copy by definition. Hence, using Proposition~\ref{prop:step1-dga}, we obtain the $A_\infty$ algebra $(A^\vee, \{m_{\hat{\aug}}^k\})$ from the DGA of the $n$-copy $\nleg{n}$.  Further, as noted in the discussion after Proposition~\ref{prop:step1-dga}, the vector space and products in the $A_\infty$ algebra $(A^\vee, \{m_{\hat{\aug}}^k\})$ split, with the only non-vanishing products given by   
\begin{equation} 
m^k_{\hat{\aug}}: A^\vee_{i_k i_{k+1}} \otimes \cdots \otimes A^\vee_{i_1 i_2} \to A^\vee_{i_1 i_{k+1}}.
\end{equation}

We are now in position to define the morphism spaces and structure maps for the augmentation categories.  First, given a pair of augmentations $(\aug_1, \aug_2)$, we define
\begin{align*}
	\Hom_+(\aug_1,\aug_2) &= A^\vee_{12}, \\
	\Hom_-(\aug_2,\aug_1) &= A^\vee_{21}.
\end{align*}
The morphism spaces depend neither on the augmentations $\aug_i$ nor on the number $n$ of copies of $\leg$ (so long as $n \geq 2$). In the case of a simply perturbed Legendrian, we follow the conventions in Section~\ref{sssec:n-copy-geometry} to denote the generators of $A^\vee_{12}$ by $a_k^+, x^+, y^+$ and those of $A^\vee_{21}$ by $a_k^-$. Further, we may split the vector space $A^\vee_{12}$ into $A^\vee_{12,\leg} \oplus A^\vee_{12, f}$, where $A^\vee_{12,\leg}$ is generated by the $a^+_k$ and $A^\vee_{12,f}$ is generated by $x^+$ and $y^+$.

To define the maps $m^k_+: \Hom_+(\aug_{k+1}, \ldots, \aug_1) \to \Hom_+(\aug_1, \aug_{k+1})$, we first note that implicit in the translation of \cite[Lemma 3.15]{nrssz:aug-sheaf} to the algebras of $n$-copies in \cite[\S4]{nrssz:aug-sheaf} is the fact that, for $i< j$, there is a chain isomorphism $h_{ij}: A^\vee_{12} \to A^\vee_{ij}$ obtained by an identification of generators, and similarly for $i>j$ using $A^\vee_{21}$. We then compose the structure map $m^k_{\hat{\aug}}$ with appropriate identifications $h_{ij}$ to define $m_+^k$:
\begin{equation} \label{eq:m+}
	m_+^k = h^{-1}_{1, k+1}  \circ m_{\hat{\aug}}^k \circ (h_{k,k+1} \otimes \cdots \otimes h_{12}).
\end{equation}
We will occasionally abuse notation for convenience and abbreviate $h_{ij}$ by $h$ and extend it to $\alg$.  As discussed in \cite{nrssz:aug-sheaf}, the definition of $m^+_k$ is independent of the number of knots in the $n$-copy, or even the choice of $k+1$ components inside a given $n$-copy. The disks used in the definition of $m_+^k(a_k^+, \ldots, a_1^+)$ are those in $\Delta_{\nleg{k+1}}(h(a), h(\mba))$, where the associated word is in positive $(k+1)$-copy form with $a_{i_j} = a_j$ and $\aug_i(\mba_i) = 1$.

Similarly, the  maps $m^k_-: \Hom_-(\aug_1, \ldots, \aug_{k+1}) \to \Hom_-(\aug_{k+1},\aug_1)$ are defined as the composite 
\begin{equation} \label{eq:m-}
	m_-^k = h^{-1}_{k+1,1}  \circ m_{\hat{\aug}}^k \circ (h_{21} \otimes \cdots \otimes h_{k+1,k}).
\end{equation}
Note that since $A^\vee_{ij}$ for $i>j$ does not depend on the perturbation $f$, neither does $m^k_-$.  The disks used in the definition of $m_-^k(a_k^-, \ldots, a_1^-)$ are those in $\Delta_{\nleg{k+1}}(h(a), h(\mba))$, where $(a,\mba)$ is in negative $(k+1)$-copy form with $a_{i_j} = a_j$ and $\aug_i(\mba_i) = 1$.

Summarizing, we state the following definition and its accompanying theorem.

\begin{defn} \label{defn:aug-cat}
    The \dfn{positive and negative augmentation categories} $\Aug_\pm(\leg)$ of a Legendrian knot $\leg$ have augmentations of the associated Chekanov-Eliashberg DGA $(\alg, \df)$ as objects, $\Hom_\pm$ as morphism spaces, and $\{m_\pm^k\}$ as structure maps.
\end{defn}

\begin{thm}[\cite{bc:bilinear,nrssz:aug-sheaf}]
    The categories $\Aug_\pm(\leg)$ are $A_\infty$ categories. Up to $A_\infty$ equivalence, the categories $\Aug_\pm(\leg)$ do not depend on the perturbation $f$ and are invariant under Legendrian isotopy.
\end{thm}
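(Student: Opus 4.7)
The plan is to get the $A_\infty$ relations for free from the $A_\infty$ algebra structure on $(A^\vee, \{m^k_{\hat\aug}\})$ built from the $n$-copy, then promote the DGA-level invariance of the Chekanov-Eliashberg algebras under perturbation and Legendrian isotopy to the $A_\infty$ level.

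For the first claim, note that Proposition~\ref{prop:step1-dga} already gives an $A_\infty$ algebra $(A^\vee, \{m^k_{\hat\aug}\})$ for each diagonal augmentation $\hat\aug$ on any $\nleg{n}$. Because $\hat\aug$ respects the link grading, the products split as in equation \eqref{eq:a-infty-link}, so the $A_\infty$ relation \eqref{eq:a-infty-rel} for $(A^\vee, \{m^k_{\hat\aug}\})$ decomposes into a collection of relations, one for each sequence of indices $(i_1, \ldots, i_{k+1})$. To derive the $A_\infty$ relations for $m^k_+$, I fix indices $i_j = j$ for $j=1,\ldots,k+1$, apply the relation for $m^k_{\hat\aug}$ restricted to the off-diagonal strip $A^\vee_{j,j+1}$, and then intertwine via the isomorphisms $h_{j,j+1}$ using \eqref{eq:m+}. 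The subtle point is that each term $m^{k-s+1}_{\hat\aug}(\cdots m^s_{\hat\aug} \cdots)$ on the nose involves two different intermediate identifications, so I must verify that $h_{ij}$ commutes with $m^k_{\hat\aug}$ in the sense needed. This follows from the independence of $m^k_+$ on both the size of the $n$-copy and the choice of $k+1$ consecutive components inside it: the disks counted in $\Delta_{\nleg{k+1}}$ for a given sequence of positively $(k+1)$-copy form words are identified with disks in any larger $\nleg{n}$ running between the same consecutive strands, using the thick/thin disk identifications of Lemmas~\ref{lem:thin-disk} and \ref{lem:n-copy-dil}. The argument for $m^k_-$ is symmetric using \eqref{eq:m-}.

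The step I expect to be the main obstacle is verifying the above $h_{ij}$-compatibility cleanly at the level of disks, especially when intermediate products in the quadratic $A_\infty$ relations produce mixed outputs in $A^\vee_{ij}$ with $j-i > 1$. Here I rely on the disk identification lemmas of Section~\ref{sec:copies}, which let me compare a disk contributing to $m^k_+$ computed in $\nleg{k+1}$ with one computed in $\nleg{n}$ for larger $n$ by stick-together maps and enrichments. The terms corresponding to augmentation evaluations on diagonal generators in the $\mba_i$ subwords have to line up with $\aug_i(\mba_i)$; this is exactly the reason $\hat\aug$ is chosen to be diagonal.

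For invariance under the perturbation $f$, the argument follows \cite{nrssz:aug-sheaf}: any two simply perturbed $n$-copies of $\leg$ differ by a Legendrian isotopy of the link $\nleg{n}$ that fixes the underlying knot $\leg$ and preserves the link grading, so it induces a stable tame isomorphism of DGAs that respects the link grading. One then uses a standard bar-construction argument (or the cobordism/2-copy formalism) to lift this DGA quasi-isomorphism to a strictly unital $A_\infty$ equivalence between the resulting $\Aug_\pm(\leg)$'s; the link-grading-preservation ensures that diagonal augmentations are sent to diagonal augmentations. For invariance under Legendrian isotopy, one observes that a Legendrian isotopy of $\leg$ induces a Legendrian isotopy of $\nleg{n}$ for every $n$, and the same lifting argument produces the required $A_\infty$ equivalence. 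Functoriality of the construction with respect to composition of isotopies then gives an equivalence relation of categories invariant under the full space of Legendrian representatives.
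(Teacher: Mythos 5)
The paper itself does not prove this statement: it is imported wholesale from \cite{bc:bilinear} and \cite{nrssz:aug-sheaf}, so there is no internal proof to compare against, and your proposal has to be measured against the arguments in those references.

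Your first half — deriving the $A_\infty$ relations for $m^k_\pm$ from the split $A_\infty$ relations of $(A^\vee,\{m^k_{\hat\aug}\})$ for diagonal augmentations, with the $h_{ij}$-compatibility supplied by the independence of the counts on $n$ and on the choice of components (the disk identification lemmas) — is essentially the construction of \cite{nrssz:aug-sheaf}, where this compatibility is axiomatized as a ``consistent sequence'' of link-graded DGAs. That part is fine. The genuine gap is in the invariance half. Knowing that, for each fixed $n$, the DGAs of the two $n$-copies (for different $f$, or for isotopic knots) are stable tame isomorphic respecting the link grading does not by itself produce an $A_\infty$ equivalence of augmentation categories: the category structure uses \emph{all} $n$-copies at once, so you need the DGA morphisms to be chosen consistently across $n$ — compatible with the maps forgetting components and with the identifications $h_{ij}$ — which is precisely the content of the consistent-sequence-of-morphisms machinery in \cite[\S 3--4]{nrssz:aug-sheaf}; ``a standard bar-construction argument'' does not supply this, and stabilizations (which add generators) need separate handling. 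Moreover, an equivalence must also act correctly on objects: the two categories have augmentations of \emph{different} DGAs as objects, the induced functor acts by pullback of augmentations, and one must prove the induced functor on cohomology categories is unital, fully faithful, and essentially surjective (e.g.\ that every augmentation is isomorphic in $H(\Aug_+)$ to a pulled-back one). None of this is addressed, and it is where the real work in the cited proofs lies; as written, your argument establishes the category structure but only gestures at invariance.
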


The reason we prefer $\Aug_+$ to $\Aug_-$ is that the former is a strictly unital category.  The unit is straightforward to describe: the element $y^+ \in \Hom_+(\aug,\aug)$ is a strict unit \cite{nrssz:aug-sheaf}. Concretely, the disks that yield the action of the unit are the thin disks with positive corner at $b^{13}$ (for $b=a,x,y$) in Lemma~\ref{lem:thin-disk}.

It is useful to characterize the structure maps $m_+^k$ in terms of thin disks and enriched disks in the original Lagrangian diagram of $\leg$, building on the ideas of \cite[Proposition 4.26]{nrssz:aug-sheaf}. For our purpose, we consider the case when $\leg^n$ is simply perturbed, though the model applies to any choice of perturbation function with  changes to some details. The augmentation category defined in this manner is also termed \dfn{simply perturbed}.

Recall from Lemma~\ref{lem:n-copy-dil} that there is a bijective correspondence between disks in the $(k+1)$-copy referenced above and $k$-enriched disks, with the correspondence given by projection from disks in the $(k+1)$-copy to enriched disks and geometric lifting of enriched disks to disks in the $(k+1)$-copy. To complement the geometric lifting process, we define the \dfn{algebraic lift} $\hat{\alpha}: \Delta_\leg^k(a,\mba) \to \alg$ of a $k$-enriched disk $(u, \mbw)$ with $\mbw = \{m_1i_1, \ldots, m_li_l\}$ to be 
\[\hat{\alpha}(u, \mbw)=\hat{a}_{i_1} \hat{a}_{i_2} \cdots \hat{a}_{i_l},\]
where $\hat{a}_j = a_j$ if $a_j \in \reeb_\leg$ and $\hat{a}_j = x \cdots x$ for $m_j$ number of $x$ generators if $a_j = t^{\pm 1}$.  It will also be convenient to define one more piece of notation.  The augmentation of a word in $(k+1)$-copy form by taking the product of the augmentations of the pure generators:
\begin{equation} \label{eq:hat-aug}
    \hat{\aug}(\mba) = \aug_1(\mba_1) \aug_{1+m_1}(\mba_2) \cdots \aug_{k+1}(\mba_{k+1}).
\end{equation}

\begin{lem} \label{lem:aug+disks}
    In the definition of $m_+^k$ for a simply perturbed $\Aug_+(\leg)$, thin disks yield the terms dictated by the strict unitality of $y^+$, as well as the terms
    \begin{equation*}
        m_+^1(y^+) = \left(\aug_1(a) + \aug_2(a)\right) a^+.
    \end{equation*}
    Enriched disks yield additional terms
    \begin{equation*}
    m_+^k(a_k^+,\ldots,a_1^+) = \sum_{\substack{(u,\mbw) \in \Delta_\leg^k(\check{a}, \check{\mba})\\ \hat{\alpha}(u, \mbw)  = a_1 \cdots a_k}}  \hat{\aug}(\mba) a^+.
\end{equation*}
\end{lem}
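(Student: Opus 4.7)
The plan is to unpack the definition $m_+^k = h^{-1}_{1,k+1} \circ m^k_{\hat{\aug}} \circ (h_{k,k+1} \otimes \cdots \otimes h_{12})$ of Equation~\eqref{eq:m+} and, for each fixed input tuple, partition the disks $u \in \Delta_{\nleg{k+1}}(h(a),\mba)$ whose prescribed negative corners are $h(a_1^+), \ldots, h(a_k^+)$ (in counter-clockwise order) into the mutually exclusive thin and thick classes of Section~\ref{sssec:n-copy-disks}. Since $\hat{\aug}$ is diagonal, each such $u$ contributes $\hat{\aug}(\mba')\,a^+$, where $\mba'$ is the subword consisting of entries of $\mba$ not among the prescribed negatives; in particular, because $\hat{\aug}$ vanishes on off-diagonal generators, any $y$-corner of such a disk must be prescribed rather than augmented in order to survive.

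For the thick disks, the $n$-Copy Thick Disk Identification (Lemma~\ref{lem:n-copy-dil}) supplies a bijection $u \leftrightarrow (\check{u},\mbw(u))$ onto enriched disks in $\Delta^{k}_\leg(\check{a},\check{\mba})$ under which the prescription $h(a_j^+) = a_j^{j,j+1}$ on the mandated negatives translates exactly to the algebraic-lift condition $\hat{\alpha}(\check{u},\mbw(u)) = a_1 \cdots a_k$, and the diagonal augmentation on the unspecified pure-strand generators of $\mba$ inside $\nleg{k+1}$ matches strand-by-strand with the product $\hat{\aug}(\mba)$ of Equation~\eqref{eq:hat-aug} on the projected pure parts of $\check{\mba}$. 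Summing over thick disks therefore produces the enriched-disk formula; if an input $a_i^+$ equals $y^+$, the enriched sum is automatically empty because $\check{y}$ is not a generator of $\alg$.

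For the thin disks I enumerate via the $n$-Copy Thin Disk Identification (Lemma~\ref{lem:thin-disk}). Because thin disks involve at most three strands and their $y$-corners must be prescribed, thin contributions occur only for $k \in \{1,2\}$, which forces $m_+^k(\ldots,y^+,\ldots)=0$ for $k \ge 3$ as required by strict unitality. The five three-strand disks --- the two flowlines $(x^{13},x^{12}y^{23})$ and $(x^{13},t^{-1}y^{12}tx^{23})$, the two partial flowlines $(a^{13},y^{12}a^{23})$ and $(a^{13},a^{12}y^{23})$, and the constant flowline $(y^{13},y^{12}y^{23})$ --- produce precisely $m_+^2(y^+,b^+) = b^+ = m_+^2(b^+,y^+)$ for $b \in \reeb \cup \{x,y\}$, recovering the strict unit axioms for $y^+$. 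At $k=1$, the two-strand flowlines $(x^{12},y^{12})$ and $(x^{12},t^{-1}y^{12}t)$ each contribute $x^+$ to $m_+^1(y^+)$ and cancel over $\ff_2$, while the two partial flowlines $(a^{12},y^{12}a^{22})$ and $(a^{12},a^{11}y^{12})$ contribute $\aug_2(a)\,a^+$ and $\aug_1(a)\,a^+$, yielding the asserted formula for $m_+^1(y^+)$.

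The principal bookkeeping obstacle is aligning the counter-clockwise corner order in the $n$-copy with the tensor order on inputs (so that the leftmost argument $a_k^+$ is identified via $h_{k,k+1}$ with the most-CCW mandated negative) and matching the $t^{\pm 1}$ pairs flanking the maxima with the multiplicity convention for enriched marked points (unbounded at $t^{-1}$, but exactly one at $t$). Once these conventions are fixed, both the thin-disk enumeration and the thick-disk bijection are mechanical.
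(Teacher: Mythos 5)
Your proposal is correct and follows essentially the same route as the paper, which simply cites Lemma~\ref{lem:thin-disk} for the thin-disk terms and Lemma~\ref{lem:n-copy-dil} (with algebraic lifts) for the thick-disk terms; your explicit enumeration of the nine thin-disk families and the cancellation of the two $x^+$ flowline contributions to $m_+^1(y^+)$ just spells out what the paper leaves implicit.
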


\begin{figure}
\labellist
\small\hair 2pt
 \pinlabel {$a$} [ ] at 7 77
 \pinlabel {$a$} [ ] at 184 77
 \pinlabel {$a_1$} [ ] at 28 9
 \pinlabel {$a_1$} [ ] at 206 9
 \pinlabel {$a_2$} [ ] at 107 9
 \pinlabel {$a_2$} [ ] at 284 9
 \pinlabel {$a_3$} [ ] at 130 77
 \pinlabel {$t^{-1}$} [ ] at 308 77
 \pinlabel {$a_4$} [ ] at 65 118
 \pinlabel {$a_5$} [ ] at 243 118
 \pinlabel {$\mba_1$} [ ] at 17 44
 \pinlabel {$\mba_1$} [ ] at 194 44
 \pinlabel {$\mba_2$} [ ] at 66 9
 \pinlabel {$\mba_2$} [ ] at 242 9
 \pinlabel {$\mba_3$} [ ] at 118 45
 \pinlabel {$\mba_3$} [ ] at 295 45
 \pinlabel {$\mba_4$} [ ] at 95 96
 \pinlabel {$\mba_5$} [ ] at 273 96
 \pinlabel {$\mba_5$} [ ] at 37 96
 \pinlabel {$\mba_6$} [ ] at 212 96
 \pinlabel {\textcolor{purple}{$2$}} [b] at 293 80
\endlabellist
\centering
\includegraphics{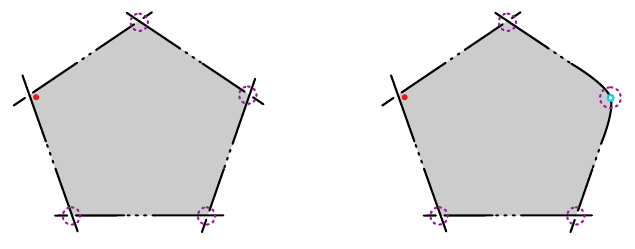}
\caption{Enriched disks that contribute $a^+$ to $m_+^4(a_4^+, \ldots, a_1^+)$ (left) and to $m_+^5(a_5^+, x^+, x^+, a_2^+, a_1^+)$ (right).  Enriched disks on the left, but not the right, side contribute to $m_-^4(a_4^-, \ldots, a_1^-)$.}
\label{fig:m+enrich}
\end{figure}

The enriched disks involved in the definition of $m_+$ appear in Figure~\ref{fig:m+enrich}.

\begin{proof}
    The thin disk contributions follow directly from Lemma~\ref{lem:thin-disk}, while the thick disk contributions come from using Lemma~\ref{lem:n-copy-dil} to identify the disks in the definition of $m^k_+$ with enriched disks and their algebraic lifts.
\end{proof}

We may also characterize the structure maps $m^k_-$ in terms of enriched disks.  If $(a,\mba)$ is in negative $(k+1)$-copy form, then all of the generators $a_{i_j}$ arise from genuine Reeb chords in $\leg$.  Thus, any disk involved in the definition of $m^k_-$ projects to an enriched disk with enrichments --- which must have multiplicity $1$ --- only at Reeb chords.

\begin{lem} \label{lem:aug-disks}
In the definition of $m^k_-$, enriched disks with enrichments only at Reeb chords yield terms
\begin{equation*}
    m_-^k(a_k^-,\ldots,a_1^-) = \sum_{\substack{(u,\mbw) \in \Delta_\leg^k(\check{a}, \check{\mba}) \\ \hat{\alpha}(u, \mbw) = a_1 \cdots a_k}} \hat{\aug}(\mba) a^-.
\end{equation*} 
\end{lem}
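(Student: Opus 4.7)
The proof plan is to mirror the argument for Lemma~\ref{lem:aug+disks}, exploiting the fact that the negative form enjoys a simpler disk structure than the positive form. Starting from the definition of $m_-^k$ in Equation~\eqref{eq:m-}, together with the identifications $h_{ij}$, the coefficient of $a^-$ in $m_-^k(a_k^-,\ldots,a_1^-)$ is a sum over pairs $(u,\mba)$ consisting of a disk $u \in \Delta_{\nleg{k+1}}(h(a), h(\mba))$ whose associated word is in negative $(k+1)$-copy form with $a_{i_j}=a_j$ and $\aug_i(\mba_i) = 1$, weighted by $\hat{\aug}(\mba)$. My goal is to re-express this count as a sum over enriched disks of $\leg$.

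The first key step is to invoke the last sentence of Lemma~\ref{lem:thin-disk}, which tells us that no disk in negative $n$-copy form is thin. Hence, unlike in the positive case, the partial and constant flowlines generate no contribution to $m_-^k$, and every contributing disk is automatically thick with respect to $\leg$. This eliminates the need for the unit-action terms and the $m_+^1(y^+)$-type term that appeared in Lemma~\ref{lem:aug+disks}.

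The second key step is to apply the $n$-copy Thick Disk Identification (Lemma~\ref{lem:n-copy-dil}) in its negative-form version.  This yields a bijection between the thick disks in $\Delta_{\nleg{k+1}}(h(a),h(\mba))$ and the $k$-enriched disks $(u,\mbw)$ in $\Delta^k_{\leg}(\check a,\check{\mba})$. Under this bijection, the algebraic lift $\hat\alpha(u,\mbw)$ records exactly the sequence of mixed corners $a_1\cdots a_k$ along the boundary, while the remaining negative corners populate the pure words $\mba_i$ and contribute the factor $\hat{\aug}(\mba)$ via the definition in Equation~\eqref{eq:hat-aug}. Assembling these contributions and re-indexing the sum over enriched disks $(u,\mbw)$ with $\hat\alpha(u,\mbw) = a_1\cdots a_k$ yields the claimed formula.

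The one point worth pausing on, which I expect to be the main obstacle to carrying out fully carefully, is verifying that the enrichments that appear all occur at Reeb chords rather than at base points.  In the positive-form lifting procedure, enriched base points produce runs of $x^{i,i+1}$ corners at the adjacent maxima; however, in the negative-form version of Lemma~\ref{lem:n-copy-dil}, the mixed corners along the disk's boundary travel from higher-indexed strands to lower-indexed strands, so the configuration of crossing signs in Figure~\ref{fig:n-copy}(b) forbids such $x$-corners from appearing as negative corners. Consequently the multiplicities in $\mbw$ at base-point positions must all vanish, and the sum may be restricted to enriched disks whose enrichments sit only at Reeb chords, which matches the restricted form of the summation in the statement.
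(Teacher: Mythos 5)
Your proposal is correct and follows essentially the same route as the paper, whose (very terse) justification is precisely that negative $(k+1)$-copy form forces every mixed letter to be a long Reeb chord, that Lemma~\ref{lem:thin-disk} rules out thin disks in negative form, and that the negative-form case of Lemma~\ref{lem:n-copy-dil} — which already records that such disks have no corners at maxima and hence no enrichments at marked points — supplies the bijection giving the stated formula. The one minor quibble is your mechanism for excluding $x$-corners: it is cleaner to note that the short chords lie in $\reeb^{ij}$ with the index order incompatible with negative copy form (and the diagonal augmentation vanishes on them, so they cannot occur as augmented corners either), rather than to appeal to the Reeb-sign configuration at the maxima; the conclusion, however, is exactly the fact the paper uses.
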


\begin{ex} \label{ex:running-aug-cat}
    Continuing the example of figure eight knot $\leg$ presented in Figure~\ref{fig:fig-eight}, we note that its augmentation categories have two objects, namely $\aug_1$ and $\aug_2$ described in Example~\ref{ex:running-aug-dga}. The morphism spaces are
    \begin{align*}
        \Hom_+(\aug_i,\aug_j) &= \ff_2 \langle a_1^+,\ldots,a_7^+,x^+,y^+ \rangle, \\
        \Hom_-(\aug_i,\aug_j) &= \ff_2 \langle a_1^-,\ldots,a_7^- \rangle,
    \end{align*}
    where $|a_k^+| = |a_k^-| = |a_k^\vee|$ as in Example~\ref{ex:running-a-infty-alg} and $|x^+| = 1$, $|y^+| = 0$. 

    The structure maps $m_-^k$, which only account for thick disks in $\tcleg$, match the codifferential $\delta_{\aug_i}$ of the DGCA of $\leg$ in Example~\ref{ex:running-dual-dga} with suitable change of notation. As explained in Example~\ref{ex:running-aug-dga}, the maps $m_-^k$ are independent of the choice of augmentations in this example.

    The structure maps $m_+^k$, on the other hand, do depend on the choice of augmentations due to additional thin disk contributions. We shall describe the structure maps on $\Hom_+(\aug_1,\aug_1)$, which happen to be the same as those on $\Hom_+(\aug_2,\aug_2)$. For compactness of presentation, we do not list the $2$-fold products that show that $y^+$ acts as the identity.
    \begin{align*} m_+^1(a_2^+) &= a_4^+ & m_+^2(a_1^+,a_2^+) &= a_7^+ &  m_+^3(a_3^+, a_1^+, a_2^+) &= a_4^+\\
     m_+^1 (a_3^+) &= a_4^+ & m_+^2(a_3^+,a_1^+) &= a_6^+ & m_+^3(a_5^+, a_3^+, a_1^+) &= a_6^+ \\
    m_+^1 (a_5^+) &= a_6^+ + a_7^+ & &&   m_+^3(a_1^+, a_2^+, a_5^+) &= a_7^+ \\
    m_+^1 (x^+) &= a_6^+ &  & 
    \end{align*}
    Unlike in the maps listed above, the $m_+^1$ map on $\Hom_+(\aug_1,\aug_2)$ is non-vanishing on $y^+$ with $m_+^1(y^+) = a_4^+$.
\end{ex}

\subsubsection{The Circle Category}
\label{sssec:circ-cat}

We also define a third augmentation category that will prove essential in the upcoming duality structure. First, we need to define $\Aug_+^{\text{Reeb}}(\leg)$ to be the wide subcategory of $\Aug_+(\leg)$ whose morphisms are given by $A^\vee_{12,\leg}$.  It follows from (the dual of)  Lemma~\ref{lem:stokes} and the fact that $\reeb^{12}_\leg$ contains all of the Reeb chords above a certain length in $\reeb^{12}$ that $\Aug_+^{\text{Reeb}}(\leg)$ is indeed a subcategory. By Lemmas~\ref{lem:aug+disks} and \ref{lem:aug-disks}, we may identify $\Aug_+^{\text{Reeb}}(\leg)$ with $\Aug_-(\leg)$.

\begin{defn} 
\label{defn:circle-category}
The \dfn{circle category} is the quotient category $\mcc(\leg) = \Aug_+(\leg)/\Aug_+^{\text{Reeb}}(\leg)$.
\end{defn}

In particular, $\mcc$ is an $A_\infty$ category with morphisms $\Hom_\mcc(\aug_1, \aug_2) = A^\vee_{12,f}$ and structure maps $m_\mcc^k$ counting thin disks in $\nleg{k+1}$. When $\nleg{k+1}$ is simply perturbed, Lemma~\ref{lem:thin-disk} shows that the only nontrivial structure maps for the circle category are those that show that $y^+$ is a strict unit.

On the linear level, the circle category reduces to the Morse complex for the perturbing function $f$ of a circle, thus justifying its name. In fact, the category recovers the cohomology ring. 

\subsection{Bimodules From Two-component Links}
\label{ssec:aug-bimod}

The algebraic structures involved in the main theorem are $A_\infty$ bimodules over $\Aug_+(\leg)$ and  morphisms between them. Thus, our next task is to define and understand the bimodule analogues of the augmentation categories in Section~\ref{ssec:aug-cat}. To this end, we begin with a general construction of $\Aug_+(\leg)$-bimodules from an ordered two-component Legendrian link.

\begin{prop} \label{prop:bimod-from-link-grading}
    A Legendrian link $\leg = \leg_1 \sqcup \leg_2$ induces an $\Aug_+(\leg_2)$-$\Aug_+(\leg_1)$ bimodule $\mcm$ with the space $\mcm(\aug^2,\aug^1)$ generated by $\reeb^{12}$.
\end{prop}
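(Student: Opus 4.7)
The plan is to mimic the construction of $\Aug_+(\leg)$ from Section~\ref{sssec:pm-aug-cat}, replacing the $n$-copy of a single Legendrian with the $(s+1, r+1)$-copy of $\leg = \leg_1 \sqcup \leg_2$ from Section~\ref{ssec:sr-copy}. First, set the graded vector space $\mcm(\aug^2, \aug^1)$ to be $A^\vee_{12}$, the dual of the span of $\reeb^{12}$, with grading shift as in Section~\ref{ssec:a-infty-alg}. This space depends on neither of the chosen augmentations.

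To define the structure map $n_\mcm^{r|s}$, fix augmentations $\aug^1_1, \ldots, \aug^1_{s+1}$ of $\leg_1$ and $\aug^2_1, \ldots, \aug^2_{r+1}$ of $\leg_2$, and form the $(s+1, r+1)$-copy $\leg^{s+1, r+1}$. Equip its Chekanov--Eliashberg DGA with the \emph{diagonal augmentation} $\hat{\aug}$ that sends pure generators in $\gens^{jj|kk}$ to $\aug^j_k$ and vanishes on all other generators. One checks that $\hat{\aug}$ respects both the $2$-component link grading on $\leg_1 \sqcup \leg_2$ and the refined link gradings from the copy structure (Section~\ref{sssec:sr-copy}), so Proposition~\ref{prop:step1-dga} produces an $A_\infty$ algebra $(A^\vee, \{m^k_{\hat{\aug}}\})$ whose structure maps split along the combined link grading. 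Parallel to Equations~\eqref{eq:m+} and~\eqref{eq:m-}, introduce chain isomorphisms $h: A^\vee_{ij} \to A^\vee_{ij|kl}$ induced by the stick-together map for appropriate $(k,l)$. The bimodule structure map is then
\[
    n_\mcm^{r|s} = h^{-1} \circ m^{r+s+1}_{\hat{\aug}} \circ (h \otimes \cdots \otimes h),
\]
where the $r$ inputs from $\Aug_+(\leg_2)$ are placed in pure sectors of the form $A^\vee_{22|k,k+1}$, the mixed input is placed in $A^\vee_{12|11}$, the $s$ inputs from $\Aug_+(\leg_1)$ are placed in pure sectors of the form $A^\vee_{11|l+1,l}$, and the output is returned from $A^\vee_{12|1,r+1}$ via $h^{-1}$. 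Note that the degree of $n_\mcm^{r|s}$ is $1-r-s$, matching that of $m^{r+s+1}_{\hat{\aug}}$.

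The bimodule $A_\infty$ relations~\eqref{eq:a-infty-bimod} then follow from the $A_\infty$ relations for $(A^\vee, \{m^k_{\hat{\aug}}\})$, specialized to the output link-grading sector. The three families of terms in~\eqref{eq:a-infty-bimod} correspond to the three possible locations of an internal sub-product in an $A_\infty$ relation on $A^\vee$: one lying entirely among the pure $\leg_2$ inputs (which, after the $h$-identifications, yields a term involving $m^k_+$ on $\Aug_+(\leg_2)$), one lying entirely among the pure $\leg_1$ inputs (yielding the analogous term for $\Aug_+(\leg_1)$), and one straddling the mixed slot (yielding an inner $n^{i|j}_\mcm$). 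This correspondence requires that the restriction of the $A_\infty$ structure on $(A^\vee, \{m^k_{\hat{\aug}}\})$ to pure substrings recovers the $m^k_+$ maps of the respective augmentation categories, and that $n^{r|s}_\mcm$ is independent of the ambient copy --- in particular, that the map is unchanged when additional augmentations are added on either side. Both facts follow from the locality of the disk combinatorics, mirroring the invariance argument for the $n$-copy construction in \cite[Section 4]{nrssz:aug-sheaf}.

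The principal technical obstacle is the bookkeeping needed to track the interaction of the two link gradings and to verify that the decomposition of the $A_\infty$ structure on $(A^\vee, \{m^k_{\hat{\aug}}\})$ by link grading produces exactly the pure augmentation-category maps on each side together with the desired mixed bimodule maps. This amounts to extending the disk-identification analysis of Section~\ref{sec:copies} (especially Lemmas~\ref{lem:s-r-2-copy-thin-disk} and~\ref{lem:s-r-2-copy-dil}) from the $2$-copy of a single knot to an arbitrary ordered $2$-component link, a generalization noted as feasible, but not carried out in detail, in the remark at the end of Section~\ref{sssec:sr-copy-disks}.
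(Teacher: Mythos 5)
Your construction is correct in outline, but it takes a genuinely different route from the paper. You rebuild the bimodule from scratch out of the $(s+1,r+1)$-copy DGA with its diagonal augmentation and then argue that the bimodule relations \eqref{eq:a-infty-bimod} drop out of the $A_\infty$ relations of $(A^\vee,\{m^k_{\hat{\aug}}\})$ after splitting by the two link gradings; this forces you to verify the two consistency statements you flag (that the pure sectors recover the $m^k_+$ of $\Aug_+(\leg_1)$ and $\Aug_+(\leg_2)$, and that $n^{r|s}_\mcm$ is independent of the ambient copy), which is essentially a re-run of the NRSSZ consistency analysis in the $(s,r)$-copy setting. The paper sidesteps all of this: it extends each $\aug^1_k$ (resp.\ $\aug^2_l$) to a pure augmentation $\hat{\aug}^1_k$ (resp.\ $\hat{\aug}^2_l$) of the \emph{whole link} $\leg$ by declaring it to equal $\aug^2_1$ (resp.\ $\aug^1_1$) on the other component, identifies the inputs and the mixed generator $w$ with morphisms of $\Aug_+(\leg)$ between these extended augmentations, and simply sets $n^{r|s}(\cev{b},\ul{w},\vec{a}) = m_+^{r+s+1}(\cev{b},w,\vec{a})$; the bimodule equations are then automatic because the diagonal bimodule $\Aug_+(\leg)_\Delta$ already satisfies them, and the fact that the relevant disks live in $\nleg{s+1,r+1}$ becomes an after-the-fact observation (Lemma~\ref{lem:sr-structure-map}) rather than part of the proof. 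Your approach buys a more explicit geometric description of the structure maps up front, at the cost of extra verification; the paper's buys the relations for free from structures already established for $\Aug_+(\leg)$.

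Two caveats on your write-up. First, the claim that pure sub-products recover $m^k_+$ of the individual components is not pure ``locality'': a priori a disk whose designated corners all lie on the copies of $\leg_1$ could still have augmented corners on copies of $\leg_2$; this is excluded because the $2$-component link grading forces any pure $\leg_2$ corner in such a word to be flanked by chords mixed between $\leg_1$ and $\leg_2$, on which your diagonal augmentation vanishes. That composability argument should be made explicit. Second, the output sector should be $A^\vee_{12|s+1,r+1}$ (the positive corner is the chord $z^{12|s+1,r+1}$ of Lemma~\ref{lem:sr-structure-map}), not $A^\vee_{12|1,r+1}$ as written.
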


Intuitively, the induced bimodule $\mcm$ repackages the information in the category $\Aug_+(\leg)$ in a way that respects the $2$-component structure of the link.  

In the proof below and in subsequent sections, we adopt compact notation for arguments to bimodule structure maps.  First, let $\cev{\aug}^2 = (\aug_{r+1}^2, \ldots, \aug_1^2)$ and $\vec{\aug}\,^1 = (\aug_1^1, \ldots, \aug_{s+1}^1)$.  Similarly, let $\vec{a} = (a_1^+, \ldots, a_s^+)$ and $\cev{b} = (b_r^+, \ldots, b_1^+)$.

\begin{proof}
To define the desired $\Aug_+(\leg_2)$-$\Aug_+(\leg_1)$ bimodule $\mcm$, let $\aug^i$ be an augmentation of $\leg_i$, $i=1,2$. Set $\mcm(\aug^2,\aug^1)$ to be the vector space generated by $\reeb^{12}$, with generators denoted $w_i$.

We next define the structure maps 
\begin{equation} \label{eq:link-structure}
n^{r|s}: \Hom_{\Aug_+(\leg_2)}(\cev{\aug}^2) \otimes \mcm(\aug_1^2, \aug_1^1) \otimes \Hom_{\Aug_+(\leg_1)}(\vec{\aug}\,^1) \to \mcm(\aug_{r+1}^2, \aug_{s+1}^1)
\end{equation}
in terms of the structure maps of the category $\Aug_+(\leg)$ or, more accurately, using its diagonal bimodule.  In particular, we need to identify a tuple $(\cev{b}, \ul{w}, \vec{a})$ in the domain of $n^{r|s}$ with a tuple in the domain of $m_+^{r+s+1}$.

The first step is to extend the augmentations $\aug_k^1$ on $\leg_1$ to pure augmentations $\hat{\aug}_k^1$ on $\leg$.  To effect this, for each $k\in \{1,\ldots, s+1\}$, we define the extension $\hat{\aug}_k^1$ to equal $\aug_1^2$ on generators from $\leg_2$; we make a similar definition for $\hat{\aug}_l^2$ for $l \in \{1, \ldots, r+1\}$.  

Next, consider a tuple $\vec{a} \in \Hom_{\Aug_+(\leg_1)}(\vec{\aug}\,^1)$. Each $a_k^+$ corresponds to a mixed Reeb chord of the $2$-copy of the Legendrian link $\leg_1$, and hence a mixed Reeb chord of the $2$-copy of the Legendrian link $\leg$. It follows that we may think of $a_k^+$ as lying in $\Hom_{\Aug_+(\leg)}(\hat{\aug}_{k+1}^1, \hat{\aug}_k^1)$; notice the transposition of indices.  Similarly, for a tuple $\cev{b} \in \Hom_{\Aug_+(\leg_2)}(\cev{\aug}^2)$, we may think of $b_k^+$ as lying in $\Hom_{\Aug_+(\leg)}(\hat{\aug}_{k+1}^2, \hat{\aug}_{k}^2)$. There is no transposition of indices in this case.

Finally, a generator $w \in \mcm(\aug^2, \aug^1)$, which corresponds to a Reeb chord of $\leg$ from $\leg_1$ to $\leg_2$, may be identified with the nearby mixed Reeb chord in the $2$-copy $\tcleg$ that begins on the $\leg_2$ component of the second copy and ends on the $\leg_1$ component of the first copy. In particular, $w$ may be thought of as lying in $\Hom_{\Aug_+(\leg)}(\hat{\aug}^2_1, \hat{\aug}^1_1)$.

With the above identification of tuple $(\cev{b}, \ul{w}, \vec{a})$ in the domain of $n^{r|s}$ as a tuple in the domain of $m_+^{r+s+1}$ in hand, we define
\[n^{r|s}(\cev{b}, \ul{w}, \vec{a}) = m_+^{r+s+1}(\cev{b}, w, \vec{a}).\]
Since the structure maps of $\Aug_+(\leg)_\Delta$ satisfy the defining $A_\infty$ bimodule equations, so too will $n^{r|s}$. 
\end{proof}

It will be useful to precisely specify the chords and disks involved in the definition of $n^{r|s}$.  Recall that the structure map $m_+^{r+s+1}$ is defined by counting disks in $\nleg{r+s+2}$ whose first $s$ negative mixed corners lie in the top $(s+1)$-copy of $\leg_1^2$ and last $r$ negative mixed corners lie in the bottom $(r+1)$-copy of $\leg_2^2$; as before, there may be pure augmented corners as well. The definition of $n^{r|s}$ counts disks in a sublink of $\nleg{r+s+2}$ identified with $\nleg{s+1,r+1}$. Thus, the $(s+1,r+1)$-copy is the correct geometric model for the bimodule $\mcm$ as claimed. 

Next, to identify the chords in the $(s+1,r+1)$-copy used in the definition of $n^{r|s}$, we refine the maps $h_{ij}$ from Section~\ref{sssec:pm-aug-cat} to maps $h_{ij|kl}$ that identify a chord $a^{ij}$ of $\nleg{2,2}$ with the chord $a^{ij|kl}$ of $\nleg{s+1,r+1}$. 

Building on the remarks after Equation~\eqref{eq:m+}, 
we now identify the disks in $\nleg{s+1,r+1}$ used to define $n^{r|s}$. 

\begin{lem} \label{lem:sr-structure-map}
The disks used in the definition of $n^{r|s}(\cev{b}, \ul{w}, \vec{a})$ are those in \[\Delta_{\nleg{s+1,r+1}} (\ul{h_{12|s+1,r+1}(z)}, h_{11|\cdot \cdot}(\mba) \ul{h_{12|11}(w)} h_{22|\cdot \cdot}(\mbb)),\] where the associated word is in $(s+1,r+1)$-copy form with $a_{i_j} = a_j, b_{i_j} = b_j$ and $\aug_i^1(\mba_i) = \aug_i^2(\mbb_i) =  1$.
\end{lem}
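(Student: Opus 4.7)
The plan is to trace through the definitions carefully and then argue that the relevant moduli spaces of disks in $\nleg{r+s+2}$ are naturally identified with moduli spaces of disks in the sublink $\nleg{s+1,r+1}$.

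First, I would unpack the definition of $n^{r|s}(\cev{b}, \ul{w}, \vec{a})$ established in the proof of Proposition~\ref{prop:bimod-from-link-grading}: by construction, $n^{r|s}(\cev{b}, \ul{w}, \vec{a}) = m_+^{r+s+1}(\cev{b}, w, \vec{a})$ where the inputs are viewed as morphisms in $\Aug_+(\leg)$ via the extensions $\hat{\aug}_k^i$. Then I would invoke Equation~\eqref{eq:m+} together with Lemma~\ref{lem:aug+disks} to express $m_+^{r+s+1}$ as a count of disks in $\nleg{r+s+2}$ whose corners are the images of $z$, $\vec{a}$, $w$, $\cev{b}$ under the maps $h_{ij}$, augmented appropriately on the pure strands.

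The key step is to observe that the chords $a_j^+$ correspond to mixed chords of $\leg_1^2$, the $b_j^+$ correspond to mixed chords of $\leg_2^2$, and $w$ and $z$ lie in $\reeb^{12}$ for $\leg$. Consequently, under the refined identifications $h_{ij|kl}$, these chords all lift into the specific sublink of $\nleg{r+s+2}$ consisting of the top $s+1$ copies of $\leg_1$ and the bottom $r+1$ copies of $\leg_2$, which is precisely $\nleg{s+1,r+1}$ in the sense of Section~\ref{ssec:sr-copy}. The indexing conventions match the $(s+1,r+1)$-copy form: $z$ lifts to $h_{12|s+1,r+1}(z) \in \widehat{\reeb}^{12|s+1,r+1}$, the consecutive $a_{i_j}$ lift into $\reeb^{11|j+1,j}$ via $h_{11|\cdot\cdot}$, the $b_{i_j}$ lift into $\reeb^{22|j,j+1}$ via $h_{22|\cdot\cdot}$, and $w$ lifts to $h_{12|11}(w) \in \reeb^{12|11}$. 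The augmentation conditions $\aug_i^1(\mba_i) = 1 = \aug_i^2(\mbb_i)$ fall out of the condition $\hat{\aug}(\cdot) = 1$ from Equation~\eqref{eq:hat-aug}, once one notes that $\hat{\aug}_k^i$ restricts to $\aug_k^i$ on generators of $\leg_i$.

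The final step is to confirm that every disk in $\nleg{r+s+2}$ with the specified corners actually has boundary contained in the sublink $\nleg{s+1,r+1}$, so that the count via $m_+^{r+s+1}$ agrees with a count over $\Delta_{\nleg{s+1,r+1}}$. This follows because the boundary of a disk in the Lagrangian diagram can only leave a given component at a corner, and the prescribed corners all lie on the sublink. The main obstacle is the bookkeeping of indices --- keeping straight the reversal conventions (with the $\leg_1$ copies labeled bottom-to-top and the $\leg_2$ copies top-to-bottom relative to the ambient $(r+s+2)$-copy ordering) and verifying that the $(s+1,r+1)$-copy form is precisely what the identification $h_{ij|kl}$ produces; once this is handled, the lemma is an immediate translation of Equation~\eqref{eq:m+} into the bimodule language.
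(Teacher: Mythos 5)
Your proposal is correct and follows essentially the same route as the paper, which states this lemma without a separate proof, treating it as an immediate consequence of the preceding discussion: $n^{r|s}=m_+^{r+s+1}$ from Proposition~\ref{prop:bimod-from-link-grading}, the characterization of the disks defining $m_+^{r+s+1}$ in the remarks after Equation~\eqref{eq:m+}, and the identification of the relevant sublink of $\nleg{r+s+2}$ with $\nleg{s+1,r+1}$ via the refined maps $h_{ij|kl}$. (The only small quibble is that the relevant disk characterization is the remark after Equation~\eqref{eq:m+} rather than Lemma~\ref{lem:aug+disks}, which describes projections to $\leg$ itself; this does not affect the argument.)
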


As in the construction of $\Aug^{\text{reeb}}_+(\leg)$ as wide subcategory of $\Aug_+(\leg)$, we may use heights of Reeb chords to construct submodules of the bimodule $\mcm$ of Proposition~\ref{prop:bimod-from-link-grading}.  Suppose $\reeb_\lambda^{12} \subset \reeb^{12}$ consists of all Reeb chords with height greater than $\lambda$.  For $\aug^i$ an augmentation of $\leg_i$, $i=1,2$, let $\mcm_\lambda(\aug^2, \aug^1) \subset \mcm(\aug^2, \aug^1)$ be the subspace generated by $\reeb_\lambda^{12}$.  Combining Lemma~\ref{lem:stokes} with Lemma~\ref{lem:sr-structure-map} shows that $\mcm_\lambda$ is an $\Aug_+(\leg_2)$-$\Aug_+(\leg_1)$ submodule of $\mcm$, which we call the \dfn{length filtration submodule at height $\lambda$}.

\subsection{Augmentation Bimodules}
\label{ssec:aug-bimod-defn}

While one might be tempted to simply take diagonal bimodules of the three $A_\infty$ categories defined in Section~\ref{ssec:aug-cat}, we will need all three bimodules to have coefficients in the same category.  This necessitates a more subtle approach using Proposition~\ref{prop:bimod-from-link-grading}.

\begin{defn} \label{defn:+-aug-bimod}
    The \dfn{positive augmentation bimodule} $\mcm_+$ of a Legendrian knot $\leg$ is defined to be the result of applying Proposition~\ref{prop:bimod-from-link-grading} to the $2$-copy $\tcleg$.

    The \dfn{negative augmentation bimodule} $\mcm_-$ of a Legendrian knot $\leg$ is defined to be the result of applying Proposition~\ref{prop:bimod-from-link-grading} to the opposite $2$-copy $\otcleg$.
\end{defn}

An inspection of the proof of Proposition~\ref{prop:bimod-from-link-grading} shows that the augmentation bimodules defined above depend on the perturbation functions for both the (opposite) $2$-copy and its $(s+r+2)$-copy. For our purposes, we only consider augmentation bimodules defined by a simply perturbed (opposite) $2$-copy as well as its $(s+r+2)$-copy perturbed by a Morse function with the maxima and minima placed as shown in Figure~\ref{fig:2-copy-crit}. In this case, the combinatorics of the Lagrangian diagrams of $\tcleg^{s+r+2}$ and $\otcleg^{s+r+2}$ matches that of the diagrams of a simply perturbed $\nleg{2s+2r+4}$ and $\nleg{2s+2r+4}$, up to relabeling. As a result, the base category $\Aug_+(\leg)$ is simply perturbed. An augmentation bimodule defined in this manner is also termed \dfn{simply perturbed}.

In this setting, we label the generators of the spaces $\mcm_+(\aug^2,\aug^1)$ by $a^+$, $x^+$, and $y^+$, and the generators of $\mcm_-(\aug^2,\aug^1)$ by $a^-$, as the generators of  $\Hom_\pm$ match those of $\mcm_\pm$ in our chosen configuration. Following the discussion around Lemma~\ref{lem:sr-structure-map}, the sublink in $\tcleg^{s+r+2}$ used to define the structure maps $n_{\mcm_+}^{r|s}$ can be identified with a simply perturbed $(s+1,r+1)$-copy of $2$-copy. 

\begin{rem}
It would be reasonable to expect that, using a consistent sequence of DGA maps as in \cite[Proposition 4.21]{nrssz:aug-sheaf}, any two perturbation schemes are conjecturally equivalent under an appropriate notion of equivalence of $A_\infty$ bimodules over an equivalence of $A_\infty$ categories.
\end{rem}

In parallel to Lemma~\ref{lem:aug+disks}, we may use Lemmas~\ref{lem:s-r-2-copy-thin-disk} and \ref{lem:s-r-2-copy-dil} to characterize $n_{\mcm_+}^{r|s}$ in terms of thin disks and doubly enriched disks in the original Lagrangian diagram of $\leg$. In order to do so, we define the \dfn{algebraic lift} of an $(s,r)$-enriched disk $(u,\mbw_1,\mbw_2) \in \Delta_\leg^{s,r}(\ul{z},\mba \ul{w} \mbb)$ with $\mbw_1 = \{m_0 0, m_1i_1,\ldots,m_ki_k\}$ and $\mbw_2 = \{n_00, n_1j_1,\ldots,n_lj_l\}$ to be
\begin{equation*}
    \hat{\alpha}(u,\mbw_1,\mbw_2) = \hat{a}_{i_k}\cdots \hat{a}_{i_1}\hat{\ul{w}}\hat{b}_{j_1}\cdots \hat{b}_{j_l},
\end{equation*}
where $\hat{a}$ and $\hat{b}$ are defined as before, while $\hat{\ul{w}} = \ul{w}$ if $w \in \reeb_\leg$ and $\hat{w} = x\cdots x\ul{x}x \cdots x$ for $m_0$ number of $x$ generators preceding $\ul{x}$ and $n_0$ number of $x$ generators following $\ul{x}$.  

Using the notation defined after Proposition~\ref{prop:bimod-from-link-grading}, we characterize the disks used in the definition of $n_{\mcm_+}$ as follows.

\begin{lem} \label{lem:mcm+disks}
    In the definition of \[n_{\mcm_+}^{r|s}: \Hom_+(\cev{\aug}^2) \otimes \mcm_+(\aug_1^2,\aug_1^1) \otimes \Hom_+(\vec{\aug}\,^1) \to \mcm_+(\aug_{r+1}^2, \aug_{s+1}^1)\] (see discussion near Equation~\eqref{eq:link-structure} for the notation) for a simply perturbed $\mcm_+$, thin disks yield the terms dictated by the strict unitality of $y^+$, as well as the terms
    \begin{align*}
        n_{\mcm_+}^{0|0}(\ul{y}^+) &= \left(\aug_1(a) + \aug_2(a)\right) a^+, &
        n_{\mcm_+}^{1|0}(a^+,\ul{y}^+) &= n_+^{0|1}(\ul{y}^+,a^+) = a^+, \\
        && n_{\mcm_+}^{1|0}(x^+,\ul{y}^+) &= n_+^{0|1}(\ul{y}^+,x^+) = x^+.
    \end{align*}
    Doubly enriched disks yield additional terms
    \begin{equation*}
    n_{\mcm_+}^{r|s}(\cev{b},\ul{w}^+,\vec{a}) \\
    = \sum_{\substack{(u,\mbw_1,\mbw_2) \in \Delta_\leg^{s,r}(\check{\ul{z}},\check{\mba} \check{\ul{w}} \check{\mbb})\\ \hat{\alpha}(u, \mbw_1,\mbw_2)  = a_s \cdots a_1\ul{w}b_1\cdots b_r}}  \hat{\aug}^1(\mba) \hat{\aug}^2(\mbb) z^+
    \end{equation*}
    
\end{lem}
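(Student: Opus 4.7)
The plan is to mirror the proof strategy of Lemma~\ref{lem:aug+disks}, adapted to the bimodule setting. First I apply Lemma~\ref{lem:sr-structure-map} to rewrite $n_{\mcm_+}^{r|s}(\cev{b},\ul{w}^+,\vec{a})$ as a count of immersed disks in $\tcleg^{s+1,r+1}$ whose boundary data are in $(s+1,r+1)$-copy form, with the pure blocks $\mba_i,\mbb_j$ each augmented to $1$ by the appropriate $\hat{\aug}^1_i$ and $\hat{\aug}^2_j$. The stick-together map to $\leg$ then partitions these disks into thin and thick disks, which I handle separately.

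For the thin contributions, I enumerate using Lemma~\ref{lem:s-r-2-copy-thin-disk}. The partial flowlines~\eqref{eq:2-thin-partial-2} of types $(\ul{a}^{12|11},a^{11|11}\ul{y}^{12|11})$ and $(\ul{a}^{12|11},\ul{y}^{12|11}a^{22|11})$ contribute $\aug_1(a)\,a^+$ and $\aug_2(a)\,a^+$ respectively to $n_{\mcm_+}^{0|0}(\ul{y}^+)$; summing over Reeb chords $a$ of $\leg$ yields $\sum_a(\aug_1(a)+\aug_2(a))a^+$ as claimed. The partial flowlines~\eqref{eq:2-thin-partial-3} together with the constant flowlines and the remaining partial flowlines in the lemma supply the strict-unitality relations $n_{\mcm_+}^{1|0}(a^+,\ul{y}^+)=n_{\mcm_+}^{0|1}(\ul{y}^+,a^+)=a^+$ and similarly for $x^+$ and $y^+$. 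The remaining true flowlines carrying both a positive and a negative mixed corner (e.g.\ $(\ul{x}^{12|11},\ul{y}^{12|11})$ paired with its $t^{-1}(\cdot)t$ twin) come in pairs whose contributions cancel modulo $2$ via $\aug_i(t^{\pm 1})=1$, mirroring the cancellations in~\cite[Proposition~4.26]{nrssz:aug-sheaf}; these leave no residue in the stated formula.

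For the thick contributions, I apply Lemma~\ref{lem:s-r-2-copy-dil} to identify each thick disk $u\in\Delta_{\tcleg^{s+1,r+1}}(\ul{z},\mba\ul{w}\mbb)$ with a doubly enriched disk $(v,\mbw_1,\mbw_2)\in\Delta^{s,r}_\leg(\check{\ul{z}},\check{\mba}\,\check{\ul{w}}\,\check{\mbb})$. Under this correspondence the mixed corner $w$ records as the mixed enrichment at position $0$ (with $t^{\pm 1}$ substitutions on either side when $w=x^{12|11}$, as in Definition~\ref{defn:doubly-enriched}); each Reeb corner $a_{i_k}$ (respectively $b_{j_l}$) records as a pure enrichment in $\mbw_1$ (respectively $\mbw_2$) at a Reeb-chord position; and each additional negative $x$-corner records as a base-point enrichment at $t^{-1}$ with the prescribed multiplicity. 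Reading off the definition of $\hat{\alpha}$, the thick disks counted by $n_{\mcm_+}^{r|s}(\cev{b},\ul{w}^+,\vec{a})$ are exactly those $(v,\mbw_1,\mbw_2)$ with $\hat{\alpha}(v,\mbw_1,\mbw_2)=a_s\cdots a_1\ul{w}b_1\cdots b_r$, and the non-enriched pure corners contribute the scalar factor $\hat{\aug}^1(\mba)\hat{\aug}^2(\mbb)$, producing the displayed sum. The main obstacle will be the combinatorial bookkeeping around position $0$ when $w$ lifts to $x^{12|11}$: there the $t^{\pm 1}$ insertions on the $\mba$ and $\mbb$ sides must be reconciled with the multiplicity constraints on $\mbw_1$ and $\mbw_2$ at position $0$ imposed by Definition~\ref{defn:doubly-enriched}. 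Once this is verified, the rest is a direct specialization of Lemma~\ref{lem:aug+disks} from the single-component setting to the $(s,r)$-bipartite setting.
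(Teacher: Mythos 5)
Your proof is correct and follows essentially the same route as the paper: the paper's proof simply cites Lemma~\ref{lem:s-r-2-copy-thin-disk} for the thin-disk contributions and Lemma~\ref{lem:s-r-2-copy-dil} together with the algebraic lift $\hat{\alpha}$ for the thick-disk contributions, exactly the two ingredients you use, with your extra bookkeeping (the mod-2 cancellation of the two max-to-min flowlines and the position-$0$ conventions when $\check{w}=t^{\pm 1}$) consistent with the stated formulas. One small terminological point: the terms $n_{\mcm_+}^{1|0}(a^+,\ul{y}^+)=n_{\mcm_+}^{0|1}(\ul{y}^+,a^+)=a^+$ (and their $x^+$ analogues) are the \emph{additional} terms listed in the lemma rather than instances of strict unitality, since there $y^+$ occupies the module slot rather than acting as the unit morphism.
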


See Figure~\ref{fig:n+enrich} for illustrations of the doubly enriched disks that contribute to $n_{\mcm_+}$.

\begin{figure}
\labellist
\small\hair 2pt
 \pinlabel {$z$} [ ] at 9 54
 \pinlabel {$z$} [ ] at 196 54
 \pinlabel {$a_s$} [ ] at 51 3
 \pinlabel {$a_s$} [ ] at 238 3
 \pinlabel {$a_{s-1}$} [ ] at 84 3
 \pinlabel {$a_{s-1}$} [ ] at 272 3
 \pinlabel {$a_1$} [ ] at 126 3
 \pinlabel {$a_{1+m_0}$} [ ] at 313 3
 \pinlabel {$w$} [l] at 168 54
 \pinlabel {$t^{\pm 1}$} [l] at 348 54
 \pinlabel {$b_1$} [ ] at 126 106
 \pinlabel {$b_{1+n_0}$} [ ] at 313 106
 \pinlabel {$b_2$} [ ] at 92 106
 \pinlabel {$b_2$} [ ] at 281 106
 \pinlabel {$b_r$} [ ] at 51 106
 \pinlabel {$b_r$} [ ] at 238 106
 \pinlabel {$\mba_{s+1}$} [r] at 33 32
 \pinlabel {$\mba_{s+1}$} [r] at 219 33
 \pinlabel {$\mba_s$} [ ] at 67 18
 \pinlabel {$\mba_s$} [ ] at 254 18
 \pinlabel {$\mba_1$} [l] at 146 30
 \pinlabel {$\mba_{1+m_0}$} [l] at 328 25
 \pinlabel {$\mbb_1$} [l] at 145 78
 \pinlabel {$\mbb_{1+n_0}$} [l] at 328 82
 \pinlabel {$\mbb_2$} [ ] at 108 89
 \pinlabel {$\mbb_2$} [ ] at 296 89
 \pinlabel {$\mbb_{r+1}$} [r] at 33 76
 \pinlabel {$\mbb_{r+1}$} [r] at 219 76
 \pinlabel {\textcolor{purple}{$n_0$}} [l] at 343 65 
 \pinlabel {\textcolor{purple}{$m_0$}} [l] at 343 42 
\endlabellist
\centering
\includegraphics{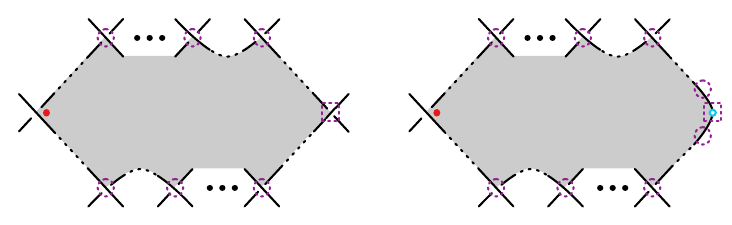}
\caption{Doubly enriched disks that contribute $z^+$ to $n_{\mcm_+}^{r|s}(\cev{b},\ul{w}^+,\vec{a})$ (left) and to $n_{\mcm_+}^{r|s}(\cev{b},\ul{t}^{\pm 1},\vec{a})$, with $n_0$ and $m_0$ $x^+$ generators before and after $\ul{w}^+$ (right); recall that $m_0=0=n_0$ if $w=t$.  Doubly enriched disks on the left side contribute $z^-$ to $n_{\mcm_-}^{r|s}(\cev{b},\ul{w}^-,\vec{a})$ or $w^\vee$ to $n_{\mcm_-^\vee}^{s|r}(\vec{a},\ul{z}^\vee,\cev{b})$, while doubly enriched disks on the right side will contribute $y^\vee$ to $(\rho^\vee)^{s|r}(\vec{a}, \ul{z}^\vee, \cev{b})$ with $m_0$ $x^+$ generators at the beginning and $n_0$ $x^+$ generators at the end; again, $m_0=0=n_0$ if $w=t$.  Finally, doubly enriched disks on the right side will contribute $y^+$ to $(\pi_\mcn \eta)^{s|r}$ with the same arguments, so long as $m_0=0$ when $w=t^{-1}$ or $m_0 \leq 1$ and $n_0=0$ when $w=t$. }
\label{fig:n+enrich}
\end{figure}

\begin{proof}
    The thin disk contributions follow directly from Lemma~\ref{lem:s-r-2-copy-thin-disk}, while the thick disk contributions come from using Lemma~\ref{lem:s-r-2-copy-dil} to identify the disks in the definition of $n_{\mcm_+}^{r|s}$ with doubly enriched disks and their algebraic lifts.
\end{proof}

Similarly, the structure maps $n_{\mcm_-}^{r|s}$ of $\mcm_-$ are defined using a simply perturbed $(s+1,r+1)$-copy of the opposite $2$-copy. As above, we may characterize $n_{\mcm_-}$ in terms of thin disks and doubly enriched disks using Lemma~\ref{lem:s-r-2-copy-thin-disk} and Lemma~\ref{lem:s-r-2-copy-dil}. In parallel to its category analogue $\Aug_-(\leg)$, note that a disk of the form $(\ul{z},\mba\ul{w}\mbb)$ involved in the definition of $n_{\mcm_-}^{r|s}$ projects to an enriched disk with mixed enrichment only at a Reeb chord.  

\begin{lem} \label{lem:mcm-disks}
    In the definition of 
    \begin{equation*}
        n_{\mcm_-}^{r|s}: \Hom_+(\cev{\aug}^2) \otimes \mcm_-(\aug_1^2,\aug_1^1) \otimes \Hom_+(\vec{\aug}\,^1) \to \mcm_-(\aug_{r+1}^2, \aug_{s+1}^1),
    \end{equation*}
    for a simply perturbed $\mcm_-$, thin disks yield the terms dictated by the strict unitality of $y^+$.
    
    Doubly enriched disks with mixed enrichment at a Reeb chord yield additional terms
    \begin{equation*}
    n_{\mcm_-}^{s|r}(\cev{b},\ul{w}^-,\vec{a}) \\
    = \sum_{\substack{(u,\mbw_1,\mbw_2) \in \Delta_\leg^{s,r}(\check{\ul{z}},\check{\mba} \check{\ul{w}} \check{\mbb})\\ \hat{\alpha}(u, \mbw_1,\mbw_2)  = a_s \cdots a_1\ul{w}b_1\cdots b_r}}  \hat{\aug}^1(\mba)\hat{\aug}^2(\mbb) z^-.
\end{equation*}
\end{lem}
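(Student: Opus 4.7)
The plan is to mirror the proof of Lemma~\ref{lem:mcm+disks} for $\mcm_+$, but substituting the opposite $2$-copy versions of the disk identification lemmas. Since $\mcm_-$ is defined via Proposition~\ref{prop:bimod-from-link-grading} applied to $\otcleg$, the discussion around Lemma~\ref{lem:sr-structure-map} tells us that the structure map $n_{\mcm_-}^{r|s}$ counts disks in a sublink of the $(s+r+2)$-copy of $\otcleg$ identifiable with the $(s+1,r+1)$-copy of $\otcleg$. The strategy is therefore to classify those disks as thin or thick (with respect to the original knot $\leg$) and apply the corresponding parts of Lemmas~\ref{lem:s-r-2-copy-thin-disk} and \ref{lem:s-r-2-copy-dil}.

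For the thin disk contributions, I would invoke the second half of Lemma~\ref{lem:s-r-2-copy-thin-disk}, which restricts the thin disks in $(s,r)$-copy form for $\otcleg$ to the two partial-flowline types $(\ul{a}^{12|12},\ul{a}^{12|11}y^{22|12})$ and $(\ul{a}^{12|21}, y^{11|21}\ul{a}^{12|11})$. These two families produce only the strict-unitality terms $n_{\mcm_-}^{1|0}(a^-,\ul{y}^+)=a^-$ and $n_{\mcm_-}^{0|1}(\ul{y}^+,a^-)=a^-$. The absence of short mixed chords in $\reeb^{12}$ for the opposite $2$-copy (the critical points of the perturbing function induce generators in $\ul{\reeb}^{21|kl}$ rather than $\ul{\reeb}^{12|kl}$) rules out the extra thin-disk contributions (such as $n_{\mcm_+}^{0|0}(\ul{y}^+)$ and the $x^+$-type outputs) that appear in $\mcm_+$.

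For the thick disk contributions, I would apply the opposite-$2$-copy half of Lemma~\ref{lem:s-r-2-copy-dil} to the disks in $\Delta_{\otcleg^{s+1,r+1}}(\ul{z},\mba\ul{w}\mbb)$ used in the definition of $n_{\mcm_-}^{r|s}$, obtaining a bijective correspondence with doubly enriched disks in $\Delta_\leg^{s,r}(\check{\ul{z}}, \check{\mba}\check{\ul{w}}\check{\mbb})$. Since the mixed chords of $\otcleg$ in $\reeb^{12}$ all arise from genuine Reeb chords of $\leg$ (there are no generators $x^{12|\cdot\cdot}$ or $y^{12|\cdot\cdot}$ to worry about), the mixed enrichment of the projected disk is always at a Reeb chord, so $\hat{\ul{w}} = \ul{w}$ and the algebraic lift reduces to $\hat{a}_{i_k}\cdots \hat{a}_{i_1}\ul{w}\hat{b}_{j_1}\cdots \hat{b}_{j_l}$. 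Matching the positive and negative pure corners against the augmentations $\vec{\aug}\,^1$ and $\cev{\aug}^2$ using Equation~\eqref{eq:hat-aug} produces the factor $\hat{\aug}^1(\mba)\hat{\aug}^2(\mbb)$ in front of $z^-$, finishing the formula.

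I do not foresee any real obstacle: the geometric content (combinatorial enumeration of thin disks and the projection/lifting correspondence for thick disks) has already been packaged into Lemmas~\ref{lem:s-r-2-copy-thin-disk} and \ref{lem:s-r-2-copy-dil}, so the argument is essentially a bookkeeping translation of the $\mcm_+$ proof, simplified by the fact that $\otcleg$ has fewer generators and fewer thin-disk families than $\tcleg$.
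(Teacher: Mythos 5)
Your argument matches the paper's proof of this lemma: the paper obtains it in exactly the same way, by combining the opposite-$2$-copy halves of Lemmas~\ref{lem:s-r-2-copy-thin-disk} and \ref{lem:s-r-2-copy-dil} and observing that all mixed chords of $\otcleg$ in $\reeb^{12}$ come from genuine Reeb chords of $\leg$, so the mixed enrichment always sits at a Reeb chord and the only thin disks are the two partial flowlines realizing unitality. One notational correction: since $y^+$ is not a generator of $\mcm_-$, the strict-unitality terms should read $n_{\mcm_-}^{1|0}(y^+,\ul{a}^-)=a^-=n_{\mcm_-}^{0|1}(\ul{a}^-,y^+)$, with the module element $a^-$ (not $y^+$) underlined.
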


See Figure~\ref{fig:n+enrich} once again.

It will also be useful to describe the structure maps of the linear dual $\mcm_-^\vee$ of $\mcm_-$ using thin and enriched disks. Following Section~\ref{sssec:a-infty-bimod-construct}, we label the generators of the morphisms of $\mcm_-^\vee$ by $a^\vee$ with $|a^\vee| = -|a^-|$. We also define the \dfn{dual algebraic lift} of an $(s,r)$-enriched disk $(u,\mbw_1,\mbw_2) \in \Delta_\leg^{s,r}(\ul{z},\mba \ul{w} \mbb)$ to be
\begin{equation*}
    \hat{\alpha}^\vee(u,\mbw_1,\mbw_2) = x \cdots x\hat{a}_{i_1}\cdots \hat{a}_{i_k}\hat{\ul{z}}\hat{b}_{j_l}\cdots \hat{b}_{j_1}x \cdots x.
\end{equation*}
where $\hat{a}$ and $\hat{b}$ are defined as before and $\hat{\ul{z}} = \ul{z}$ (recall that $z \in \reeb_\leg$). Further, there are $m_0$ number of $x$ generators at the front and $n_0$ number of $x$ generators at the end.

\begin{lem} \label{lem:mcm-vee-disks}
In the definition of \[n_{\mcm_-^\vee}^{s|r}:\Hom_+(\vec{\aug}\,^1) \otimes \mcm_-^\vee(\aug_{s+1}^1,\aug_{r+1}^2) \otimes \Hom_+(\cev{\aug}^2) \to \mcm_-^\vee(\aug_1^1,\aug_1^2)\] for a simply perturbed $\mcm_-^\vee$, thin disks yield the terms dictated by the strict unitality of $y^+$. Doubly enriched disks with mixed enrichment at a Reeb chord yield additional terms
    \begin{equation*}
    n_{\mcm_-^\vee}^{s|r}(\vec{a},\ul{z}^\vee,\cev{b}) \\
    = \sum_{\substack{(u,\mbw_1,\mbw_2) \in \Delta_\leg^{s,r}(\check{\ul{z}}, \check{\mba} \check{\ul{w}} \check{\mbb})\\ \hat{\alpha}^\vee(u, \mbw_1,\mbw_2)  =  a_1 \cdots a_s\ul{z}b_r\cdots b_1}}  \hat{\aug}^1(\mba) \hat{\aug}^2(\mbb) w^\vee.
    \end{equation*}
\end{lem}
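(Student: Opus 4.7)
The plan is to derive the lemma directly from the definition of the linear dual of an $A_\infty$ bimodule together with Lemma~\ref{lem:mcm-disks}. Applying Equation~\eqref{eq:linear-dual} gives
\[
n_{\mcm_-^\vee}^{s|r}(\vec{a},\ul{z}^\vee,\cev{b})(w^-) \;=\; z^\vee\!\bigl(n_{\mcm_-}^{r|s}(\cev{b},\ul{w}^-,\vec{a})\bigr),
\]
so the $w^\vee$-coefficient of $n_{\mcm_-^\vee}^{s|r}(\vec{a},\ul{z}^\vee,\cev{b})$ equals the $z^-$-coefficient of $n_{\mcm_-}^{r|s}(\cev{b},\ul{w}^-,\vec{a})$. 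Summing over $w \in \reeb^{12}$ will then yield the claimed formula.

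The strict unitality of $y^+$ is inherited immediately: the linear dual of a strictly unital bimodule structure map is itself strictly unital, so each $y^+$-relation from Lemma~\ref{lem:mcm-disks} for $n_{\mcm_-}$ transfers to $n_{\mcm_-^\vee}$ term by term after pairing with $z^\vee$.

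For the enriched disk contributions, substituting the formula of Lemma~\ref{lem:mcm-disks} gives an expression indexed by doubly enriched disks $(u,\mbw_1,\mbw_2) \in \Delta^{s,r}_\leg(\check{\ul{z}},\check{\mba}\check{\ul{w}}\check{\mbb})$ with mixed enrichment at a Reeb chord satisfying $\hat{\alpha}(u,\mbw_1,\mbw_2) = a_s\cdots a_1\ul{w}b_1\cdots b_r$. It remains to verify that this condition is equivalent to the condition $\hat{\alpha}^\vee(u,\mbw_1,\mbw_2) = a_1\cdots a_s\ul{z}b_r\cdots b_1$ in the statement. Since the mixed enrichment lies at a Reeb chord, the multiplicities $m_0$ and $n_0$ of position $0$ vanish, so no flanking $x$'s appear in $\hat{\alpha}^\vee$ and the expression reduces to $\hat{a}_{i_1}\cdots\hat{a}_{i_k}\ul{z}\hat{b}_{j_l}\cdots\hat{b}_{j_1}$. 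Comparing with $\hat{\alpha} = \hat{a}_{i_k}\cdots\hat{a}_{i_1}\ul{w}\hat{b}_{j_1}\cdots\hat{b}_{j_l}$, both conditions impose the same identification $\hat{a}_{i_p} = a_p$ and $\hat{b}_{j_q} = b_q$ on the enriched positions of $\mba$ and $\mbb$; they differ only in the reading direction on each side (reflecting the tensor-order reversal intrinsic to the linear dual construction) and in whether $\ul{w}$ or $\ul{z}$ labels the middle, which is harmless since $z$ is the positive corner of each disk in either description. Hence the two index sets coincide and the desired formula follows.

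The main obstacle is the mild bookkeeping required to match the reversed tensor orders introduced by the linear dual with the reversed reading conventions built into the definition of $\hat{\alpha}^\vee$. Once that dictionary is fixed, the lemma is a direct transcription of Lemma~\ref{lem:mcm-disks}.
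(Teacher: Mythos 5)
Your proof is correct and follows essentially the same route the paper takes: the paper obtains Lemma~\ref{lem:mcm-vee-disks} from Lemma~\ref{lem:mcm-disks} by taking the adjoint via Equation~\eqref{eq:linear-dual}, which is exactly your pairing identity, with the cyclic reordering of inputs and the reversed reading convention built into $\hat{\alpha}^\vee$ absorbing the dualization. Your observation that $m_0=n_0=0$ when the mixed enrichment sits at a Reeb chord, so no flanking $x$'s appear, is the right bookkeeping point; just note that when an enriched basepoint has multiplicity greater than one the matching is of concatenated words rather than letterwise, which is harmless since each such block consists of identical $x$'s.
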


Note the cyclic difference of the inputs in $n^{r|s}_{\mcm_-}$ and $n^{s|r}_{\mcm_-^\vee}$, which arises from taking the adjoint as in Equation~\ref{eq:linear-dual} in Section~\ref{sssec:a-infty-bimod-construct}.  See Figure~\ref{fig:n+enrich} yet again.

In parallel to the construction of $\Aug_+^{\text{Reeb}}(\leg)$ as a subcategory of $\Aug_+(\leg)$, let $\mcm_+^{\text{Reeb}}$ be the length filtration submodule of $\mcm_+$ generated by $\reeb^{12}_\leg$.

\begin{prop} \label{prop:aug-submodule}
    The submodule $\mcm_+^{\text{Reeb}}$ is isomorphic to $\mcm_-$.
\end{prop}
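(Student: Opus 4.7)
The plan is to construct a na\"ive bimodule isomorphism $\Phi: \mcm_+^{\text{Reeb}} \to \mcm_-$ given by the obvious identification of generators $a^+ \mapsto a^-$, where $a$ ranges over the Reeb chords of $\leg$. Since both spaces $\mcm_+^{\text{Reeb}}(\aug^2,\aug^1)$ and $\mcm_-(\aug^2,\aug^1)$ are generated by the mixed chords of the (opposite) $2$-copy that come from genuine Reeb chords of $\leg$, with matching gradings from Section~\ref{sssec:n-copy-geometry}, $\Phi^{0|0}$ is an isomorphism of graded vector spaces. Setting $\Phi^{r|s}=0$ for $(r,s)\neq(0,0)$, it remains to verify that $\Phi$ is a bimodule morphism, i.e.\ that
\[ n_{\mcm_-}^{r|s}(\cev{b},\,\ul{\Phi(w)},\,\vec{a}) \;=\; \Phi\bigl(n_{\mcm_+^{\text{Reeb}}}^{r|s}(\cev{b},\,\ul{w},\,\vec{a})\bigr) \]
for all $r,s\ge 0$ and all arguments. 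This is the bimodule analogue of the category-level identification of $\Aug_+^{\text{Reeb}}(\leg)$ with $\Aug_-(\leg)$ already noted before Definition~\ref{defn:circle-category}.

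To verify this identity, I would decompose both sides into thin and thick disk contributions using Lemmas~\ref{lem:mcm+disks} and \ref{lem:mcm-disks}. The crucial observation on the $\mcm_+^{\text{Reeb}}$ side is that $\ul{w}$ ranges only over Reeb chord generators, so the contributions to $n_{\mcm_+}^{r|s}$ that have $\ul{y}^{12|11}$ or $\ul{x}^{12|11}$ as the mixed negative corner --- namely the flowlines, the constant flowlines, the partial flowlines on lines~(\ref{eq:2-thin-partial-2})--(\ref{eq:2-thin-partial-3}) of Lemma~\ref{lem:s-r-2-copy-thin-disk}, and the term $n_{\mcm_+}^{0|0}(\ul y^+)$ --- are all suppressed. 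The only thin-disk terms that survive are the strict unitality terms coming from the partial flowlines $(\ul{a}^{12|12},\ul{a}^{12|11}y^{22|12})$ and $(\ul{a}^{12|21},y^{11|21}\ul{a}^{12|11})$, and these are exactly the thin-disk contributions to $n_{\mcm_-}^{r|s}$ listed for $\otcleg$ in Lemma~\ref{lem:s-r-2-copy-thin-disk}. Hence thin-disk contributions match on both sides.

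For thick disks, Lemmas~\ref{lem:mcm+disks} and \ref{lem:mcm-disks} express both $n_{\mcm_+^{\text{Reeb}}}^{r|s}$ and $n_{\mcm_-}^{r|s}$ as sums over the same family of doubly enriched disks $(u,\mbw_1,\mbw_2)\in\Delta_\leg^{s,r}(\check{\ul{z}},\check{\mba}\check{\ul{w}}\check{\mbb})$ whose mixed enrichment is at a Reeb chord, weighted by $\hat{\aug}^1(\mba)\hat{\aug}^2(\mbb)$. The thick-disk identification of Lemma~\ref{lem:s-r-2-copy-dil}, applied to $\tcleg$ and to $\otcleg$ in turn, provides compatible bijections from these enriched disks to the thick disks in $\tcleg^{s+1,r+1}$ and $\otcleg^{s+1,r+1}$ that define the respective structure maps. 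The main place something could go wrong is the exceptional subtlety in the $\tcleg$-case of Lemma~\ref{lem:s-r-2-copy-dil}, which allows position $0$ of the enriched disk to carry extra pure enrichments when the mixed enrichment is at $t^{-1}$; but this exception applies only when the mixed chord is $x^{12|11}$, which our restriction $w \in \reeb_\leg$ excludes. The two bijections therefore line up directly, the thick-disk counts agree generator-by-generator, and $\Phi$ is a na\"ive $\Aug_+(\leg)$-bimodule isomorphism.
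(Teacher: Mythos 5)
Your proposal is correct and follows essentially the same route as the paper: a na\"ive morphism identifying the Reeb-chord generators (the paper writes it in the direction $\mcm_-\to\mcm_+^{\text{Reeb}}$, $\ul{a}^-\mapsto a^+$, which is immaterial), with the structure maps matched via the thin/thick disk characterizations of Lemmas~\ref{lem:mcm+disks} and \ref{lem:mcm-disks}, observing that the extra thin-disk terms and the exceptional enrichment-at-$0$ case are excluded because $x^+$ and $y^+$ cannot be the bimodule input in $\mcm_+^{\text{Reeb}}$. Your write-up is simply a more explicit version of the paper's argument, spelling out which thin disks of Lemma~\ref{lem:s-r-2-copy-thin-disk} survive on each side.
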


\begin{proof}
    Define a na\"ive morphism $F: \mcm_- \to \mcm_+^{\text{Reeb}}$ on the generators of each morphism space $\mcm_-(\aug^2, \aug^1)$ by $F^{0|0}(\ul{a}^-) = a^+$.  To show that $F$ is, indeed, an $\Aug_+(\leg)$-bimodule isomorphism, we note from Lemma~\ref{lem:mcm+disks} and \ref{lem:mcm-disks} that the structure maps for both bimodules agree under $F$ and its naïve inverse.  In particular, we have that the only terms in $n^{r|s}_{\mcm_+}$ arising from thin disks --- besides those realizing $y^+$ as a strict unit --- take inputs that are not in $\mcm_+^{\text{Reeb}}$. Further, since $\ul{x}$ cannot be an input for the structure map of $\mcm_+^{\text{Reeb}}$, the doubly enriched disks used in Lemmas~\ref{lem:mcm+disks} and \ref{lem:mcm-disks} match.
\end{proof}

Finally, we define the bimodule associated to the circle category.

\begin{defn} \label{defn:circle-bimod}
    The \dfn{circle bimodule} $\mcn$ is the quotient  $\mcm_+/\mcm_-$.
\end{defn}

To better understand these bimodules, we note that $\mcm_+$ and $\mcn$ arise from diagonal bimodules.  We state this relationship more precisely, using $\pi:\Aug_+(\leg) \to \mcc(\leg)$ to denote the canonical projection functor to the quotient category. Again, we assume that all constructions involved are simply perturbed.

\begin{prop}\label{prop:aug-bimod+}
    Assuming all structures are simply perturbed, the positive augmentation bimodule $\mcm_+$ is isomorphic to the diagonal bimodule $\Aug_+(\leg)_\Delta$, while the circle bimodule is isomorphic to the pullback module $\pi^* \mcc(\leg)_\Delta$.
\end{prop}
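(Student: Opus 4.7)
The plan is to unpack definitions carefully and observe that both claimed isomorphisms follow, almost tautologically, once the right identifications are put in place. There is no hard geometric content beyond what was already established in Section~\ref{ssec:aug-bimod}; the work lies in matching conventions.

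For $\mcm_+ \cong \Aug_+(\leg)_\Delta$, I would define a na\"ive bimodule morphism $F_+\colon \Aug_+(\leg)_\Delta \to \mcm_+$ on objects as the identity on augmentations, and on generators of each morphism space by sending the generators $a^+,x^+,y^+$ of $\Hom_+(\aug_1,\aug_2) = A^\vee_{12}$ to the eponymous generators of $\mcm_+(\aug^2,\aug^1)$. This is well defined because the morphism spaces of both bimodules are, by Section~\ref{sssec:pm-aug-cat} and Definition~\ref{defn:+-aug-bimod}, freely generated by mixed chords of the $2$-copy of $\leg$ (namely $\nleg{2}$ and $\tcleg$ respectively), and under the simply perturbed hypothesis these two labeled families of generators agree.

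To verify that $F_+$ intertwines the structure maps, compare the two recipes: $n^{r|s}_{\Aug_+(\leg)_\Delta} = m_+^{r+s+1}$ counts disks in $\nleg{r+s+2}$ via Equation~\eqref{eq:m+}, while by Proposition~\ref{prop:bimod-from-link-grading} and Lemma~\ref{lem:sr-structure-map}, $n^{r|s}_{\mcm_+}$ counts disks in the sublink of $\tcleg^{s+r+2}$ identified with $\nleg{s+1,r+1}$. The discussion after Definition~\ref{defn:+-aug-bimod} (and the parallel one after Figure~\ref{fig:2-copy-crit}) asserts that in the simply perturbed setting the Lagrangian diagram of $\tcleg^{s+r+2}$ matches that of $\nleg{2s+2r+4}$ up to relabeling, so the maps $h_{ij|kl}$ set up a bijection between the disks being counted. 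Since $F_+^{0|0}$ is a linear isomorphism and all higher components vanish, this yields an $A_\infty$ bimodule isomorphism.

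For $\mcn \cong \pi^*\mcc(\leg)_\Delta$, I would argue by passing $F_+$ through the defining quotient. Proposition~\ref{prop:aug-submodule} gives $\mcm_- \cong \mcm_+^{\text{Reeb}}$ as sub-bimodules of $\mcm_+$. Combining the length-filtration argument that underlies $\Aug_+^{\text{Reeb}}(\leg)$ being a wide subcategory (namely Lemma~\ref{lem:stokes} applied to $\reeb^{12}_\leg$), one sees that $\Aug_+^{\text{Reeb}}(\leg)_\Delta$ is an $\Aug_+(\leg)$-sub-bimodule of $\Aug_+(\leg)_\Delta$, and $F_+$ restricts to an isomorphism between this sub-bimodule and $\mcm_+^{\text{Reeb}} \cong \mcm_-$. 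Consequently $F_+$ descends to a bimodule isomorphism between the quotients $\Aug_+(\leg)_\Delta/\Aug_+^{\text{Reeb}}(\leg)_\Delta$ and $\mcn = \mcm_+/\mcm_-$. Finally, since the projection functor $\pi$ is na\"ive, Equation~\eqref{eq:pullback} simplifies to show that $\pi^*\mcc(\leg)_\Delta$ has morphism spaces $A^\vee_{12,f}$ and structure maps $m_\mcc^{r+s+1}(\pi^1 \otimes \cdots \otimes \bbid \otimes \cdots \otimes \pi^1)$, which is precisely the quotient category's recipe from Section~\ref{sssec:a-infty-cat}; this identifies the left-hand quotient with $\pi^*\mcc(\leg)_\Delta$ by inspection of generators and structure maps.

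The main (and only real) obstacle is bookkeeping: one must ensure that the relabeling between $\tcleg^{s+r+2}$ and $\nleg{2s+2r+4}$ respects the index conventions built into the maps $h_{ij|kl}$ and that the transposition of indices noted in the proof of Proposition~\ref{prop:bimod-from-link-grading} is absorbed correctly on the diagonal-bimodule side. Once this clerical matching is done, both statements are structural identities, with the geometric content having been discharged already in Section~\ref{sec:copies} and in the construction of $\Aug_+(\leg)$ itself.
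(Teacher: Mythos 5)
Your proposal is correct and follows essentially the same route as the paper: a na\"ive morphism matching the generators, with the structure maps identified by comparing the disk counts for $m_+^{r+s+1}$ in $\nleg{r+s+2}$ with those of Lemma~\ref{lem:sr-structure-map}/Lemma~\ref{lem:mcm+disks} under the simply perturbed identification of $\tcleg^{s+1,r+1}$ with the $n$-copy. Your treatment of the circle bimodule—descending $F_+$ through the quotients using the length-filtration (ideal) property and the na\"ive-functor simplification of Equation~\eqref{eq:pullback}—is just a more explicit rendering of the paper's "same observation with appropriate quotients" remark.
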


\begin{proof}
    Define a na\"ive morphism $F: \mcm_+ \to \Aug_+(\leg)_\Delta$ on the generators of each morphism space $\mcm_+(\aug^2, \aug^1)$ by $F^{0|0}(\ul{a}^+) = a^+$. That $F$ is, indeed, an $\Aug_+(\leg)$-bimodule isomorphism follows from the observation made after the proof of Proposition~\ref{prop:bimod-from-link-grading} that by identifying $\leg^{s+r+2}$ with $\leg^{s+1,r+1}$, the structure maps of $\Aug_+(\leg)_\Delta$ are the same as those in Lemma~\ref{lem:mcm+disks}.

    The proof that $\mcn \simeq \pi^* \mcc(\leg)_\Delta$ follows the same observation with appropriate quotients of the domains and codomains of the structure maps taken into account.
\end{proof}

The proposition above implies that the morphism spaces  $\mcn(\aug^2, \aug^1)$ are generated by $x^+$ and $y^+$. Besides those dictated by strict unitality of $y^+$, the only nontrivial maps are
\begin{equation} \label{eq:N-structure-maps}
n_{\mcn}^{1|0}(x^+, \ul{y}^+) = n_{\mcn}^{0|1}(\ul{y}^+, x^+) = x^+.
\end{equation}

We end this section with the following useful structural result that follows immediately from Lemma \ref{lem:submodule-to-cone}.

\begin{prop}\label{prop:aug+cone}
    $\mcm_+$ is the mapping cone of a bimodule morphism $\rho:\mcn[-1] \to \mcm_-$.
\end{prop}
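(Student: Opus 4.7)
The plan is to apply Lemma~\ref{lem:submodule-to-cone} verbatim to the short exact sequence of $\Aug_+(\leg)$-bimodules
\[
\begin{tikzcd}
    0 \ar[r] & \mcm_- \ar[r,"i"] & \mcm_+ \ar[r,"\pi"] & \mcn \ar[r] & 0,
\end{tikzcd}
\]
where the inclusion comes from identifying $\mcm_-$ with the length-filtration submodule $\mcm_+^{\text{Reeb}}$ of $\mcm_+$ via Proposition~\ref{prop:aug-submodule}, and the projection is the canonical one from Definition~\ref{defn:circle-bimod}. The existence of this sequence on the nose, rather than merely up to homotopy, is what puts us in position to use Lemma~\ref{lem:submodule-to-cone}; here $\mcm_+^{\text{Reeb}}$ is a genuine submodule of $\mcm_+$ thanks to the length filtration argument (Lemma~\ref{lem:stokes} applied to the $(s{+}1,r{+}1)$-copy, as noted in the discussion preceding Definition~\ref{defn:+-aug-bimod}), and $\mcn$ is by definition the resulting strict quotient.

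To invoke Lemma~\ref{lem:submodule-to-cone}, the only piece of data I need to supply is a na\"ive pre-morphism $\sigma:\mcn \to \mcm_+$ whose underlying linear map splits $\pi$ on each Hom space. This is immediate from the simply perturbed setup: the graded vector space $\mcm_+(\aug^2,\aug^1)$ decomposes as a direct sum of the span of the Reeb-chord generators $a^+$ (which is precisely $\mcm_+^{\text{Reeb}}(\aug^2,\aug^1) \simeq \mcm_-(\aug^2,\aug^1)$) and the span of the short-chord generators $x^+, y^+$ (which is $\mcn(\aug^2,\aug^1)$ by Equation~\eqref{eq:N-structure-maps}), so I may simply take $\sigma$ to be the na\"ive linear map $x^+\mapsto x^+$, $y^+\mapsto y^+$. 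I would emphasize that $\sigma$ is not itself a bimodule morphism: the higher structure maps of $\mcm_+$ can feed $x^+$ and $y^+$ inputs into thick disks that output Reeb-chord generators $a^+$, and it is precisely this failure that gets measured by $\rho = (\bbid - \sigma\pi) \circ n_{\mcm_+} \circ \sigma$.

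With $\sigma$ in hand, Lemma~\ref{lem:submodule-to-cone} produces the desired bimodule morphism $\rho:\mcn[-1] \to \mcm_-$ together with the quasi-isomorphism $\mcm_+ \simeq \cone(\rho)$. I do not expect any substantive obstacle: every ingredient has been prepared in the preceding sections, and the proof really is a one-line citation. The only minor care needed is in confirming that the chosen $\sigma$ takes values in $\mcm_+$ (trivial, since $x^+, y^+$ are generators of $\mcm_+$) and that its composition with $\pi$ is the identity on $\mcn$ (also trivial from the vector space decomposition), so that the hypotheses of Lemma~\ref{lem:submodule-to-cone} are met.
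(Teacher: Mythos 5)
Your proposal is correct and matches the paper's own argument, which is exactly the one-line application of Lemma~\ref{lem:submodule-to-cone} to the submodule $\mcm_- \simeq \mcm_+^{\text{Reeb}} \subset \mcm_+$ with quotient $\mcn$. The extra detail you supply about the na\"ive splitting $\sigma$ (and why it fails to be a bimodule morphism, which is what $\rho$ measures) is a correct elaboration of what the paper leaves implicit.
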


The adjoint $\rho^\vee: \mcm_-^\vee \to \mcn^\vee[1]$ of the morphism in the mapping cone above will be important in the proof of the main theorem in Section~\ref{sec:main-pf}.  As above, we may describe the map in terms of thin disks and doubly enriched disks in $\leg$. Note the cyclic permutation arising from taking the adjoint as in Lemma~\ref{lem:mcm-vee-disks}. 

\begin{lem} \label{lem:rho-vee}
    In the definition of $\rho^\vee: \mcm_-^\vee \to \mcn^\vee[1]$ between simply perturbed bimodules, thin disks yield the terms
    \begin{equation*} \label{eq:rho-dual-thin}
    \begin{array}{ll}
        (\rho^{\vee})^{0|0}(\ul{a}^\vee) = (\aug_1^1(a) + \aug_1^2(a))y^\vee, &(\rho^{\vee})^{1|0}(a^+,\ul{a}^\vee) = (\rho^{\vee})^{0|1}(\ul{a}^\vee, a^+) = y^\vee.
    \end{array}
    \end{equation*}
    
    Doubly enriched disks with mixed enrichment at a base point yield additional terms
    \begin{equation*} \label{eq:rho-dual-thick}
    (\rho^\vee)^{s|r}(\vec{a},\ul{z}^\vee,\cev{b}) \\
    = \sum_{\substack{(u,\mbw_1,\mbw_2) \in \Delta_\leg^{s,r}(\check{\ul{z}},\check{\mba} \ul{t}^{\pm 1} \check{\mbb})\\ \hat{\alpha}^\vee(u, \mbw_1,\mbw_2)  =  a_1\cdots a_s \ul{z} b_r \cdots b_1}}  \hat{\aug}^1(\mba)\hat{\aug}^2(\mbb) x^\vee.
    \end{equation*}
\end{lem}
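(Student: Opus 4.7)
The plan is to compute $\rho:\mcn[-1] \to \mcm_-$ explicitly using Lemma~\ref{lem:submodule-to-cone}, and then read off $\rho^\vee$ by the morphism-level analogue of the adjoint formula in Equation~\eqref{eq:linear-dual}.

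For the first step, Proposition~\ref{prop:aug+cone} together with Lemma~\ref{lem:submodule-to-cone} writes $\rho = (\bbid - \sigma\pi) \circ n_{\mcm_+} \circ \sigma$ for a na\"ive splitting $\sigma:\mcn \to \mcm_+$ of the projection $\pi:\mcm_+ \to \mcn$. Using Propositions~\ref{prop:aug-bimod+} and~\ref{prop:aug-submodule}, the natural choice $\sigma(y^+) = y^+$, $\sigma(x^+) = x^+$ makes $\bbid - \sigma\pi$ the projection onto the $\mcm_-$-summand. Hence $\rho^{r|s}(\cev{b}, \ul{w}^+, \vec{a})$ for $w \in \{x,y\}$ is the sum of those $z^+$-terms ($z \in \reeb_\leg$, identified with $z^- \in \mcm_-$) appearing in $n_{\mcm_+}^{r|s}(\cev{b}, \ul{w}^+, \vec{a})$. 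I then enumerate these terms using Lemma~\ref{lem:mcm+disks}: for middle input $\ul{y}^+$, no thick doubly enriched disk contributes, since the mixed enrichment of such a disk must sit either on a Reeb chord (producing middle input $\ul{a}^+$) or on $t^{\pm 1}$ (producing $\ul{x}^+$), never on $\ul{y}^+$; only the three thin-disk terms
\[n_{\mcm_+}^{0|0}(\ul{y}^+) = (\aug_1(a)+\aug_2(a))a^+, \quad n_{\mcm_+}^{1|0}(a^+,\ul{y}^+) = a^+, \quad n_{\mcm_+}^{0|1}(\ul{y}^+,a^+) = a^+\]
survive. For middle input $\ul{x}^+$, no thin disk yields $z^+$ with $z \in \reeb_\leg$, while the thick doubly enriched disks in Lemma~\ref{lem:mcm+disks} contribute precisely when the base disk has $\ul{w} = \ul{t}^{\pm 1}$.

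To finish, I apply the adjoint relation
\[(\rho^\vee)^{s|r}(\vec{a}, \ul{z}^\vee, \cev{b})(\ul{w}^+) = z^\vee\bigl(\rho^{r|s}(\cev{b}, \ul{w}^+, \vec{a})\bigr)\]
for $w \in \{x,y\}$. The coefficient of $y^\vee$ (dual to $\ul{y}^+$) in $(\rho^\vee)^{s|r}(\vec{a}, \ul{z}^\vee, \cev{b})$ recovers the three thin-disk terms in the statement, while the coefficient of $x^\vee$ (dual to $\ul{x}^+$) recovers the thick-disk formula. The one delicate point I expect to be the main obstacle is reconciling the left/right swap-and-reverse built into the adjoint with the mirror relationship between the algebraic lifts $\hat{\alpha}$ and $\hat{\alpha}^\vee$: the $m_0$ adjacent $x^+$ inputs that $\hat{\alpha}$ places immediately to the left of $\ul{x}$ in the input word for $\rho$ must be shown to migrate to the $m_0$ leading $x^+$'s in the $\hat{\alpha}^\vee$ form for $\rho^\vee$, and similarly the $n_0$ on the right become the trailing $x^+$'s. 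Once this bookkeeping is matched, the enumeration of enriched disks and the augmentation weights $\hat{\aug}^1(\mba)\hat{\aug}^2(\mbb)$ transfer verbatim from Lemma~\ref{lem:mcm+disks} to the statement.
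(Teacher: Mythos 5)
Your proposal is correct and follows essentially the same route as the paper: both identify $\rho$ via Lemma~\ref{lem:submodule-to-cone} as the part of $n_{\mcm_+}$ (restricted to middle inputs $x^+,y^+$) landing in $\mcm_-$, enumerate those terms with Lemma~\ref{lem:mcm+disks} (thin disks only for middle input $y^+$, doubly enriched disks with mixed enrichment at $t^{\pm 1}$ for middle input $x^+$), and then dualize via Equation~\eqref{eq:linear-dual}. The bookkeeping you flag as delicate is exactly the cyclic reordering already built into the definitions of $\hat{\alpha}$ versus $\hat{\alpha}^\vee$, so it goes through as you expect.
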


See the doubly enriched disks on the right side of Figure~\ref{fig:n+enrich}.

\begin{proof}
The construction of the map $\rho$ in Lemma~\ref{lem:submodule-to-cone} shows that, for $w = x^+$ or $y^+$, $\rho^{r|s}(b_r^+, \ldots, b_1^+, \ul{w}, a_1^+, \ldots, a_s^+)$ outputs the same terms as $n^{r|s}_{\mcm_+}$ except for contributions of other $x^+$ and $y^+$ terms.  Thus, the enumeration of the terms arising from thin disks follows from Lemma~\ref{lem:mcm+disks} after taking the adjoint as in Equation~\eqref{eq:linear-dual} in Section~\ref{sssec:a-infty-bimod-construct}.

Similarly, the terms from the doubly enriched disks follow from recognizing that the only possible input into a thick disk for $\rho$ is a maximum. Hence, once again, the enumeration of the terms arising from thick disks follows from Lemma~\ref{lem:mcm+disks} after taking the adjoint as in Equation~\eqref{eq:linear-dual}.
\end{proof}

\begin{ex} \label{ex:running-rho-dual}
We use Example~\ref{ex:running-aug-cat} to illustrate the map $\rho^\vee: \mcm_-^\vee \to \mcn^\vee[1]$ when both augmentations are $\aug_1$ (or $\aug_2$). The non-vanishing components are
\begin{align*}
    (\rho^\vee)^{0|0}(\ul{a_6}^\vee) &= x^\vee,\\ (\rho^\vee)^{1|0}(a_i^+, \ul{a_i^\vee}) &= y^\vee, \\ (\rho^\vee)^{0|1}(\ul{a_i^\vee},a_i^+) &= y^\vee.
\end{align*}
\end{ex}

\subsection{The Separated Bimodule}
\label{ssec:sep-module} Our second application of Proposition~\ref{prop:bimod-from-link-grading} associates an $\Aug_+(\leg)$-bimodule to the separated $2$-copy of $\leg$. As we shall see in this section, this bimodule encodes the augmentation bimodules $\mcm_+$ and $\mcm_-^\vee$ as parts of an acyclic mapping cone.  The resulting quasi-isomorphism will be a key input into the algebraic machinery of Section~\ref{sec:wrcy} in the proof of the main theorem.

\begin{defn}\label{defn:sep-bimod}
    The \textbf{separated $2$-copy bimodule} $\widehat{\mcm}$ is defined to be the result of  applying Proposition \ref{prop:bimod-from-link-grading} to the separated $2$-copy $\stcleg$.
\end{defn}

As above, we only consider separated $2$-copy bimodules defined by simply perturbed copies of $\leg$. In particular, the base category $\Aug_+(\leg)$ is simply perturbed and the morphisms of $\widehat{\mcm}$ are generated by Reeb chords of a simply perturbed separated $2$-copy $\stcleg$. The structure maps $n_{\widehat{\mcm}}^{r|s}$ count disks in a simply perturbed $(s,r)$-copy of $\stcleg$. Again, $\widehat{\mcm}$ defined in this manner is termed \dfn{simply perturbed}.

In order to find the augmentation bimodules inside $\widehat{\mcm}$, we use Proposition~\ref{prop:p-q-chords} and the discussion afterwards to obtain, for all $\aug^1,\aug^2 \in \obj(\Aug_+(\leg))$,
\begin{equation} \label{eq:sep-bimod}
    \widehat{\mcm}(\aug^2, \aug^1) \simeq  \Hom_+(\aug^2,\aug^1) \oplus \Hom_-^\vee(\aug^2,\aug^1)[-1].
\end{equation}
Recall that, in the decomposition above, Reeb chords that generate $\Hom_+$ are denoted by $q^+$, $x^+$, or $y^+$, while chords that generate $\Hom_-^\vee$ are denoted by $p^-$.

With respect to this decomposition, we find a submodule $\mcq$ of $\widehat{\mcm}$ using the length filtration bimodule construction at length $Z$.

\begin{lem} \label{lem:split-2-copy}
    There exists a submodule $\mcq$ of $\widehat{\mcm}$ so that,  for all $\aug^1,\aug^2 \in \obj(\Aug_+(\leg))$, $\mcq(\aug^2, \aug^1) \simeq \Hom_+(\aug^2,\aug^1)$.   Further, there is a quotient module $\mcp = \widehat{\mcm}/\mcq$ whose underlying vector spaces are generated by equivalence classes of generators of $\Hom_-^\vee(\aug^2, \aug^1)[-1]$.
\end{lem}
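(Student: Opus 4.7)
The plan is to take $\mcq$ to be the length filtration submodule $\widehat{\mcm}_Z$ at the separation height $Z$, i.e., for each pair of augmentations the subspace $\mcq(\aug^2, \aug^1) \subset \widehat{\mcm}(\aug^2, \aug^1)$ is generated by those Reeb chords in $\widehat{\reeb}^{12}$ whose length exceeds $Z$. By definition of the separated $2$-copy, these are exactly the $q$ chords, while the $p$ chords all have length less than $Z$. Moreover, by construction of $\stcleg$, the length $Z$ was chosen to exceed the maximum length of the Reeb chords of $\leg$, so every pure chord $a^{ii|kl}$ has length strictly less than $Z$.

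The first thing to verify is that $\mcq$ really is an $\Aug_+(\leg)$-subbimodule of $\widehat{\mcm}$. This is the only nontrivial point, and it is exactly the content of the length filtration submodule construction recorded at the end of Section~\ref{ssec:aug-bimod}. Concretely, by Lemma~\ref{lem:sr-structure-map} the structure map $n^{r|s}_{\widehat{\mcm}}(\cev{b}, \ul{w}, \vec{a})$ is a signed count of disks in $\Delta_{\stcleg^{s+1,r+1}}(\ul{z}, \mba \ul{w} \mbb)$, with $w$ appearing as a \emph{negative} corner. Whenever $w \in \widehat{\reeb}_q$, Stokes' inequality from Lemma~\ref{lem:stokes} gives $\ell(z) > \ell(w) > Z$, so the output $z$ is also a $q$ chord and therefore lies in $\mcq$. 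Hence the structure maps of $\widehat{\mcm}$ preserve $\mcq$.

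With this in place, the rest is bookkeeping. By Proposition~\ref{prop:p-q-chords} and the identification \eqref{eq:sep-bimod}, the chosen generating set of $\mcq(\aug^2, \aug^1)$, namely the $q$ chords, is exactly the subset of $\widehat{\mcm}(\aug^2, \aug^1)$ that is identified with the $\Hom_+(\aug^2, \aug^1)$ summand, which yields the vector space isomorphism $\mcq(\aug^2, \aug^1) \simeq \Hom_+(\aug^2, \aug^1)$ respecting gradings. The quotient bimodule $\mcp = \widehat{\mcm}/\mcq$ inherits its structure maps from $\widehat{\mcm}$ in the usual way, and, as a graded vector space, $\mcp(\aug^2, \aug^1)$ is generated by the equivalence classes of the remaining generators of $\widehat{\mcm}(\aug^2, \aug^1)$, which are precisely the $p$ chords. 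By \eqref{eq:sep-bimod} these correspond to the generators of $\Hom_-^\vee(\aug^2, \aug^1)[-1]$, giving the desired description of $\mcp$.

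The only place any real work happens is the submodule check in the second paragraph, and even there the argument is entirely an application of Stokes' inequality; the main thing to be careful about is confirming that no pure chord $a^{ii|kl}$ (which is what makes up $\mba, \mbb$) can itself have length exceeding $Z$, since otherwise higher structure maps could produce $q$-chord outputs from $p$-chord inputs. As noted above, this is built into the choice of $Z$.
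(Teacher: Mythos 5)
Your proof is correct and follows the paper's own route: the paper obtains $\mcq$ exactly as the length filtration submodule of $\widehat{\mcm}$ at height $Z$, with the submodule property coming from Lemma~\ref{lem:stokes} together with Lemma~\ref{lem:sr-structure-map}, and the identifications of generators with $\Hom_+$ and $\Hom_-^\vee[-1]$ coming from Proposition~\ref{prop:p-q-chords} and Equation~\eqref{eq:sep-bimod}. Your closing worry about $p$-chord inputs producing $q$-chord outputs is actually immaterial (such outputs simply die in the quotient $\mcp$ and do not threaten either the submodule property or the quotient construction), but including it does no harm.
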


We proceed to explicitly characterize the structure maps of $\mcq$ and $\mcp$ in terms of thin disks and doubly enriched disks in $\leg$. In more detail, using Lemma~\ref{lem:sr-structure-map} with Lemma~\ref{lem:s-r-sep-thin-disk-1} for thin disk contributions and Lemma~\ref{lem:separated+dil}(1) for enriched disk contributions yields the following lemma.

\begin{lem} \label{lem:mcq-disks}
    In the definition of \[n_{\mcq}^{r|s}: \Hom_+(\cev{\aug}^2) \otimes \mcq(\aug_1^2,\aug_1^1) \otimes \Hom_+(\vec{\aug}\,^1) \to \mcq(\aug_{r+1}^2, \aug_{s+1}^1)\] for a simply perturbed $\mcq$, thin disks yield the terms dictated by the strict unitality of $y^+$, as well as the terms
    \begin{align*}
        n_{\mcq}^{0|0}(\ul{y}^+) &= \left(\aug_1(a) + \aug_2(a)\right) q^+ &
        n_{\mcq}^{1|0}(q^+,\ul{y}^+) &= n_+^{0|1}(\ul{y}^+,q^+) = q^+ \\
        && n_{\mcq}^{1|0}(x^+,\ul{y}^+) &= n_+^{0|1}(\ul{y}^+,x^+) = x^+
    \end{align*}
    Doubly enriched disks yield additional terms
    \begin{equation*}
    n_{\mcq}^{r|s}(\cev{b},\ul{w}^+,\vec{a}) \\
    = \sum_{\substack{(u,\mbw_1,\mbw_2) \in \Delta_\leg^{s,r}(\check{\ul{z}}, \check{\mba} \check{\ul{w}} \check{\mbb})\\ \hat{\alpha}(u, \mbw_1,\mbw_2)  = a_s \cdots a_1\ul{w}b_1\cdots b_r}}  \hat{\aug}^1(\mba)\hat{\aug}^2(\mbb) z^+.
\end{equation*}
\end{lem}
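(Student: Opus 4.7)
The plan is to assemble the statement from three previously established ingredients: the description of the structure maps of the bimodule associated to a two-component link (Lemma~\ref{lem:sr-structure-map}), the thin disk identification for $\stcleg$ in the $q$--$q$ case (Lemma~\ref{lem:s-r-sep-thin-disk-1}), and the thick disk identification in the same case (Lemma~\ref{lem:separated+dil}(1)).

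First, I would observe that $\mcq$ is the length filtration submodule of $\widehat{\mcm}$ at height $Z$, so its morphism spaces are generated by $\widehat{\reeb}_q \cup \{x^+, y^+\}$. By the action inequality of Lemma~\ref{lem:stokes}, any disk contributing to $n^{r|s}_\mcq$ must have positive corner $z$ of length greater than the sum of lengths of its negative corners; since $w \in \widehat{\reeb}_q$ (or is a short chord), no $p$ chord can serve as the output, so $z$ lies in the same set. Combined with Lemma~\ref{lem:sr-structure-map}, the disks relevant to $n^{r|s}_\mcq(\cev{b}, \ul{w}, \vec{a})$ are exactly those in the $(s+1,r+1)$-copy of $\stcleg$ whose defining word is in $(s+1,r+1)$-copy form, with $z, w$ restricted as above and with the pure corners in $\mba_k$, $\mbb_k$ augmented by $\hat{\aug}^1$, $\hat{\aug}^2$ respectively.

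Next, I would enumerate the thin disk contributions using Lemma~\ref{lem:s-r-sep-thin-disk-1}. The flowlines and constant flowlines in that enumeration realize the strict unitality of $y^+$ as a unit in $\Hom_+(\aug,\aug)$ acting on $\mcq$. The partial flowlines $(\ul{q}^{12|11}, a^{11|11}\ul{y}^{12|11})$ and $(\ul{q}^{12|11}, \ul{y}^{12|11}a^{22|11})$ contribute in two distinct ways depending on whether the pure corner is treated as a genuine input or is augmented away: treating it as genuine yields the relations $n_\mcq^{0|1}(\ul{y}^+, q^+) = q^+$ and $n_\mcq^{1|0}(q^+, \ul{y}^+) = q^+$ under the identification $a \leftrightarrow q$, while absorbing it into $\aug_1$ or $\aug_2$ produces the term $(\aug_1(a) + \aug_2(a))q^+$ in $n_\mcq^{0|0}(\ul{y}^+)$. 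The remaining partial flowlines involving short chords $x^{\cdot \cdot}$ produce the analogous unitality-type relations on $x^+$.

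Finally, the thick contributions follow from Lemma~\ref{lem:separated+dil}(1): thick disks in $\Delta_{\stcleg^{s+1,r+1}}(\ul{z}, \mba\ul{w}\mbb)$ with $z, w \in \widehat{\reeb}_q$ are in bijection with doubly enriched disks in $\Delta^{s,r}_\leg(\check{\ul{z}}, \check{\mba}\check{\ul{w}}\check{\mbb})$, where pure enrichment at position $0$ appears precisely when the mixed enrichment sits at $t^{-1}$, i.e., when $w = x^+$. The pure augmented corners in $\mba_k, \mbb_k$ contribute the factors $\hat{\aug}^1(\mba), \hat{\aug}^2(\mbb)$ of Equation~\eqref{eq:hat-aug}, while the constraint $\hat{\alpha}(u,\mbw_1,\mbw_2) = a_s \cdots a_1 \ul{w} b_1 \cdots b_r$ records that the remaining enriched corners match the genuine input arguments of $n^{r|s}_\mcq$. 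Summing over the bijection yields the stated formula. The main obstacle, really careful bookkeeping rather than a substantive difficulty, is partitioning the partial flowline entries of Lemma~\ref{lem:s-r-sep-thin-disk-1} between strict unitality relations and the explicit formulas listed, and verifying the correspondence between the enrichment conventions used for $\widehat{\mcm}$ and those for $\mcm_+$ when $w$ is the short chord $x^+$.
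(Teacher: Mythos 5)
Your proposal is correct and is essentially the paper's argument: the paper obtains this lemma precisely by combining Lemma~\ref{lem:sr-structure-map} (to identify which disks in the $(s+1,r+1)$-copy of $\stcleg$ define $n_\mcq$) with Lemma~\ref{lem:s-r-sep-thin-disk-1} for the thin-disk terms and Lemma~\ref{lem:separated+dil}(1) for the doubly enriched (thick) terms. The only quibble is bookkeeping: the relations $n_\mcq^{1|0}(q^+,\ul{y}^+)=n_\mcq^{0|1}(\ul{y}^+,q^+)=q^+$ come from the separately listed partial flowlines $(\ul{q}^{12|12}, \ul{y}^{12|11}a^{22|12})$ and $(\ul{q}^{12|21}, a^{11|21}\ul{y}^{12|11})$, not from re-reading $(\ul{q}^{12|11}, a^{11|11}\ul{y}^{12|11})$ with its diagonal pure corner as a genuine input, since a corner in $\gens^{11|11}$ or $\gens^{22|11}$ can only be augmented away.
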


Next, the characterization of the structure maps $n_{\mcp}$ of $\mcp$ follows from Lemma~\ref{lem:s-r-sep-thin-disk-2} for thin disk contributions and Lemma~\ref{lem:separated+dil}(2) for enriched disk contributions. To explain this characterization, we first emphasize the cyclic permutation in the correspondence between enriched and lifted disks in Lemma~\ref{lem:separated+dil}(2).  Second, the mixed enrichment for the enriched disks in Lemma~\ref{lem:separated+dil}(2) must lie at a Reeb chord in $\leg$. To match the enriched disks in the characterization of $n_\mcp$ with those in the characterization of $n_{\mcm_-^\vee}$ in Lemma~\ref{lem:mcm-vee-disks}, we apply Lemma~\ref{lem:separated+dil}(2) to enriched disks in $\Delta_\leg^{s,r}(\ul{z},\mba \ul{w}\mbb)$ in place of the enriched disks in $\Delta^{r,s}_\leg(\ul{w}, \mbb \ul{w} \mba)$ that are referred to in the statement of the lemma.

\begin{lem}\label{lem:mcp-disks}
    In the definition of \[n_{\mcp}^{s|r}:\Hom_+(\vec{\aug}\,^1) \otimes \mcp(\aug_{s+1}^1,\aug_{r+1}^2) \otimes \Hom_+(\cev{\aug}^2) \to \mcp(\aug_1^1,\aug_1^2)\] for a simply perturbed $\mcp$, thin disks yield the terms dictated by the strict unitality of $y^+$. 
    
    Doubly enriched disks with mixed enrichment at a Reeb chord yield additional terms
    \begin{equation*}
    n_{\mcp}^{s|r}(\vec{a},\ul{z}^-,\cev{b}) \\
    = \sum_{\substack{(u,\mbw_1,\mbw_2) \in \Delta_\leg^{s,r}(\check{\ul{z}}, \check{\mba} \check{\ul{w}} \check{\mbb})\\ \hat{\alpha}^\vee(u, \mbw_1,\mbw_2)  =  a_1 \cdots a_s \ul{z} b_r\cdots b_1}}  \hat{\aug}^1(\mba)\hat{\aug}^2(\mbb) w^-.
    \end{equation*}
\end{lem}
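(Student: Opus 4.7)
The plan is to directly enumerate the disks in the geometric model for $\widehat{\mcm}$ that survive passage to the quotient $\mcp = \widehat{\mcm}/\mcq$, and match them with the disks identified in the statement. Using Lemma~\ref{lem:sr-structure-map} (applied to $\stcleg$), the structure map $n_\mcp^{s|r}(\vec{a},\ul{z}^-,\cev{b})$ counts disks in a simply perturbed $(s+1,r+1)$-copy of $\stcleg$ with positive corner at a chord projecting to $z$, negative corner at a chord projecting to $w$, and augmented pure negative corners on either side. The critical observation is that since both the input $\ul{z}^-$ and the output $w^-$ live in $\mcp$, we must have $z, w \in \widehat{\reeb}_p$; otherwise, the corresponding generator lies in the submodule $\mcq$ (by the length filtration defining $\mcq$ in Lemma~\ref{lem:split-2-copy}) and vanishes in the quotient.

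For the thin disk contribution, I would invoke Lemma~\ref{lem:s-r-sep-thin-disk-2}, which enumerates \emph{all} thin disks with $z, w \in \widehat{\reeb}_p$: these are the two partial flowlines $(\ul{p}^{12|21}, y^{11|21} \ul{p}^{12|11})$ and $(\ul{p}^{12|12}, \ul{p}^{12|11} y^{22|12})$, which translate precisely to $n_\mcp^{1|0}(y^+,\ul{p}^-) = p^-$ and $n_\mcp^{0|1}(\ul{p}^-,y^+) = p^-$, i.e.\ the structure maps dictated by strict unitality of $y^+$. The thin disks catalogued in Lemma~\ref{lem:s-r-sep-thin-disk-3} do not enter because they have $z \in \widehat{\reeb}_x \subset \widehat{\reeb}_q$, forcing the input to be zero in $\mcp$. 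Likewise, there are no thin disks with $z$ or $w$ a long $q$-chord paired with the other a $p$-chord beyond these possibilities.

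For the thick disk contribution, I would invoke Lemma~\ref{lem:separated+dil}(2), which yields a bijection between thick disks in $\stcleg^{s+1,r+1}$ with $z, w \in \widehat{\reeb}_p$ and doubly enriched disks in $\Delta_\leg^{r,s}(\ul{\check{w}}, \check{\mbb}\ul{\check{z}}\check{\mba})$. Translating this bijection into the language of algebraic lifts, the cyclic reversal of the negative corners $\check{\mba}\ul{\check{z}}\check{\mbb} \leftrightarrow \check{\mbb}\ul{\check{z}}\check{\mba}$ between input and output is exactly the cyclic permutation that appears in the dual lift $\hat{\alpha}^\vee$ (compare with Lemma~\ref{lem:mcm-vee-disks} for $\mcm_-^\vee$), and the pure negative corners in $\mba$ and $\mbb$ contribute the augmentation products $\hat{\aug}^1(\mba)$ and $\hat{\aug}^2(\mbb)$. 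Remaining potential sources of thick disks, namely those from Remark~\ref{rem:qp-thick-disks} (with $z \in \widehat{\reeb}_q$, $w \in \widehat{\reeb}_p$) and Lemma~\ref{lem:separated+dil}(3) (with $z \in \widehat{\reeb}_y \subset \widehat{\reeb}_q$), are ruled out by the same reason as before: the input would be a $q$-chord, hence zero in $\mcp$.

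The only real subtlety, and the main bookkeeping obstacle, is ensuring that the cyclic permutation in the indexing of the doubly enriched disks from Lemma~\ref{lem:separated+dil}(2) matches the convention of $\hat{\alpha}^\vee$ used in the formula. This is routine since both arise from the same source — the reversal of orientations at $p$-chords relative to $q$-chords in the separated $2$-copy — and can be verified by directly tracing how the stick-together map transports corners at $p$ chords in $\stcleg^{s+1,r+1}$ to corners of opposite Reeb sign in $\pi_{xy}(\leg)$, paralleling exactly the analysis behind Lemma~\ref{lem:mcm-vee-disks}.
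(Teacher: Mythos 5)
Your overall strategy is the same as the paper's: identify $n_\mcp$ with counts of disks in the simply perturbed copies of $\stcleg$ whose two mixed corners are both $p$ chords, then quote Lemma~\ref{lem:s-r-sep-thin-disk-2} for the thin contributions and Lemma~\ref{lem:separated+dil}(2) for the thick ones, taking care that the cyclic permutation in that identification is exactly the one built into $\hat{\alpha}^\vee$ (this is precisely the relabeling the paper performs before stating the lemma). So the skeleton is fine.

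There is, however, a concrete error in your exclusion step. In the convention of Lemma~\ref{lem:sr-structure-map}, the module input sits at the \emph{negative} mixed corner of the disk in the copied link and the output is read off at the \emph{positive} corner. For the disks of Lemma~\ref{lem:s-r-sep-thin-disk-3}, Lemma~\ref{lem:separated+dil}(3), and Remark~\ref{rem:qp-thick-disks}, the negative mixed corner is the $p$ chord --- a perfectly legitimate representative of a class in $\mcp$ --- and it is the positive corner ($x$, $y$, or a long $q$ chord) that lies in $\mcq$; these disks are excluded because their \emph{output} is killed by the quotient projection, not because ``the input would be a $q$-chord, hence zero in $\mcp$.'' Indeed, those are exactly the disks that survive as the connecting morphism $\eta:\mcp[-1]\to\mcq$ of Lemma~\ref{lem:submodule-to-cone} (and later feed Lemma~\ref{lem:pi-eta-disks}), so they cannot be dismissed on the grounds that their $\mcp$-input vanishes. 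Relatedly, your claim that ``there are no thin disks with $z$ or $w$ a long $q$-chord paired with the other a $p$-chord beyond these possibilities'' contradicts the paper's explicit observation (after Lemma~\ref{lem:s-r-sep-thin-disk-3}) that there are many more thin disks with $z\in\widehat{\reeb}_q$ and $w\in\widehat{\reeb}_p$; they are simply irrelevant to $n_\mcp$, again because their output lands in $\mcq$. Neither slip changes the final formula, but as written the exclusion argument misidentifies which mixed corner carries the module input, which is exactly the bookkeeping point this lemma turns on; fixing it is a one-line change (``the output lies in $\mcq$ and vanishes in $\mcp=\widehat{\mcm}/\mcq$''), together with noting via Remark~\ref{rem:qp-thick-disks} that disks with $z\in\widehat{\reeb}_p$ and $w\in\widehat{\reeb}_q$ are ruled out by action considerations.
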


A comparison of the characterizations of the structure maps for $\mcm_\pm$, $\mcq$, and $\mcp$ leads to the following identification.

\begin{prop} \label{prop:separated-id}
    There are $\Aug_+(\leg)$-bimodule isomorphims $\mcm_+ \simeq \mcq$ and $\mcm_-^\vee[-1] \simeq \mcp$. 
\end{prop}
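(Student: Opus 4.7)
The proof will construct both isomorphisms as na\"ive bimodule morphisms defined by identifying generators, then verify each is indeed a bimodule morphism by a term-by-term comparison of the disk-counting formulas already established in Section~\ref{sec:aug-cat} and the current subsection.

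For the first isomorphism, I would define a na\"ive morphism $F: \mcm_+ \to \mcq$ by declaring $F^{0|0}$ to send each generator $a^+ \in \mcm_+(\aug^2,\aug^1)$ to the corresponding $q^+ \in \mcq(\aug^2,\aug^1)$ under the bijection of Proposition~\ref{prop:p-q-chords} (which matches $\overline{\reeb}^{12}$ with $\widehat{\reeb}_q^{12}$), together with the identity on the short generators $x^+, y^+$, which are themselves $q$-type chords arising from critical points. Because Proposition~\ref{prop:p-q-chords} is a grading-preserving bijection on these generators, $F^{0|0}$ is a linear isomorphism. To check that $F$ is a bimodule morphism, I would compare Lemma~\ref{lem:mcm+disks} against Lemma~\ref{lem:mcq-disks}: the thin disk contributions (strict unitality of $y^+$, the $n^{0|0}(\ul{y}^+)$ term, and the actions $n^{1|0}(\cdot,\ul{y}^+)$, $n^{0|1}(\ul{y}^+,\cdot)$) appear identically on both sides, while the thick contributions on both sides are indexed by the \emph{same} doubly enriched disks $(u,\mbw_1,\mbw_2) \in \Delta_\leg^{s,r}(\check{\ul z}, \check{\mba}\check{\ul w}\check{\mbb})$ with identical algebraic lift condition $\hat\alpha(u,\mbw_1,\mbw_2) = a_s\cdots a_1 \ul{w} b_1\cdots b_r$ and identical weighting by $\hat\aug^1(\mba)\hat\aug^2(\mbb)$. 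Since $F^k = 0$ for $k > 0$, the bimodule relations reduce to the equality of these two disk counts.

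For the second isomorphism, I would check first that the gradings match: by the shift convention after Proposition~\ref{prop:p-q-chords} we have $|p_k| = -|a_k^{21}|-1 = -|a_k|-1$, while the generator $a^\vee_k \in \mcm_-^\vee[-1]$ has grading $-|a_k^-| - 1 = -|a_k| - 1$. Define $G: \mcm_-^\vee[-1] \to \mcp$ na\"ively by $G^{0|0}(a^\vee) = p^-$, using the bijection between $\overline{\reeb}^{12}\cup\overline{\reeb}^{21}$ and $\widehat{\reeb}^{12}$ (more precisely, the $p$-branch sends chords in $\overline{\reeb}^{21}$, which generate $\mcm_-$ and hence dualize to generators of $\mcm_-^\vee$, to $\widehat{\reeb}_p$, which generates $\mcp$). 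Then compare Lemma~\ref{lem:mcm-vee-disks} with Lemma~\ref{lem:mcp-disks}: both statements produce the identical thin-disk terms from strict unitality and the identical thick-disk terms indexed by doubly enriched disks with the cyclically rewritten algebraic lift condition $\hat\alpha^\vee(u,\mbw_1,\mbw_2) = a_1\cdots a_s \ul{z} b_r \cdots b_1$. This verifies that $G$ is a bimodule morphism, and since $G^{0|0}$ is a linear isomorphism on each component, $G$ is an isomorphism.

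The one point that requires mild vigilance — and which I expect to be the most subtle step — is ensuring that nothing from the hybrid thick disks with positive corner at a $q$-chord and negative corner at a $p$-chord (described in Remark~\ref{rem:qp-thick-disks}) leaks into the structure maps of either $\mcq$ or $\mcp$ in a way that would spoil the na\"ive identifications. These hybrid disks project to disks in $\leg$ with \emph{two} positive corners, which cannot contribute to $n_{\mcm_+}$ or to $n_{\mcm_-^\vee}$ (both of which count disks with a single positive corner), and the length filtration defining $\mcq \subset \widehat{\mcm}$ ensures that such hybrid disks do not contribute to $n_\mcq$ either — a $q$-output with a $p$-input would require length to \emph{increase}, contradicting Lemma~\ref{lem:stokes} applied to the separated $2$-copy. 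Once this action argument is in place, the proof reduces to the direct disk-by-disk matching sketched above.
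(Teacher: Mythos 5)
Your proposal is correct and takes essentially the same route as the paper: both define the na\"ive morphisms $F^{0|0}(a^+)=q^+$ (with $x^+,y^+$ fixed) and $G^{0|0}(a^\vee)=p^-$ and verify them by matching the disk characterizations in Lemmas~\ref{lem:mcm+disks}/\ref{lem:mcq-disks} and Lemmas~\ref{lem:mcm-vee-disks}/\ref{lem:mcp-disks}. Your extra remarks on gradings and on the hybrid $q$-to-$p$ disks are sound but already implicit in those lemmas and in the length-filtration construction of $\mcq$.
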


\begin{proof}
    Define a na\"ive morphism $F: \mcm_+ \to \mcq$ by $F^{0|0}(\ul{a}_k^+) = q_k^+$, $F^{0|0}(x^+) = x^+$, $F^{0|0}(y^+) = y^+$. We see from Lemmas~\ref{lem:mcm+disks} and \ref{lem:mcq-disks} that the structure maps of $\mcm_+$ and $\mcq$ agree under this identification, yielding $\mcm_+ \simeq \mcq$.

    Similarly, the na\"ive morphism $G: \mcm_-^\vee \to \mcp$ defined on the generators by $G^{0|0}(a^\vee) = p^-$, combined with Lemmas~\ref{lem:mcm-vee-disks} and \ref{lem:mcp-disks}, shows that $\mcm_-^\vee[-1] \simeq \mcp$.
\end{proof}

Finally, an application of Lemma~\ref{lem:submodule-to-cone} to $(\widehat{\mcm},\mcq)$ yields a bimodule morphism $\eta:\mcp[-1] \to \mcq$ with $\widehat{\mcm} \simeq \cone(\eta)$. The map $\eta$ extends the linear duality map in \cite{duality} to the $A_\infty$ bimodule setting.

\begin{prop}\label{prop:eta-quasi-iso}
    The map $\eta:\mcp[-1] \to \mcq$ is a quasi-isomorphism. Thus, by a slight abuse of notation, there exists a bimodule quasi-isomorphism $\eta: \mcm_-^\vee[-2] \to \mcm_+$.
\end{prop}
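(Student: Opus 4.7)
The plan is to reduce the statement to a chain-level acyclicity claim that is (essentially) the original linearized duality extended to the bilinearized setting. First, by construction $\mcq$ is a submodule of $\widehat{\mcm}$ with quotient $\mcp$, and Lemma~\ref{lem:submodule-to-cone} applied to this pair produces precisely the bimodule morphism $\eta: \mcp[-1] \to \mcq$ together with a quasi-isomorphism $\widehat{\mcm} \simeq \cone(\eta)$. Consequently, showing that $\eta$ is a quasi-isomorphism is equivalent to showing that $\widehat{\mcm}$ is acyclic, i.e.\ that the chain complex $(\widehat{\mcm}(\aug^2, \aug^1), n^{0|0}_{\widehat{\mcm}})$ has vanishing cohomology for every pair of augmentations $(\aug^1, \aug^2)$.

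Next, I would interpret this chain complex geometrically. By Lemma~\ref{lem:sr-structure-map} with $r = s = 0$, the differential $n^{0|0}_{\widehat{\mcm}}$ counts disks in the $(1,1)$-copy of $\stcleg$ with a single positive and a single negative mixed corner, with any intermediate pure negative corners augmented via the diagonal extensions $\hat{\aug}^1$ and $\hat{\aug}^2$. Under the decomposition in Equation~\eqref{eq:sep-bimod}, this is the linear dual of the bilinearized mixed contact homology complex of $\stcleg$ with respect to the pair $(\aug^1,\aug^2)$, with the $q$-chord part contributing the $\Hom_+$ factor and the $p$-chord part contributing the shifted $\Hom_-^\vee$ factor.

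The third step is to invoke the known acyclicity of this complex. This is the content, after dualization, of the bilinearized duality theorem of Bourgeois--Chantraine \cite{bc:bilinear}, which extends the original linearized duality of \cite{duality, high-d-duality} and is reformulated in the language of morphism spaces of $\Aug_\pm(\leg)$ in \cite[\S5]{nrssz:aug-sheaf}. In the original proof in \cite{duality}, acyclicity at the chain level is established by exhibiting an explicit pairing between $p$-chord and $q$-chord contributions of the separated $2$-copy's mixed LCH, and the bilinearized extension simply carries this through with two augmentations. Given the acyclicity of $\widehat{\mcm}$, the first assertion of the proposition follows, and the second assertion (the bimodule quasi-isomorphism $\eta: \mcm_-^\vee[-2] \to \mcm_+$) is then immediate from the isomorphisms $\mcp \simeq \mcm_-^\vee[-1]$ and $\mcq \simeq \mcm_+$ supplied by Proposition~\ref{prop:separated-id}.

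The main obstacle is the bookkeeping required to match $n^{0|0}_{\widehat{\mcm}}$ precisely with the bilinearized differential of the cited references, since our setup uses a specific simply perturbed $(1,1)$-copy with a prescribed placement of Morse critical points and a careful convention distinguishing $p$-chords from $q$-chords. This verification is however a direct translation using the disk identification lemmas (Lemmas~\ref{lem:s-r-sep-thin-disk-1}--\ref{lem:separated+dil}) of Section~\ref{ssec:sep-2-copy}, and does not require any analytic input beyond what is already codified in \cite{nrssz:aug-sheaf}.
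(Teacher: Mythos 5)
Your reduction is exactly the paper's: $\widehat{\mcm}\simeq\cone(\eta)$ via Lemma~\ref{lem:submodule-to-cone}, so $\eta$ is a quasi-isomorphism if and only if each complex $(\widehat{\mcm}(\aug^2,\aug^1),n^{0|0}_{\widehat{\mcm}})$ is acyclic, and the bimodule statement then follows from Proposition~\ref{prop:separated-id}. Where you diverge is in how acyclicity is established. The paper does it in one self-contained sentence: there is a Legendrian isotopy of $\stcleg$ translating $\leg_1$ horizontally (an $x$-translation is a contactomorphism) until the Lagrangian projections of the two components are disjoint, so the resulting link has no mixed chords at all; invariance of bilinearized LCH for the mixed complex then forces the homology to vanish. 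You instead outsource acyclicity to the literature (\cite{bc:bilinear}, \cite[\S5]{nrssz:aug-sheaf}). That can be made to work, since those references contain (bi)linearized duality for the separated $2$-copy in conventions this paper deliberately follows, but it trades a one-line geometric argument for the convention-matching you acknowledge, and it makes your proof depend on the cited statements being available in chain-level form.

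Two cautions on that step. First, your description of the mechanism in \cite{duality} is not accurate: the original acyclicity proof is not an explicit algebraic pairing between $p$- and $q$-chord contributions, it is precisely the displacement-plus-invariance argument above (the pairing-type statement is the content of the duality map itself, not of the acyclicity). Second, be careful what you cite: the bilinearized duality \emph{long exact sequence} alone only gives abstract isomorphisms of (co)homology groups, which does not by itself show that \emph{this particular} cone $\widehat{\mcm}(\aug^2,\aug^1)$ is acyclic; you need the chain-level statement that the mixed-chord complex of the far-separated $2$-copy is acyclic (which is what the references actually prove en route, again by displacement). If you phrase the key step as that chain-level acyclicity — or simply run the displacement isotopy directly, as the paper does — your argument is complete.
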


\begin{proof}
    There is a Legendrian isotopy of $\stcleg$ that shifts $\leg_1$ horizontally, e.g.\ in the $x$ direction, so that the resulting link has no mixed chords. Thus, by the invariance of (bi-)linearized Legendrian contact homology, we see that $\widehat{\mcm}$ is acyclic as a chain-level mapping cone and hence that $\eta$ is a quasi-isomorphism.
\end{proof}

\begin{ex} \label{ex:running-eta-map} Referring back to Example~\ref{ex:running-aug-cat}, we describe the thick disk contribution to the duality morphism $\eta:\mcm_-^\vee[-1] \to \mcm_+$. By Lemma~\ref{lem:separated+dil} and Remark~\ref{rem:qp-thick-disks}, such disks correspond to disks with either two positive punctures or with one positive puncture and a base point on the boundary in Figure~\ref{fig:fig-eight}. Different inputs of augmentations produce different maps in $\eta$. For instance, the thick disks yield the following terms to the collection of maps on $\Hom_+(\aug_1,\ldots,\aug_1) \otimes \mcm_-^\vee(\aug_1,\aug_1) \otimes \Hom_+(\aug_1,\ldots,\aug_1)$:
\begin{align*}
\eta^{0|0}(\ul{a_1}^\vee) &= a_2^+ + a_3^+ & \eta^{1|0}(a_4^+,\ul{a_2}^\vee) &= a_6^+ & \eta^{2|0}(a_5^+,a_4^+,\ul{a_2}^\vee) &= a_6^+ \\
\eta^{0|0}(\ul{a_2}^\vee) &= a_1^+ + a_6^+ & \eta^{1|0}(a_4^+,\ul{a_7}^\vee) &= a_3^+ & \eta^{2|0}(a_5^+,a_4^+,\ul{a_7}^\vee) &= a_3^+ \\
\eta^{0|0}(\ul{a_3}^\vee) &= a_1^+ + a_7^+ & \eta^{0|1}(\ul{a_6}^\vee,a_4^+) &= a_2^+ & \eta^{0|2}(\ul{a_6}^\vee,a_4^+,a_5^+) &= a_2^+ \\
\eta^{0|0}(\ul{a_4}^\vee) &= a_5^+ & \eta^{0|1}(\ul{a_3}^\vee,a_4^+) &= a_7^+ & \eta^{0|2}(\ul{a_3}^\vee,a_4^+,a_5^+) &= a_7^+ \\
\eta^{0|0}(\ul{a_5}^\vee) &= a_4^+ && && \\
 \eta^{0|0}(\ul{a_6}^\vee) &= a_2^+ + y^+ && && \\
 \eta^{0|0}(\ul{a_7}^\vee) &= a_3^+ && && 
\end{align*}
A direct computation shows that the cohomology ring $H^*\mcm_-^\vee(\aug_1,\aug_1)$ is generated by $\{[a_1^\vee], [a_2^\vee], [a_6^\vee+a_7^\vee]\}$ and $H^*\mcm_+(\aug_1,\aug_1)$ is generated by $\{[a_2^+ + a_3^+],[a_1^+], [y^+]\}$. Thus, $\eta$ is, indeed, a quasi-isomorphism on $\mcm_-^\vee(\aug_1,\aug_1)$. 
\end{ex}

In the proof of the main theorem, we will need to identify disks in the definition of $\pi_\mcn \eta: \mcm_-^\vee[-2] \to \mcn$ in a manner similar to that in Lemma~\ref{lem:rho-vee}.  

\begin{lem} \label{lem:pi-eta-disks}
In the definition of $\pi_\mcn \eta: \mcm_-^\vee[-2] \to \mcn$ between simply perturbed bimodules, thin disks yield the following terms to $\pi_\mcn\eta$:
\begin{equation} \label{eq:pi-eta-thin}
\begin{aligned}
    (\pi_\mcn \eta)^{0|0}(\ul{a}^\vee) &= (\aug_1^1(a) + \aug_1^2(a))x^+ & (\pi_\mcn \eta)^{1|1}(x^+,\ul{a}^\vee,a^+) &= x^+\\
        (\pi_\mcn \eta)^{1|0}(a^+,\ul{a}^\vee) &= x^+ & (\pi_\mcn \eta)^{2|0}(x^+,a^+,\ul{a}^\vee) &= x^+ \\ 
        (\pi_\mcn \eta)^{1|0}(x^+,\ul{a}^\vee) &= (\aug_1^1(a) + \aug_1^2(a))x^+ & \\
         (\pi_\mcn \eta)^{0|1}(\ul{a}^\vee,a^+) &= x^+ &
\end{aligned}
\end{equation}

Doubly enriched disks with mixed enrichment at a base point yield additional terms
\begin{equation} \label{eq:pi-eta-thick}
    (\pi_\mcn\eta)^{s|r}(\vec{a},\ul{z}^\vee,\cev{b}) \\
    = \sum_{\substack{(u,\mbw_1,\mbw_2) \in \Delta_\leg^{s,r}(\check{\ul{z}}, \check{\mba} \ul{t}^{\pm 1} \check{\mbb})\\ \hat{\alpha}^\vee(u, \mbw_1,\mbw_2)  =  a_1 \cdots a_s \ul{w}b_r\cdots b_1 }}  \hat{\aug}^1(\mba)\hat{\aug}^2(\mbb) y^+,
\end{equation}
where $m_0 \leq 1$ and $n_0 = 0$ if the mixed enrichment is at $t$, and $m_0 = 0$ if the mixed enrichment is at $t^{-1}$. 
\end{lem}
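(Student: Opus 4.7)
The plan is to unpack $\pi_\mcn \eta$ through the mapping cone construction of $\eta$ from Lemma~\ref{lem:submodule-to-cone}, and then enumerate the contributing disks using the disk identification lemmas of Section~\ref{ssec:sep-2-copy}. By Lemma~\ref{lem:submodule-to-cone} applied to the submodule $\mcq \subset \widehat{\mcm}$, we have $\eta = (\mathrm{id} - \sigma\pi)\circ n_{\widehat{\mcm}}\circ \sigma$, where $\sigma: \mcp \to \widehat{\mcm}$ is the na\"ive vector-space section sending each generator $p^- \in \mcp$ to the corresponding $p^-$ in $\widehat{\mcm}$. Under the identifications of Proposition~\ref{prop:separated-id}, $\sigma$ lands entirely in the $\Hom_-^\vee[-1]$-summand (the $p$-chord part) of $\widehat{\mcm}$, and $\pi_\mcn: \mcm_+ \to \mcn$ kills the $a^+ \leftrightarrow q^+$ part of the $\Hom_+$-summand, so $\pi_\mcn \circ \sigma\pi = 0$. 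This collapses the formula to $\pi_\mcn \eta = \pi_\mcn \circ n_{\widehat{\mcm}}\circ \sigma$. By Lemma~\ref{lem:sr-structure-map} applied to the separated $2$-copy, the disks relevant to $\pi_\mcn \eta$ are precisely those in the $(s+1,r+1)$-copy of $\stcleg$ whose negative mixed corner lies in $\widehat{\reeb}_p$ (the input from $\mcm_-^\vee$) and whose positive corner lies in $\widehat{\reeb}_x \cup \widehat{\reeb}_y$ (so the output survives $\pi_\mcn$).

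For disks with positive corner in $\widehat{\reeb}_x$, Lemma~\ref{lem:s-r-sep-thin-disk-3} shows that all such disks are thin and enumerates the eight half-flowline forms in~\eqref{eq:sep-thin-1}--\eqref{eq:sep-thin-4}. I would translate each form via Proposition~\ref{prop:separated-id} (sending $p \mapsto a^\vee$, $x \mapsto x^+$, and pure $a^{ii|\cdot\cdot} \mapsto a^+$) and account for the cyclic reading convention used by $\mcm_-^\vee$ from Lemma~\ref{lem:mcm-vee-disks}. The result is that the eight disks collapse onto the six terms in~\eqref{eq:pi-eta-thin}: the two disks in~\eqref{eq:sep-thin-1} combine into $(\pi_\mcn\eta)^{0|0}(\ul{a}^\vee)$ with coefficient $\aug_1^1(a)+\aug_1^2(a)$; those in~\eqref{eq:sep-thin-2} give the single-input terms $(\pi_\mcn\eta)^{0|1}(\ul{a}^\vee, a^+)$ and $(\pi_\mcn\eta)^{1|0}(a^+, \ul{a}^\vee)$; those in~\eqref{eq:sep-thin-3} combine into $(\pi_\mcn\eta)^{1|0}(x^+,\ul{a}^\vee)$ with coefficient $\aug_1^1(a)+\aug_1^2(a)$; and those in~\eqref{eq:sep-thin-4} give the mixed terms $(\pi_\mcn\eta)^{2|0}(x^+, a^+, \ul{a}^\vee)$ and $(\pi_\mcn\eta)^{1|1}(x^+, \ul{a}^\vee, a^+)$.

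For disks with positive corner in $\widehat{\reeb}_y$ and negative mixed corner in $\widehat{\reeb}_p$, I would apply Lemma~\ref{lem:separated+dil}(3), which supplies a bijection between such disks and doubly enriched disks in $\Delta_\leg^{s,r}(\ul{\check{w}},\check{\mbb}\,\ul{t}^{\pm 1}\,\check{\mba})$ whose mixed enrichment sits at a base point and whose pure enrichments at index $0$ obey precisely the constraints stated in the lemma (namely $m_0 \leq 1$ with $n_0=0$ if the mixed enrichment is at $t$, and $m_0 = 0$ if at $t^{-1}$). Reading the non-enriched pure corners off as augmentation weights $\hat{\aug}^1(\mba)\hat{\aug}^2(\mbb)$ and using the algebraic lift $\hat{\alpha}^\vee$ to encode the enrichment at index $0$ as a string of $x^+$ generators at the ends of the argument list then yields~\eqref{eq:pi-eta-thick}.

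The main obstacle is the combinatorial bookkeeping in the thin-disk case: verifying that the eight disks of Lemma~\ref{lem:s-r-sep-thin-disk-3} really are exhaustive for the input/output combination $(z \in \widehat{\reeb}_x,\, w \in \widehat{\reeb}_p)$, that each one is placed into the correct $(r|s)$-slot under the cyclic convention of $\mcm_-^\vee$, and that no additional terms are introduced by the $(\mathrm{id}-\sigma\pi)$ correction in $\eta$. The last point reduces to the identity $\pi_\mcn\sigma = 0$ noted above; the first two are mechanical once the conventions are fixed.
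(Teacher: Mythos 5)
Your proposal is correct and follows essentially the same route as the paper: after unwinding $\eta$ through the cone construction of Lemma~\ref{lem:submodule-to-cone} (so that $\pi_\mcn\eta$ counts exactly the disks in $\stcleg^{s+1,r+1}$ with negative mixed corner at a $p$ chord and positive corner at an $x$ or $y$ chord), the thin-disk terms are read off from Lemma~\ref{lem:s-r-sep-thin-disk-3} and the $y^+$-output terms from Lemma~\ref{lem:separated+dil}(3), with the same cyclic re-indexing of the enriched disks that the paper notes. Your extra explicit justification that the $(\mathrm{id}-\sigma\pi)$ correction and the $q^+$ outputs drop out under $\pi_\mcn$ is a harmless elaboration of what the paper leaves implicit.
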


Once again, see the doubly enriched disks on the right side of Figure~\ref{fig:n+enrich}, though care should be taken with $m_0$ and $n_0$ as in the statement of the lemma.

\begin{proof}
The formulae for the thin disks follows from Lemma~\ref{lem:s-r-sep-thin-disk-3}, while the formula for the doubly enriched disks follows from Lemma~\ref{lem:separated+dil}(3). Once again, we apply Lemma~\ref{lem:separated+dil}(3) to enriched disks in $\Delta_\leg^{s,r}(\ul{z},\check{\mba}\ul{t}^{\pm 1}\check{\mbb})$ in place of the enriched disks in $\Delta^{r,s}_\leg(\ul{w}, \check{\mbb} \ul{t}^{\pm 1} \check{\mba})$ that are referred to in the statement of the lemma.
\end{proof}

\begin{ex}\label{ex:running-homotopy}
    From Example~\ref{ex:running-eta-map}, we see that when all augmentations are $\aug_1$, the only non-vanishing term in $\pi_\mcn \eta$ arising from a thick disk is
    \begin{equation*}
        (\pi_\mcn\eta)^{0|0}(a_6^\vee) = y^+.
    \end{equation*}
\end{ex}

\section{Proof of the Main Theorem}
\label{sec:main-pf}

With the algebraic tools of Section~\ref{sec:wrcy} and the geometric structures of Sections~\ref{sec:copies} and \ref{sec:aug-cat} in hand, we are ready to prove Theorem~\ref{thm:main}. The first step is to prove that when the augmentation category of a Legendrian knot is simply perturbed, there is a very weak relative Calabi-Yau structure of dimension $-2$ on the morphism $\rho^\vee: \mcm_-^\vee[-1] \to \mcn^\vee$, which was defined in Proposition~\ref{prop:aug+cone}. This is accomplished in Sections~\ref{ssec:poincare-dual} and \ref{ssec:lch-duality-morphism}.

After noting that $(\Aug_+(\leg) \stackrel{\pi}{\to} \mcc(\leg), \mcn[-1] \stackrel{\rho}{\to} \mcm_-)$ is a conical pair, we then apply Proposition \ref{prop:alg-gadget} to obtain a weak right relative Calabi-Yau structure of dimension $2$ on $\pi: \Aug_+(\leg) \to \mcc(\leg)$, which completes the proof of Theorem~\ref{thm:main}.  This is accomplished in Section~\ref{ssec:final-proof}.

\subsection{The Poincaré Duality Morphism}
\label{ssec:poincare-dual}

We start by describing a weak Calabi-Yau structure of dimension $1$ on $\mcn$.  That is, we specify  a bimodule quasi-isomorphism $\theta: \mcn^\vee[-1] \to \mcn$. Given that the generators of the underlying vector spaces of $\mcn$ are $x^+,y^+$, we denote the dual basis for the underlying vector spaces of $\mcn^\vee$ by $x^\vee,y^\vee$. We then have the following proposition.

\begin{prop} 
\label{prop:pd-bimodule}
The bimodule pre-morphism $\theta:\mcn^\vee[-1] \to \mcn$ defined by
\begin{equation} \label{eq:theta}
\begin{array}{rlrl}
    \theta^{0|0}(\ul{y}^\vee) &= x^+,& \theta^{0|1}(\ul{y}^\vee, x^+) &= x^+, \\
    \theta^{0|0}(\ul{x}^\vee) &= y^+,& \theta^{0|1}(\ul{x}^\vee,x^+) &= y^+, 
\end{array}
\end{equation}
with all other components vanishing, is an $A_\infty$ bimodule quasi-isomorphism.
\end{prop}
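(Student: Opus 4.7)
The plan is to verify the two conditions separately: first, that $\theta$ is a bimodule morphism, i.e.\ $\delta(\theta) = 0$, and second, that $\theta^{0|0}$ is a quasi-isomorphism on each morphism space. The quasi-isomorphism part is almost immediate. Since $\mcn \simeq \pi^*\mcc(\leg)_\Delta$ by Proposition~\ref{prop:aug-bimod+}, and the only nontrivial structure maps of $\mcc(\leg)$ arise from strict unitality of $y^+$ (in particular $m_\mcc^1 = 0$), the linear differentials $n_\mcn^{0|0}$ and $n_{\mcn^\vee}^{0|0}$ both vanish. The morphism spaces $\mcn(\aug^2,\aug^1)$ and $\mcn^\vee[-1](\aug^2,\aug^1)$ are therefore two-dimensional chain complexes with zero differential, and $\theta^{0|0}$ is a bijection between their generator sets $\{y^\vee,x^\vee\}$ and $\{x^+,y^+\}$, hence a quasi-isomorphism.

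For the bimodule morphism condition, I plan to verify $\delta(\theta)^{r|s} = 0$ for all $r,s \geq 0$ by case analysis, controlled by three structural facts. First, $\pi: \Aug_+(\leg) \to \mcc(\leg)$ is a na\"ive functor whose linear part kills every Reeb-chord generator $a^+$; hence $\mcn = \pi^*\mcc_\Delta$ and its dual $\mcn^\vee$ have structure maps that vanish on any $\Aug_+$-input containing a Reeb chord. Second, beyond the strict-unitality actions of $y^+$, the only nonzero structure maps of $\mcn$ are $n_\mcn^{1|0}(x^+,\ul{y}^+) = n_\mcn^{0|1}(\ul{y}^+,x^+) = x^+$, with dual relations $n_{\mcn^\vee}^{1|0}(x^+,\ul{x}^\vee) = n_{\mcn^\vee}^{0|1}(\ul{x}^\vee,x^+) = y^\vee$; in particular every $n^{r|s}$ with $r+s \geq 2$ vanishes. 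Third, $\theta^{r|s} = 0$ unless $(r,s) \in \{(0,0),(0,1)\}$.

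These restrictions collapse $\delta(\theta)^{r|s}$ to a short list of candidate terms for each $(r,s)$. The cases $(r,s) \in \{(0,0),(1,0),(0,1)\}$ reduce to pairwise cancellations such as $\theta^{0|0}(n_{\mcn^\vee}^{1|0}(x^+,\ul{x}^\vee)) + n_\mcn^{1|0}(x^+,\theta^{0|0}(\ul{x}^\vee)) = y^+ + y^+ = 0$, and analogous identities for the remaining generators. The mixed cases $(r,s) \in \{(0,2),(1,1),(2,0)\}$ are the only ones in which $\theta^{0|1}$ appears; after substituting $w^\vee \in \{x^\vee,y^\vee\}$ and $\Aug_+$-arguments in $\{x^+,y^+\}$, the three potentially nonzero contributions---those combining $n_\mcn$ or $n_{\mcn^\vee}$ with $\theta^{0|1}$, and $\theta^{0|1}(\ul{w}^\vee, m_+^2(\cdot,\cdot))$---cancel in pairs by strict unitality of $\Aug_+(\leg)$ (e.g.\ $m_+^2(y^+,x^+) = m_+^2(x^+,y^+) = x^+$). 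For $(r,s)$ with $r+s \geq 3$, the only nontrivial candidate is $\theta^{0|1}(\ul{w}^\vee, m_+^k(\vec{a}))$ with $k \geq 3$; the key observation is that $m_+^k(x^+,\ldots,x^+)$ cannot contain an $x^+$- or $y^+$-component, because every thin disk with positive $x$- or $y$-corner in Lemma~\ref{lem:thin-disk} requires a $y$-input, so this term vanishes as well.

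The main obstacle is the bookkeeping in the mixed cases $(r,s) \in \{(0,2),(1,1),(2,0)\}$, where $\theta^{0|1}$ must be combined with higher $A_\infty$ products of $\Aug_+(\leg)$; once the three structural restrictions above are in place, however, each such case reduces to a short finite check using strict unitality and the sparseness of $m_+^k$ applied to tuples built from $x^+$ and $y^+$.
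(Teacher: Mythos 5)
Your proposal is correct and takes essentially the same route as the paper, whose proof is exactly this direct verification using strict unitality, the structure maps in Equation~\eqref{eq:N-structure-maps}, and their adjoints. (One minor slip: in your sample cancellation both terms equal $x^+$, not $y^+$, since $n_{\mcn^\vee}^{1|0}(x^+,\ul{x}^\vee)=y^\vee$ and $\theta^{0|0}(y^\vee)=x^+$, while $n_\mcn^{1|0}(x^+,y^+)=x^+$; the cancellation itself is unaffected.)
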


The proof follows from a direct calculation using strict unitality, the formulae in Equation~\eqref{eq:N-structure-maps}, and their adjoints.

The motivation for this construction is a Morse-theoretic proof of Poincar\'e Duality for $S^1$.  Given a Morse function $f:S^1 \to \rr$, swap the maximum and minimum by trading $f$ for $-f$, and then compose with the continuation map for a homotopy of $-f$ back to $f$.  In the $A_\infty$ setting, the swap of $-f$ for $f$ extends na\"ively to a bimodule quasi-isomorphism from $\mcn^\vee[-1]$ to $\mcn$. The map $\theta$ above is then the composition of this quasi-isomorphism with an extension of the continuation map $\Psi: \mcn \to \mcn$.  See Remark~\ref{rmk:ctn-map} for further discussion.

\subsection{Very Weak Calabi-Yau Structures} 
\label{ssec:lch-duality-morphism} 

At this point, we have obtained morphisms that fit into the following diagram, where the vertical morphisms are quasi-isomorphisms by Propositions~\ref{prop:eta-quasi-iso} and \ref{prop:pd-bimodule}.

\begin{equation} \label{eq:very-weak-CY-2} \begin{tikzcd}
	\cdots \arrow[r] & \mcm_-^\vee[-2] \arrow[r,"\rho^\vee"] \arrow[d,"\eta"] & \mcn^\vee[-1] \arrow[r,"\pi_\mcn^\vee"] \arrow[d,"\theta"] & \mcm_+^\vee[-1] \arrow[d,"\eta'"] \arrow[r] & \cdots \\	
	\cdots \arrow[r] &\mcm_+ \arrow[r,"\pi_\mcn"] & \mcn \arrow[r,"\rho"] & \mcm_-[1] \arrow[r] & \cdots
\end{tikzcd}
\end{equation}

Once we show that the left square of the diagram commutes up to homotopy, we will be licensed to use Lemma \ref{lem:very-weak-shortcut} to complete the construction of the very weak Calabi-Yau structure on $\rho^\vee:\mcm_-^\vee[-1] \to \mcn^\vee$.

\begin{lem}
\label{lem:v-weak-cy-commute}
    $\theta \rho^\vee \sim \pi_{\mcn} \eta$.
\end{lem}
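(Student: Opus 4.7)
The plan is to construct an explicit $A_\infty$ homotopy $H:\mcm_-^\vee[-2] \to \mcn[-1]$ of degree $-1$ witnessing $\theta\rho^\vee + \pi_\mcn\eta = \delta(H)$, by comparing the two composites term-by-term using the disk-counting formulas of Section~\ref{sec:aug-cat}.

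First, we would expand $\theta\rho^\vee$ using the composition formula~\eqref{eqn:composition}.  Since Proposition~\ref{prop:pd-bimodule} gives $\theta$ only the non-vanishing components $\theta^{0|0}$ and $\theta^{0|1}$, this composite reduces to $(\theta\rho^\vee)^{r|s} = \theta^{0|0}\circ (\rho^\vee)^{r|s} + \theta^{0|1}((\rho^\vee)^{r|s-1}\otimes \bbid)$.  Combined with Lemma~\ref{lem:rho-vee}, this separates into $x^+$-valued contributions coming from the ordinary $2$-copy thin disks (via $\theta^{0|0}(y^\vee)=x^+$) and $y^+$-valued contributions coming from doubly enriched disks whose mixed enrichment lies at a base point (via $\theta^{0|0}(x^\vee)=y^+$).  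On the other side, Lemma~\ref{lem:pi-eta-disks} expresses $\pi_\mcn\eta$ as a sum of $x^+$-valued contributions from the separated $2$-copy half-flowlines of Lemma~\ref{lem:s-r-sep-thin-disk-3} together with $y^+$-valued contributions from the thick disks of Lemma~\ref{lem:separated+dil}(3).

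The $y^+$-valued parts of the two composites agree on the nose: the doubly enriched disks counted by $\pi_\mcn\eta$ via Lemma~\ref{lem:separated+dil}(3) are, after cyclically transposing the ordering conventions as in Lemmas~\ref{lem:mcm-vee-disks} and~\ref{lem:rho-vee}, precisely the doubly enriched disks enumerating the thick part of $\rho^\vee$, with matching augmentation weights.  The discrepancy is therefore entirely $x^+$-valued and arises from the extra separated half-flowlines, which carry intermediate $x^{ii}$ or $a^{ii}$ corners with no analogue in the ordinary $2$-copy.  Geometrically, each such half-flowline factors through a shorter ``partial'' configuration terminating at a $y$-type chord of the perturbation, which motivates defining $H^{r|s}(\vec{a},\ul{a}^\vee,\cev{b})$ as an augmentation-weighted count of those shorter configurations, valued in $y^+$.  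The structure maps for $\mcn$ recorded in Equation~\eqref{eq:N-structure-maps} then promote $y^+$ to $x^+$ through the $n_\mcn\circ H$ summand of $\delta(H)$, reproducing the missing $x^+$ terms in $\pi_\mcn\eta$.

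The hard part will be verifying that $\delta(H)$ matches the discrepancy on the nose: every term in $\delta(H)$ coming from $n_\mcn\circ H$, $H\circ n_{\mcm_-^\vee}$, and pre-composition of $H$ with $m_+$ must be matched against a specific half-flowline or doubly enriched disk, and the cross-terms produced by $\theta^{0|1}((\rho^\vee)^{r|s-1}\otimes \bbid)$ must be cancelled by corresponding contributions of $H$ that involve an auxiliary $x^+$ input on the right.  The asymmetric treatment of $t$ versus $t^{-1}$ enrichments flagged in Lemmas~\ref{lem:pi-eta-disks} and~\ref{lem:separated+dil}(3) also demands careful bookkeeping, and some terms will only close up after invoking the $A_\infty$ relations on $\mcm_-^\vee$ and $\Aug_+(\leg)$.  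All of this is combinatorial rather than geometric: once the correct definition of $H$ on generators is in place, the matching is organized by the disk identification lemmas of Section~\ref{sec:copies}, and the remaining higher-order error terms cancel automatically by the $A_\infty$ axioms.
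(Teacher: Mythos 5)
Your overall strategy---write down an explicit homotopy $H$ and verify Definition~\ref{defn:a-infty-homotopy} by matching disk counts through the identification lemmas of Section~\ref{sec:copies}---is the same as the paper's, but two structural claims in your outline are wrong in ways that would derail the verification. First, the $y^+$-valued parts of the two composites do \emph{not} agree on the nose. The thick disks feeding $\theta^{0|0}\circ(\rho^\vee)^{s|r}$ (Lemma~\ref{lem:rho-vee}) allow arbitrary pure enrichments $m_0,n_0$ at index $0$ when the mixed enrichment sits at $t^{-1}$, whereas $\pi_\mcn\eta$ (Lemma~\ref{lem:pi-eta-disks}) forces $m_0=0$ there, and conversely permits $m_0=1$ at $t$ where $\rho^\vee$ forces $m_0=n_0=0$; in addition, the cross-term $\theta^{0|1}\bigl((\rho^\vee)^{s|r-1}\otimes\bbid\bigr)$ of Equation~\eqref{eq:theta-rho-thick-2} is itself $y^+$-valued. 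So the discrepancy is not ``entirely $x^+$-valued'': the $t$ versus $t^{-1}$ asymmetry you flag at the end is precisely a $y^+$-valued mismatch, and in the paper it is resolved in Claims~\ref{clm:11-x-input} and~\ref{clm:x-x-input} by pairing the ends of telescoping sums with exactly these extra thick terms (and by cancelling \eqref{eq:theta-rho-thick-2} against \eqref{eq:theta-rho-thick-1}-terms with a pure enrichment at index $0$), not by contributions of $H$ with an auxiliary $x^+$ input. Second, your $H$ is left as an unspecified ``augmentation-weighted count of shorter configurations'' with components in all bidegrees; the homotopy that actually works is the three-term map of Equation~\eqref{eq:homotopy}, namely $H^{0|0}(\ul{a}^\vee)=(\aug_1^1(a)+\aug_1^2(a))\,y^+$ and $H^{1|0}(a^+,\ul{a}^\vee)=H^{0|1}(\ul{a}^\vee,a^+)=y^+$ with all other components zero, which counts no nontrivial disks at all (geometrically it is a continuation map for switching the copy perturbations from $f$ to $-f$, cf.\ Remark~\ref{rmk:ctn-map}).

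Relatedly, the cancellations do not ``close up automatically by the $A_\infty$ axioms.'' The engine of the verification is a telescoping-sum argument: for a fixed disk of $\leg$, summing $H^{0|0}\circ n_{\mcm_-^\vee}$ over all admissible positions of the mixed enrichment telescopes, and the boundary terms are killed either because augmentations are DGA maps, $\aug_1^i\circ\partial=0$ (Claim~\ref{clm:00-input}), or by matching against $H\circ m_+$ terms and against the residual thick contributions of $\pi_\mcn\eta$ or $\theta\rho^\vee$ isolated above (Claims~\ref{clm:01-input}--\ref{clm:x-x-input}). Without an explicit $H$, without the correct accounting of where the two composites actually differ, and without this telescoping mechanism---which relies on the augmentation property rather than the $A_\infty$ relations---the ``hard part'' you defer cannot be completed as described.
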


We begin the proof of the lemma by characterizing the morphism $\theta \rho^\vee$ in terms of thin disks and doubly enriched disks in $\leg$; such a characterization for $\pi_\mcn \eta$ already appeared in Lemma~\ref{lem:pi-eta-disks}. We will then write down a homotopy $H$ between the two morphisms, and finally verify that $H$ is, indeed, a homotopy.

First, we use Lemma~\ref{lem:rho-vee} and Proposition~\ref{prop:pd-bimodule} to write down  formulae for $\theta \rho^\vee:\mcm_-^\vee[-2] \to \mcn$. Fix augmentations $\aug^1_1, \ldots, \aug^1_{r+1}$ and $\aug^2_1, \ldots, \aug^2_{s+1}$.  A composition of the formula in Lemma~\ref{lem:rho-vee} with Equation~\eqref{eq:theta} shows that thin disks yield the following terms for $\theta\rho^\vee$:
\begin{equation} \label{eq:theta-rho-thin}
    \begin{aligned}
        (\theta\rho^\vee)^{0|0}(\ul{a}^\vee) &= (\aug_1^1(a) + \aug_1^2(a))x^+ & (\theta\rho^\vee)^{1|1}(a^+,\ul{a}^\vee,x^+) &= x^+ \\
        (\theta\rho^\vee)^{1|0}(a^+,\ul{a}^\vee) &= x^+ & (\theta\rho^\vee)^{0|2}(\ul{a}^\vee,a^+,x^+) &= x^+ \\ 
        (\theta\rho^\vee)^{0|1}(\ul{a}^\vee,a^+) &= x^+  \\
        (\theta\rho^\vee)^{0|1}(\ul{a}^\vee,x^+) &= 
        (\aug_1^1(a) + \aug_1^2(a))x^+ 
    \end{aligned}
\end{equation}
The composition of the formulae in Lemma~\ref{lem:rho-vee} and Equation~\eqref{eq:theta} shows that doubly enriched disks with mixed enrichment at a base point yield additional terms
\begin{equation} \label{eq:theta-rho-thick-1}
    \theta^{0|0}(\rho^\vee)^{s|r}(\vec{a},\ul{z}^\vee,\cev{b}) \\
    = \sum_{\substack{(u,\mbw_1,\mbw_2) \in \Delta_\leg^{s,r}(\check{\ul{z}}, \check{\mbb} \ul{t}^{\pm 1} \check{\mba})\\ \hat{\alpha}^\vee(u, \mbw_1,\mbw_2)  =  a_1\cdots a_s \ul{z} b_r \cdots b_1}}  \hat{\aug}^1(\mba)\hat{\aug}^2(\mbb) y^+.
\end{equation}
and
\begin{equation} \label{eq:theta-rho-thick-2}
    \theta^{0|1}((\rho^\vee)^{s|r-1}(\vec{a},\ul{z}^\vee,\cev{b}),x^+)
    = \sum_{\substack{(u,\mbw_1,\mbw_2) \in \Delta_\leg^{s,r-1}(\check{\ul{z}}, \check{\mbb} \ul{t}^{\pm 1} \check{\mba})\\ \hat{\alpha}^\vee(u, \mbw_1,\mbw_2)  =  a_1\cdots a_s \ul{z} b_r \cdots b_2}}  \hat{\aug}^1(\mba)\hat{\aug}^2(\mbb) y^+,
\end{equation}
where $\cev{b} = (b_r^+, \ldots, b_2^+)$ in this case. All other components of $\theta\rho^\vee$ vanish.

We claim that the pre-morphism $H:\mcm_-^\vee[-2] \to \mcn$ defined by its non-vanishing components
\begin{equation} \label{eq:homotopy}
\begin{aligned}
    H^{0|0}(\ul{a}^\vee) &= (\aug_1^1(a) + \aug_1^2(a))y^+, \\
    H^{1|0}(a^+, \ul{a}^\vee) &= y^+, \\  H^{0|1}(\ul{a}^\vee, a^+) &= y^+, 
\end{aligned}
\end{equation}
is an $A_\infty$ homotopy between  $\pi_\mcn \eta$ and $\theta\rho^\vee$. Since we have explicit formulae for all of the maps, the proof will be a direct verification using the definition of homotopy (cf.\ Definition~\ref{defn:a-infty-homotopy}). 

\begin{rem}\label{rmk:ctn-map}
The continuation morphism $\Psi: \mcn \to \mcn$ discussed after Proposition~\ref{prop:pd-bimodule} and the homotopy $H:\mcm_-^\vee[-2] \to \mcn$ used in the proof of Lemma~\ref{lem:v-weak-cy-commute} can both be interpreted geometrically. In our setup, the $(s,r)$-copy of the separated $2$-copy is obtained via a triple of Morse functions $(f,f,f)$, where the second entry is used to perturb the separated $2$-copy, while the first and third entries are used to perturb the $s$- and $r$-copy. We may also use $(-f,f,-f)$ for the perturbation; see Figure~\ref{fig:perturb-compare} for a comparison. There is a Legendrian isotopy from $\stcleg^{s,r}_{f,f,f}$ to $\stcleg^{s,r}_{-f,f,-f}$ induced by a sequence of Reidemeister III that moves the $y$ chords for the $s$- and $r$-copies around the knot. The corresponding components of the induced DGA map yield $\Psi$ and $H$.
\end{rem}

\begin{figure}
    \centering
    \includegraphics{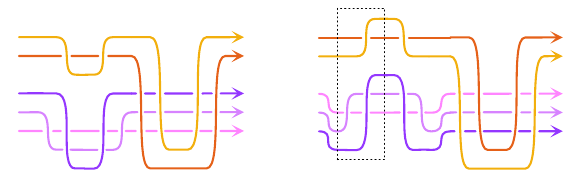}
    \caption{A comparison of the perturbation functions $(f,f,f)$ at left and $(-f,f,-f)$ at right for the separated $2$-copy.  By isotoping the boxed $y$ chords for the $s$- and $r$-copies in the $(-f,f,-f)$ perturbation leftwards around the knot, we recover the $(f,f,f)$ perturbation.}
    \label{fig:perturb-compare}
\end{figure}

Consider an input of the form $(\cev{b},\ul{a}^\vee, \vec{a})$. The verification that $H$ is a homotopy breaks down to what type of generators the input begins and ends with. In particular, we consider the following three cases:

\begin{description}
    \item[Case 1] Both $a_s$ and $b_r$ are in $\reeb_\leg$ (or are empty),
    \item[Case 2] One of $a_s$ and $b_r$ is in $\reeb_f$ and the other is in $\reeb_\leg$ (or is empty),
    \item[Case 3] Both $a_s$ and $b_r$ are in $\reeb_f$. 
\end{description}

\subsubsection{Case 1}

We begin by considering Case 1, detailing two special cases in Claims~\ref{clm:00-input} and \ref{clm:01-input}. The verification of the general case follows the same ideas as in the second claim. The proofs of these and subsequent claims are all structured in the same way: terms in the homotopy equation arising from thin disks cancel by direct computation using the explicit characterizations of $\theta \rho^\vee$, $\pi \eta$, and $H$, while one class of terms arising from thick disks form telescoping sums whose ``ends'' cancel with other terms that also arise from thick disks.  The differences lie in which thick disks are involved and in how the ``ends'' of the telescoping sums are canceled.

\begin{claim} \label{clm:00-input}
    Definition~\ref{defn:a-infty-homotopy} holds for a single input $a^\vee$.
\end{claim}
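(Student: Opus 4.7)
The plan is to verify the homotopy equation $(\theta\rho^\vee + \pi_\mcn\eta)^{0|0}(\ul{a}^\vee) = \delta(H)^{0|0}(\ul{a}^\vee)$ by showing both sides vanish in $\ff_2$ via separate explicit computations.

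For the LHS, expand $(\theta\rho^\vee)^{0|0}(\ul{a}^\vee) = \theta^{0|0}((\rho^\vee)^{0|0}(\ul{a}^\vee))$ using Lemma~\ref{lem:rho-vee} and Proposition~\ref{prop:pd-bimodule}, and compare with $(\pi_\mcn\eta)^{0|0}(\ul{a}^\vee)$ from Lemma~\ref{lem:pi-eta-disks}. Both maps contribute identical thin-disk terms $(\aug^1(a)+\aug^2(a))x^+$ and identical thick-disk terms enumerated by disks $u \in \Delta_\leg(a,\mba\,\ul{t}^{\pm1}\mbb)$ (the auxiliary constraints $m_0 \leq 1$ and $n_0 = 0$ in Lemma~\ref{lem:pi-eta-disks} are trivially met when $\mbw_1 = \mbw_2 = \emptyset$), so the LHS vanishes in $\ff_2$.

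For the RHS, with no Hom inputs, Definition~\ref{defn:bimod-morphism} reduces $\delta(H)^{0|0}(\ul{a}^\vee)$ to $n_\mcn^{0|0}(H^{0|0}(\ul{a}^\vee)) + H^{0|0}(n_{\mcm_-^\vee}^{0|0}(\ul{a}^\vee))$. The first summand vanishes since $H^{0|0}(\ul{a}^\vee)$ is a scalar multiple of the strict unit $y^+$ of the circle category. For the second, Lemma~\ref{lem:mcm-vee-disks} expresses $n_{\mcm_-^\vee}^{0|0}(\ul{a}^\vee)$ as a sum over disks $u \in \Delta_\leg(a,\mba\,\ul{w}\,\mbb)$ with $w \in \reeb$, and applying $H^{0|0}$ produces $\sum_u (\aug^1(w) + \aug^2(w))\hat{\aug}^1(\mba)\hat{\aug}^2(\mbb)\,y^+$.

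The decisive step is a telescoping reorganization. For each disk $u \in \Delta_\leg(a, c_1\cdots c_k)$, enumerate the contributions by the position $i$ of the mixed corner $c_i = w$; the Reeb-chord-only restriction can be dropped because basepoint positions $c_i = t^{\pm 1}$ contribute nothing (since $\aug^j(t^{\pm 1}) = 1$ forces $\aug^1(c_i) + \aug^2(c_i) = 0$), and the full sum
\[
\sum_{i=1}^k \bigl[\aug^1(c_1\cdots c_i)\aug^2(c_{i+1}\cdots c_k) + \aug^1(c_1\cdots c_{i-1})\aug^2(c_i\cdots c_k)\bigr]
\]
telescopes to $\aug^1(c_1\cdots c_k) + \aug^2(c_1\cdots c_k)$. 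Summing over all disks yields $(\aug^1(\df a) + \aug^2(\df a))\,y^+ = 0$ by the augmentation condition $\aug^j \circ \df = 0$. The main obstacle is spotting this telescoping identity: the factor $\aug^1(w) + \aug^2(w)$ sitting at the mixed corner is exactly what is needed to extend the Reeb-chord sum to a full telescope over all corners, after which the augmentation property closes the argument.
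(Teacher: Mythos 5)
Your proof is correct and follows essentially the same route as the paper's: thin-disk and thick-disk contributions to $\pi_\mcn\eta$ and $\theta\rho^\vee$ cancel term-by-term at $r=s=0$, the $n_\mcn^{0|0}H^{0|0}$ term vanishes, and $H^{0|0}n_{\mcm_-^\vee}^{0|0}(\ul{a}^\vee)$ is killed by the same telescoping sum over corner positions together with $\aug^j\circ\df=0$. Your explicit remark that basepoint corners may be included in the telescope because $\aug^1(t^{\pm1})+\aug^2(t^{\pm1})=0$ is a nice clarification of a point the paper's formula leaves implicit, but it is the same argument.
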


\begin{proof}
In this case, the homotopy equation from Definition~\ref{defn:a-infty-homotopy} reads as follows:

\begin{equation} \label{eq:clm00}
    (\pi_\mcn \eta)^{0|0}(\ul{a}^\vee) + (\theta\rho^\vee)^{0|0}(\ul{a}^\vee) = H^{0|0}n_{\mcm^\vee_-}^{0|0}(\ul{a}^\vee) + n_\mcn^{0|0}H^{0|0}(\ul{a}^\vee).
\end{equation}

The formulae \eqref{eq:pi-eta-thin} and \eqref{eq:theta-rho-thin} show that terms arising from thin disks that contribute to $(\pi_\mcn \eta)^{0|0}(\ul{a}^\vee)$ and $(\theta\rho^\vee)^{0|0}(\ul{a}^\vee)$ cancel.  On the other side, we know that $n_{\mcn}^{0|0} = 0$ by the discussion after Proposition~\ref{prop:aug-bimod+}, and that no thin disks contribute to $n_{\mcm^\vee_-}^{0|0}$ by Lemma~\ref{lem:mcm-vee-disks}.

Turning to thick disks, we first note that contributions to $(\pi_\mcn \eta)^{0|0}(\ul{a}^\vee)$ and $(\theta\rho^\vee)^{0|0}(\ul{a}^\vee)$ cancel since the formulae in Equations~\eqref{eq:pi-eta-thick} and \eqref{eq:theta-rho-thick-1} match precisely when $r=s=0$.  In particular, thick disk contributions to these terms arise from the same doubly enriched disks.

For the final term in Equation~\eqref{eq:clm00}, we need only verify that 
\begin{equation} \label{eq:claim-a}
    H^{0|0}n_{\mcm^\vee_-}^{0|0}(\ul{a}^\vee) = 0.
\end{equation} 

Lemma~\ref{lem:mcm-vee-disks} shows that $n_{\mcm_-^\vee}^{0|0}(\ul{a}^\vee)$ counts doubly enriched disks with no enrichments other than the mixed enrichment at a Reeb chord. Rewriting the contributions to $n^{0|0}_{\mcm_-}(\ul{a}^\vee)$ in terms of the collection of all disks in $\leg$ with a positive corner at $a$ (which we denote $\Delta(a, \mba)$ for notational compactness) yields the following formula, illustrated in Figure~\ref{fig:multiple-enrichments}:

\begin{figure}
\labellist
\small\hair 2pt
 \pinlabel {$a^\vee$} [r] at 7 42
 \pinlabel {$a_1^\vee$} [t] at 43 4
 \pinlabel {$\aug^2_1(a_2)$} [l] at 78 42
 \pinlabel {$\aug^2_1(a_3)$} [b] at 43 78
 \pinlabel {$a^\vee$} [r] at 126 42
 \pinlabel {$\aug^1_1(a_1)$} [t] at 161 4
 \pinlabel {$a_2^\vee$} [l] at 199 42
 \pinlabel {$\aug^2_1(a_3)$} [b] at 161 78
 \pinlabel {$a^\vee$} [r] at 245 42
 \pinlabel {$\aug^1_1(a_1)$} [t] at 279 4
 \pinlabel {$\aug^1_1(a_2)$} [l] at 316 42
 \pinlabel {$a_3^\vee$} [b] at 279 78
\endlabellist
    \centering
    \includegraphics{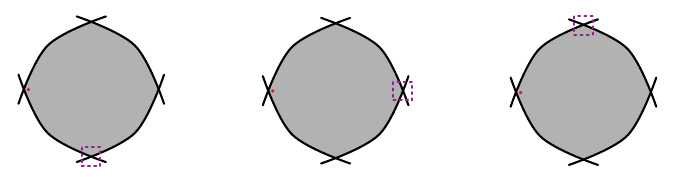}
    \caption{The disk in $\leg$ pictured has multiple possible mixed enrichments, each of which could contribute to $n^{0|0}_{\mcm_-}(a^\vee)$, depending on the value the augmentations $\aug_1^1$ and $\aug_1^2$ take on the non-enriched negative corners.  The corners are labeled with these augmentation values and the potential output that appear in Equation~\eqref{eq:n-from-one-disk}.}
    \label{fig:multiple-enrichments}
\end{figure}

\begin{equation} \label{eq:n-from-one-disk}
    n_{\mcm^\vee_-}^{0|0}(\ul{a}^\vee) = \sum_{\Delta(a,\mba)}\sum_{k = 1}^n \aug_1^2(a_1)\cdots\aug_1^2(a_{k-1})\aug_1^1(a_{k+1})\cdots \aug_1^1(a_n) a_k^\vee.
\end{equation}
It follows that
\begin{multline*}
    H^{0|0}n_{\mcm^\vee_-}^{0|0}(\ul{a}^\vee) = \sum_{\Delta(a,\mba)}\sum_{k = 1}^n (\aug_1^2(a_1)\cdots\aug_1^2(a_{k-1})\aug_1^1(a_k)\aug_1^1(a_{k+1})\cdots \aug_1^1(a_n) \\
    + \aug_1^2(a_1)\cdots\aug_1^2(a_{k-1})\aug_1^2(a_k)\aug_1^1(a_{k+1})\cdots \aug_1^1(a_n))y^+.
\end{multline*}
This is a telescoping sum, and hence we obtain
\begin{align*}
    H^{0|0}n_{\mcm^\vee_-}^{0|0}(\ul{a}^\vee) &= \sum_{\Delta(a,\mba)}(\aug_1^2(a_1)\cdots \aug_1^2(a_n) + \aug_1^1(a_1)\cdots \aug_1^1(a_n))y^+ \\
    &= (\aug_1^2 \circ \partial(a) + \aug_1^1 \circ \partial(a))y^+ \\
    &= 0,
\end{align*}
where $\partial$ is the differential of the Chekanov-Eliashberg DGA of $\leg$, and hence has the property that $\aug_1^2 \circ \partial = \aug_1^1 \circ \partial = 0$.  In all, we have shown that Equation~\eqref{eq:claim-a} holds, and the claim follows.
\end{proof}

\begin{claim} \label{clm:01-input}
    Definition~\ref{defn:a-infty-homotopy} holds for inputs of the form $(\ul{a}^\vee,a_j^+)$, where $a_j \in \reeb_\leg$. 
\end{claim}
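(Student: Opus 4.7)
The verification follows the same template as Claim~\ref{clm:00-input}, but with the additional input $a_j^+$ requiring more intricate bookkeeping. We first expand the homotopy equation $(\delta H)^{0|1}(\ul{a}^\vee, a_j^+) = (\pi_\mcn\eta + \theta\rho^\vee)^{0|1}(\ul{a}^\vee, a_j^+)$ using the definition of $\delta$ from Section~\ref{sssec:a-infty-bimod-cat-morphism}. By Equation~\eqref{eq:homotopy} and the sparse structure of $n_\mcn$ outlined after Proposition~\ref{prop:aug-bimod+}, many contributions to $\delta H$ vanish on account of either strict unitality of $y^+$ or the fact that Reeb-chord inputs $a_j^+$ are killed by the projection through which $\Aug_+(\leg)$ acts on $\mcn$. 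The surviving contributions on the LHS are $H^{0|0}(n_{\mcm_-^\vee}^{0|1}(\ul{a}^\vee, a_j^+))$, $H^{0|1}(n_{\mcm_-^\vee}^{0|0}(\ul{a}^\vee), a_j^+)$, and $H^{0|1}(\ul{a}^\vee, m_+^1(a_j^+))$.

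Next, we split both sides into thin-disk and thick-disk contributions. The thin-disk parts of $(\pi_\mcn\eta)^{0|1}$ and $(\theta\rho^\vee)^{0|1}$ each equal $x^+$ by Equations~\eqref{eq:pi-eta-thin} and \eqref{eq:theta-rho-thin}, and so cancel. The remaining thin-disk terms on the LHS, coming from $H$ composed with the thin-disk components of $n_{\mcm_-^\vee}$ and $m_+^1(a_j^+)$, reduce to a short finite check using the explicit formula~\eqref{eq:homotopy} for $H$. For thick disks, since $a_j^+ \neq x^+$, the contribution from $\theta^{0|1}((\rho^\vee)^{0|0}(\ul{a}^\vee), a_j^+)$ vanishes and we are left only with $\theta^{0|0}\circ (\rho^\vee)^{0|1}$. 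A direct comparison of Lemma~\ref{lem:rho-vee} with Lemma~\ref{lem:pi-eta-disks} shows that the corresponding doubly enriched disks match precisely: in the specialized case $s=0, r=1$, the enrichment constraints $m_0 \leq 1, n_0 = 0$ at $t$ and $m_0 = 0$ at $t^{-1}$ are automatically satisfied, since $\mbw_1 = \emptyset$ and the pure enrichment in $\mbw_2$ sits at the position of $a_j$ rather than at $0$. Hence these thick-disk contributions cancel.

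The remaining term $H^{0|0}(n_{\mcm_-^\vee}^{0|1}(\ul{a}^\vee, a_j^+))$ is handled by a telescoping argument parallel to that of Claim~\ref{clm:00-input}. Following Equation~\eqref{eq:n-from-one-disk}, we write $n_{\mcm_-^\vee}^{0|1}(\ul{a}^\vee, a_j^+)$ as a sum over all disks $u \in \Delta(a, \mba)$ containing a negative corner at $a_j$, weighted by the augmentation product split around the mixed-enriched corner. Applying $H^{0|0}$ converts this into a telescoping sum whose endpoints collapse to $(\aug_1^1 + \aug_1^2)$ applied to the component of $\partial a$ carrying the forced $a_j$ factor, multiplied by $y^+$, which vanishes because $\aug_i\circ\partial = 0$. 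The main obstacle will be the careful index bookkeeping to accommodate the fixed pure enrichment at $a_j$, together with verifying that the auxiliary terms $H^{0|1}(\ul{a}^\vee, m_+^1(a_j^+))$ and $H^{0|1}(n_{\mcm_-^\vee}^{0|0}(\ul{a}^\vee), a_j^+)$ combine to absorb the remaining telescoping boundary contributions.
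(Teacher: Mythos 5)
Your reduction of the homotopy equation matches the paper's: the $n_\mcn$-terms vanish, the thin-disk contributions to $(\pi_\mcn\eta)^{0|1}$ and $(\theta\rho^\vee)^{0|1}$ cancel, and the thick-disk contributions to these two maps cancel term by term because, with no $x^+$ inputs, the constraints $m_0\leq 1$, $n_0=0$ (resp.\ $m_0=0$) of Lemma~\ref{lem:pi-eta-disks} are vacuous. The gap is in your treatment of $H^{0|0}n_{\mcm_-^\vee}^{0|1}(\ul{a}^\vee,a_j^+)$. You assert that the telescoping endpoints are ``$(\aug_1^1+\aug_1^2)$ applied to the component of $\partial a$ carrying the forced $a_j$ factor'' and hence vanish because $\aug_i\circ\partial=0$. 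That is the mechanism of Claim~\ref{clm:00-input}, and it does not apply here: for a fixed disk with negative corners $a_{i_1},\ldots,a_{i_m},a_j,a_{j_n},\ldots,a_{j_1}$, the mixed enrichment ranges only over the corners $a_{i_k}$ lying counterclockwise between $a$ and $a_j$, the corners after $a_j$ always carry the fixed augmentation $\aug_1^1$, and $a_j$ itself is an input and is never augmented. The telescope therefore runs only over this prefix, and its two endpoints are $\aug_1^2(a_{i_1})\cdots\aug_1^2(a_{i_m})\aug_1^1(a_{j_1})\cdots\aug_1^1(a_{j_n})\,y^+$ and $\aug_2^1(a_{i_1})\cdots\aug_2^1(a_{i_m})\aug_1^1(a_{j_1})\cdots\aug_1^1(a_{j_n})\,y^+$: products of distinct augmentations over a word from which $a_j$ has been deleted. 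These are not values of a single augmentation on terms of $\partial a$, so no identity of the form $\aug_i\circ\partial=0$ is available, and in general they do not vanish.

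Moreover, your last two sentences are in tension: if the endpoints really vanished on their own, the terms $H^{0|1}(n_{\mcm_-^\vee}^{0|0}(\ul{a}^\vee),a_j^+)$ and $H^{0|1}(\ul{a}^\vee,m_+^1(a_j^+))$ would be left with nothing to cancel them, and they are in general nonzero. The actual content of the claim beyond Claim~\ref{clm:00-input} --- which your proposal defers as an ``obstacle'' rather than proving --- is precisely the identification of these two terms with the two telescope endpoints: by Lemma~\ref{lem:mcm-vee-disks}, the coefficient of $a_j^\vee$ in $n_{\mcm_-^\vee}^{0|0}(\ul{a}^\vee)$ is computed by the same underlying disks with the mixed enrichment placed at $a_j$, which reproduces the first endpoint; and by Lemma~\ref{lem:aug+disks}, the coefficient of $a^+$ in $m_+^1(a_j^+)$ is computed by the same disks singly enriched at $a_j$, with augmentations taken in the Hom-spaces occurring in that composition, which reproduces the second endpoint. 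Carrying out these two disk identifications, including the bookkeeping of which of the three augmentations $\aug_1^1$, $\aug_2^1$, $\aug_1^2$ decorates which arc of the boundary, is what is missing from your argument.
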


\begin{proof}
In this case, the homotopy equation from Definition~\ref{defn:a-infty-homotopy} reads as follows:
\begin{multline} \label{eq:clm01}
    (\pi_\mcn \eta)^{0|1}(\ul{a}^\vee,a_j^+) + (\theta\rho^\vee)^{0|1}(\ul{a}^\vee,a_j^+) = \\ H^{0|0}n_{\mcm_-^\vee}^{0|1}(\ul{a}^\vee,a_j^+) + H^{0|1}(n_{\mcm_-^\vee}^{0|0}(\ul{a}^\vee),a_j^+) 
    + H^{0|1}(\ul{a}^\vee,m_+^1(a_j^+))\\
    + n_\mcn^{0|1}(H^{0|0}(\ul{a}^\vee),a_j^+) + n_\mcn^{0|0}(H^{0|1}(\ul{a}^\vee,a_j^+)).
\end{multline}
Both $n_\mcn^{0|1}(H^{0|0}(\ul{a}^\vee),a_j^+)$ and $n_\mcn^{0|0}(H^{0|1}(\ul{a}^\vee,a_j^+))$ vanish according to the definition of $n_\mcn$. For the remaining terms, we follow a similar analysis of thin and thick disks as in the previous proof..

First, the formulae \eqref{eq:pi-eta-thin} and \eqref{eq:theta-rho-thin} show that terms arising from thin disks that contribute to $(\pi_\mcn \eta)^{0|1}(\ul{a}^\vee,a_j^+)$ and $(\theta\rho^\vee)^{0|1}(\ul{a}^\vee,a_j^+)$ cancel. Lemma~\ref{lem:mcm-vee-disks} shows that no thin disks contribute to other terms in the equation.

Turning to thick disks, we see that contributions to $(\pi_\mcn \eta)^{0|1}(\ul{a}^\vee,a_j^+)$ and $(\theta\rho^\vee)^{0|1}(\ul{a}^\vee,a_j^+)$ cancel since formulae in Equations~\eqref{eq:pi-eta-thick} and \eqref{eq:theta-rho-thick-1} match precisely when $r=0$ and $s = 1$. Thus, we need only verify that, for thick disk contributions, we have
\begin{equation}\label{eq:claim-b}
    H^{0|0}n_{\mcm_-^\vee}^{0|1}(\ul{a}^\vee,a_j^+) + H^{0|1}(n_{\mcm_-^\vee}^{0|0}(\ul{a}^\vee),a_j^+)
    + H^{0|1}(\ul{a}^\vee,m_+^1(a_j^+)) = 0.
\end{equation}

Lemma~\ref{lem:mcm-vee-disks} shows that $n_{\mcm_-^\vee}^{0|1}(\ul{a}^\vee,a_j^+)$ counts doubly enriched disks with a pure enrichment in $\mbw_1$ at $a_j$ and a mixed enrichment at a Reeb chord counterclockwise between $a$ and $a_j$. Rewriting these contributions in terms of disks in $\leg$ with a positive corner at $a$ and negative corners counterclockwise at $a_{i_1},\ldots,a_{i_m},a_j,a_{j_n},\ldots,a_{j_1}$ yields the following formula, illustrated in Figure~\ref{fig:multiple-enrichments-2}:

\begin{multline} \label{eq:n01-from-one-disk}
    n_{\mcm_-^\vee}^{0|1}(\ul{a}^\vee,a_j^+) = \sum_{\Delta(a,\mba)}\sum_{k = 1}^m \aug_1^2(a_{i_1})\cdots \aug_1^2(a_{i_{k-1}})\aug_2^1(a_{i_{k+1}}) \\
    \cdots\aug_2^1(a_{i_m})\aug_1^1(a_{j_1})\cdots \aug_1^1(a_{j_n})a_{i_k}^\vee.
\end{multline}

\begin{figure}
\labellist
\small\hair 2pt
 \pinlabel {$a^\vee$} [r] at 15 160
 \pinlabel {$a_1^\vee$} [tr] at 30 114
 \pinlabel {$\aug_2^1(a_2)$} [tl] at 80 114
 \pinlabel {$a_3^+$} [l] at 96 160
 \pinlabel {$\aug^1_1(a_4)$} [b] at 54 187
 \pinlabel {$a^\vee$} [r] at 175 160
 \pinlabel {$\aug^2_1(a_1)$} [tr] at 193 114
 \pinlabel {$a_2^\vee$} [tl] at 241 114
 \pinlabel {$a_3^+$} [l] at 257 160
 \pinlabel {$\aug^1_1(a_4)$} [b] at 216 187
 \pinlabel {$a^\vee$} [r] at 15 55
 \pinlabel {$\aug^2_1(a_1)$} [tr] at 30 9
 \pinlabel {$\aug^2_1(a_2)$} [tl] at 80 9
 \pinlabel {$a_3^\vee$} [l] at 96 55
 \pinlabel {$\aug^1_1(a_4)$} [b] at 54 82
 \pinlabel {$a^\vee$} [r] at 178 55
 \pinlabel {$\aug^1_2(a_1)$} [tr] at 193 9
 \pinlabel {$\aug^1_2(a_2)$} [tl] at 241 9
 \pinlabel {$a_3^+$} [l] at 257 55
 \pinlabel {$\aug^1_1(a_4)$} [b] at 216 82
\endlabellist
    \centering
    \includegraphics{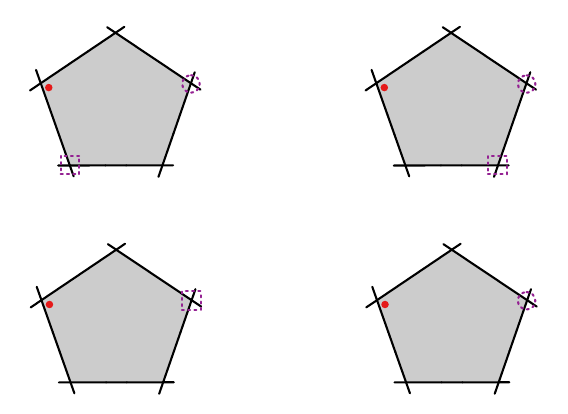}
    \caption{The disk in $\leg$ pictured has multiple possible enrichments.  The disks in the top row  could contribute to $n^{0|1}_{\mcm_-^\vee}(\ul{a}^\vee, a_j^+)$, depending on the value the augmentations take on the non-enriched negative corners; here, $j=3$.  The disk at bottom left could contribute to $H^{0|1}(n_{\mcm_-^\vee}^{0|0}(\ul{a}^\vee), a_j^+)$, while the disk at bottom right could contribute to $H^{0|1}(\ul{a}^\vee, m^1_+(a_j^+))$.}
    \label{fig:multiple-enrichments-2}
\end{figure}

As above, the telescoping nature of the sum yields the following:
\begin{multline} \label{eq:claim01-1}
    H^{0|0}n_{\mcm_-^\vee}^{0|1}(\ul{a}^\vee,a_j^+)  = \sum_{\Delta(a, \mba)}(\aug_1^2(a_{i_1})\cdots\aug_1^2(a_{i_m})\aug_1^1(a_{j_1})\cdots \aug_1^1(a_{j_n}) \\
    + \aug_2^1(a_{i_1})\cdots\aug_2^1(a_{i_m})\aug_1^1(a_{j_1})\cdots \aug_1^1(a_{j_n}))y^+.
\end{multline}

Next, we consider $H^{0|1}(n^{0|0}_{\mcm_-^\vee}(\ul{a}^\vee), a_j^+)$. This term can only be nonzero when $n^{0|0}_{\mcm_-^\vee}(\ul{a}^\vee) = a_j^\vee$, and Lemma~\ref{lem:mcm-vee-disks} shows that such contributions arise from the same underlying disks as above, but with no enrichment other than the mixed enrichment at $a_j$. The result is the following:
\begin{equation} \label{eq:claim01-2}
    H^{0|1}(n^{0|0}_{\mcm_-^\vee}(\ul{a}^\vee),a_j^+) = \sum_{\Delta(a,\mba)} \aug_1^2(a_{i_1})\cdots \aug_1^2(a_{i_m})\aug_1^1(a_{j_1})\cdots \aug_1^1(a_{j_n})y^+.
\end{equation}

Finally, $H^{0|1}(\ul{a}^\vee, m_+^1(a_j^+))$ is non-vanishing only if $m_+^1(a_j^+) = a^+$, and Lemma~\ref{lem:aug+disks} shows that such contributions arise from the same disks as above with a single enrichment at $a_j$.  This yields a similar formula as above, but with a different set of augmentations involved due to the difference of the domains:
\begin{equation} \label{eq:claim01-3}
    H^{0|1}(\ul{a}^\vee, m_+^1(a_j^+)) = \sum_{\Delta(a,\mba)} \aug_2^1(a_{i_1})\cdots \aug_2^1(a_{i_m})\aug_1^1(a_{j_1})\cdots \aug_1^1(a_{j_n})y^+.
\end{equation}

Putting the formulae \eqref{eq:claim01-1}, \eqref{eq:claim01-2}, and \eqref{eq:claim01-3} together, we see that the desired Equation~\eqref{eq:claim-b} holds.
\end{proof}

As mentioned above, a similar argument using disks with more pure enrichments works for longer inputs that begin and end with generators in $\reeb_\leg$.

\subsubsection{Case 2}

Next, we verify the homotopy equation when one of $a_s$ and $b_r$ is in $\reeb_f$ or the other is in $\reeb_\leg$ (or is empty). As above, we detail three special cases in Claims~\ref{clm:01-y-input}, ~\ref{clm:01-x-input}, and \ref{clm:11-x-input}.  The remaining cases either follow from symmetric or slightly generalized arguments with no new ideas necessary. 

We first examine a case with a $y^+$ as part of the input.

\begin{claim} \label{clm:01-y-input}
    Definition~\ref{defn:a-infty-homotopy} holds for inputs of the form $(\ul{a}^\vee,y^+)$.
\end{claim}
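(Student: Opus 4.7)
The plan is to verify the homotopy relation at input $(\ul{a}^\vee, y^+)$ by direct computation, following the template of Claims~\ref{clm:00-input} and \ref{clm:01-input}. Writing the equation explicitly:
\begin{multline*}
(\pi_\mcn \eta)^{0|1}(\ul{a}^\vee, y^+) + (\theta\rho^\vee)^{0|1}(\ul{a}^\vee, y^+) = H^{0|0}(n_{\mcm_-^\vee}^{0|1}(\ul{a}^\vee, y^+)) \\
+ H^{0|1}(n_{\mcm_-^\vee}^{0|0}(\ul{a}^\vee), y^+) + H^{0|1}(\ul{a}^\vee, m_+^1(y^+)) \\
+ n_\mcn^{0|0}(H^{0|1}(\ul{a}^\vee, y^+)) + n_\mcn^{0|1}(H^{0|0}(\ul{a}^\vee), y^+).
\end{multline*}

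First I would show that the LHS vanishes: the thin disk formulas in Lemma~\ref{lem:pi-eta-disks} and Equation~\eqref{eq:theta-rho-thin} have no entries with $y^+$ as a right input, and the thick disk formulas in Equations~\eqref{eq:pi-eta-thick}, \eqref{eq:theta-rho-thick-1}, and \eqref{eq:theta-rho-thick-2} admit only Reeb chords or $x$ generators (arising from pure enrichments at $t^{\pm 1}$) as the $b_i$'s, never $y^+$.

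Next, two of the five RHS terms, $n_\mcn^{0|0}(H^{0|1}(\ul{a}^\vee, y^+))$ and $H^{0|1}(n_{\mcm_-^\vee}^{0|0}(\ul{a}^\vee), y^+)$, vanish since Equation~\eqref{eq:homotopy} has no component $H^{0|1}(\cdot, y^+)$. Writing $\ul{a}^\vee \in \mcm_-^\vee(\aug_A, \aug_B)$ and $y^+ \in \Hom_+(\aug_B, \aug_C)$, I would establish three identities for the remaining terms: the partial flowline $(\ul{a}^{12|12}, \ul{a}^{12|11}y^{22|12})$ of Lemma~\ref{lem:s-r-2-copy-thin-disk} combined with the absence of thick disks (Lemma~\ref{lem:s-r-2-copy-dil}) and linear duality~\eqref{eq:linear-dual} gives $n_{\mcm_-^\vee}^{0|1}(\ul{a}^\vee, y^+) = a^\vee$ in $\mcm_-^\vee(\aug_A, \aug_C)$, so $H^{0|0}$ of this is $(\aug_A(a)+\aug_C(a))y^+$; the constant flowline $(y^{13}, y^{12}y^{23})$ of Lemma~\ref{lem:thin-disk} descended to $\mcn$ via $\pi$ gives $n_\mcn^{0|1}(\ul{y}^+, y^+) = y^+$, so $n_\mcn^{0|1}(H^{0|0}(\ul{a}^\vee), y^+) = (\aug_A(a)+\aug_B(a))y^+$; and Lemma~\ref{lem:aug+disks} combined with Equation~\eqref{eq:homotopy} yields $H^{0|1}(\ul{a}^\vee, m_+^1(y^+)) = (\aug_B(a)+\aug_C(a))y^+$. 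Summing gives $[(\aug_A+\aug_C)+(\aug_A+\aug_B)+(\aug_B+\aug_C)](a)\,y^+ = 0$ over $\ff_2$, matching the vanishing LHS.

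The main obstacle will be confirming the first of the three identities above: establishing that the partial flowline is the unique contribution to $n_{\mcm_-}^{1|0}(y^+, \ul{a}^-)$, and correctly tracking which pair of augmentations labels each copy of $y^+$ in the subsequent application of $H^{0|0}$. Once the three coefficient computations are in place, the cancellation is immediate over $\ff_2$, since each of $\aug_A$, $\aug_B$, and $\aug_C$ appears in exactly two of the three coefficient pairs.
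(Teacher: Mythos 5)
Your proposal is correct, and it checks the same five-term homotopy equation as the paper does, but the cancellation pattern you exhibit is genuinely different from (and more careful than) the paper's. The paper's proof disposes of the term $H^{0|1}(\ul{a}^\vee, m_+^1(y^+))$ by asserting that $m_+^1(y^+)=0$, and then cancels $H^{0|0}n_{\mcm_-^\vee}^{0|1}(\ul{a}^\vee,y^+)$ directly against $n_\mcn^{0|1}(H^{0|0}(\ul{a}^\vee),y^+)$, both claimed to equal $(\aug_1^1(a)+\aug_1^2(a))y^+$. That bookkeeping is literally valid only when the two augmentations decorating the $y^+$ input agree: by Lemma~\ref{lem:aug+disks} (and Example~\ref{ex:running-aug-cat}, where $m_+^1(y^+)=a_4^+$), one has $m_+^1(y^+)=\sum_b(\aug_B(b)+\aug_C(b))b^+$, which is nonzero in general, and in that same situation the two ``paired'' terms see different augmentation pairs, namely $(\aug_A,\aug_C)$ and $(\aug_A,\aug_B)$, exactly as you compute. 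Your three-way cancellation $\bigl[(\aug_A+\aug_C)+(\aug_A+\aug_B)+(\aug_B+\aug_C)\bigr](a)\,y^+=0$ over $\ff_2$ is the accounting that works for arbitrary augmentation labels, and it specializes to the paper's two-term cancellation when $\aug_B=\aug_C$. Your inputs to this computation are all justified by the cited results: the vanishing of the left-hand side and of the two $H^{0|1}(\cdot,y^+)$-type terms is immediate, and the identity $n_{\mcm_-^\vee}^{0|1}(\ul{a}^\vee,y^+)=a^\vee$, which you flag as the main obstacle, follows at once from Lemma~\ref{lem:s-r-2-copy-thin-disk} (the opposite $2$-copy has only the two partial flowlines, with no extra augmentable corners) together with the fact that thick disks never carry $y$ inputs, so uniqueness of that contribution is not actually delicate.
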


\begin{proof}
Having $y^+$ as one of the input variables implies that only thin disks are involved in the verification of the homotopy equation, which is of the same form as  Equation~\eqref{eq:clm01} with $y^+$ in place of $a_j^+$. The formulae in Equations~\eqref{eq:pi-eta-thin} and \eqref{eq:theta-rho-thin} show that the terms in $(\pi_\mcn \eta)^{0|1}$ and  $(\theta\rho^\vee)^{0|1}$ vanish. Of the remaining terms, $n_\mcn^{0|0}(H^{0|1}(\ul{a}^\vee,y^+))$ and $H^{0|1}(n_{\mcm_-^\vee}^{0|0}(\ul{a}^\vee),y^+)$ vanish by the definition of $H$, while $H^{0|1}(\ul{a}^\vee,m_+^1(y^+))$ vanishes since $m_+^1(y^+)=0$ for simply perturbed augmentation categories. Finally, both $H^{0|0}n_{\mcm_-^\vee}^{0|1}(\ul{a}^\vee,y^+)$ and 
$n_\mcn^{0|1}(H^{0|0}(\ul{a}^\vee),y^+)$ are equal to $(\aug_1^1(a) + \aug_1^2(a))y^+$, and hence cancel.
\end{proof}

A similar computation holds for any inputs in Case 2 that begin or end with $y^+$.  Inputs that begin or end with $x^+$ are more subtle, as they involve thick disks as well as thin ones.

\begin{claim} \label{clm:01-x-input}
    Definition~\ref{defn:a-infty-homotopy} holds for inputs of the form $(\ul{a}^\vee,x^+)$.
\end{claim}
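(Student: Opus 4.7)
My plan is to verify the homotopy equation
\begin{multline*}
(\pi_\mcn\eta)^{0|1}(\ul{a}^\vee, x^+) + (\theta\rho^\vee)^{0|1}(\ul{a}^\vee, x^+) = H^{0|0} n_{\mcm_-^\vee}^{0|1}(\ul{a}^\vee, x^+) + H^{0|1}(n_{\mcm_-^\vee}^{0|0}(\ul{a}^\vee), x^+) \\ + H^{0|1}(\ul{a}^\vee, m_+^1(x^+)) + n_\mcn^{0|1}(H^{0|0}(\ul{a}^\vee), x^+) + n_\mcn^{0|0}(H^{0|1}(\ul{a}^\vee, x^+))
\end{multline*}
by separating the coefficients of $x^+$ and $y^+$ and, within each, separating thin and thick disk contributions, following the blueprint of Claims~\ref{clm:00-input} and \ref{clm:01-input}. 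The last term vanishes automatically since $n_\mcn^{0|0}=0$ for the simply perturbed circle bimodule.

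The $x^+$-coefficient cancellation comes entirely from thin disks. By Equations~\eqref{eq:pi-eta-thin} and \eqref{eq:theta-rho-thin}, $(\pi_\mcn\eta)^{0|1}(\ul{a}^\vee, x^+) = 0$ and $(\theta\rho^\vee)^{0|1}(\ul{a}^\vee, x^+) = (\aug_1^1(a)+\aug_1^2(a))x^+$, while on the right side $n_\mcn^{0|1}(H^{0|0}(\ul{a}^\vee), x^+) = (\aug_1^1(a)+\aug_1^2(a))x^+$ by Equation~\eqref{eq:homotopy} and the strict unitality of $y^+$; no other right-hand terms produce $x^+$-multiples, so these two terms cancel directly.

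The $y^+$-coefficient is the substance of the proof and comes entirely from thick disks. My strategy is to fix an underlying immersed disk $u \in \Delta(a, \mba)$ and track how each of the six terms registers $u$ under the possible assignments of the data carried by the input $x^+$ and by the mixed enrichment. The $x^+$-input enters $(\pi_\mcn\eta)^{0|1}$ either as a trailing enrichment adjacent to a $t^{-1}$-mixed enrichment (from the $n_0 \leq 1$ branch of Lemma~\ref{lem:pi-eta-disks}) or as a pure enrichment at a secondary basepoint; it enters $(\theta\rho^\vee)^{0|1}$ through $\theta^{0|1}$ applied to a thick $(\rho^\vee)^{0|0}$ contribution via Equation~\eqref{eq:theta-rho-thick-2}; it enters $H^{0|0}n_{\mcm_-^\vee}^{0|1}$ as a pure enrichment at an arbitrary basepoint of $\partial u$ alongside a floating mixed enrichment at a Reeb chord; it is paired with a floating Reeb-chord mixed enrichment in $H^{0|1}(n^{0|0}_{\mcm_-^\vee}(\ul{a}^\vee), x^+)$; and it is consumed by $m_+^1$ in $H^{0|1}(\ul{a}^\vee, m_+^1(x^+))$, where Lemma~\ref{lem:aug+disks} produces an $a^+$-output from a single pure enrichment at a basepoint that is then fed into $H^{0|1}$.

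The main obstacle, as in Claim~\ref{clm:01-input}, is the case analysis: for each choice of which basepoint of $\partial u$ carries the $x^+$-data and which corner or basepoint carries the mixed enrichment, the remaining corners contribute products of $\aug_1^1$ and $\aug_1^2$ evaluations that, upon sliding the mixed enrichment along the Reeb-chord corners of $\partial u$, telescope from $\hat{\aug}^2$-type to $\hat{\aug}^1$-type products exactly as in Equation~\eqref{eq:n-from-one-disk}. The endpoints of these telescopes are absorbed into the boundary terms $(\pi_\mcn\eta)^{0|1}$, $(\theta\rho^\vee)^{0|1}$, and $H^{0|1}(\ul{a}^\vee, m_+^1(x^+))$, using the fact that all augmentations take the value $1$ at base points, and the residual summation over all $u$ reduces to $\aug_1^i \circ \partial(a) = 0$, closing the cancellation and completing the claim.
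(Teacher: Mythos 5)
Your overall strategy (splitting into $x^+$- and $y^+$-coefficients, then thin versus thick disks, with a telescoping argument modeled on Claims~\ref{clm:00-input} and \ref{clm:01-input}) is the right skeleton, and the $x^+$-coefficient part is correct: the thin contribution $(\theta\rho^\vee)^{0|1}(\ul{a}^\vee,x^+)=(\aug_1^1(a)+\aug_1^2(a))x^+$ from Equation~\eqref{eq:theta-rho-thin} cancels against $n_\mcn^{0|1}(H^{0|0}(\ul{a}^\vee),x^+)$ (though the latter comes from Equation~\eqref{eq:N-structure-maps}, not strict unitality). The thick-disk bookkeeping, however, has two genuine gaps. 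First, you treat $H^{0|1}(n^{0|0}_{\mcm_-^\vee}(\ul{a}^\vee),x^+)$ as a contributing term (``paired with a floating Reeb-chord mixed enrichment''), but by Equation~\eqref{eq:homotopy} the only nonvanishing component $H^{0|1}$ requires the long chord $a^+$ in its second slot, so $H^{0|1}(\,\cdot\,,x^+)=0$ identically. The role that this term plays in Claim~\ref{clm:01-input} is taken over here by the \emph{second} component of the composition $(\theta\rho^\vee)^{0|1}=\theta^{0|0}(\rho^\vee)^{0|1}+\theta^{0|1}\bigl((\rho^\vee)^{0|0}\otimes\bbid\bigr)$, namely $\theta^{0|1}((\rho^\vee)^{0|0}(\ul{a}^\vee),x^+)$ via Equation~\eqref{eq:theta-rho-thick-2}; recognizing this substitution is the new content of this claim, and it is missing from your plan.

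Second, you never account for the other component $\theta^{0|0}(\rho^\vee)^{0|1}(\ul{a}^\vee,x^+)$ (Equation~\eqref{eq:theta-rho-thick-1}), in which the $x^+$ input is consumed by the disk itself, either as the pure enrichment $n_0=1$ adjacent to the mixed basepoint enrichment or at a second passage of the boundary through the basepoint. This component is exactly what cancels the thick part of $(\pi_\mcn\eta)^{0|1}(\ul{a}^\vee,x^+)$: both are sums over the same doubly enriched disks with mixed enrichment at $t^{\pm1}$ and one extra $x$-yielding enrichment. Your proposal to absorb $(\pi_\mcn\eta)^{0|1}$ as a telescope endpoint does not work, because the telescope arising from $H^{0|0}n^{0|1}_{\mcm_-^\vee}(\ul{a}^\vee,x^+)$ slides a mixed enrichment over \emph{Reeb-chord} corners of disks carrying a single basepoint enrichment, and its two ends match $H^{0|1}(\ul{a}^\vee,m_+^1(x^+))$ and $\theta^{0|1}((\rho^\vee)^{0|0}(\ul{a}^\vee),x^+)$ respectively; the $\pi_\mcn\eta$ disks, which carry a basepoint \emph{mixed} enrichment plus the extra $x$-enrichment, belong to a different family. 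Relatedly, the closing appeal to $\aug_1^i\circ\partial(a)=0$ is not the operative mechanism here (unlike in Claim~\ref{clm:00-input}): the distinguished $x$-enrichment prevents the telescope ends from assembling into $\aug\circ\partial$, so they must cancel term-by-term against the boundary terms just identified.
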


\begin{proof}
In parallel to Equation~\eqref{eq:clm01}, we want to verify that 
\begin{equation} \label{eq:claim-01-x}
\begin{split}
    (\pi_\mcn\eta)^{0|1}&(\ul{a}^\vee, x^+) + \theta^{0|0}(\rho^\vee)^{0|1}(\ul{a}^\vee,x^+) + \theta^{0|1}((\rho^\vee)^{0|0}(\ul{a}^\vee),x^+) = \\ &H^{0|0}n_{\mcm_-^\vee}^{0|1}(\ul{a}^\vee,x^+)   + H^{0|1}(n_{\mcm_-^\vee}^{0|0}(\ul{a}^\vee),x^+) + H^{0|1}(\ul{a}^\vee,m_+^1(x^+)) \\
    &+ n_\mcn^{0|0}H^{0|1}(\ul{a}^\vee,x^+) + n_\mcn^{0|1}(H^{0|0}(\ul{a}^\vee),x^+).
\end{split}
\end{equation}
By definition of $H$ and of $n_\mcn$, observe that the terms $H^{0|1}(n_{\mcm_-^\vee}^{0|0}(\ul{a}^\vee),x^+)$ and $n_\mcn^{0|0}(H^{0|1}(\ul{a}^\vee,x^+))$ both vanish.  As in previous claims, terms coming from the thin disks cancel by direct computation using Equations~\eqref{eq:pi-eta-thin}, \eqref{eq:theta-rho-thin} and \eqref{eq:homotopy}. Further, the thick disks that contribute terms to $H^{0|0}n_{\mcm_-^\vee}^{0|1}(\ul{a}^\vee,x^+)$, $H^{0|1}(\ul{a}^\vee,m_+^1(x^+))$, and $\theta^{0|1}(\rho^\vee)^{0|0}(\ul{a}^\vee),x^+)$ cancel using a telescoping sum argument similar to that in the proof of Claim~\ref{clm:01-input}.

The thick disks beyond those that appear in the telescoping sum contribute to $(\pi_\mcn\eta)^{0|1}(\ul{a}^\vee, x^+)$ and $\theta^{0|0}\rho^{0|1}(\ul{a}^\vee,x^+)$.  Equations~\eqref{eq:pi-eta-thick} and \eqref{eq:theta-rho-thick-1} show that contributions come from doubly enriched disks with a positive corner at $a$, a pure enrichment in $\mbw_2$ at $t^{\pm 1}$, and the mixed enrichment at $t^{\pm 1}$. There are two possibilities for the underlying disks.  First, the $x^+$ could correspond to a pure enrichment at index $0$ (i.e.\ $n_0=1$, which is possible for both $\theta \rho^\vee$ and $\pi_\mcn\eta$).  Second, the $x^+$ could correspond to a pure enrichment at a nonzero index, which arises when the disk passes through the base point at least twice.  In either case, the terms in $\theta \rho^\vee$ and $\pi_\mcn\eta$ cancel.
\end{proof}

In fact, a similar computation holds for any inputs in Case 2 that end with $x^+$ because enrichments at index $0$ for the $t^{-1}$ markings in Lemma~\ref{lem:pi-eta-disks} and in \eqref{eq:theta-rho-thick-1} and \eqref{eq:theta-rho-thick-2} are handled similarly when the $x^+$ coefficients come after $\ul{a}^\vee$. For inputs that begin with $x^+$, the argument can be more subtle because of the asymmetry in how basepoints are handled in Lemma~\ref{lem:pi-eta-disks}.  We detail the case with inputs of the form $(x^+, \ul{a}^\vee, a_j^+)$; other cases beginning with $x^+$ follow the same pattern with no new ideas.

\begin{claim} \label{clm:11-x-input}
    Definition~\ref{defn:a-infty-homotopy} holds for inputs of the form $(x^+, \ul{a}^\vee, a_j^+)$, where $a_j \in \reeb_\leg$.
\end{claim}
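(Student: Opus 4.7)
The plan is to follow the strategy of Claims~\ref{clm:00-input}--\ref{clm:01-x-input}: first simplify $\delta(H)^{1|1}(x^+, \ul{a}^\vee, a_j^+)$ by dropping vanishing terms, then verify the remaining equation separately on thin and thick disks. Two automatic simplifications are immediate: since $H^{1|1}=0$, every summand of $\delta(H)$ containing $H^{1|1}$ vanishes (including all contributions from $m_+^1$ acting on $x^+$ or $a_j^+$); and by \eqref{eq:homotopy}, $H^{1|0}$ is nonzero only on inputs of the form $(a^+,\ul{a}^\vee)$ for a Reeb chord $a$, so both $H^{1|0}(x^+, \ul{a}^\vee)$ and $H^{1|0}(x^+, a_k^\vee)$ vanish. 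After these reductions, the right-hand side reduces to
\begin{multline*}
n_\mcn^{1|1}(x^+, H^{0|0}(\ul{a}^\vee), a_j^+) + n_\mcn^{1|0}(x^+, H^{0|1}(\ul{a}^\vee, a_j^+)) \\
+ H^{0|0}(n_{\mcm_-^\vee}^{1|1}(x^+, \ul{a}^\vee, a_j^+)) + H^{0|1}(n_{\mcm_-^\vee}^{1|0}(x^+, \ul{a}^\vee), a_j^+).
\end{multline*}

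For thin disks, direct inspection shows that the only entry of \eqref{eq:pi-eta-thin} at $(1,1)$ with $x^+$ on the \emph{left} gives $(\pi_\mcn\eta)^{1|1}(x^+, \ul{a}^\vee, a_j^+)=x^+$ precisely when $a=a_j$, and the corresponding entry of \eqref{eq:theta-rho-thin} has $x^+$ on the right and so contributes nothing here. On the right, $n_\mcn^{1|1}(x^+, y^+, a_j^+)=0$ because $y^+$ is the strict unit of $\mcc$ and the action on $\mcn$ is pulled back from $\mcc_\Delta$, while $n_\mcn^{1|0}(x^+, H^{0|1}(\ul{a}^\vee, a_j^+)) = n_\mcn^{1|0}(x^+, y^+) = x^+$ for $a=a_j$ by the unit property. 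These two surviving thin-disk contributions cancel.

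For thick disks, the contributions split into two classes. Disks with mixed enrichment at a base point appear only on the LHS, via $\pi_\mcn\eta$ (through \eqref{eq:pi-eta-thick}) and $\theta^{0|0}\circ(\rho^\vee)^{1|1}$ (through \eqref{eq:theta-rho-thick-1}); these pair disk-by-disk in the same way as in the thick-disk step of Claim~\ref{clm:01-input} and cancel in characteristic $2$. Disks with mixed enrichment at a Reeb chord appear only on the RHS, via $H^{0|0}\circ n_{\mcm_-^\vee}^{1|1}$ and $H^{0|1}(n_{\mcm_-^\vee}^{1|0}, a_j^+)$; grouping them by their underlying immersed disks $u\in\Delta_\leg(a,\mba)$ in which $a_j$ occurs as a negative corner and summing over the allowable pure enrichments in $\mbw_1$ (including the pre-basepoint $x$ encoded by the $x^+$ input) yields a telescoping sum that collapses to $(\aug_1^1\circ\partial + \aug_1^2\circ\partial)(a) = 0$ on each underlying disk, exactly as in the proof of Claim~\ref{clm:00-input}.

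The main obstacle is matching the base-point contributions under the asymmetric constraints on $m_0$ and $n_0$ in \eqref{eq:pi-eta-thick} ($m_0\leq 1, n_0=0$ at $t$ and $m_0=0$ at $t^{-1}$), which do not appear for $\rho^\vee$ in \eqref{eq:theta-rho-thick-1}. In Claim~\ref{clm:01-x-input}, the $x^+$ on the \emph{right} paired cleanly with the $n_0$-side enrichment allowed at $t^{-1}$; with $x^+$ on the \emph{left}, the matching proceeds through the $m_0=1$ option at $t$, together with the case where the $x^+$ is absorbed as a pure enrichment at a nonzero index when the underlying disk passes through the base point more than once. Once this case analysis is in place, the base-point disks pair as in Claim~\ref{clm:01-x-input} and the verification concludes.
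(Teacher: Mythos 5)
Your reductions and thin-disk analysis are fine, but the thick-disk cancellation scheme has a genuine gap, and it is exactly where the real content of this claim lies. First, the two RHS terms $H^{0|0}n_{\mcm_-^\vee}^{1|1}(x^+,\ul{a}^\vee,a_j^+)$ and $H^{0|1}(n_{\mcm_-^\vee}^{1|0}(x^+,\ul{a}^\vee),a_j^+)$ do \emph{not} collapse to $(\aug_1^1\circ\partial+\aug_1^2\circ\partial)(a)y^+=0$ as in Claim~\ref{clm:00-input}. In that claim the mixed enrichment ranged over \emph{all} negative corners of the underlying disk, so the telescoping ends were full boundary words and $\aug\circ\partial=0$ applied. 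Here the $x^+$ input pins a pure enrichment in $\mbw_1$ at a passage of $t^{\pm1}$ and the $a_j^+$ input pins a pure enrichment in $\mbw_2$ at $a_j$, so the mixed enrichment only sweeps the Reeb-chord corners on the arc between them; the telescoping sum therefore has two dangling ends consisting of partial augmentation products. One end is absorbed by $H^{0|1}(n_{\mcm_-^\vee}^{1|0}(x^+,\ul{a}^\vee),a_j^+)$ (the analogue of what happened in Claim~\ref{clm:01-input}), but the other end — which in Claim~\ref{clm:01-input} was cancelled by $H^{0|1}(\ul{a}^\vee,m_+^1(a_j^+))$ — has no RHS partner here, since the corresponding term would be $H^{1|1}(x^+,\ul{a}^\vee,m_+^1(a_j^+))=0$.

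Second, and correspondingly, the LHS base-point contributions do \emph{not} pair off between $\pi_\mcn\eta$ and $\theta^{0|0}(\rho^\vee)^{1|1}$. The asymmetry you flag as the "main obstacle" is precisely what prevents this: by Lemma~\ref{lem:pi-eta-disks}, $\pi_\mcn\eta$ allows a pure enrichment at index $0$ (the $m_0=1$ option) only when the mixed enrichment is at $t$, while by Lemma~\ref{lem:mcm+disks} the disks feeding $\rho^\vee$ allow index-$0$ pure enrichments only when the mixed enrichment is at $t^{-1}$. So a doubly enriched disk with $0\in\mbw_1$ enriched contributes to exactly one of the two LHS terms and is unmatched within the LHS. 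The correct bookkeeping is a cross-cancellation: these unmatched LHS terms are exactly what cancel the dangling ends of the RHS telescoping sums, with a case split — pure/mixed enrichment at $t$ pairs the end with a term of $\pi_\mcn^{0|0}\eta^{1|1}$, while $t^{-1}$ pairs it with a term of $\theta^{0|0}(\rho^\vee)^{1|1}$ — and only the \emph{remaining} LHS contributions (mixed enrichment strictly between the two pure enrichments) cancel against each other via \eqref{eq:pi-eta-thick} and \eqref{eq:theta-rho-thick-1}. Your proposal keeps all cancellations within each side of the homotopy equation, which fails for the disks just described; the final paragraph gestures at the $m_0$ case analysis but still asserts the LHS pairs internally "as in Claim~\ref{clm:01-x-input}," which is where the argument breaks down.
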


\begin{proof}
In this case, we want to verify that 
\begin{multline} \label{eq:claim-11-x}
    (\pi_\mcn\eta)^{1|1}(x^+,\ul{a}^\vee, a_j^+) + \theta^{0|0}(\rho^\vee)^{1|1}(x^+,\ul{a}^\vee,a_j^+) \\
    = H^{0|0}n_{\mcm_-^\vee}^{1|1}(x^+,\ul{a}^\vee,a_j^+) + H^{0|1}(n_{\mcm_-^\vee}^{1|0}(x^+,\ul{a}^\vee),a_j^+) \\
    + n_\mcn^{0|1}(x^+,H^{0|1}(\ul{a}^\vee,a_j^+)),
\end{multline}
where we only display terms that do not vanish a priori. The analysis is similar to the previous proof with one new ingredient. Again, terms coming from the thin disks cancel by direct computation. 

Turning to thick disks, a similar analysis to that in the proof of Claim~\ref{clm:01-input} will show that the thick disk contributions to $H^{0|0}n_{\mcm_-^\vee}^{1|1}(x^+,\ul{a}^\vee,a_j^+)$ and $H^{0|1}(n_{\mcm_-^\vee}^{1|0}(x^+,\ul{a}^\vee),a_j^+)$ will cancel with \emph{some} of the thick disk contributions to either $\theta^{0|0}(\rho^\vee)^{1|1}(x^+,\ul{a}^\vee,a_j^+)$ or $\pi_\mcn^{0|0}\eta^{1|1}(x^+,\ul{a}^\vee, a_j^+)$. In more detail, Lemma~\ref{lem:mcm-vee-disks} shows that thick disk contributions to $n_{\mcm_-^\vee}^{1|1}(x^+,\ul{a}^\vee,a_j^+)$ arise from doubly enriched disks with a positive corner at the Reeb chord $a$, a pure enrichment in $\mbw_1$ at $t^{\pm 1}$, a pure enrichment in $\mbw_2$ at $a_j$, and mixed enrichment at a Reeb chord that lies counterclockwise between $t^{\pm 1}$ and $a_j$. As in the proof of Claim~\ref{clm:01-input}, for each of the underlying disks, $H^{0|0}n_{\mcm_-^\vee}^{1|1}(x^+,\ul{a}^\vee,a_j^+)$ is a telescoping sum. One end of this sum cancels with a term in $H^{0|1}(n_{\mcm_-^\vee}^{1|0}(x^+,\ul{a}^\vee),a_j^+)$, which comes from doubly enriching the disk involved with a pure enrichment in $\mbw_1$ at $t^{\pm 1}$ and mixed enrichment at $a_j$ that lies counterclockwise between $t^{\pm 1}$ and $a$.

The other end of the telescoping sum is more subtle to cancel.  Equations~\eqref{eq:pi-eta-thick} and \eqref{eq:theta-rho-thick-1} show that the other end of the telescoping sum cancels with a term in either $\pi_\mcn^{0|0}\eta^{1|1}(x^+,\ul{a}^\vee, a_j^+)$ or $\theta^{0|0}(\rho^\vee)^{1|1}(x^+,\ul{a}^\vee,a_j^+)$, depending on whether the disk involved has a pure enrichment at $t$ or $t^{-1}$ for its contribution to $n_{\mcm_-^\vee}^{1|1}(x^+,\ul{a}^\vee,a_j^+)$. Specifically, if the enrichment is at $t$, the sum cancels with a term in $\pi_\mcn^{0|0}\eta^{1|1}(x^+,\ul{a}^\vee, a_j^+)$ that comes from doubly enriching the disk involved with a pure enrichment $0 \in \mbw_1$ at $t$, a pure enrichment in $\mbw_2$ at $a_j$, and the mixed enrichment at $t$ again; the disk does not contribute to $\theta^{0|0}(\rho^\vee)^{1|1}(x^+,\ul{a}^\vee,a_j^+)$ since the index $0$ cannot be purely enriched if the mixed enrichment is a $t$. Otherwise, the sum cancels with a term in $\theta^{0|0}(\rho^\vee)^{1|1}(x^+,\ul{a}^\vee,a_j^+)$ that comes from doubly enriching the disk involved with a pure enrichment $0 \in \mbw_1$ at $t^{-1}$, a pure enrichment in $\mbw_2$ at $a_j$, and the mixed enrichment at $t^{-1}$ again; the disk does not contribute to $\pi_\mcn^{0|0}\eta^{1|1}(x^+,\ul{a}^\vee, a_j^+)$ according to Lemma~\ref{lem:pi-eta-disks}.

Finally, Equation~\eqref{eq:pi-eta-thick} and Equation~\eqref{eq:theta-rho-thick-1} show that the remaining thick disk contributions to $\pi_\mcn^{0|0}\eta^{1|1}(x^+,\ul{a}^\vee, a_j^+)$ and $\theta^{0|0}(\rho^\vee)^{1|1}(x^+,\ul{a}^\vee,a_j^+)$ both arise from doubly enriched disks with a positive corner at $a$, a pure enrichment in $\mbw_1$ at $t^{\pm 1}$, a pure enrichment in $\mbw_2$ at $a_j$, and  mixed enrichment at $t^{\pm 1}$ that lies \emph{strictly} between the pure enrichments $t^{\pm 1}$ and $a_j$ counterclockwise. Thus, they cancel out.
\end{proof}

\subsubsection{Case 3}

It remains to consider inputs in Case 3. If either $a_r$ or $b_s$ is $y$, only thin disks may be involved and it is a direct check that all terms in the homotopy equation vanish. Thus, we are led to the case $a_r = b_s = x$. As above, we detail the case when the input is $(x^+, \ul{a}^\vee, x^+)$. The remaining cases follow similar arguments with no new ideas necessary.

\begin{claim} \label{clm:x-x-input}
    Definition~\ref{defn:a-infty-homotopy} holds for inputs of the form $(x^+, \ul{a}^\vee, x^+)$.
\end{claim}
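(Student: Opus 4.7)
The homotopy equation in this case reads
\begin{equation*}
    (\pi_\mcn\eta)^{1|1}(x^+, \ul{a}^\vee, x^+) + (\theta\rho^\vee)^{1|1}(x^+, \ul{a}^\vee, x^+) = \delta(H)^{1|1}(x^+, \ul{a}^\vee, x^+),
\end{equation*}
so my first step is to prune the right-hand side of the components that vanish \emph{a priori}. Since $H$ has components only in tri-degrees $(0,0)$, $(1,0)$, and $(0,1)$ by Equation~\eqref{eq:homotopy}, the term $H^{1|1}$ drops out. Moreover, $H^{1|0}$ requires a Reeb-chord first argument and $H^{0|1}$ requires a Reeb-chord second argument, so $H^{1|0}(x^+,\ul{a}^\vee) = 0$ and $H^{0|1}(\ul{a}^\vee,x^+) = 0$. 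After this pruning, the surviving terms include the postcompositions $n_\mcn^{0|1}(H^{1|0}(x^+,\ul{a}^\vee),x^+)$ and $n_\mcn^{1|0}(x^+,H^{0|1}(\ul{a}^\vee,x^+))$ through the strict unitality of $y^+$, the precomposition $H^{0|0}\circ n_{\mcm_-^\vee}^{1|1}$, and the two ``shoulder'' precompositions $H^{0|1}(n_{\mcm_-^\vee}^{1|0}(x^+,\ul{a}^\vee),x^+)$ and $H^{1|0}(x^+,n_{\mcm_-^\vee}^{0|1}(\ul{a}^\vee,x^+))$ together with the $m_+$ contributions coming from $m_+^1(x^+)$ or $m_+^2(x^+,x^+)$ acting on the two flanks.

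Following the template of Claims~\ref{clm:01-x-input} and \ref{clm:11-x-input}, I would then split the verification into a \emph{thin disk} part and a \emph{thick disk} part. For the thin disk part I would read off all contributions directly from Equations~\eqref{eq:pi-eta-thin}, \eqref{eq:theta-rho-thin}, and \eqref{eq:homotopy}, together with the trivial action of $n_\mcn$ on $y^+$ in Equation~\eqref{eq:N-structure-maps}, and check by direct tabulation that they cancel. This step is essentially bookkeeping and should not present any genuine difficulty.

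For the thick disk part, my plan is to exhibit the same telescoping cancellation used in Claim~\ref{clm:11-x-input}. The contributions to $H^{0|0} n_{\mcm_-^\vee}^{1|1}(x^+,\ul{a}^\vee,x^+)$ arise from doubly enriched disks in $\Delta_\leg^{1,1}(\check{\ul{z}}, \check{\mba}\check{\ul{w}}\check{\mbb})$ with pure enrichments at the basepoint on each side of $\ul{w}$ and a mixed enrichment at a Reeb chord lying between them. Summing over the admissible position of the mixed enrichment produces a telescoping sum of augmentation products, one end of which cancels with the analogous telescoping sum from $H^{0|1}(n_{\mcm_-^\vee}^{1|0}(x^+,\ul{a}^\vee),x^+)$ and $H^{1|0}(x^+,n_{\mcm_-^\vee}^{0|1}(\ul{a}^\vee,x^+))$ (in which the mixed enrichment has jumped onto a basepoint), exactly as in the proof of Claim~\ref{clm:11-x-input}. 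The remaining ends of the telescoping sums should then match thick-disk contributions to $(\pi_\mcn\eta)^{1|1}$ and $(\theta\rho^\vee)^{1|1}$ identified by Lemmas~\ref{lem:pi-eta-disks} and \ref{lem:rho-vee}.

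I expect the main obstacle to lie in matching up these remaining telescoping ends with $\pi_\mcn\eta$ and $\theta\rho^\vee$, because the admissibility constraints on the pure enrichments at index $0$ are \emph{asymmetric} between the two maps: for $\pi_\mcn\eta$ the constraint from Lemma~\ref{lem:pi-eta-disks} is that the mixed enrichment at $t$ forces $n_0 = 0$ and $m_0 \le 1$, while the mixed enrichment at $t^{-1}$ forces $m_0 = 0$; by contrast $\theta\rho^\vee$ inherits from $\rho^\vee$ a constraint that treats the two sides differently, with one $x^+$ handled by $\theta^{0|1}$ as in Equation~\eqref{eq:theta-rho-thick-2}. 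Since both bookend inputs here are $x^+$, a given underlying disk with its basepoint enrichments will contribute to exactly one of $\pi_\mcn\eta$ or $\theta\rho^\vee$, according to whether the mixed enrichment sits at $t$ or $t^{-1}$ and whether the two $x^+$ bookends come from index-$0$ enrichments or from nonzero-index enrichments produced by the boundary looping through the basepoint multiple times. I would handle this by dividing the thick-disk analysis into four subcases indexed by $(t$ vs $t^{-1}$, $\mbw_1$-side vs $\mbw_2$-side allocation of index~$0$), and verify the cancellation in each. Once this is in place, the claim will follow.
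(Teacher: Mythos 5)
Your bookkeeping of the homotopy equation is internally inconsistent, and the thick-disk cancellation scheme built on it would fail. After your own (correct) observation that $H^{1|0}$ and $H^{0|1}$ are nonzero only when the Hom-side input is a long chord $a^+$, the ``shoulder'' terms you retain as surviving, $H^{0|1}(n_{\mcm_-^\vee}^{1|0}(x^+,\ul{a}^\vee),x^+)$ and $H^{1|0}(x^+,n_{\mcm_-^\vee}^{0|1}(\ul{a}^\vee,x^+))$, vanish identically, since their outer Hom-argument is $x^+$; likewise the $n_\mcn$-postcompositions vanish because $H^{1|0}(x^+,\ul{a}^\vee)=0=H^{0|1}(\ul{a}^\vee,x^+)$, and the claimed $m_+$ contributions do not occur ($m_+^1$ applied to a flank feeds into $H^{1|1}=0$, and $m_+^2(x^+,x^+)$ is impossible because the two flanks lie on opposite sides of the module element). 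The only nonvanishing right-hand term is $H^{0|0}n_{\mcm_-^\vee}^{1|1}(x^+,\ul{a}^\vee,x^+)$. Consequently, the end of the telescoping sum that in Claim~\ref{clm:11-x-input} was absorbed by an $H$-shoulder term cannot be absorbed that way here: in the paper's proof it is absorbed by the composition term $\theta^{0|1}((\rho^\vee)^{1|0}(x^+,\ul{a}^\vee),x^+)$ of $(\theta\rho^\vee)^{1|1}$, i.e.\ by the contributions of Equation~\eqref{eq:theta-rho-thick-2}; you mention this term in passing but never assign it this role, so your cancellation does not close up.

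Relatedly, your assertion that a given underlying disk contributes to exactly one of $\pi_\mcn\eta$ or $\theta\rho^\vee$ conflicts with the actual cancellation pattern and misses the new feature of this case. Two further cancellations are needed: first, disks with pure enrichment $0 \in \mbw_1$ at $t^{-1}$ and mixed enrichment at $t^{-1}$ contribute both to $\theta^{0|1}((\rho^\vee)^{1|0}(x^+,\ul{a}^\vee),x^+)$ and, with $0 \in \mbw_1,\mbw_2$ both enriched, to $\theta^{0|0}(\rho^\vee)^{1|1}(x^+,\ul{a}^\vee,x^+)$, and these cancel \emph{internally} to $\theta\rho^\vee$; second, the remaining thick contributions to $(\pi_\mcn\eta)^{1|1}$ and $\theta^{0|0}(\rho^\vee)^{1|1}$ arise from the \emph{same} doubly enriched disks (mixed enrichment at $t^{\pm1}$ strictly between the pure enrichments) and cancel pairwise across the two maps, just as at the end of Claims~\ref{clm:01-x-input} and \ref{clm:11-x-input}. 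Your four-subcase plan could perhaps be reorganized to capture this, but as stated it rests on terms that vanish and on a disk-by-disk dichotomy that does not hold. (A minor additional point: for the input $(x^+,\ul{a}^\vee,x^+)$ there are no thin-disk contributions at all, rather than thin contributions that cancel.)
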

\begin{proof}
In parallel to Equation~\eqref{eq:claim-11-x}, we want to verify that 
\begin{multline}
    (\pi_\mcn\eta)^{1|1}(x^+,\ul{a}^\vee, x^+) + \theta^{0|0}(\rho^\vee)^{1|1}(x^+,\ul{a}^\vee,x^+) + \theta^{0|1}((\rho^\vee)^{1|0}(x^+,\ul{a}^\vee),x^+)  \\
    = H^{0|0}n_{\mcm_-^\vee}^{1|1}(x^+,\ul{a}^\vee,x^+), 
\end{multline}
where, as above, we only present terms that do not vanish a priori. Observe that there is no thin disk contribution to the equation. Turning to thick disks, first, the thick disks that contribute terms to $H^{0|0}n_{\mcm_-^\vee}^{1|1}(x^+,\ul{a}^\vee,x^+)$ cancel with \emph{some} of the thick disk contributions to $\theta^{0|1}((\rho^\vee)^{1|0}(x^+,\ul{a}^\vee),x^+)$ and either to $\theta^{0|0}(\rho^\vee)^{1|1}(x^+,\ul{a}^\vee,x^+)$ or $(\pi_\mcn\eta)^{1|1}(x^+,\ul{a}^\vee, x^+)$ using a telescoping sum argument as in Claim~\ref{clm:11-x-input}. 

The only new feature in this case is that Equation~\eqref{eq:theta-rho-thick-2} shows that there is thick disk contribution to the term $\theta^{0|1}((\rho^\vee)^{1|0}(x^+,\ul{a}^\vee),x^+)$ that comes from doubly enriched disks with a pure enrichment $0 \in \mbw_1$ at $t^{-1}$ and the mixed enrichment at $t^{-1}$ again. The same underlying disks contribute to $\theta^{0|0}(\rho^\vee)^{1|1}(x^+,\ul{a}^\vee,x^+)$ by pure enrichments $0 \in \mbw_1, \mbw_2$ at $t^{-1}$ and the mixed enrichment at $t^{-1}$ again. Thus, they cancel out.

Finally, Equation~\eqref{eq:pi-eta-thick} and Equation~\eqref{eq:theta-rho-thick-1} show that the remaining thick disk contributions to $\pi_\mcn^{0|0}\eta^{1|1}(x^+,\ul{a}^\vee, x^+)$ and $\theta^{0|0}(\rho^\vee)^{1|1}(x^+,\ul{a}^\vee,x^+)$ cancel as before.
\end{proof}

\subsection{Weak Relative Calabi-Yau Structures}
\label{ssec:final-proof}

In summary, using Lemma \ref{lem:very-weak-shortcut}, we obtain by Proposition \ref{prop:separated-id}, Proposition \ref{prop:pd-bimodule}, and Lemma \ref{lem:v-weak-cy-commute} the promised very weak relative Calabi-Yau structure on $\rho:\mcn[-1] \to \mcm_-$. In sum, we obtain:

\begin{prop}\label{prop:v-weak-cy-rho}
    Suppose $\Aug_+(\leg)$ is simply perturbed. There is a very weak relative Calabi-Yau structure of dimension $-2$ on the morphism $\rho:\mcn[-1] \to \mcm_-$ defined in Proposition \ref{prop:aug+cone}.
\end{prop}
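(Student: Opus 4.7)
The plan is to assemble the data of the very weak relative CY structure directly from the ingredients already in place and then invoke Lemma~\ref{lem:very-weak-shortcut}. Unpacking Definition~\ref{defn:v-weak-rel-cy} for the morphism $\rho:\mcn[-1]\to\mcm_-$ and dimension $n=-2$, one needs bimodule quasi-isomorphisms $\eta:\mcm_-^\vee[-2]\to\cone(\rho)$, $\theta:\mcn^\vee[-1]\to\mcn$ (since $(\mcn[-1])^\vee[-2]=\mcn^\vee[-1]$ and $(\mcn[-1])[1]=\mcn$), and $\eta':\cocone(\rho)[-1]\to\mcm_-[1]$, together with a homotopy filling the left square of the diagram~\eqref{eq:very-weak-CY}. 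Proposition~\ref{prop:aug+cone} identifies $\cone(\rho)\simeq\mcm_+$, and under this identification the canonical projection $\cone(\rho)\to\mcn$ becomes the projection $\pi_\mcn:\mcm_+\to\mcn$ onto the quotient bimodule.

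With this template fixed, I would populate the three pieces. For $\eta:\mcm_-^\vee[-2]\to\mcm_+$, I would take the quasi-isomorphism produced in Proposition~\ref{prop:eta-quasi-iso}, obtained via Proposition~\ref{prop:separated-id} and the acyclicity of the separated $2$-copy bimodule $\widehat{\mcm}$ (a Legendrian isotopy argument based on horizontal displacement). For $\theta:\mcn^\vee[-1]\to\mcn$, I would take the explicit Poincaré-duality morphism of Proposition~\ref{prop:pd-bimodule}. Both are quasi-isomorphisms by construction.

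Next, I would verify the hypothesis of Lemma~\ref{lem:very-weak-shortcut}, namely the homotopy $\pi_\mcn\eta\sim\theta\rho^\vee$. This is exactly the content of Lemma~\ref{lem:v-weak-cy-commute}, where the chain-level homotopy $H$ defined by Equation~\eqref{eq:homotopy} is produced by a case analysis on the form of the input, matching thin-disk contributions by direct computation and arranging the thick-disk contributions as telescoping sums whose endpoints cancel the contributions of base-point enrichments to $\pi_\mcn\eta$ and $\theta\rho^\vee$. Once this homotopy is in hand, Lemma~\ref{lem:very-weak-shortcut} manufactures the third quasi-isomorphism $\eta':\cocone(\rho)[-1]\to\mcm_-[1]$ and verifies that $(\eta,\theta,\eta')$ fit into the required homotopy commutative diagram, completing the construction.

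Since the proposition is the capstone of the section, the substantive obstacles have all been cleared earlier: the existence of $\eta$ (the ``duality quasi-isomorphism'') rests on the acyclicity of $\widehat{\mcm}$, while the homotopy in Lemma~\ref{lem:v-weak-cy-commute} is the combinatorial heart of the argument and required the full machinery of doubly enriched disks developed in Sections~\ref{sec:copies} and~\ref{sec:aug-cat}. At the level of this proposition the remaining work is just the bookkeeping needed to check that the shifts and identifications line up with Definition~\ref{defn:v-weak-rel-cy} and the input format of Lemma~\ref{lem:very-weak-shortcut}.
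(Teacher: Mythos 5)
Your proposal is correct and follows essentially the same route as the paper: the paper also assembles the structure from $\eta$ (Propositions~\ref{prop:separated-id} and \ref{prop:eta-quasi-iso}), $\theta$ (Proposition~\ref{prop:pd-bimodule}), and the homotopy $\theta\rho^\vee \sim \pi_\mcn\eta$ of Lemma~\ref{lem:v-weak-cy-commute}, then invokes Lemma~\ref{lem:very-weak-shortcut} to produce $\eta'$ and the required homotopy commutative diagram. Your bookkeeping of the shifts and the identification $\cone(\rho)\simeq\mcm_+$ from Proposition~\ref{prop:aug+cone} matches the paper's Diagram~\eqref{eq:very-weak-CY-2}.
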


By Proposition $\ref{prop:aug+cone}$, observe that $(\Aug_+(\leg) \stackrel{\pi}{\to} \mcc(\leg), \mcn[-1] \stackrel{\rho}{\to} \mcm_-)$ is a conical pair.  The main theorem of the paper, Theorem~\ref{thm:main}, then follows from Propositions \ref{prop:alg-gadget} and \ref{prop:v-weak-cy-rho}.

\begin{rem} \label{rem:reduce2}
We return to Remark~\ref{rem:reduce1} now that all of the categories and bimodules have been defined and the commutative diagrams underlying the main theorem have been explained.  To see that Theorem~\ref{thm:main} truly is a generalization of the duality long exact sequence of \cite{high-d-duality}, fix an augmentation $\aug$ and consider Diagram~\eqref{eq:wrcy} at the linear level. Use Proposition~\ref{prop:aug-submodule}, Definition~\ref{defn:circle-bimod}, and Proposition~\ref{prop:aug-bimod+} to translate the bimodule vector spaces to the morphism spaces of $\Aug_\pm(\leg)$ and $\mcc$.  In the derived category, the top line of the diagram thus becomes
\[
\begin{tikzcd}[column sep=scriptsize]
    \cdots \ar[r] & H^{k}\Hom_+(\aug,\aug) \ar[r] & H^{k}(\leg) \ar[r] & H^{k+1}\Hom_-(\aug,\aug) \ar[r] & \cdots
\end{tikzcd}
\]
Use the leftmost vertical quasi-isomorphism in Diagram~\eqref{eq:wrcy} --- that is, the quasi-isomorphism from Proposition~\ref{prop:eta-quasi-iso} --- to replace $H^{k}\Hom_+(\aug,\aug)$ with $H^{k-2}\Hom_-^\vee(\aug,\aug)$. Finally, as noted in \cite[\S5.2]{nrssz:aug-sheaf}, we may translate $H^k\Hom_-(\aug,\aug)$ to $LCH^{k-1}(\aug)$ and $H^{k-2}\Hom_-^\vee(\aug, \aug)$ to $LCH_{-k+1}(\aug)$.  Thus, we obtain the original duality long exact sequence described in Theorem~\ref{thm:duality}.
\end{rem}


\bibliographystyle{amsplain} 
\bibliography{main}

\end{document}